\title[LWP for ZK on the background of a bounded function]{Local well-posedness for the Zakharov-Kuznetsov equation on the background of a bounded function}
\author[J.M. Palacios]{Jos\'e Manuel Palacios}
\address{Institut Denis Poisson, Universit\'e de Tours, Universit\'e d'Orleans, CNRS, Parc Grandmont 37200, Tours, France}
\email{jose.palacios@lmpt.univ-tours.fr}
\newcommand{\be}{\begin{equation}}
\newcommand{\ee}{\end{equation}}
\newcommand{\bp}{\begin{proof}}
\newcommand{\ep}{\end{proof}}
\newcommand{\bel}{\begin{equation}\label}
\newcommand{\eeq}{\end{equation}}
\newcommand{\bea}{\begin{eqnarray}}
\newcommand{\eea}{\end{eqnarray}}
\newcommand{\bee}{\begin{eqnarray*}}
\newcommand{\eee}{\end{eqnarray*}}
\newcommand{\ben}{\begin{enumerate}}
\newcommand{\een}{\end{enumerate}}
\newcommand{\R}{\mathbb{R}}
\newcommand{\N}{\mathbb{N}}
\newcommand{\Z}{\mathbb{Z}}
\newcommand{\supp}{\operatorname{supp}}
\newtheorem{thm}{Theorem}[section]
\newtheorem{cor}[thm]{Corollary}
\newtheorem{lem}[thm]{Lemma}
\newtheorem{prop}[thm]{Proposition}
\theoremstyle{remark}
\newtheorem{rem}{Remark}[section]
\definecolor{codegreen}{rgb}{0,0.6,0}
\definecolor{codegray}{rgb}{0.5,0.5,0.5}
\definecolor{codepurple}{rgb}{0.58,0,0.82}
\definecolor{backcolour}{rgb}{0.95,0.95,0.92}
\lstdefinestyle{mystyle}{
	backgroundcolor=\color{backcolour},   
	commentstyle=\color{codegreen},
	keywordstyle=\color{magenta},
	numberstyle=\tiny\color{codegray},
	stringstyle=\color{codepurple},
	basicstyle=\footnotesize,
	breakatwhitespace=false,         
	breaklines=true,                 
	captionpos=b,                    
	keepspaces=true,                 
	numbers=left,                    
	numbersep=5pt,                  
	showspaces=false,                
	showstringspaces=false,
	showtabs=false,                  
	tabsize=2
}
\numberwithin{equation}{section}
\pgfplotsset{compat=newest}
\theoremstyle{definition}
\numberwithin{ej}{section}
\begin{document}





\renewcommand{\sectionmark}[1]{\markright{\thesection.\ #1}}
\renewcommand{\headrulewidth}{0.5pt}
\renewcommand{\footrulewidth}{0.5pt}
\begin{abstract}
We prove the local well-posedness for the two-dimensional Zakharov-Kuznetsov equation in $H^s(\R^2)$, for $s\in [1,2]$, on the background of an $L^\infty(\R^3)$-function $\Psi(t,x,y)$, with  $\Psi(t,x,y)$ satisfying some natural extra conditions. This result not only gives us a framework to solve the ZK equation around a Kink, for example, but also around a periodic solution, that is, to consider localized non-periodic perturbations of periodic solutions. Additionally, we show the global well-posedness in the energy space $H^1(\R^2)$.
\end{abstract}

\maketitle 

\section{Introduction}

\subsection{The classical model} 

In this work we seek to study the initial value problem associated with the Zakharov-Kuznetsov (ZK) equation in two space dimensions, namely \begin{align}\label{zk}
\begin{cases} 
\partial_tv+\partial_x\Delta v+\tfrac{1}{2}\partial_x(v^2)=0,
\\ v(0,x,y)=v_0(x,y)\in H^s(\R^2),
\end{cases}
\end{align}
where $v=v(t,x,y)$ is a real-valued function, $\Delta$ is the Laplacian operator and $(t,x,y)\in\R^3$. Equation \eqref{zk} was formally derived by Zakharov and Kuznetsov in \cite{ZK} as an asymptotic model to
describe the propagation of nonlinear ion-acoustic waves in a magnetized plasma. Equation \eqref{zk} has also been derived by Lannes, Linares and Saut in \cite{LaLiSa} from the Euler-Poisson system with magnetic field as a long-wave and small-amplitude limit (see also \cite{LiSa} for a formal derivation). Moreover, the Zakharov-Kuznetsov equation may also be seen as a natural two-dimensional generalization of the celebrated Korteweg-de Vries (KdV) equation \begin{align}\label{kdv}
\partial_tv+\partial_x^3v+\tfrac12\partial_x (v^2)=0.
\end{align}
One of the most interesting features of equation \eqref{zk} is the existence of solitary wave solutions. Indeed, the Zakharov-Kuznetsov equation has solitary waves solutions of many different types, namely, localized traveling waves solutions, kink solutions as well as periodic traveling waves solutions. Moreover, as a generalization of the KdV equation, a basic method to obtain solitary waves solutions for the ZK equation is simply by taking the solutions of the KdV equation and considering them as $y$-independent functions.

\medskip

Contrary to the Korteweg-de Vries equation, or the Kadomtsev-Petviashvili equation \[
\partial_tv+\partial_x^3v\pm\partial_x^{-1}\partial_y^2v+\tfrac12\partial_x(v^2)=0,
\]
which is another higher-dimensional generalization of the KdV equation \eqref{kdv}, the Zakharov-Kuznetsov equation \eqref{zk} is not completely integrable. Nevertheless, it keeps a Hamiltonian structure and possesses at least three different quantities that are formally conserved by the ZK flow, namely, the mean, the mass and the energy (respectively) \begin{align*}
I_1\left(v(t)\right)&:= \int_{\R^2} v(t,x,y)dxdy=I_1(v_0),
\\ I_2\left(v(t)\right)&:= \int_{\R^2}v^2(t,x,y)dxdy=I_2(v_0), 
\\ I_3\left(v(t)\right)&:= \dfrac12\int_{\R^2}\Big(\big\vert\nabla v(t,x,y)\big\vert^2-\dfrac13 v(t,x,y)^3\Big)=I_3(v_0).
\end{align*}
Therefore, $L^2$ and $H^1$ are two natural spaces to study well-posedness for the ZK equation. As a consequence of these conservation laws, global well-posedness has been proved in $L^2(\R^2)$ and $H^1(\R^2)$ in \cite{Ki,Fa} respectively.

\subsection{Model in the background of a bounded function} Motivated by the study of Kink solutions, in this work we seek to study a slight generalization of the initial value problem \eqref{zk}. More specifically, here we consider the problem \begin{align}\label{zk_initial_data}
\begin{cases} 
\partial_tv+\partial_x\Delta v+\tfrac{1}{2}\partial_x(v^2)=0,
\\ v(0,x,y)=\Phi(x,y),
\end{cases}
\end{align}
where, for the moment, we do not intend to assume any decay property of the initial data $\Phi(x,y)$ in any spacial direction, but only that $\Phi\in L^\infty(\R^2)$. Instead, we decompose the solution $v(t,x,y)$ in the following fashion \begin{align}\label{decomposition}
v(t,x,y)=u(t,x,y)+\Psi(t,x,y),
\end{align}
where we assume that $\Psi\in L^\infty(\R^3;\R)$ is a given function (see \eqref{hyp_psi_general} below for the specific hypotheses on $\Psi$) and we seek for $u(t)$ belonging to some Sobolev space. Then, it is natural to rewrite the above initial value problem in terms of the following Cauchy problem \begin{align}\label{zk_psi}
\begin{cases} 
\partial_tu+\partial_t\Psi+\partial_x \Delta u+\partial_x\Delta\Psi+\tfrac{1}{2}\partial_x(u+\Psi)^2=0,
\\ u(0,x,y)=u_0(x,y)\in H^s(\R^2).
\end{cases}
\end{align}
We stress that equation \eqref{zk_psi} is nothing but equation \eqref{zk_initial_data} once replacing the decomposition
given in \eqref{decomposition}.

\medskip

At this point it is worth to mention that, due to the presence of the background function $\Psi$, which is not integrable, none of the conservation laws presented in the previous subsection is well-defined, and there seems to be no evident well-defined conservation law for \eqref{zk_psi}. Although, a suitable modification of the energy functional $I_3$ shall play a key role in proving global well-posedness for \eqref{zk_psi} in $H^1(\R^2)$.

\medskip

As mentioned before, one of our main motivations comes from studying Kink solutions. However, at the same time we also seek to provide a framework to study localized non-periodic
perturbations of periodic solutions of \eqref{zk}, such as for example, the
famous cnoidal wave solutions of the KdV equaiton \begin{align*}
\Psi_{\mathrm{cn}}(t,x,y):=\alpha+\beta \mathrm{cn}^2\big(\gamma(x-ct);\kappa\big),
\end{align*}
where $c>0$ and $(\alpha,\beta,\gamma,\kappa)\in\R^4$ satisfying some suitable conditions, and where $\mathrm{cn}(\cdot,\cdot)$ stands for the Jacobi elliptic cnoidal function (see \cite{KdV}).

\medskip

\textbf{Hypotheses on the background function:} In the sequel we shall always assume that the given function $\Psi(t,x,y)$ satisfies the following hypotheses:
\begin{align}\label{hyp_psi_general}
\begin{cases}
\Psi \in L^\infty(\R,W^{4^+,\infty}_{xy}(\R^2)),
\\ \big(\partial_t\Psi+\partial_x\Delta\Psi+\tfrac12\partial_x( \Psi^2)\big)\in L^\infty(\R,H^{3^+}(\R^2)).
\end{cases}
\end{align}
\begin{rem}
Notice that if $\Psi$ solves equation \eqref{zk}, then the latter hypothesis in \eqref{hyp_psi_general} is immediately satisfied. This observation shall allow us to work in the background of kinks and periodic solutions.
\end{rem}

\smallskip

About proving local well-posedness for model  \eqref{zk_psi}, it is interesting to notice that, in \cite{LiPa}, Linares and Pastor showed the local well-posedness of the Zakharov-Kuznetsov equation \eqref{zk} for initial data in $H^s(\R^2)$, with $s>3/4$. With this aim, they adapted the method developed by Kenig, Ponce and Vega in \cite{KePoVe2} (to deal with the generalized KdV equation), which combines smoothing effects, Strichartz estimates, and a maximal function estimate, along with the Banach contraction principle. Thus, since equation \eqref{zk_psi} can be regarded as a perturbation of the classical ZK equation \eqref{zk}, one might think that, in order to prove local well-posedness for equation \eqref{zk_psi}, it is reasonable to proceed by using the contraction principle just as Linares and Pastor did in \cite{LiPa}. However, it seems that this is not possible, due to the occurrence of the term $\Psi\partial_xu$, since $\Psi$ is not integrable, which shows us that this problem is more involved.

\smallskip

\subsection{Main results}
In the rest of this work, we focus on studying the Cauchy problem associated with \eqref{zk_psi}. Our main goal is to prove global well-posedness for \eqref{zk_psi} in the energy space. The following theorem is our main result and provide us the local well-posedness in Sobolev spaces for $s\in [1,2]$. 

\begin{thm}[Local well-posedness]\label{MT1}
Let $s\in [1,2]$ fixed. Consider a fixed background function $\Psi(t,x,y)$ satisfying the conditions in \eqref{hyp_psi_general}. Then, for any initial data $u_0\in H^s(\R^2)$ there exists a positive time of existence $T=T(\Vert u_0\Vert_{H^s})>0$ and a unique solution to the equation \eqref{zk_psi}, emanating from $u_0$, such that \[
u\in C([-T,T],H^s(\R^s))\cap B^{s}(T)\cap F^{s}(T).
\]
Moreover, the data-to-solution map $\Phi:u_0\to u$ is continuous from $H^s(\R^2)$ into $C([-T,T],H^s(\R^2))$.
\end{thm}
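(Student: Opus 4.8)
The plan is to prove local well-posedness for the perturbed equation \eqref{zk_psi} by solving for $u$ via a fixed-point argument built on the Duhamel formulation, but the central difficulty — already flagged by the author — is the term $\Psi\partial_x u$, which is a first-order derivative in $x$ multiplied by a non-decaying coefficient. Since $\Psi$ is only bounded (and smooth in space up to $W^{4^+,\infty}$), this term cannot be treated perturbatively by a direct contraction estimate: a naive Duhamel estimate of $\int_0^t S(t-t')\Psi\partial_x u\,dt'$ loses a full derivative that the linear smoothing effect only barely recovers, and the non-integrability of $\Psi$ obstructs the standard $L^\infty_x L^2_{yT}$--$L^1_x L^2_{yT}$ smoothing/maximal-function duality used in \cite{LiPa}.

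\textbf{Setup and function spaces.} First I would fix the linear group $S(t)=e^{-t\partial_x\Delta}$ associated with the linear ZK equation and recall the linear estimates underpinning the Bourgain-type spaces $B^s(T)$ and $F^s(T)$ referenced in the statement (short-time $X^{s,b}$-type norms, together with the smoothing, maximal-function, and Strichartz components). The solution $u$ is sought as a fixed point of the map
\begin{equation}\label{duhamel-plan}
\Lambda u(t) = S(t)u_0 - \int_0^t S(t-t')\Big(\partial_t\Psi+\partial_x\Delta\Psi+\tfrac12\partial_x(\Psi^2)\Big)(t')\,dt' - \int_0^t S(t-t')\,\partial_x\Big(\tfrac12 u^2 + \Psi u\Big)(t')\,dt'.
\end{equation}
The forcing term on the right is controlled directly by the second hypothesis in \eqref{hyp_psi_general}, which places it in $L^\infty_t H^{3^+}$, comfortably better than the target regularity; this is where the smoothness assumptions on $\Psi$ pay off and is the easy part.

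\textbf{The main obstacle and how I would handle it.} The hard part is estimating the contribution of $\partial_x(\Psi u)=\Psi\,\partial_x u + (\partial_x\Psi)\,u$ inside the short-time norms. The benign piece $(\partial_x\Psi)u$ is controlled since $\partial_x\Psi\in L^\infty$. For the genuinely dangerous piece $\Psi\,\partial_x u$, I would \emph{not} treat it as a source term but rather absorb it into a modified linear propagator, i.e. study the variable-coefficient linear problem $\partial_t w + \partial_x\Delta w + \Psi\,\partial_x w = F$ directly and establish energy estimates with a commutator/integration-by-parts structure: the term $\int \Psi\,\partial_x w\cdot w$ integrates to $-\tfrac12\int(\partial_x\Psi)w^2$, which is bounded by $\|\partial_x\Psi\|_{L^\infty}\|w\|_{L^2}^2$, giving a clean $L^2$ energy bound. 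For higher derivatives I would commute $\partial^\alpha$ (up to order $s$) through the equation and use the Kato–Ponce commutator estimate to trade the lost derivative against $\|\Psi\|_{W^{s^+,\infty}}$, closing an a priori $H^s$ energy estimate. The short-time smoothing and maximal-function estimates then must be re-derived for this modified linear flow; here the frequency-localized short-time spaces are essential because on each small time interval the coefficient $\Psi$ acts nearly like a constant drift, and the smoothing effect survives the first-order perturbation provided the time interval scales with frequency.

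\textbf{Contraction, uniqueness, and continuous dependence.} With the modified linear estimates in hand, I would set up the contraction for the purely nonlinear Duhamel term $\int_0^t S_\Psi(t-t')\partial_x(\tfrac12 u^2)\,dt'$, where $S_\Psi$ denotes the modified propagator, showing $\Lambda$ maps a ball of the resolution space into itself and is a contraction for $T$ small depending only on $\|u_0\|_{H^s}$ and the fixed norms of $\Psi$ in \eqref{hyp_psi_general}. Uniqueness follows from the same Lipschitz estimate applied to the difference of two solutions, and continuous dependence of the data-to-solution map follows from the same difference estimate together with a standard Bona–Smith-type approximation argument to upgrade from the weak continuity given by the contraction to genuine continuity in $C([-T,T],H^s)$. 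The linchpin throughout is that every estimate involving $\Psi$ uses only its $L^\infty$-based spatial Sobolev norms and never its (unavailable) integrability, which is exactly what makes the bounded-background framework work.
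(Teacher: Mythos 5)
Your central device --- absorbing $\Psi\,\partial_x u$ into a modified propagator $S_\Psi$ and then contracting only on the quadratic term --- is precisely the step the paper argues is unavailable, and your proposal leaves it unproved at the decisive point. All the estimates defining the resolution spaces (smoothing, maximal-function, Strichartz, or the $X^{s,b}$-type structure of $F^s(T)$) are tied to the explicit flat symbol $\omega(\xi,\mu)=\xi(\xi^2+\mu^2)$ via the Fourier transform; for the variable-coefficient flow $\partial_t w+\partial_x\Delta w+\Psi\,\partial_x w=F$ with $\Psi$ bounded but non-decaying there is no explicit propagator, and the assertion that ``the smoothing effect survives the first-order perturbation provided the time interval scales with frequency'' is exactly the heart of the matter, not a routine re-derivation. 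The paper's entire architecture exists to avoid this: it keeps $\partial_x(u\Psi)$ as a forcing term and proves the short-time bilinear estimate $\Vert\partial_x(uV)\Vert_{N^s(T)}\lesssim\Vert u\Vert_{F^s(T)}\Vert V\Vert_{L^\infty_tW^{1/2^+,\infty}_{xy}}+\Vert u\Vert_{F^0(T)}\Vert V\Vert_{L^\infty_tW^{s^+,\infty}_{xy}}$ (Proposition \ref{shortime_psi_bilinear_estimate}), with the interval length $H^{-\beta}$, $\beta=1/2$, dictated exactly by the worst interaction $\partial_xP_H(P_{\sim H}u\cdot P_{\ll H}\Psi)$ --- this quantitative version of your ``nearly constant drift'' heuristic is the content of Sections \ref{sec_bilinear_estimates}--4, not an adaptation one can take for granted.

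There is a second structural gap: even granting linear estimates for $S_\Psi$, a contraction cannot close in the short-time spaces, because the linear estimate (Proposition \ref{linear_prop}) reads $\Vert u\Vert_{F^s(T)}\lesssim\Vert u\Vert_{B^s(T)}+\Vert f\Vert_{N^s(T)}$, with the energy norm $\Vert u\Vert_{B^s(T)}$ --- not $\Vert u_0\Vert_{H^s}$ --- on the right, and $\Vert u\Vert_{B^s(T)}$ is only controlled through energy estimates valid for actual solutions (Propositions \ref{energy_prop_state} and \ref{diff_sol_prop}), not for arbitrary elements of a ball. Consequently the paper's scheme is: a priori bounds for smooth solutions after scaling the data and $\Psi$ small, a continuity argument in $T$ (Lemma \ref{cont_incr_tech_lem}), an $L^2$-level Lipschitz bound for differences (note: the difference estimate closes only in $L^2$, not in $H^s$, which already rules out a direct $H^s$ contraction), and Bona--Smith regularization used for \emph{existence itself}, not merely to upgrade continuity as in your last paragraph. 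To your credit, your energy-method component is sound and mirrors the paper: the integration-by-parts identity $\int\Psi\,\partial_x w\cdot w=-\tfrac12\int(\partial_x\Psi)w^2$ and the commutator trade against $L^\infty$-based norms of $\Psi$ are exactly what appears in Lemma \ref{pseudo_holder_l2} (via the kernel bound for $[P_H^2\partial_x,\Psi]$ and Schur's lemma). But as written, the proposal's existence and uniqueness mechanism rests on unproven variable-coefficient linear theory and on a fixed-point closure that the short-time framework does not permit.
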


\begin{rem}
The short-time Bourgain spaces $F^{s}(T):=F^{s}_{1/2}(T)$ and $B^{s}(T):=B^{s}_{1/2}(T)$ shall be defined in Section \ref{preliminaries}.
\end{rem}

Finally, by using some modification of the energy functional we prove global well-posedness in the energy space for equation \eqref{zk_psi}.

\begin{thm}[Global well-posedness]\label{MT_gwp}
The local solution $u(t)$ provided by the previous theorem can be extended for all times $T>0$.
\end{thm}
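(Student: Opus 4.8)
The plan is to upgrade the local solution to a global one by proving that $\|u(t)\|_{H^1(\R^2)}$ cannot blow up in finite time, and then iterating Theorem \ref{MT1}; since the existence time there depends only on $\|u_0\|_{H^1}$, any a priori bound of the form $\sup_{t\in[0,T]}\|u(t)\|_{H^1}\leq C(T,\|u_0\|_{H^1})$ immediately yields continuation to arbitrary $T>0$. It is convenient to abbreviate $G:=\partial_t\Psi+\partial_x\Delta\Psi+\tfrac12\partial_x(\Psi^2)$, so that the equation in \eqref{zk_psi} becomes $\partial_tu+\partial_x\Delta u+\tfrac12\partial_x(u^2)+\partial_x(u\Psi)=-G$, with $G\in L^\infty(\R,H^{3^+})$ and $\Psi\in L^\infty(\R,W^{4^+,\infty})$ by \eqref{hyp_psi_general}. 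All the energy manipulations below are first carried out for smooth solutions (say with $H^2$ data, covered by Theorem \ref{MT1}), where every integration by parts is legitimate, and then transferred to general $u_0\in H^1$ by the continuous dependence on the data.

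First I would bound the $L^2$ norm. Pairing the equation with $u$, the dispersive term and the cubic term integrate to zero, while the coupling term gives $\int u\,\partial_x(u\Psi)=\tfrac12\int u^2\,\partial_x\Psi$; hence
\begin{align*}
\tfrac12\frac{d}{dt}\|u\|_{L^2}^2=-\tfrac12\int u^2\,\partial_x\Psi-\int uG\leq\tfrac12\|\partial_x\Psi\|_{L^\infty}\|u\|_{L^2}^2+\|G\|_{L^2}\|u\|_{L^2},
\end{align*}
and Gronwall's lemma produces a bound on $\|u(t)\|_{L^2}$ that is finite on each interval $[0,T]$.

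The core of the argument is the $\dot H^1$ bound, which I would obtain through the modified energy
\begin{align*}
E(u):=\frac12\int_{\R^2}|\nabla u|^2-\frac16\int_{\R^2}u^3-\frac12\int_{\R^2}\Psi\,u^2.
\end{align*}
Its variational derivative is $w:=\tfrac{\delta E}{\delta u}=-\Delta u-\tfrac12u^2-\Psi u$, and a direct computation shows that the equation is exactly the forced Hamiltonian flow $\partial_tu=\partial_x w-G$. Consequently the non-integrable contributions of $\Psi$ cancel by the skew-symmetry of $\partial_x$, and
\begin{align*}
\frac{d}{dt}E(u)=\int w\,(\partial_x w-G)-\frac12\int\Psi_t\,u^2=-\int wG-\frac12\int\Psi_t\,u^2,
\end{align*}
since $\int w\,\partial_x w=0$. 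Moreover $E$ is coercive modulo the $L^2$ norm: the two-dimensional Gagliardo-Nirenberg inequality $\|u\|_{L^3}^3\lesssim\|\nabla u\|_{L^2}\|u\|_{L^2}^2$ lets one absorb the cubic term and bound $\|\nabla u\|_{L^2}^2\leq 4E(u)+C(\|u\|_{L^2})$, so controlling $E(u)$ is equivalent to controlling $\|u\|_{H^1}$.

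It remains to estimate the right-hand side of the energy identity, and this is where the hypotheses \eqref{hyp_psi_general} are used in full. Expanding $-\int wG=\int G\Delta u+\tfrac12\int u^2G+\int\Psi uG$ and moving both derivatives off $\Delta u$ onto $G$ (permissible since $G\in H^{3^+}$, hence $\Delta G\in L^2$) bounds the first term by $\|\Delta G\|_{L^2}\|u\|_{L^2}$, while the remaining terms are controlled by $\|G\|_{L^\infty}\|u\|_{L^2}^2$ and $\|\Psi\|_{L^\infty}\|G\|_{L^2}\|u\|_{L^2}$; for the last term one checks that $\Psi_t=G-\partial_x\Delta\Psi-\tfrac12\partial_x(\Psi^2)\in L^\infty$ (using $H^{3^+}(\R^2)\hookrightarrow L^\infty$ together with $\Psi\in W^{4^+,\infty}$), so that $|\int\Psi_tu^2|\leq\|\Psi_t\|_{L^\infty}\|u\|_{L^2}^2$. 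The main obstacle is precisely this bookkeeping: identifying the modified energy whose variational structure produces the cancellation $\int w\,\partial_xw=0$, and verifying that the surviving forcing terms are integrable and controlled by the prescribed norms of $\Psi$ and $G$ — together with the regularization argument needed to justify these computations at the $H^1$ level. Once $\tfrac{d}{dt}E(u)\leq C(T)\big(1+\|u\|_{L^2}^2\big)$ is in hand, the already-established $L^2$ bound makes the right-hand side a genuine constant on $[0,T]$; integrating in time and invoking the coercivity estimate then yields the sought a priori bound on $\|u(t)\|_{H^1}$, completing the continuation to all times.
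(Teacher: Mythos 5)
Your proposal is correct and follows essentially the same route as the paper: an $L^2$ Gronwall bound from pairing the equation with $u$, followed by the same modified energy (your $E$ is exactly half of the paper's $\mathcal{E}(u)=\int|\nabla u|^2-\tfrac13\int u^2(u+3\Psi)$), the same Gagliardo--Nirenberg coercivity argument, and iteration of Theorem \ref{MT1}. Your variational/Hamiltonian bookkeeping via $\partial_tu=\partial_x\big(\tfrac{\delta E}{\delta u}\big)-G$ is merely a cleaner derivation of the identical energy-derivative identity that the paper obtains by direct integration by parts, so the two proofs coincide in substance.
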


\begin{rem}
The local well-posedness Theorem \ref{MT1} only requires the regularity hypotheses in \eqref{hyp_psi_general} to hold with exponents $7/2^+$ and $5/2^+$, respectively. The fact that we assume regularity $4^+$ and $3^+$, respectively, is just to have a wider range of LWP in the regular case (c.f. Theorem \ref{regular_lwp_thm}), so that we can easily justify all our computations while integrating by parts, using the continuity of the data-to-solution map, hence assuming that the solution is regular enough.
\end{rem}

We now discuss the main ingredients in the proof of  Theorem \ref{MT1}. We shall adapt the method introduced by Ionescu, Kenig and Tataru in \cite{IoKeTa}, in the context of the KP-I equation, which consists in an energy method, based on the introduction of the dyadic short-time Bourgain spaces $F^{s}_\beta$ and their dual $N^{s}_\beta$, defined in the following section. Generally speaking, from a perturbative point of view, for an equation of the form \begin{align*}
\begin{cases}
\partial_t u+\partial_x\Delta u=f,
\\ u(0,x,y)=u_0(x,y),
\end{cases}
\end{align*}
one would like to prove a linear estimate for solutions of the above IVP, of the form \begin{align}\label{expl_linear}
\Vert u\Vert_{\mathbf{F}^{s}(T)}\lesssim \Vert u_0\Vert_{H^s}+\Vert f\Vert_{\mathbf{N}^{s}(T)},
\end{align}
for some suitable spaces $\mathbf{F}^{s}(T)$ and $\mathbf{N}^{s}(T)$, along with matching nonlinear estimates \[
\Vert \partial_x(u^2)\Vert_{\mathbf{N}^{s}(T)}\lesssim \Vert u\Vert_{\mathbf{F}^{s}(T)}^2 \quad \hbox{ and } \quad \Vert \partial_x(u\Psi)\Vert_{\mathbf{N}^{s}(T)}\lesssim \Vert u\Vert_{\mathbf{F}^{s}(T)}\Vert \Psi\Vert_{ L^\infty_t W^{4^+,\infty}_{x,y}}.
\]
However, due to the presence of $\Psi$, it is not clear whether such a choice of spaces $\mathbf{F}^{s}(T)$ and $\mathbf{N}^{s}(T)$ exist, which forces us to approach the problem in a less perturbative way. 

\medskip

Instead, the key idea of Ionescu, Kenig and Tataru in \cite{IoKeTa}, was to introduce some normed functional spaces $\mathbf{F}^{s}(T)$, $\mathbf{N}^{s}(T)$, and a semi-normed space $\mathbf{B}^{s}(T)$ so that, for smooth solutions of the equation, the following inequalities (reformulated in our present context, adding $\Psi$ into the equation) hold \begin{align}\label{explicacion}
\begin{cases}
\Vert u\Vert_{\mathbf{F}^{s}(T)}\lesssim \Vert u\Vert_{\mathbf{B}^{s}(T)}+\Vert \partial_x(u^2)\Vert_{\mathbf{N}^{s}(T)}+\Vert \partial_x(u\Psi)\Vert_{\mathbf{N}^{s}(T)}+|||\Psi|||_{s},
\\ \Vert \partial_x(u^2)\Vert_{\mathbf{N}^{s}(T)}\lesssim \Vert u\Vert_{\mathbf{F}^{s}(T)}^2,
\\ \Vert \partial_x(u\Psi)\Vert_{\mathbf{N}^{s}(T)}\lesssim \Vert u\Vert_{\mathbf{F}^{s}(T)}\Vert \Psi\Vert_{ L^\infty_t W^{4^+,\infty}_{xy}}
\\ \Vert u\Vert_{\mathbf{B}^{s}(T)}^2\lesssim \Vert u_0\Vert_{H^s}+\Vert u\Vert_{\mathbf{F}^{s}(T)}|||\Psi|||_{3^+}+\Vert u\Vert_{\mathbf{F}^{s}(T)}^2\Vert \Psi\Vert_{ L^\infty_tW^{4^+,\infty}_{xy}}+\Vert u\Vert_{\mathbf{F}^{s}(T)}^3,
\end{cases}
\end{align}
where the norm $|||\cdot|||_s$ is defined in  \eqref{norm_psi_3b}.
Then, the above inequality along with a simple continuity argument suffices to control $\Vert u\Vert_{\mathbf{F}^{s}(T)}$, provided that $\Vert u_0\Vert_{H^s}\ll 1$ and \[
\Vert \Psi\Vert_{L^\infty_tW^{4^+}_{xy}}+\Vert \partial_t\Psi+\partial_x\Delta\Psi+\tfrac12\partial_x (\Psi^2)\Vert_{L^\infty_tH^{3^+}_{xy}}\ll 1, 
\]
which can be arranged by scaling considerations. The first inequality in \eqref{explicacion} is the analogue of the linear estimate \eqref{expl_linear}, while the second and the third one correspond to the bilinear estimates. Finally, the last inequality in \eqref{explicacion} is an energy-type estimate. The main issue in \eqref{explicacion} is the appearance of the energy norm $\Vert u\Vert_{\mathbf{B}^{s}(T)}$ in the first inequality of \eqref{explicacion}, taking the place of the usual $H^s$-norm of the initial data $\Vert u_0\Vert_{H^s}$, which is introduced to control the small time localization appearing in the $\mathbf{F}^{s}$-structure. 

\medskip

The $\mathbf{F}^{s}(T)$ spaces enjoy a $X^{s,1/2,1}$-type structure, but with a localization in small, frequency dependent time intervals, whose length is of order $H^{-\beta}$, for some $\beta>0$ to be chosen (in our present case we take $\beta=1/2$), when the spatial frequency $(\xi,\mu)$ of the function is localized around $\xi^2+\mu^2\sim H$. This shall allow us, in a certain way, to work only with modulations of size $\vert \tau-\omega(\xi,\mu)\vert\gtrsim H^\beta$, thus neglecting the  contribution in the cases where the modulation variable is too small. Of course, the choice of $\beta$ shall be one of the key points in these definitions. As mentioned before, one of the main obstruction in proving local well-posedness, by a fixed point argument, in our present case is coming from the occurrence of $\partial_x(u\Psi)$ in the nonlinearity. More precisely, we choose $\beta$ in order to be able to deal with the norm $\Vert \partial_x(u\Psi)\Vert_{\mathbf{N}^{s}(T)}$, whose definition is also based on frequency dependent time intervals of length of order $H^{-\beta}$. Particularly harmful shall be terms of the form $\partial_xP_H(P_{\sim H}u\cdot P_{\ll H}\Psi)$. These types of terms lead us to consider $\beta=1/2$.   
Finally, we adapt the Bona-Smith method \cite{BS} to prove the continuity of the flow in the $H^s$ space.

\subsection{Previous results}

The Cauchy problem associated with the Zakharov-Kuznetsov equation \eqref{zk} has been extensively studied in the last years. In the two-dimensional case, the first result goes back to Faminskii \cite{Fa}, where he proved the local and global well-posedness in the energy space $H^1(\R^2)$ by adapting the prove of Kenig, Ponce and Vega to deal with the KdV equation \cite{KePoVe3}. This result was later improved by Linares and Pastor in \cite{LiPa}. They showed the LWP of \eqref{zk} in $H^s(\R^2)$ for $s>3/4$. This latter result was proved by using a fixed point argument, taking advantage of the dispersive smoothing effects associated to the linear part of the ZK equation \eqref{zk}, in a similar fashion as Kenig, Ponce and Vega did for the generalized KdV equation  \cite{KePoVe2}. Then, Gr\"unrock and Herr \cite{GrHe} and Molinet and Pilod \cite{MoPi} independently proved local well-posedness for $s>1/2$. This was done by using the Fourier restriction norm method. On the other hand, Shinya Kinoshita has recently proved the local and global well-posedness of equation \eqref{zk} in $H^s(\R^2)$ for $s>-1/4$ and $L^2(\R^2)$ respectively \cite{Ki}. This latter LWP result is almost optimal, at least from a Picard iteration approach point of view, since the data-to-solution map $u_0\mapsto u(t)$ fails to be $C^2$ for $s<-1/4$.

\medskip

Concerning the method of proof of Theorem \ref{MT1}, as we mentioned before, the technique used here was first introduced by Ionescu, Kenig and Tataru in \cite{IoKeTa}. Since then, these ideas have been adapted to deal with several other models. We refer to \cite{MoPi} and \cite{RiVe} for previous works using some similar ideas in the context of the ZK-type equation. Also, see \cite{Gu} and \cite{KePi} for some adaptations of these ideas in the context of KdV-type equations. Finally, we refer to \cite{ChCo} and \cite{KoTa} for some previous works using some similar spaces to prove a priori bounds for the 1D cubic NLS at low regularity.

\medskip

Finally, we believe that the technique employed here may be useful to prove LWP in the background of a bounded function for a broad class of  higher dimensional models, as for the KP-II equation, for example. We plan to address this issue in a forthcoming paper. For a method to treat one-dimensional models we refer to \cite{Pa}.

\subsection{Organization of this paper}

This paper is organized as follows. In Section
\ref{preliminaries} we provide the definition of the short-time Bourgain spaces and prove several of their basic properties. In Section \ref{sec_bilinear_estimates} we prove the main $L^2$-bilinear estimates. Then, we establish the main energy estimates for solutions and difference of solutions in Section \ref{sec_energy}. Finally, we prove the local and global well-posedness theorems in sections \ref{sec_loc} and \ref{sec_gwp} respectively.

\section{Preliminaries}\label{preliminaries}

\subsection{Basic notations}

For any pair of positive numbers $a$ and $b$, the notation $a\lesssim b$ means that there exists a positive constant $c$ such that $a\leq cb$. We also denote by $a\sim b$ when $a\lesssim b$ and $b\lesssim a$. Moreover, for $2\in\R$, we denote by $2^+$, respectively $2^-$,  a number slightly greater,
respectively lesser, than $2$. In the sequel we denote by $\mathbb{D}:=\{2^\ell:\, \ell\in\N\}$. Also, in this article we shall use an adapted (to dyadic numbers) version of the floor function $\lfloor\cdot\rfloor$ in the following sense: for $x\in\R$ we define the floor function $\lfloor x\rfloor$ as follows \[
\lfloor x\rfloor := \max\{L\in\mathbb{D}:\ L\leq x <2L\}.
\] 
For $a_1,a_2,a_3\in\R$ we define the quantities $a_\mathrm{max}\geq a_\mathrm{med} \geq a_\mathrm{min}$ to be the maximum, median and minimum of $a_1$, $a_2$ and $a_3$ respectively. Furthermore, we shall occasionally use the notation $F(x)$ to denote a primitive of the nonlinearity $f(x)$, that is,  $F(s)=\int_0^s f(s')ds'$.

\medskip

Now, for $u(t,x,y)\in \mathcal{S}'(\R^3)$, $\mathcal{F}u=\hat{u}$ shall denote its space-time Fourier transform, whereas $\mathcal{F}_tu$, respectively $\mathcal{F}_{x,y}u$, shall denote its time Fourier transform,
respectively space Fourier transform. 
Additionally, for $s\in\R$, we introduce the Bessel and Riesz potentials of order $-s$, namely $J^s_x$ and $D^s_x$, by (respectively) \[
J^su := \mathcal{F}^{-1}_{xy}\big((1+\vert(\xi,\mu)\vert^2)^{s/2}\mathcal{F}u\big) \quad \hbox{ and }\quad D^su:= \mathcal{F}^{-1}_{xy}\big(\vert(\xi,\mu)\vert^s\mathcal{F}u\big).
\]
We also denote by $U(t)$ the unitary group associated with the linear part of \eqref{zk}, that is, the  unitary group $e^{-t\partial_x\Delta}$ associated with the linear dispersive equation \[
\partial_t u+\partial_x\Delta u=0,
\]
which is defined via the Fourier transform as \[
U(t)u_0=\mathcal{F}^{-1}\big(e^{it\xi(\xi^2+\mu^2)}\mathcal{F}u_0\big).
\]
Moreover, from now on $\omega(\xi,\mu)$ shall denote the symbol associated with \eqref{zk}, that is, we set $\omega(\xi,\mu):=\xi(\vert\xi\vert^2+\mu^2)$ and the resonant relation
\begin{align*}
\Omega(\xi_1,\mu_1,\xi_2,\mu_2)&:=\omega(\xi_1+\xi_2,\mu_1+\mu_2)-\omega(\xi_1,\mu_1)-\omega(\xi_2,\mu_2).
\end{align*}
We shall also need to study the group velocity of the equation, in particular its first component, that is, the function \[
h(\xi,\mu):=\partial_\xi\omega(\xi,\mu)=3\xi^2+\mu^2.
\]
Throughout this work we consider a fixed smooth cutoff function $\eta$ satisfying \begin{align}\label{def_eta}
\eta\in C^\infty_0(\R),\quad 0\leq \eta\leq 1, \quad \eta\big\vert_{[-1,1]}=1 \quad \hbox{and} \quad \supp\eta\subset[-2,2].
\end{align}
We define $\phi(\xi):=\eta(\xi)-\eta(2\xi)$ and, for $\ell\in\N$, we denote by $\eta_{2^\ell}$ the function given by \[
\eta_{2^\ell}(\xi):=\phi\big(2^{-\ell}\xi\big) \quad \hbox{ and } \quad \eta_{0}(\xi):=\eta(\xi).
\]
Similarly, we define $\psi_{2^\ell}(\xi,\mu)$ and $\psi_{0}(\xi,\mu)$ as follows \[
\psi_{2^\ell}(\xi,\mu):=\eta_{2^\ell}(\xi^2+\mu^2) \quad \hbox{ and } \quad \psi_{0}(\xi,\mu):=\phi_{0}(\xi^2+\mu^2).
\]
Additionally, write a tilde above these functions to denote another cutoff function, in the same variables, but associated with a slightly larger region. Concretely, we define $(\tilde\eta_{2^\ell})_{\ell\geq 0}$ to be another nonhomogeneous dyadic partition of the unity satisfying that $\supp\tilde\eta_{2^\ell}\subset [2^{\ell-1},2^{\ell+1}] ,$ $\ell\geq1$, and such that $\tilde\eta_{2^\ell}\equiv 1$ on $\supp\eta_{2^\ell}$. We proceed in the same fashion for $\psi_{2^\ell}$.

\medskip

Any summations over capitalized variables such as $N$, $M$, $H$ or $L$ are presumed to be dyadic. Unless stated otherwise, we work with non-homogeneous dyadic decompositions for the space-frequency, time-frequency and modulation variables, i.e., these variables range over numbers of the form $\mathbb{D}=\{2^\ell:\, \ell\in\N\}$. Then, with the previous notations we have that \[
\sum_{N\in \mathbb{D}}\eta_{N}(\xi)=1 \, \hbox{ for all }\, \xi\in\R\setminus\{0\} \quad \hbox{ and } \quad \supp\eta_{N}\subset\{\tfrac{1}{2}N\leq \vert\xi\vert\leq 2N\}.
\]
We define the Littlewood-Paley multipliers by the following identities \begin{align}\label{def_pr_P_Q}
P_N^xu:=\mathcal{F}_{x}^{-1}\big(\eta_N(\xi)\mathcal{F}_xu\big)\quad \hbox{ and }\quad P_Hu:=\mathcal{F}^{-1}_{xy}\big(\psi_H(\xi,\mu)\mathcal{F}_{xy}u\big).
\end{align}
With these definitions at hand, we introduce the operators \begin{align*}
P_{\leq H}&:=\sum_{\mathcal{H}\leq H}P_\mathcal{H}, \quad P_{\geq H}:=\sum_{\mathcal{H}\geq H}P_\mathcal{H}, \quad P_{\sim H}:=\mathrm{Id}-P_{\ll H}-P_{\gg H},
\end{align*}
and similarly for $P_N^x$. Occasionally, for $A\subset\R^2$ we shall denote by $P_A=\mathcal{F}^{-1}_{xy}\mathds{1}_A\mathcal{F}_{xy}$, that is, the Fourier projection on $A$. We also define the set associated with the above decompositions. More precisely, for $N,H\in\mathbb{D}$ we define \[
I_N:=\big\{\xi\in\R: \ \tfrac12N\leq \vert \xi\vert 2N\big\} \quad \hbox{ and } \quad \Delta_H:=\big\{(\xi,\mu)\in\R^2: \ h(\xi,\mu)\in I_H\big\}.
\]
We borrow some notations from \cite{Ta} as well. For $k\geq 2$ natural number and $\xi\in\R$, we denote by $\Gamma^k(\xi,\mu)$ the $(2k-2)$-dimensional affine-hyperplane of $\R^{2k}$ given by \begin{align*}
\Gamma^k(\xi,\mu)&:=\{(\xi_1,\mu_1,...,\xi_k,\mu_k)\in\R^{2k}:\, \xi_1+...+\xi_k=\xi, \ \, \mu_1+...+\mu_k=\mu\}.
\end{align*}
endowed with the natural measure 
\begin{align*}
&\int_{\Gamma^k(\xi,\mu)}F(\xi_1,\mu_1,...,\xi_k,\mu_k)d\Gamma^k(\xi,\mu)
\\ & = \int_{\R^{2k-2}}F(\xi_1,\mu_1,...,\xi_{k-1},\mu_{k-1},\xi-\xi_1-...-\xi_{k-1},\mu-\mu_1-...-\mu_{k-1})d\xi_1d\mu_1...d\xi_{k-1}d\mu_{k-1}.
\end{align*}
for any function 
$F:\Gamma^k(\xi,\mu)\to \mathbb{C}$. Moreover, when $\xi=\mu=0$ we shall simply denote by $\Gamma^k=\Gamma^k(0)$ with the obvious modifications.

\subsection{Function spaces}

We shall work with short-time localized Bourgain spaces, introduced by Ionescu, Kenig and Tataru in \cite{IoKeTa}. First, for $H\in\mathbb{D}$ fixed, we introduce the frequency localized $\ell^1$-Besov type space $X_H$ of regularity $1/2$ with respect to modulations
\[
X_H:=\big\{\phi\in L^2(\R^3): \, \mathrm{supp}\,\phi\subset \R\times I_H \,\hbox{ and }\, \Vert \phi\Vert_{X_H}<+\infty\big\},
\]
where the $X_H$-norm is defined as \begin{align}\label{xh_norm}
\Vert \phi\Vert_{X_H}:=\sum_{L\in\mathbb{D}}L^{1/2}\big\Vert \eta_L\big(\tau-\omega(\xi,\mu)\big)\phi(\tau,\xi,\mu)\big\Vert_{L^2(\R^3)}.
\end{align}
This type of structures were previously used in \cite{Tataru}, and are useful to prevent high frequency losses in bilinear and trilinear estimates. Now, for $\beta\geq 0$ to be chosen and $H\in\mathbb{D}$ fixed, we define the Bourgain spaces $F_{H,\beta}$ localized in short-time intervals of length $H^{-\beta}$, \[
F_{H,\beta}:=\big\{f\in L^\infty(\R,L^2(\R^2)):\, \mathrm{supp}\,\mathcal{F}(f)\subset\R\times I_H\, \hbox{ and } \, \Vert f\Vert_{F_{H,\beta}}<+\infty\big\},
\]
where the $F_{H,\beta}$-norm is given by \[
\Vert f\Vert_{F_{H,\beta}}:=\sup_{t_H\in\R}\big\Vert \mathcal{F}\big(\eta_0(H^\beta(\cdot-t_H))f\big)\big\Vert_{X_H}.
\]
Its dual version $N_{H,\beta}$ is defined by \[
N_{H,\beta}:=\big\{f\in L^\infty(\R,L^2(\R^2)):\, \mathrm{supp}\,\mathcal{F}(f)\subset\R\times I_H\, \hbox{ and } \, \Vert f\Vert_{N_{H,\beta}}<+\infty\big\},
\]
where in this case the $N_{H,\beta}$-norm is given by \[
\Vert f\Vert_{N_{H,\beta}}:=\sup_{t_H\in\R}\big\Vert \big(\tau-\omega(\xi,\omega)+iH^\beta\big)^{-1}\mathcal{F}\big(\eta_0(H^\beta (\cdot-t_H))f\big)\big\Vert_{X_H}.
\]
Then, for $s\geq0$, we define the global $F^s_\beta$ and $N^s_\beta$ spaces from their frequency localized versions $F_{H,\beta}$ and $N_{H,\beta}$ by using a nonhomogeneous Littlewood-Paley decomposition as follows \begin{align}\label{Fs_norm}
F^s_\beta&:=\Big\{f\in L^\infty(\R, L^2(\R^2)):\, \Vert f\Vert_{F^s_\beta}^2:=\sum_{H\in\mathbb{D}} H^{s}\Vert P_Hf\Vert_{F_{H,\beta}}^2<+\infty \Big\},
\\ N^s_\beta &:=\Big\{f\in L^\infty(\R, L^2(\R^2)):\, \Vert f\Vert_{N^s_\beta}^2:=\sum_{H\in\mathbb{D}} H^{s}\Vert P_Hf\Vert_{N_{H,\beta}}^2<+\infty\Big\}.\label{Ns_norm}
\end{align}
The reason why we define the above norms with an $H^s$ weight instead of $H^{2s}$ comes from the fact that the localization already has a square power $H\sim \xi^2+\mu^2$. Hence, defining them with a $H^s$ weight makes them related to an $H^s(\R^2)$ level of regularity.

\medskip

The bounds we obtain for solutions of equation \eqref{zk_psi} are on a fixed
time interval, while the above function spaces are not. To remedy this, we also define the localized (in time) version of these spaces. For any time $T\in(0,1]$, let $Y$ denote either $F^s_\beta$ or $N^s_\beta$. Then, we define \begin{align*}
\Vert f\Vert_{Y(T)}:=\mathrm{inf}\big\{\Vert \tilde{f}\Vert_Y:\, \tilde{f}:\R^3\to\R \, \hbox{ and } \, \tilde{f}\big\vert_{[-T,T]\times \R^2}=f \big\}.
\end{align*}
Then, \begin{align}\label{FsT_norm}
F^s_\beta(T)&:=\big\{f\in L^\infty([-T,T],L^2(\R^2)):\, \Vert f\Vert_{F ^s_\beta(T)}<+\infty\big\},
\\ N^s_\beta(T)&:=\big\{f\in L^\infty([-T,T],L^2(\R^2)):\, \Vert f\Vert_{N ^s_\beta(T)}<+\infty\big\}.\label{NsT_norm}
\end{align}
Finally, we define the energy space $B^s(T)$ as follows, for $s\geq 0$ and $T\in (0,1]$ we set \[
\Vert f\Vert_{B^s(T)}^2:=\Vert P_1f(0,\cdot,\cdot)\Vert_{L^2_{x,y}}^2+\sum_{H\in\mathbb{D}\setminus\{1\}}H^{s}\sup_{t_H\in[-T,T]}\Vert P_Hf(t_H,\cdot,\cdot)\Vert_{L^2_{x,y}}^2.
\]
Then, the energy space $B^s(T)$ is given by \[
B^s(T):=\big\{f\in L^\infty([-T,T],L^2(\R^2):\, \Vert f\Vert_{B^s(T)}<+\infty\big\}.
\]

\subsection{Properties of the function spaces}

In this subsection we show
some basic but important properties concerning the short-time function spaces introduced in the previous subsection. They all have been proved in different contexts (see \cite{IoKeTa,KePi}).

\medskip

The following lemma gives us that $F^s_\beta(T)\hookrightarrow L^\infty([-T,T],H^s(\R^2))$.
\begin{lem}\label{lem_linear_estimate}
Let $T>0$, $s\geq 0$ and $\beta \geq 0$. Then, for all $f\in F^s_\beta(T)$, the following holds \begin{align}\label{basic_embedding}
\Vert f\Vert_{L^\infty_TH^s}\lesssim \Vert f\Vert_{F^s_\beta(T)}.
\end{align}
\end{lem}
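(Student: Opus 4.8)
The plan is to prove the embedding $F^s_\beta(T)\hookrightarrow L^\infty_T H^s$ by first reducing, via a standard Littlewood-Paley and extension argument, to a single frequency-localized piece, and then establishing the corresponding fixed-time $L^2$ bound at the level of $X_H$. Concretely, since $\Vert f\Vert_{L^\infty_T H^s}^2 \sim \sum_{H}H^s \Vert P_H f\Vert_{L^\infty_T L^2}^2$ (using that $H\sim \xi^2+\mu^2$ on the support of $\psi_H$, so that an $H^s$-weight matches the $H^s$-level of regularity, exactly as the paper notes after \eqref{Ns_norm}), and since by definition of the localized norm we may choose an extension $\tilde f$ with $\tilde f|_{[-T,T]\times\R^2}=f$ and $\Vert\tilde f\Vert_{F^s_\beta}\lesssim \Vert f\Vert_{F^s_\beta(T)}$, it suffices to show, for each dyadic $H$ and each $g$ with $\supp\mathcal{F}(g)\subset \R\times I_H$,
\begin{align*}
\Vert g\Vert_{L^\infty_t L^2_{xy}} \lesssim \Vert g\Vert_{F_{H,\beta}}.
\end{align*}

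The key step is therefore the frequency-localized fixed-time estimate. First I would fix an arbitrary time $t_0\in\R$ and insert a temporal cutoff: choosing $t_H=t_0$ in the supremum defining $\Vert g\Vert_{F_{H,\beta}}$, and writing $g_0:=\eta_0(H^\beta(\cdot-t_0))g$, we have $g(t_0)=g_0(t_0)$ because $\eta_0\equiv 1$ near $0$. Thus it is enough to bound $\Vert g_0(t_0)\Vert_{L^2_{xy}}$ by $\Vert g_0\Vert_{X_H}$ uniformly in $t_0$. Now I would pass to the Fourier side in time: writing the fixed-time slice as an integral of the space-time Fourier transform against $e^{it_0\tau}$, one gets
\begin{align*}
\Vert g_0(t_0)\Vert_{L^2_{xy}} = \Big\Vert \int_\R \widehat{g_0}(\tau,\xi,\mu)\,e^{it_0\tau}\,d\tau\Big\Vert_{L^2_{\xi,\mu}} \leq \int_\R \Vert \widehat{g_0}(\tau,\cdot,\cdot)\Vert_{L^2_{\xi,\mu}}\,d\tau,
\end{align*}
by Minkowski's inequality. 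The point is then to dominate this single $L^1_\tau L^2_{\xi,\mu}$ integral by the $X_H$-norm, which is an $\ell^1$-sum over modulation dyadic blocks $L$ of $L^{1/2}\Vert \eta_L(\tau-\omega)\widehat{g_0}\Vert_{L^2}$. Decomposing $\widehat{g_0}=\sum_L \eta_L(\tau-\omega(\xi,\mu))\widehat{g_0}$, applying Cauchy-Schwarz in $\tau$ on each block (whose $\tau$-support has measure $\lesssim L$), and summing yields
\begin{align*}
\int_\R \Vert\widehat{g_0}(\tau)\Vert_{L^2_{\xi,\mu}}\,d\tau \lesssim \sum_{L\in\mathbb{D}} L^{1/2}\big\Vert \eta_L(\tau-\omega)\widehat{g_0}\big\Vert_{L^2(\R^3)} = \Vert g_0\Vert_{X_H},
\end{align*}
which is precisely the $X_H$-norm \eqref{xh_norm}. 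Taking the supremum over $t_0$ gives $\Vert g\Vert_{L^\infty_t L^2}\lesssim \Vert g\Vert_{F_{H,\beta}}$, as desired.

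Reassembling, I would sum the squares of these localized bounds against the $H^s$ weights and use the definition \eqref{Fs_norm} of $\Vert\cdot\Vert_{F^s_\beta}$ together with the square-function characterization of $H^s$; taking the infimum over admissible extensions $\tilde f$ then recovers \eqref{basic_embedding}. The only genuine technical point, and the step I expect to require the most care, is the passage from the $L^1_\tau L^2_{xy}$ integral to the $\ell^1$-in-$L$ modulation sum: one must be careful that the modulation localization $\eta_L(\tau-\omega(\xi,\mu))$ is taken after the spatial Fourier transform, so that Cauchy-Schwarz in $\tau$ is applied at fixed $(\xi,\mu)$ and the measure of the effective $\tau$-support on each block is controlled by $L$, which is exactly what produces the gain $L^{1/2}$ matching the $X_H$ weight. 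Everything else is the routine interchange of norms and the standard reduction from $F^s_\beta(T)$ to its frequency-localized building blocks.
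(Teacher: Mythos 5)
Your proposal is correct and follows essentially the same route as the paper: extend $f$, Littlewood--Paley decompose, insert the temporal cutoff $\eta_0(H^\beta(\cdot-t_0))$ with $t_H=t_0$ (so the slice at $t_0$ is unchanged), Fourier-invert in time, and gain $L^{1/2}$ per modulation block by Cauchy--Schwarz in $\tau$ --- this last step is precisely the paper's estimate \eqref{l1l2_xh}. One small correction: the intermediate Minkowski passage to the $L^1_\tau L^2_{\xi,\mu}$ norm should be dropped, since after taking the $L^2_{\xi,\mu}$-norm inside, the effective $\tau$-support of a modulation block $\eta_L(\tau-\omega(\xi,\mu))$ is no longer of measure $\sim L$ uniformly (it can be of size $\sim H^{3/2}+L$); instead one bounds $\big\Vert\int_\R\vert\widehat{g_0}(\tau,\xi,\mu)\vert\,d\tau\big\Vert_{L^2_{\xi,\mu}}$ directly, applying Cauchy--Schwarz in $\tau$ at fixed $(\xi,\mu)$, exactly as your closing remark indicates.
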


\begin{proof}
In fact, let us consider $f\in F^s_\beta(T)$. Now, we choose an extension $\tilde{f}\in F^s_\beta$ such that \[
\tilde{f}\big\vert_{[-T,T]}=f \quad \hbox{ and } \quad \Vert \tilde{f}\Vert_{F^s_\beta}\leq 2\Vert f\Vert_{F^s_\beta(T)}.
\]
Then, it follows from the definition that \begin{align}\label{LP_decomposition_proof_lemma}
\Vert f(t,\cdot,\cdot)\Vert_{H^s}^2=\Vert \tilde{f}(t,\cdot,\cdot)\Vert_{H^s}^2\lesssim \sum_{H\in \mathbb{D}}H^{s}\Vert P_H\tilde{f}(t,\cdot,\cdot)\Vert_{L^2_{x,y}}^2.
\end{align}
Now, for $t\in[-T,T]$ and $H\in\mathbb{D}$ fixed, using the Fourier Inversion formula we have that \begin{align}\label{fourier_inversion}
\mathcal{F}_{x,y}(P_H\tilde{f})(t,\xi,\mu)=c\int_{\R^2}e^{it\tau} \mathcal{F}_{x,y}\left(\eta_0\big(H^\beta(\cdot-t)\big)P_H\tilde{f}\right) d\tau.
\end{align}
On the other hand, by using Cauchy-Schwarz inequality in $\tau$ along with the definition of $X_H$ we infer that \begin{align}\label{l1l2_xh}
\left\Vert \int_{\R}\vert f(\tau,\xi,\mu)\vert d\tau\right\Vert_{L^2_{\xi,\mu}}\lesssim \Vert f \Vert_{X_H},
\end{align}
for all $f\in X_H$. Therefore, gathering \eqref{fourier_inversion} and \eqref{l1l2_xh} we obtain that \[
\Vert P_H\tilde{f}(t,\cdot,\cdot)\Vert_{L^2_{x,y}}\lesssim \Vert P_H\tilde{f}\Vert_{F_{H,\beta}},
\]
for all $H\in\mathbb{D}$. Then, plugging the latter estimate into \eqref{LP_decomposition_proof_lemma} and then taking the supreme over $t\in[-T,T]$, we conclude the proof of the lemma.
\end{proof}

\begin{lem}\label{gro_characteristic_function_lemma}
Let $I\subset\R$ be a bounded interval and consider $H\in\mathbb{D}$ fixed. Then, the following inequality holds \begin{align}\label{property_space_ineq_sup}
\sup_{L\in\mathbb{D}}L^{1/2}\left\Vert \eta_L\big(\tau-\omega(\xi,\mu)\big)\mathcal{F}(\mathds{1}_I(t)f)\big)\right\Vert_{L^2}\lesssim \Vert \mathcal{F}(f)\Vert_{X_H},
\end{align}
for all $f$ such that $\mathcal{F}(f)\in X_H$.
\end{lem}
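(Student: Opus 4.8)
The starting point is to move from physical time to the modulation variable. Since $\mathds{1}_I$ depends only on $t$, taking the full space-time Fourier transform converts multiplication by $\mathds{1}_I(t)$ into convolution in $\tau$ with $\widehat{\mathds{1}_I}$, leaving $(\xi,\mu)$ untouched; in particular $\mathcal{F}(\mathds{1}_I f)=c\,\widehat{\mathds{1}_I}*_\tau\mathcal{F}(f)$ still has $(\xi,\mu)$-support in $I_H$. I would then decompose $\mathcal{F}(f)=\sum_{L'}f_{L'}$ with $f_{L'}=\eta_{L'}(\tau-\omega(\xi,\mu))\mathcal{F}(f)$, so that $\Vert\mathcal{F}(f)\Vert_{X_H}=\sum_{L'}L'^{1/2}\Vert f_{L'}\Vert_{L^2}$ and each $f_{L'}$ is supported, in $\tau$, in the annulus $|\tau-\omega(\xi,\mu)|\sim L'$.

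By Minkowski's inequality it then suffices to prove, for each dyadic output modulation $L$ and each $L'$, a bound of the form
\[
L^{1/2}\big\Vert \eta_L(\tau-\omega(\xi,\mu))\big(\widehat{\mathds{1}_I}*_\tau f_{L'}\big)\big\Vert_{L^2}\lesssim c(L,L')\,L'^{1/2}\Vert f_{L'}\Vert_{L^2},
\]
with $c(L,L')\lesssim 1$ uniformly, and then sum in $L'$ and take the supremum in $L$. The decisive structural point is the asymmetry between the two sides of \eqref{property_space_ineq_sup}: the left-hand side is a \emph{supremum} over $L$, whereas $\Vert\mathcal{F}(f)\Vert_{X_H}$ is a \emph{sum} over $L'$. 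Consequently no decay in $L'$ is required; mere uniform boundedness $c(L,L')\lesssim 1$ already gives, for every fixed $L$, the bound $\sum_{L'}c(L,L')L'^{1/2}\Vert f_{L'}\Vert_{L^2}\lesssim\Vert\mathcal{F}(f)\Vert_{X_H}$, and taking the supremum over $L$ preserves it.

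To obtain $c(L,L')\lesssim 1$ I would split into two regimes. When $L\lesssim L'$, I discard the cutoff $\eta_L$ and use the free bound from Plancherel: multiplication by $\mathds{1}_I(t)$ has operator norm at most $1$ on $L^2(\R^3)$, hence convolution by $\widehat{\mathds{1}_I}$ is bounded by $1$ on $L^2$, so $\Vert\widehat{\mathds{1}_I}*_\tau f_{L'}\Vert_{L^2}\le\Vert f_{L'}\Vert_{L^2}$ and the prefactor $L^{1/2}\lesssim L'^{1/2}$ closes the estimate. When $L\gg L'$, the output frequency $\tau$ (at distance $\sim L$ from $\omega$) and the input frequency $\tau'$ (at distance $\sim L'\ll L$) force $|\tau-\tau'|\sim L$, so I invoke the pointwise decay $|\widehat{\mathds{1}_I}(\tau-\tau')|\lesssim|\tau-\tau'|^{-1}\sim L^{-1}$. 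Bounding the convolution pointwise by $L^{-1}\Vert f_{L'}\Vert_{L^1_\tau}\lesssim L^{-1}L'^{1/2}\Vert f_{L'}\Vert_{L^2_\tau}$ (Cauchy--Schwarz on the length-$\sim L'$ support) and taking the $L^2$ norm over the length-$\sim L$ output annulus yields $L^{1/2}\Vert\cdots\Vert_{L^2}\lesssim L'^{1/2}\Vert f_{L'}\Vert_{L^2}$, i.e. $c(L,L')\lesssim 1$ once more.

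The one genuine subtlety, and the step I would watch most carefully, is that $\widehat{\mathds{1}_I}\notin L^1(\R)$ (it decays only like $|\tau|^{-1}$), so a direct Young-inequality estimate for convolution by $\widehat{\mathds{1}_I}$ on $X_H$ is unavailable; this is exactly why the proof must play the sup-over-$L$ versus sum-over-$L'$ asymmetry against the two complementary regimes above. I would also treat the lowest piece $L'=1$ (the $\eta_0$ cutoff handling $|\tau-\omega|\lesssim 1$) separately, noting the same two estimates apply verbatim, and record that all constants are uniform in $I$, since only $\Vert\mathds{1}_I\Vert_{L^2\to L^2}\le 1$ and the universal decay $|\widehat{\mathds{1}_I}(\tau)|\lesssim|\tau|^{-1}$ are used.
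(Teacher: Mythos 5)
Your proof is correct and follows essentially the same route as the paper's: a dyadic decomposition in the input modulation, Plancherel (boundedness of multiplication by $\mathds{1}_I$ on $L^2$) for the regime $L\lesssim L'$, and the kernel decay $\vert\widehat{\mathds{1}_I}(\tau)\vert\lesssim\vert\tau\vert^{-1}$ combined with $\vert\tau-\tau'\vert\sim L$ and Cauchy--Schwarz for $L\gg L'$, with the sup-versus-sum asymmetry allowing merely uniform constants $c(L,L')\lesssim 1$. Your explicit remarks on why Young's inequality is unavailable and on the uniformity in $I$ are accurate refinements of the same argument.
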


\begin{proof}
Let $L\in\mathbb{D}$ be fixed. First, by frequency decomposition we write \[
f=\sum_{\mathcal{L}\in\mathbb{D}}\mathcal{F}^{-1}\big(\eta_\mathcal{L}(\tau-\omega(\xi,\mu))\mathcal{F}(f)\big)=:\sum_{\mathcal{L}\in\mathbb{D}}f_\mathcal{L}.
\]
Then, plugging the above decomposition into the left-hand side of \eqref{property_space_ineq_sup} we obtain that \[
L^{1/2}\left\Vert \eta_L\big(\tau-\omega(\xi,\mu)\big)\mathcal{F}(\mathds{1}_I(t)f)\right\Vert_{L^2}\lesssim L^{1/2}\sum_{\mathcal{L}\in\mathbb{D}}\left\Vert \eta_L\big(\tau-\omega(\xi,\mu)\big)\mathcal{F}(\mathds{1}_I(t)f_\mathcal{L})\right\Vert_{L^2}.
\]
Now we split the sum over $\mathcal{L}\in\mathbb{D}$ into two sums, one over $\{\mathcal{L}\in\mathbb{D}: \, \mathcal{L}\gtrsim L\}$ and the remaining one over $\{\mathcal{L}\in\mathbb{D}:\, \mathcal{L}\ll L\}$. For the former, we observe that by using Plancharel identity we immediately obtain that \[
L^{1/2}\sum_{\mathcal{L}\gtrsim L}\left\Vert \eta_L\big(\tau-\omega(\xi,\mu)\big)\mathcal{F}(\mathds{1}_I(t)f_\mathcal{L})\right\Vert_{L^2}\lesssim \sum_{\mathcal{L}\gtrsim L}\mathcal{L}^{1/2}\left\Vert \eta_\mathcal{L}\big(\tau-\omega(\xi,\mu)\big)\mathcal{F}(f)\right\Vert_{L^2}.
\]
On the other hand, for the latter, since $I\subset\R$ is a bounded interval, it is not difficult to see that $\big\vert\mathcal{F}_t(\mathds{1}_I)(\tau)\big\vert\lesssim 1/\vert\tau\vert$. Thus, expanding the Fourier Transform and then applying Cauchy-Schwarz inequality along with the previous observation we infer that \begin{align*}
&L^{1/2}\sum_{\mathcal{L}\ll L}\left\Vert \eta_L\big(\tau-\omega(\xi,\mu)\big)\mathcal{F}(\mathds{1}_I(t)f_\mathcal{L})\right\Vert_{L^2}
\\ & \qquad \qquad \lesssim L^{1/2} \sum_{\mathcal{L}\ll L} \Big\Vert \eta_L\big(\tau-\omega(\xi,\mu)\big)\int_\R\big\vert \mathcal{F}(f)(\tau',\xi,\mu)\big\vert\cdot\dfrac{\eta_\mathcal{L}(\tau'-\omega(\xi,\mu))}{\vert \tau-\tau'\vert}d\tau'\Big\Vert_{L^2_{\tau,\xi,\mu}}
\\ & \qquad \qquad \lesssim \sum_{\mathcal{L}\ll L}\mathcal{L}^{1/2}\Vert \eta_\mathcal{L}\big(\tau-\omega(\xi,\mu)\big)\mathcal{F}(f)\Vert_{L^2},
\end{align*}
where we have used that, due to the presence of $\eta_L\cdot \eta_\mathcal{L}$ and since in this case we have that $\mathcal{L}\ll L$, then it holds \[
\vert \tau-\tau'\vert\leq\vert \tau-\omega(\xi,\mu)\vert+\vert\tau'-\omega(\xi,\mu)\vert\sim L.
\]
Therefore, gathering the above estimates and taking the supreme in $L$, we conclude the proof of the lemma.
\end{proof}

The following technical lemma provides a crucial property of the $X_H$ spaces. 

\begin{lem}\label{technical_estimate_lemma}
Let $L,H\in\mathbb{D}$ be fixed. Then, for all $f\in X_H$ the following inequalities hold \begin{align}
L^{\beta/2}\Big\Vert \eta_{\leq \lfloor L^\beta\rfloor}(\tau-\omega(\xi,\mu))\int_\R \vert f(\tau',\xi,\mu)\vert\cdot L^{-\beta}\big(1+L^{-\beta}\vert \tau-\tau'\vert\big)^{-4}d\tau' \Big\Vert_{L^2}&\lesssim \Vert f\Vert_{X_H},\label{technical_estimate_low}
\\ 
\sum_{\ell\in\mathbb{D}:\, \ell\geq L^\beta}\ell^{1/2}\Big\Vert \eta_{\ell}(\tau-\omega(\xi,\mu))\int_\R \vert f(\tau',\xi,\mu)\vert\cdot L^{-\beta}\big(1+L^{-\beta}\vert \tau-\tau'\vert\big)^{-4}d\tau' \Big\Vert_{L^2}&\lesssim \Vert f\Vert_{X_H},\label{technical_estimate_high}
\end{align} 
where $\lfloor L^\beta\rfloor$ denotes the largest dyadic number $d\in\mathbb{D}$ satisfying that $d\leq L^\beta$.
\end{lem}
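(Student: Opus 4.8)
The plan is to reduce everything to a one-dimensional convolution estimate in the modulation variable and then exploit the polynomial decay of the kernel $K_L(\tau):=L^{-\beta}(1+L^{-\beta}|\tau|)^{-4}$. First I would decompose $f$ into modulation pieces, writing $f=\sum_{\mathcal{L}\in\mathbb{D}}f_\mathcal{L}$ with $f_\mathcal{L}:=\eta_\mathcal{L}(\tau-\omega(\xi,\mu))f$, so that by the very definition \eqref{xh_norm} one has $\Vert f\Vert_{X_H}=\sum_{\mathcal{L}}\mathcal{L}^{1/2}\Vert f_\mathcal{L}\Vert_{L^2}$ and $f_\mathcal{L}$ is supported in $\{|\tau-\omega(\xi,\mu)|\sim\mathcal{L}\}$. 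Since $K_L\geq 0$, the triangle inequality gives the pointwise bound $|f|\leq\sum_\mathcal{L}|f_\mathcal{L}|$, and so it suffices to prove both inequalities with $f$ replaced by a single piece $f_\mathcal{L}$ and the right-hand side replaced by $\mathcal{L}^{1/2}\Vert f_\mathcal{L}\Vert_{L^2}$, summing over $\mathcal{L}$ at the end. Note also that $\omega$ depends only on $(\xi,\mu)$, so for each fixed $(\xi,\mu)$ the map $f_\mathcal{L}\mapsto\int|f_\mathcal{L}(\tau',\xi,\mu)|K_L(\tau-\tau')\,d\tau'=:h_\mathcal{L}$ is a convolution in the modulation variable $\tau-\omega$, with $\Vert K_L\Vert_{L^1}\sim 1$; hence all estimates can be carried out pointwise in $(\xi,\mu)$ and integrated at the end.

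The heart of the argument is a pair of pointwise bounds for $h_\mathcal{L}$ on each dyadic shell $\{|\tau-\omega|\sim\ell\}$. When $\ell\lesssim\mathcal{L}$, I would simply use Young's inequality, $\Vert h_\mathcal{L}(\cdot,\xi,\mu)\Vert_{L^2_\tau}\lesssim\Vert f_\mathcal{L}(\cdot,\xi,\mu)\Vert_{L^2_\tau}$. When $\ell\gg\mathcal{L}$, the supports force $|\tau-\tau'|\sim\ell$, so $K_L(\tau-\tau')\lesssim L^{-\beta}(1+L^{-\beta}\ell)^{-4}$ is essentially constant in $\tau'$; combined with the Cauchy--Schwarz bound $\int|f_\mathcal{L}(\tau',\xi,\mu)|\,d\tau'\lesssim\mathcal{L}^{1/2}\Vert f_\mathcal{L}(\cdot,\xi,\mu)\Vert_{L^2_\tau}$ (the support has $\tau'$-measure $\lesssim\mathcal{L}$) and the fact that the shell has $\tau$-measure $\sim\ell$, this yields $\Vert\eta_\ell(\tau-\omega)h_\mathcal{L}(\cdot,\xi,\mu)\Vert_{L^2_\tau}\lesssim\ell^{1/2}L^{-\beta}(1+L^{-\beta}\ell)^{-4}\mathcal{L}^{1/2}\Vert f_\mathcal{L}(\cdot,\xi,\mu)\Vert_{L^2_\tau}$. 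All constants are uniform in $(\xi,\mu)$, so the same inequalities hold with $L^2_\tau$ replaced by $L^2_{\tau,\xi,\mu}$.

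For the high-modulation estimate \eqref{technical_estimate_high} I would split the sum over $\ell\geq L^\beta$ according to whether $\ell\lesssim\mathcal{L}$ or $\ell\gg\mathcal{L}$. In the first range the Young bound gives a geometric sum $\sum_{L^\beta\leq\ell\lesssim\mathcal{L}}\ell^{1/2}\Vert f_\mathcal{L}\Vert$, dominated by its largest term $\sim\mathcal{L}^{1/2}\Vert f_\mathcal{L}\Vert$. In the second range the decay bound produces $\mathcal{L}^{1/2}\Vert f_\mathcal{L}\Vert\sum_{\ell\gg\mathcal{L}}\ell\,L^{-\beta}(1+L^{-\beta}\ell)^{-4}$, and the substitution $\ell=L^\beta 2^j$ turns this into $\sum_{j\geq0}2^{-3j}\sim 1$. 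Summing over $\mathcal{L}$ recovers $\Vert f\Vert_{X_H}$.

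The delicate point, and the main obstacle, is the low-modulation estimate \eqref{technical_estimate_low}, where Young's inequality alone is too lossy: for $\mathcal{L}\ll L^\beta$ it would only give $L^{\beta/2}\Vert f_\mathcal{L}\Vert$, far larger than the desired $\mathcal{L}^{1/2}\Vert f_\mathcal{L}\Vert$. To remedy this I would compute the $L^2$ norm of $h_\mathcal{L}$ on $\{|\tau-\omega|\lesssim L^\beta\}$ shell by shell. For $\mathcal{L}\lesssim L^\beta$, on every shell $\{|\tau-\omega|\sim r\}$ with $\mathcal{L}\lesssim r\lesssim L^\beta$ one has $K_L\sim L^{-\beta}$, so the pointwise size of $h_\mathcal{L}$ is $\lesssim L^{-\beta}\mathcal{L}^{1/2}\Vert f_\mathcal{L}\Vert$; summing the shell contributions (dominated by $r\sim L^\beta$) gives $\Vert\eta_{\leq\lfloor L^\beta\rfloor}(\tau-\omega)h_\mathcal{L}\Vert_{L^2}\lesssim L^{-\beta/2}\mathcal{L}^{1/2}\Vert f_\mathcal{L}\Vert$, so that the prefactor $L^{\beta/2}$ is exactly absorbed. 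For $\mathcal{L}\gg L^\beta$ the supports again force $|\tau-\tau'|\sim\mathcal{L}$, the kernel contributes $L^{3\beta}\mathcal{L}^{-4}$, and one gains a factor $(L^\beta/\mathcal{L})^4\leq 1$, which more than suffices. Summing over $\mathcal{L}$ then closes the estimate. The key structural feature making all of this work is that the fourth-power decay of $K_L$ beats both the $\tau$-measure of the shells and the dyadic weights precisely at the borderline scale $\ell\sim L^\beta$, where the naive Young bound fails.
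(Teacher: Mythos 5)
Your proof is correct: the reduction to a single modulation piece $f_\mathcal{L}$ is legitimate since the kernel is nonnegative and $\Vert f\Vert_{X_H}=\sum_\mathcal{L}\mathcal{L}^{1/2}\Vert f_\mathcal{L}\Vert_{L^2}$ exactly, and your two-regime bounds (Young for $\ell\lesssim\mathcal{L}$; support separation forcing $|\tau-\tau'|\sim\ell$ plus kernel decay for $\ell\gg\mathcal{L}$) combine with the geometric summations exactly as claimed. The route does differ from the paper's, most substantially in the high-modulation estimate \eqref{technical_estimate_high}: the paper first commutes the cutoff through the convolution via the Mean Value Theorem bound $\vert\eta_\ell(\tau-\omega)-\eta_\ell(\tau'-\omega)\vert\lesssim\ell^{-1}\vert\tau-\tau'\vert$, producing a main term $\mathrm{J}_1$ estimated by Young's inequality with no loss in $\ell$, plus an error $\mathrm{J}_2$ handled by weighted Cauchy--Schwarz; you avoid this commutator device entirely and instead absorb the mismatch between $\ell$ and $\mathcal{L}$ by summing the geometric series $\sum_{\ell\lesssim\mathcal{L}}\ell^{1/2}\sim\mathcal{L}^{1/2}$. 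Your version is more elementary and makes transparent that the $\ell^1$-Besov structure with weight $\ell^{1/2}$ tolerates a crude per-shell Young bound; the paper's commutator trick is sharper shell-by-shell and would survive in situations where the modulation weights are not geometrically summable. For the low-modulation estimate \eqref{technical_estimate_low} you are essentially doing what the paper does — Cauchy--Schwarz on the $\mathcal{L}$-shell, i.e. $\int\vert f_\mathcal{L}\vert d\tau'\lesssim\mathcal{L}^{1/2}\Vert f_\mathcal{L}\Vert_{L^2_{\tau'}}$, which the paper encodes through the weight $\langle\tau'-\omega\rangle^{-1/2}$, together with the measure $\sim L^\beta$ of the low region — except the paper handles all $\mathcal{L}$ at once by bounding the kernel trivially, $(1+L^{-\beta}\vert\tau-\tau'\vert)^{-4}\leq1$. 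Indeed your case split at $\mathcal{L}\sim L^\beta$ is superfluous there: the trivial bound $K_L\leq L^{-\beta}$ combined with your own Cauchy--Schwarz already yields $L^{\beta/2}\cdot L^{\beta/2}\cdot L^{-\beta}\cdot\mathcal{L}^{1/2}\Vert f_\mathcal{L}\Vert_{L^2}=\mathcal{L}^{1/2}\Vert f_\mathcal{L}\Vert_{L^2}$ uniformly in $\mathcal{L}$, so your extra gain $(L^\beta/\mathcal{L})^4$ is gratuitous; the fourth-power decay of the kernel is genuinely needed only in the regime $\ell\gg\max\{\mathcal{L},L^\beta\}$ of \eqref{technical_estimate_high}, where both proofs use it in the same way.
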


\begin{proof}
First we seek to prove \eqref{technical_estimate_low}. In fact, by using Cauchy-Schwarz inequality we see that
\begin{align*}
&\mathrm{I}:=L^{\beta/2}\Big\Vert \eta_{\leq \lfloor L ^\beta\rfloor}(\tau-\omega(\xi,\mu))\int_\R \vert f(\tau',\xi,\mu)\vert\cdot L^{-\beta}\big(1+L^{-\beta}\vert \tau-\tau'\vert\big)^{-4}d\tau' \Big\Vert_{L^2_{\tau,\xi,\mu}}
\\ &\,\, = L^{-\beta/2}\Big\Vert \eta_{\leq \lfloor L ^\beta\rfloor}(\tau-\omega(\xi,\mu))\sum_{\mathsf{L}\in\mathbb{D}}\int_\R \eta_{\mathsf{L}}(\tau'-\omega(\xi,\mu))\vert f(\tau',\xi,\mu)\vert \big(1+L^{-\beta}\vert \tau-\tau'\vert\big)^{-4}d\tau' \Big\Vert_{L^2}
\\ & \leq L^{-\beta/2}\Big\Vert \eta_{\leq \lfloor L ^\beta\rfloor}(\tau-\omega(\xi,\mu))\sum_{\mathsf{L}\in\mathbb{D}}\mathrm{I}_{\mathsf{L}} \cdot\mathsf{L}^{1/2} \big\Vert \eta_{\mathsf{L}}(\tau'-\omega(\xi,\mu)) f(\tau',\xi,\mu)\big\Vert_{L^2_{\tau'}} \Big\Vert_{L^2_{\tau,\xi,\mu}},
\end{align*}
where $\mathrm{I}_{\mathsf{L}}$ is defined as \[
\mathrm{I}_{\mathsf{L}}:=\big\Vert \widetilde{\eta}_{\mathsf{L}}(\tau'-\omega(\xi,\mu))\big(1+L^{-\beta}\vert\tau-\tau'\vert\big)^{-4}\langle\tau'-\omega(\xi,\mu)\rangle^{-1/2}\big\Vert_{L^2_{\tau'}}.
\]
Now notice that, bounding $(1+L^{-\beta}\vert \tau-\tau'\vert)^{-4}\leq1$, we trivially get that \[
\mathrm{I}_{\mathsf{L}}\lesssim \mathsf{L}^{1/2} \mathsf{L}^{-1/2}\lesssim 1,
\]
which, plugging into the former estimate, and then recalling the definition of $X_H$, concludes the proof of \eqref{technical_estimate_low}.

\medskip

We now seek to prove \eqref{technical_estimate_high}.
In fact, first of all notice that by the Mean Value Theorem we have that \[
\big\vert \eta_\ell (\tau-\omega(\xi,\mu))-\eta_\ell(\tau'-\omega(\xi,\mu))\big\vert\lesssim \ell^{-1}\vert \tau-\tau'\vert.
\]
Then, plugging the latter estimate into the left-hand side of \eqref{technical_estimate_high} we infer that \begin{align*}
\sum_{\ell\in\mathbb{D}:\, \ell\geq L^\beta}\ell^{1/2}\Big\Vert \eta_{\ell}(\tau-\omega(\xi,\mu))\int_\R \vert f(\tau',\xi,\mu)\vert\cdot L^{-\beta}\big(1+L^{-\beta}\vert \tau-\tau'\vert\big)^{-4}d\tau' \Big\Vert_{L^2}\leq \mathrm{J}_1+\mathrm{J}_2,
\end{align*}
where \begin{align*}
\mathrm{J}_1&:= \sum_{\ell\in\mathbb{D}:\, \ell\geq L^\beta}\ell^{1/2}\left\Vert\Big(\big[\eta_\ell(\cdot-\omega(\xi,\mu))f(\cdot,\xi,\mu)\big]*\big[L^{-\beta}(1+L^{-\beta}\vert\cdot\vert)^{-4}\big]\Big)(\tau)\right\Vert_{L^2_{\tau,\xi,\mu}},
\\ \mathrm{J}_2&:=\sum_{\ell\in\mathbb{D}:\, \ell\geq L^\beta}\ell^{-1/2}\left\Vert \widetilde{\eta}_\ell(\tau-\omega(\xi,\mu))\int_\R\vert f(\tau',\xi,\mu)\vert  L^{-\beta}\vert \tau-\tau'\vert\big(1+L^{-\beta}\vert\tau-\tau'\vert\big)^{-4}d\tau'\right\Vert_{L^2_{\tau,\xi,\mu}}.
\end{align*}
Now, on the one-hand, notice that by basic computations we have $L^{-\beta}\Vert (1+L^{-\beta}\vert \cdot\vert )^{-4}\Vert_{L^1}\lesssim 1$. Thus, applying Young convolution inequality (in $\tau$) we obtain \[
\mathrm{J}_1\lesssim \sum_{\ell\in\mathbb{D}:\,\ell\geq L^\beta} \ell^{1/2}\Vert \eta_\ell(\tau-\omega(\xi,\mu))f(\tau,\xi,\mu)\Vert_{L^2}\leq \Vert f\Vert_{X_H}.
\]
On the other hand, to deal with $\mathrm{J}_2$ we can proceed similarly as we did for \eqref{technical_estimate_low}. In fact, decomposing into frequencies and then using Cauchy-Schwarz inequality we obtain that \[
\mathrm{J}_2\lesssim  L^{-\beta}\sum_{\ell\in\mathbb{D}:\,\ell\geq L^\beta}\ell^{-1/2}\Big\Vert \widetilde{\eta}_{\ell}(\tau-\omega(\xi,\mu))\sum_{\mathsf{L}\in\mathbb{D}}\mathrm{J}_{\mathsf{L}} \cdot\mathsf{L}^{1/2} \big\Vert \eta_{\mathsf{L}}(\tau'-\omega(\xi,\mu)) f(\tau',\xi,\mu)\big\Vert_{L^2_{\tau'}} \Big\Vert_{L^2_{\tau,\xi,\mu}},
\]
where in this case $\mathrm{J}_\mathsf{L}$ is given by \[
\mathrm{J}_\mathsf{L}(\tau,\xi,\mu):=\big\Vert \widetilde{\eta}_\mathsf{L}(\tau'-\omega(\xi,\mu))\vert \tau-\tau'\vert (1+L^{-\beta}\vert\tau-\tau'\vert)^{-4}\langle \tau'-\omega(\xi,\mu)\rangle^{-1/2}\big\Vert_{L^2_{\tau'}}.
\]
Now we split the analysis into two cases. First, assume that $\ell\lesssim \mathsf{L}$. Then, direct computations yield us to \[
\mathrm{J}_\mathsf{L}\lesssim \ell^{-1/2}\big\Vert \vert \cdot\vert (1+L^{-\beta}\vert \cdot\vert)^{-4}\big\Vert_{L^2}\lesssim\ell^{-1/2}L^{3\beta/2}.
\]
On the other hand, due to frequency localization, on the region $\ell \gg \mathsf{L}$ we have that $\vert \tau-\tau'\vert\sim \ell$. Hence, we have that \[
\mathrm{J}_\mathsf{L}\lesssim \ell(1+L^{-\beta}\ell)^{-4}\Vert \widetilde{\eta}_\mathsf{L}(\tau'-\omega(\xi,\mu))\langle \tau'-\omega(\xi,\mu)\rangle^{-1/2}\Vert_{L^2}\lesssim \ell^{-3} L ^{4\beta}.
\]
Plugging both estimates of $\mathrm{J}_\mathsf{L}$ for each of the previous cases into $\mathrm{J}_2$, we conclude that
\[
\mathrm{J}_2\lesssim \sum_{\mathsf{L}\in\mathbb{D}}\mathsf{L}^{1/2}\Vert \eta_\mathsf{L}(\tau-\omega(\xi,\mu))f(\tau,\xi,\mu)\Vert_{L^2}=\Vert f\Vert_{X_H}.
\]
The proof is complete.
\end{proof}

As a consequence of the previous lemma we derive several important properties. 
\begin{cor}\label{gro_gamma}
Let $\beta\geq0$, $H\in\mathbb{D}$, $t_H\in\R$ and consider $\gamma\in\mathcal{S}(\R)$. The, the following holds \begin{align}\label{gro_gamma_estimate}
\big\Vert \mathcal{F}\big(\gamma(H^\beta(\cdot-t_H))f\big)\Vert_{X_H}&\lesssim \Vert \mathcal{F}(f)\Vert_{X_H},
\end{align}
for all $f$ such that $\mathcal{F}(f)\in X_H$.
\end{cor}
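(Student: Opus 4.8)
The plan is to identify the time-multiplier $\gamma(H^\beta(\cdot-t_H))$ with convolution in $\tau$ against a kernel of exactly the shape appearing in Lemma \ref{technical_estimate_lemma}, and then to invoke that lemma with its parameter $L$ set equal to $H$. First I would pass to the Fourier side in time. Since $\gamma(H^\beta(\cdot-t_H))$ depends only on $t$, multiplication by it commutes with $\mathcal{F}_{x,y}$ and becomes convolution in $\tau$ of the respective time-Fourier transforms. A direct computation gives $\mathcal{F}_t[\gamma(H^\beta(\cdot-t_H))](\sigma)=cH^{-\beta}e^{-it_H\sigma}\hat\gamma(H^{-\beta}\sigma)$, whence
\[
\mathcal{F}\big(\gamma(H^\beta(\cdot-t_H))f\big)(\tau,\xi,\mu)=c\int_\R H^{-\beta}e^{-it_H(\tau-\tau')}\hat\gamma\big(H^{-\beta}(\tau-\tau')\big)\,\mathcal{F}(f)(\tau',\xi,\mu)\,d\tau'.
\]
The crucial observation is that the translation produces only the unimodular phase $e^{-it_H(\tau-\tau')}$, which disappears upon taking absolute values; this is exactly why the resulting bound will be uniform in $t_H$. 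Since $\gamma\in\mathcal{S}(\R)$ we have $\hat\gamma\in\mathcal{S}(\R)$, so $|\hat\gamma(H^{-\beta}\sigma)|\lesssim(1+H^{-\beta}|\sigma|)^{-4}$, giving the pointwise bound
\[
\big|\mathcal{F}\big(\gamma(H^\beta(\cdot-t_H))f\big)(\tau,\xi,\mu)\big|\lesssim\int_\R|\mathcal{F}(f)(\tau',\xi,\mu)|\,H^{-\beta}\big(1+H^{-\beta}|\tau-\tau'|\big)^{-4}\,d\tau'.
\]
Note also that multiplying by a function of $t$ alone leaves the $(\xi,\mu)$-support unchanged, so the left-hand side still lives on $\R\times I_H$ and genuinely belongs to $X_H$ once we bound its norm.

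Next I would insert this pointwise bound into the definition \eqref{xh_norm} of the $X_H$-norm and split the dyadic sum over the modulation variable $L$ at the threshold $\lfloor H^\beta\rfloor$, into a low part $L\leq\lfloor H^\beta\rfloor$ and a high part $L\geq H^\beta$ (these cover every dyadic scale, overlapping by at most one). Because the Littlewood--Paley weights $\eta_L$ and the convolution kernel are nonnegative, inside each $L^2$-norm I may replace $|\mathcal{F}(\gamma(\cdots)f)|$ by the convolution integral above. For the high-modulation part, estimate \eqref{technical_estimate_high}, read with its $L$ equal to our $H$ and its summation index $\ell$ equal to $L$, applies verbatim and yields $\lesssim\Vert\mathcal{F}(f)\Vert_{X_H}$.

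For the low-modulation part I would first bound $L^{1/2}\leq H^{\beta/2}$ and then collapse the sum: using that the $\eta_L(\tau-\omega(\xi,\mu))$ are finitely overlapping in the modulation variable (so that $\sum_{L\leq\lfloor H^\beta\rfloor}\Vert\eta_L(\tau-\omega(\xi,\mu))(\cdots)\Vert_{L^2}^2\lesssim\Vert\eta_{\leq\lfloor H^\beta\rfloor}(\tau-\omega(\xi,\mu))(\cdots)\Vert_{L^2}^2$) together with $\sum_{L\leq\lfloor H^\beta\rfloor}L\lesssim H^\beta$ and Cauchy--Schwarz in the dyadic index, the sum is controlled by $H^{\beta/2}\Vert\eta_{\leq\lfloor H^\beta\rfloor}(\tau-\omega(\xi,\mu))(\cdots)\Vert_{L^2}$; estimate \eqref{technical_estimate_low} then gives the desired $\lesssim\Vert\mathcal{F}(f)\Vert_{X_H}$. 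Summing the two contributions concludes the proof.

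I do not expect a genuine obstacle here: essentially all the work has already been absorbed into Lemma \ref{technical_estimate_lemma}, and what remains is the identification of the kernel in the first step and the matching of the two modulation regimes to the two parts of that lemma. The only points requiring a little care are the bookkeeping in the low-modulation regime---checking that the geometric sum of $L^{1/2}$ is dominated by the top scale $H^{\beta/2}$ and that the nearly orthogonal pieces recombine into $\eta_{\leq\lfloor H^\beta\rfloor}$ without loss---and observing that the uniformity in $t_H$ is automatic, since $t_H$ enters only through the phase $e^{-it_H(\tau-\tau')}$.
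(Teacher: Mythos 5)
Your proposal is correct and is essentially the paper's own proof: both rewrite the time multiplication as a $\tau$-convolution against the kernel $cH^{-\beta}e^{-it_H\sigma}\hat\gamma(H^{-\beta}\sigma)$, use the Schwartz decay of $\hat\gamma$ to get the pointwise bound by $\int|\mathcal{F}(f)(\tau',\xi,\mu)|\,H^{-\beta}(1+H^{-\beta}|\tau-\tau'|)^{-4}d\tau'$ (noting the phase from $t_H$ disappears, giving uniformity), and then conclude from estimates \eqref{technical_estimate_low}--\eqref{technical_estimate_high} of Lemma \ref{technical_estimate_lemma} applied with its parameter $L$ set to $H$. Your extra bookkeeping in the low-modulation regime (collapsing $\sum_{L\leq\lfloor H^\beta\rfloor}L^{1/2}\Vert\eta_L(\cdot)\Vert_{L^2}$ into $H^{\beta/2}\Vert\eta_{\leq\lfloor H^\beta\rfloor}(\cdot)\Vert_{L^2}$) merely spells out a step the paper leaves implicit.
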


\begin{proof}
In fact, it is enough to notice that, by standard properties of the Fourier transform along with the fact that $\mathcal{F}(\gamma)\in\mathcal{S}(\R^3)$, we have that \begin{align*}
\big\vert\mathcal{F}\big[\gamma(H^\beta(\cdot-t_H))f\big](\tau,\xi,\mu)\big\vert&=\big\vert \big(\mathcal{F}[f](\cdot,\xi,\mu)*[e^{-it_H(\cdot)}H^{-\beta}\mathcal{F}[\gamma](H^{-\beta}\cdot)]\big)(\tau)\big\vert 
\\ & \leq \int_\R \vert \mathcal{F}(f)(\tau',\xi,\mu)\vert \cdot H^{-\beta}(1+H^{-\beta}\vert\tau-\tau'\vert)^{-4}d\tau'.
\end{align*}
Therefore, taking the $X_H$-norm on both sides of the previous inequality, and then using \eqref{technical_estimate_low}-\eqref{technical_estimate_high}  we conclude the proof the corollary.
\end{proof}

\begin{cor}\label{lem_tau_w_ihb_xH}
Let $\beta\geq0$, $H\in\mathbb{D}$, $t_H\in\R$ and consider $\gamma\in\mathcal{S}(\R)$. Then, it holds that \begin{align}\label{tau_w_ihb_xH}
\big\Vert (\tau-\omega(\xi,\mu)+iH^\beta)^{ -1}\mathcal{F}\big(\gamma(H^\beta(\cdot-t_H))f\big)\big\Vert_{X_H}&\lesssim \big\Vert (\tau-\omega(\xi,\mu)+iH^\beta)^{-1}\mathcal{F}(f)\big\Vert_{X_H},
\end{align}
for all $f$ such that $\mathcal{F}(f)\in X_H$.
\end{cor}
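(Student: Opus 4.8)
The plan is to follow the same route as the proof of Corollary \ref{gro_gamma}, reducing the estimate to a convolution bound to which Lemma \ref{technical_estimate_lemma} applies, but now carrying the multiplier $w(\tau,\xi,\mu):=(\tau-\omega(\xi,\mu)+iH^\beta)^{-1}$ through the argument. Writing $g:=w\,\mathcal{F}(f)$, the right-hand side of \eqref{tau_w_ihb_xH} is exactly $\Vert g\Vert_{X_H}$, and since $|w|\leq H^{-\beta}$ we have $g\in X_H$ whenever $\mathcal{F}(f)\in X_H$; moreover $g$ inherits the support condition $\mathrm{supp}\,g\subset\R\times I_H$. First I would record the pointwise bound already obtained in the proof of Corollary \ref{gro_gamma}, which, because $\gamma\in\mathcal{S}(\R)$ so that $\mathcal{F}(\gamma)$ decays faster than any polynomial, may be taken with an arbitrarily large power; I would use
\[
\big|\mathcal{F}\big(\gamma(H^\beta(\cdot-t_H))f\big)(\tau,\xi,\mu)\big|\lesssim \int_\R |\mathcal{F}(f)(\tau',\xi,\mu)|\, H^{-\beta}\big(1+H^{-\beta}|\tau-\tau'|\big)^{-5}d\tau'.
\]

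Next I would multiply both sides by $|w(\tau,\xi,\mu)|$ and replace $|\mathcal{F}(f)(\tau')|$ by $|\tau'-\omega(\xi,\mu)+iH^\beta|\,|g(\tau',\xi,\mu)|$, using the definition $\mathcal{F}(f)=(\tau'-\omega+iH^\beta)g$. The decisive step is then to control the weight ratio
\[
\frac{|\tau'-\omega(\xi,\mu)+iH^\beta|}{|\tau-\omega(\xi,\mu)+iH^\beta|}\sim\frac{|\tau'-\omega(\xi,\mu)|+H^\beta}{|\tau-\omega(\xi,\mu)|+H^\beta}\lesssim 1+H^{-\beta}|\tau-\tau'|,
\]
where the last inequality follows from the triangle inequality $|\tau'-\omega|\leq|\tau-\omega|+|\tau-\tau'|$ together with $|\tau-\omega|+H^\beta\geq H^\beta$. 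Inserting this bound absorbs one factor of $(1+H^{-\beta}|\tau-\tau'|)$, turning the $-5$ kernel into a $-4$ kernel and yielding the pointwise domination
\[
\big|w(\tau)\,\mathcal{F}\big(\gamma(H^\beta(\cdot-t_H))f\big)(\tau,\xi,\mu)\big|\lesssim \int_\R |g(\tau',\xi,\mu)|\,H^{-\beta}\big(1+H^{-\beta}|\tau-\tau'|\big)^{-4}d\tau'.
\]

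Finally, since the $X_H$-norm is monotone under pointwise domination of absolute values (the cutoffs $\eta_L(\tau-\omega)$ being nonnegative and depending only on $(\tau,\xi,\mu)$), I would take $\Vert\cdot\Vert_{X_H}$ of both sides and apply Lemma \ref{technical_estimate_lemma} with $L=H$ to the function $g$: estimates \eqref{technical_estimate_low} and \eqref{technical_estimate_high} bound the low- and high-modulation contributions of the convolution respectively, and together give $\Vert w\,\mathcal{F}(\gamma(H^\beta(\cdot-t_H))f)\Vert_{X_H}\lesssim\Vert g\Vert_{X_H}=\Vert w\,\mathcal{F}(f)\Vert_{X_H}$, which is precisely \eqref{tau_w_ihb_xH}. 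I expect the only genuine obstacle to be the weight-ratio estimate in the middle step: the whole argument hinges on showing that shifting the modulation weight from $\tau'$ to $\tau$ costs only a factor polynomial in $H^{-\beta}|\tau-\tau'|$, which is exactly what the regularizing parameter $H^\beta$ appearing in $w$ guarantees and which is then reabsorbed by the rapid decay of the Schwartz kernel.
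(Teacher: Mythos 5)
Your proposal is correct and follows essentially the same route as the paper: both reduce \eqref{tau_w_ihb_xH} to the Schwartz-kernel convolution bound from Corollary \ref{gro_gamma} and then invoke estimates \eqref{technical_estimate_low}--\eqref{technical_estimate_high} of Lemma \ref{technical_estimate_lemma}. Your explicit weight-ratio estimate $\big(|\tau'-\omega+iH^\beta|\big)/\big(|\tau-\omega+iH^\beta|\big)\lesssim 1+H^{-\beta}|\tau-\tau'|$, absorbed by one extra power of the rapidly decaying kernel, is exactly the step the paper leaves implicit when it says to expand the convolution and "proceed as in the proof" of the technical lemma, so your write-up is a correct (and more complete) rendering of the same argument.
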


\begin{proof}
In fact, first of all, by estimating as follows we can split the analysis into two cases \begin{align*}
&\big\Vert (\tau-\omega(\xi,\mu)+iH^\beta)^{ -1}\mathcal{F}\big(\gamma(H^\beta(\cdot-t_H))f\big)\big\Vert_{X_H}
\\ & \qquad \qquad \lesssim H^{-\beta}\sum_{L\in\mathbb{D}:\, L\leq  H^\beta} L^{1/2}\big\Vert \eta_L(\tau-\omega(\xi,\mu))\mathcal{F}\big[\gamma(H^\beta(\cdot-t_H))f\big]\big\Vert_{L^2_{\tau,\xi,\mu}}
\\ & \qquad \qquad \quad + \sum_{L\in\mathbb{D}:\, L>H^\beta} L^{-1/2}\big\Vert \eta_L(\tau-\omega(\xi,\mu))\mathcal{F}[\gamma(H^\beta(\cdot-t_H))f]\big\Vert_{L^2_{\tau,\xi,\mu}} .%
\end{align*}
For the first term in the right-hand side of the previous inequality, we proceed as in the proof of estimate \eqref{gro_gamma_estimate}. We write the Fourier transform of $\gamma(H^\beta(\cdot-t_H))f$ as the convolution of the Fourier transforms. Then, expanding the convolution, we use estimate \eqref{technical_estimate_low} to conclude this case. Similarly, to deal with the second term we expand the Fourier transform as before and then proceed as in the proof of \eqref{technical_estimate_high}. Thus, gathering both estimates we obtain \eqref{tau_w_ihb_xH}.
\end{proof}

More generally, for any $H\in\mathbb{D}$ and any $\beta \geq 0$ fixed, we define the set of $H$-acceptable time multiplication factors $S_{H,\beta}$ as follows \[
S_{H,\beta}:=\big\{m_H:\R\to\R:\, \Vert m_H\Vert_{S_{H,\beta}}:=\sum_{j=0}^{10}H^{-j\beta}\Vert \partial_jm_H\Vert_{L^\infty}<+\infty \big\}.
\]
\begin{lem}\label{time_mult_lema}
Let $\beta\geq 0$, $H\in\mathbb{D}$ and consider $m_H\in S_{H,\beta}$. Then, it holds that
\[
\big\Vert m_H(t)f\Vert_{F_{H,\beta}}\lesssim \Vert m_H\Vert_{S_{H,\beta}}\Vert f\Vert_{F_{H,\beta}},
\]
and \[
\big\Vert m_H(t)f\Vert_{N_{H,\beta}}\lesssim \Vert m_H\Vert_{S_{H,\beta}}\Vert f\Vert_{N_{H,\beta}}.
\]
\end{lem}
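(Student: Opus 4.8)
The plan is to realize multiplication by $m_H$ inside the time window of length $H^{-\beta}$ as a convolution, in the modulation variable $\tau$, against a kernel that lives at the scale $H^{-\beta}$ and whose amplitude is $\Vert m_H\Vert_{S_{H,\beta}}$; once this is achieved the estimate reduces to Lemma \ref{technical_estimate_lemma}, exactly as in the proof of Corollary \ref{gro_gamma}. Concretely, I would fix $t_H\in\R$ and set $a(t):=\eta_0(H^\beta(t-t_H))\,m_H(t)$, which is supported in $\{|t-t_H|\le 2H^{-\beta}\}$. Let $\widetilde{\eta}_0\in C^\infty_0$ satisfy $\widetilde{\eta}_0\equiv 1$ on $[-2,2]$; then $\widetilde{\eta}_0(H^\beta(\cdot-t_H))\equiv 1$ on $\supp a$, so I may factor
\[
\eta_0(H^\beta(\cdot-t_H))\,m_H\,f \;=\; a\cdot g,\qquad g:=\widetilde{\eta}_0(H^\beta(\cdot-t_H))\,f .
\]
Since $a$ depends on $t$ only, on the Fourier side this is a convolution purely in $\tau$, namely $\mathcal{F}(ag)(\tau,\xi,\mu)=\int_\R \mathcal{F}_t(a)(\tau-\tau')\,\mathcal{F}(g)(\tau',\xi,\mu)\,d\tau'$. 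The fattening is needed precisely because $\eta_0(H^\beta(\cdot-t_H))$ is not identically $1$ on $\supp a$, whereas $\widetilde{\eta}_0(H^\beta(\cdot-t_H))$ is.

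The crux is the bound on $\mathcal{F}_t(a)$. Writing $a(t)=b(H^\beta(t-t_H))$ with $b(s):=\eta_0(s)\,m_H(t_H+H^{-\beta}s)$, the chain and Leibniz rules give $\Vert\partial^j b\Vert_{L^\infty}\lesssim \sum_{i\le j}H^{-i\beta}\Vert\partial^i m_H\Vert_{L^\infty}\le \Vert m_H\Vert_{S_{H,\beta}}$ for $0\le j\le 10$. As $b$ is supported in $[-2,2]$, four integrations by parts (legitimate since $4\le 10$) yield $|\mathcal{F}_t(b)(\rho)|\lesssim \Vert m_H\Vert_{S_{H,\beta}}(1+|\rho|)^{-4}$, hence $|\mathcal{F}_t(a)(\tau)|\lesssim \Vert m_H\Vert_{S_{H,\beta}}\,H^{-\beta}(1+H^{-\beta}|\tau|)^{-4}$. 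Inserting this into the convolution produces the pointwise bound
\[
|\mathcal{F}(ag)(\tau,\xi,\mu)|\lesssim \Vert m_H\Vert_{S_{H,\beta}}\int_\R|\mathcal{F}(g)(\tau',\xi,\mu)|\,H^{-\beta}\big(1+H^{-\beta}|\tau-\tau'|\big)^{-4}d\tau',
\]
which is exactly the kernel treated in Lemma \ref{technical_estimate_lemma} with $L=H$. Taking the $X_H$-norm and combining \eqref{technical_estimate_low} (for the low modulations) with \eqref{technical_estimate_high} (for the high modulations) gives $\Vert \mathcal{F}(ag)\Vert_{X_H}\lesssim \Vert m_H\Vert_{S_{H,\beta}}\Vert\mathcal{F}(g)\Vert_{X_H}$; this is the very computation performed in Corollary \ref{gro_gamma}, only with the fixed Schwartz seminorm of the cutoff now replaced by $\Vert m_H\Vert_{S_{H,\beta}}$.

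It then remains to absorb $g$ into the $F_{H,\beta}$-norm, i.e.\ to show $\Vert\mathcal{F}(\widetilde{\eta}_0(H^\beta(\cdot-t_H))f)\Vert_{X_H}\lesssim \Vert f\Vert_{F_{H,\beta}}$ uniformly in $t_H$, which is just the statement that the $F_{H,\beta}$-norm is insensitive to the (Schwartz, compactly supported) choice of time cutoff. I would prove this by inserting a partition of unity $\sum_{n\in\Z}\chi(\cdot-n)\equiv 1$ with $\chi\in C^\infty_0([-1,1])$ and setting $t_n:=t_H+nH^{-\beta}$: using $\eta_0\equiv 1$ on $\supp\chi$ one writes $\widetilde{\eta}_0(H^\beta(\cdot-t_H))f=\sum_n\gamma_n(H^\beta(\cdot-t_n))\,\eta_0(H^\beta(\cdot-t_n))f$ with $\gamma_n(s):=\widetilde{\eta}_0(s+n)\chi(s)$, and since $\widetilde{\eta}_0$ is compactly supported only finitely many $n$ contribute. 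Estimate \eqref{gro_gamma_estimate} applied to each summand, together with $\Vert\mathcal{F}(\eta_0(H^\beta(\cdot-t_n))f)\Vert_{X_H}\le\Vert f\Vert_{F_{H,\beta}}$, closes this step. Collecting the three inequalities and taking the supremum over $t_H$ proves the $F_{H,\beta}$ bound.

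Finally, the $N_{H,\beta}$ bound follows verbatim upon keeping the weight $(\tau-\omega(\xi,\mu)+iH^\beta)^{-1}$ throughout, using the same factorization $ag$, and replacing the two appeals to Corollary \ref{gro_gamma} by the weighted estimate \eqref{tau_w_ihb_xH} of Corollary \ref{lem_tau_w_ihb_xH}, whose proof already encodes the convolution-against-the-$H^{-\beta}$-kernel mechanism. I expect the only delicate point to be the calibration between the weights $H^{-j\beta}$ built into $\Vert\cdot\Vert_{S_{H,\beta}}$ and the $H^{-\beta}$ time-localization scale: it is precisely this matching that turns $\mathcal{F}_t(a)$ into a kernel of exactly the form handled by Lemma \ref{technical_estimate_lemma}, so that no power of $H$ is lost in the process.
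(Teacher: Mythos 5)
Your proof is correct and follows essentially the same route as the paper: both reduce the statement to the kernel bound $\big\vert \mathcal{F}_t\big[m_H(\cdot)\eta_0(H^\beta(\cdot-\tilde t))\big](\tau)\big\vert \lesssim H^{-\beta}(1+H^{-\beta}\vert\tau\vert)^{-4}\Vert m_H\Vert_{S_{H,\beta}}$ (which the paper proves by the same $L^1$-of-four-derivatives computation you obtain by rescaling) and then conclude via Lemma \ref{technical_estimate_lemma} as in Corollaries \ref{gro_gamma} and \ref{lem_tau_w_ihb_xH}. Your factorization through the fattened cutoff $\widetilde\eta_0$ and the finite partition-of-unity step absorbing it back into the $F_{H,\beta}$- and $N_{H,\beta}$-norms simply make explicit what the paper compresses into ``arguing as in the proof of Corollary \ref{gro_gamma}.''
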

\begin{proof}
We shall only prove the first estimate since the second one follows very similar lines. In fact, notice that arguing as in the proof of Corollary \ref{gro_gamma} we infer that it suffices to prove that \begin{align}\label{c_time_mult}
\big\vert \mathcal{F}_t\big[m_H(\cdot)\eta_0(H^\beta\big(\cdot-\tilde{t}\big))\big](\tau)\big\vert \lesssim H^{-\beta}\big(1+H^{-\beta}\vert \tau\vert \big)^{-4}\Vert m_H\Vert_{S_{H,\beta}},
\end{align}
for all $\tau,\tilde{t}\in\R$. Then, in order to prove inequality \eqref{c_time_mult}, first observe that, using the fact that the Fourier transform is a bounded operator from $L^1$ to $L^\infty$, we obtain that \[
\big\Vert \mathcal{F}_t\big[m_H(\cdot)\eta_0\big(H^\beta(\cdot-\tilde{t})\big)\big]\big\Vert_{L^\infty}\lesssim \big\Vert m_H(\cdot)\eta_0\big(H^\beta(\cdot-\tilde{t})\big)\big\Vert_{L^1}\lesssim H^{-\beta}\Vert m_H\Vert_{L^\infty}\Vert \eta_0\Vert_{L^1}.
\]
On the other hand, from basic properties of the Fourier transform we see that \begin{align*}
H^{-4\beta}\vert \tau\vert^4\big\vert \mathcal{F}_t\big[m_H(\cdot)\eta_0\big(H^\beta(\cdot-\tilde{t})\big)\big](\tau)\big\vert &\lesssim H^{-4\beta}\big\Vert \partial_t\big(m_H(\cdot)\eta_0\big(H^\beta(\cdot-\tilde{t})\big)\big)\big\Vert_{L^1}
\\ & \lesssim H^{-4\beta}\sum_{j=0}^4\Vert \partial_t^jm_H\Vert_{L^\infty}H^{(4-j)\beta}H^{-\beta}\big\Vert \partial_t^{4-j}\eta_0\Vert_{L^1}.
\end{align*}

Therefore, gathering the above estimates, along with the definition of $S_{H,\beta}$, we obtain inequality  \eqref{c_time_mult}, which concludes the proof of the lemma.
\end{proof}
The next corollary shall be useful in the proof of the bilinear and energy estimates.
\begin{cor}\label{key_cor_prelim}
Let $\beta \geq 0$, $\tilde t\in\R$ be fixed  and consider $H,H_1\in\mathbb{D}$ satisfying that $H_1\geq 2^5 H$. Then, the following inequalities hold \begin{align}\label{key_cor_prelim_low_estimate}
H^{\beta/2}_1\big\Vert \eta_{\leq \lfloor H^\beta_1\rfloor}(\tau-\omega(\xi,\mu))\mathcal{F}\big(\eta_0 (H^\beta_1(\cdot-\tilde t))f\big)\big\Vert_{L^2_{\tau,\xi,\mu}}\lesssim \Vert f\Vert_{F_{H,\beta}},
\end{align}
and \begin{align}\label{key_cor_prelim_high_estimate}
\sum_{L> H_1^\beta}L^{1/2}\big\Vert \eta_{L}(\tau-\omega(\xi,\mu))\mathcal{F}\big(\eta_0 (H^\beta_1(\cdot-\tilde t))f\big)\big\Vert_{L^2_{\tau,\xi,\mu}}\lesssim \Vert f\Vert_{F_{H,\beta}},
\end{align}
for all $f\in F_{H,\beta}$.
\end{cor}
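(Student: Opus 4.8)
The plan is to reduce both inequalities to Lemma \ref{technical_estimate_lemma} applied with the choice $L=H_1$, after replacing $f$ by its $F_{H,\beta}$-profile localized around the base point $\tilde t$. Accordingly, I would set
\[
g:=\eta_0\big(H^\beta(\cdot-\tilde t)\big)f,
\]
so that, taking $t_H=\tilde t$ in the definition of the $F_{H,\beta}$-norm, one has immediately $\Vert \mathcal{F}(g)\Vert_{X_H}\leq \Vert f\Vert_{F_{H,\beta}}$. The key elementary observation is that the narrow cutoff $\eta_0(H_1^\beta(\cdot-\tilde t))$ is supported in $\{|t-\tilde t|\leq 2H_1^{-\beta}\}$, whereas $\eta_0(H^\beta(\cdot-\tilde t))\equiv 1$ on $\{|t-\tilde t|\leq H^{-\beta}\}$; the hypothesis $H_1\geq 2^5H$ provides the separation $2H_1^{-\beta}\leq H^{-\beta}$ needed so that the support of the narrow cutoff lies inside the region where the wide cutoff is identically one. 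Hence $f=g$ on that support, and therefore
\[
\eta_0\big(H_1^\beta(\cdot-\tilde t)\big)f=\eta_0\big(H_1^\beta(\cdot-\tilde t)\big)g.
\]

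Next I would pass to the Fourier side. Since the cutoff acts only in the $t$ variable, $\mathcal{F}\big(\eta_0(H_1^\beta(\cdot-\tilde t))g\big)$ is the convolution in $\tau$ (with $\xi,\mu$ kept as parameters) of $\mathcal{F}(g)$ with $\mathcal{F}_t[\eta_0(H_1^\beta(\cdot-\tilde t))]$, and the latter obeys the standard rescaled-Schwartz bound $\big|\mathcal{F}_t[\eta_0(H_1^\beta(\cdot-\tilde t))](\tau)\big|\lesssim H_1^{-\beta}(1+H_1^{-\beta}|\tau|)^{-4}$. This yields the pointwise domination
\[
\big|\mathcal{F}\big(\eta_0(H_1^\beta(\cdot-\tilde t))f\big)(\tau,\xi,\mu)\big|\lesssim \int_\R \big|\mathcal{F}(g)(\tau',\xi,\mu)\big|\,H_1^{-\beta}\big(1+H_1^{-\beta}|\tau-\tau'|\big)^{-4}\,d\tau',
\]
which is exactly the integrand appearing in Lemma \ref{technical_estimate_lemma} with the identifications $L=H_1$ and $\mathcal{F}(g)$ in place of $f$.

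Finally I would invoke Lemma \ref{technical_estimate_lemma}. Estimate \eqref{technical_estimate_low}, with $L=H_1$ so that $\lfloor L^\beta\rfloor=\lfloor H_1^\beta\rfloor$, controls the low-modulation piece and yields \eqref{key_cor_prelim_low_estimate}; estimate \eqref{technical_estimate_high}, whose summation range $\ell\geq L^\beta=H_1^\beta$ matches the range $L>H_1^\beta$ of \eqref{key_cor_prelim_high_estimate}, controls the high-modulation piece and yields \eqref{key_cor_prelim_high_estimate}. In both cases the resulting bound is $\lesssim \Vert \mathcal{F}(g)\Vert_{X_H}\leq \Vert f\Vert_{F_{H,\beta}}$, as desired. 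I expect the only genuine point to be the support-containment step, that is, the use of $H_1\geq 2^5H$ to legitimately replace $f$ by the $\tilde t$-localized profile $g$ (without which $\mathcal{F}(f)$ need not even belong to $X_H$, since $F_{H,\beta}$ only controls short-time localized pieces); once this replacement is justified, the corollary is a direct repackaging of Lemma \ref{technical_estimate_lemma}, requiring no further dispersive or oscillatory input.
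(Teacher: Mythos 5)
Your proof is correct and is essentially the paper's argument: both use the hypothesis $H_1\geq 2^5H$ to replace $f$ by a time-localized profile whose $X_H$-norm is controlled by $\Vert f\Vert_{F_{H,\beta}}$, dominate $\mathcal{F}_t\big[\eta_0(H_1^\beta(\cdot-\tilde t))\big]$ by the rescaled kernel $H_1^{-\beta}(1+H_1^{-\beta}\vert\tau\vert)^{-4}$, and conclude by Lemma \ref{technical_estimate_lemma} with $L=H_1$ (the paper phrases this last step as ``arguing as in the proof of Corollary \ref{gro_gamma}''). The only deviation is cosmetic: the paper localizes with the widened cutoff $\eta_0\big(2^{-6\beta}H^{\beta}(\cdot-\tilde t)\big)$ and bounds its contribution via Lemma \ref{time_mult_lema}, whereas you take the exact-width cutoff $\eta_0\big(H^\beta(\cdot-\tilde t)\big)$ straight from the definition of $F_{H,\beta}$ — your containment $2H_1^{-\beta}\leq H^{-\beta}$ then requires $\beta\geq 1/5$, which is harmless since the paper's own identity carries an analogous restriction on small $\beta$ and the corollary is only ever invoked with $\beta=1/2$.
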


\begin{proof}
In fact it is enough to notice that, on the one-hand, due to the hypothesis on $H$ and $H_1$, we have that \[
\eta_0\big(H_1^\beta(\cdot-\tilde{t})\big)f=\eta_0\big(H_1^\beta(\cdot-\tilde{t})\big)\eta_0\big(2^{-6\beta}H^{\beta}(\cdot-\tilde{t})\big)f.
\]
On the other hand, it follows from Lemma \ref{time_mult_lema}  that \[
\big\Vert \mathcal{F}\big[\eta_0\big( 2^{-6\beta}H^{\beta}(\cdot-\tilde{t})\big)f\big]\big\Vert_{X_H}\lesssim \Vert f\Vert_{F_{H,\beta}}.
\]
Therefore, using estimates \eqref{technical_estimate_low} and \eqref{technical_estimate_high} and arguing exactly as in the proof of Corollary \ref{gro_gamma} we conclude the proof of the lemma.
\end{proof}

\subsection{Linear estimates}

In this subsection we derive the linear properties enjoyed by the short-time function spaces introduced at the beginning of this section (see \cite{IoKeTa,KePi}).

\begin{prop}\label{linear_prop}
Let $s\geq 0$, $\beta >0$ and consider $T\in (0,1]$. Then, the following holds \begin{align}\label{linear_prop_estimate}
\Vert u\Vert_{F^s_\beta(T)}\lesssim \Vert u\Vert_{B^s(T)}+\Vert f\Vert_{N^s_\beta(T)},
\end{align}
for all $u\in B^s(T)$ and $f\in N^s_\beta(T)$ satisfying that \begin{align}\label{linear_PDE}
\partial_tu+ \partial_x\Delta u=f, \quad \hbox{ on } \quad [-T,T]\times\R^2.
\end{align}
\end{prop}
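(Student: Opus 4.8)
The plan is to prove the linear estimate \eqref{linear_prop_estimate} by reducing it to a frequency-localized statement and then matching the short-time structure of $F_{H,\beta}$ against the Duhamel formula for the inhomogeneous equation \eqref{linear_PDE}. First I would perform a Littlewood-Paley decomposition in the spatial frequency, so that it suffices to prove, for each dyadic $H\in\mathbb{D}$, the localized estimate
\begin{align*}
\Vert P_Hu\Vert_{F_{H,\beta}}\lesssim \sup_{t_H\in[-T,T]}\Vert P_Hu(t_H,\cdot,\cdot)\Vert_{L^2_{x,y}}+\Vert P_Hf\Vert_{N_{H,\beta}},
\end{align*}
and then square, multiply by $H^s$, and sum over $H$, recognizing the right-hand side as $\Vert u\Vert_{B^s(T)}^2+\Vert f\Vert_{N^s_\beta(T)}^2$ by the definitions \eqref{Fs_norm}, \eqref{Ns_norm} and of $B^s(T)$. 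Throughout I would work with a fixed extension $\tilde{f}\in N^s_\beta$ of $f$, chosen so that $\Vert \tilde{f}\Vert_{N^s_\beta}\leq 2\Vert f\Vert_{N^s_\beta(T)}$, and I would set $u_H:=P_Hu$, $f_H:=P_H\tilde f$.

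The heart of the argument is the localized estimate, which I would prove by freezing a time $t_H\in\R$ and analyzing $\eta_0(H^\beta(\cdot-t_H))u_H$. On the support of this cutoff the time variable is restricted to an interval of length $\sim H^{-\beta}$ about $t_H$; on such an interval I would write the Duhamel representation
\begin{align*}
u_H(t)=U(t-t_H)u_H(t_H)+\int_{t_H}^t U(t-t')f_H(t')\,dt',
\end{align*}
multiply by the cutoff, and estimate the two contributions separately in $X_H$. For the homogeneous piece $\eta_0(H^\beta(\cdot-t_H))U(\cdot-t_H)u_H(t_H)$, its space-time Fourier transform concentrates the modulation $\tau-\omega(\xi,\mu)$ at scale $\lesssim H^\beta$, coming from the $\eta_0(H^\beta\cdot)$ factor; applying the $X_H$-norm and summing the dyadic modulation pieces via the weight $L^{1/2}$ gives a factor $\lesssim H^{\beta/2}\cdot H^{-\beta/2}=1$ against $\Vert u_H(t_H)\Vert_{L^2_{x,y}}$, so this term is controlled by $\sup_{t_H}\Vert u_H(t_H)\Vert_{L^2}$. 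For the inhomogeneous piece I would use the standard short-time Duhamel lemma: after inserting the cutoff and writing the truncated time integral as a Fourier multiplier, the operator $\chi_{[t_H,t]}$ combined with $\eta_0(H^\beta(\cdot-t_H))$ produces exactly the multiplier $(\tau-\omega(\xi,\mu)+iH^\beta)^{-1}$ up to admissible time factors in $S_{H,\beta}$, so that the contribution is bounded by $\Vert f_H\Vert_{N_{H,\beta}}$ using the definition of the $N_{H,\beta}$-norm together with Lemma \ref{time_mult_lema} and Corollary \ref{lem_tau_w_ihb_xH} to absorb the cutoffs.

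The main technical obstacle I expect is making the truncated Duhamel multiplier identity rigorous and uniform in $t_H$, namely showing that the sharp cutoff $\mathds{1}_{[t_H,t]}(t')$ appearing in the time integral can be replaced, at the level of the $X_H$-norm, by the smooth resolvent $(\tau-\omega+iH^\beta)^{-1}$ at essentially no cost. This is precisely where Lemma \ref{gro_characteristic_function_lemma} enters: it guarantees that multiplication by the indicator of a bounded time interval is bounded on the $X_H$ scale, which lets me pass from the smooth-cutoff Duhamel formula to the one with the characteristic function without losing the $\ell^1$-Besov summability in $L$. A secondary subtlety is the low-frequency block $H=1$ (or $H\le 1$), where the modulation weight $H^\beta$ is $O(1)$ and the frequency support is not an annulus but a ball; there I would treat $P_1u(0,\cdot,\cdot)$ as the initial datum exactly as the definition of $B^s(T)$ prescribes, matching the special first term $\Vert P_1f(0,\cdot,\cdot)\Vert_{L^2_{x,y}}^2$ in the $B^s(T)$-norm. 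Once the frequency-localized estimate holds uniformly in $t_H$, taking the supremum over $t_H\in\R$, then the $\ell^2_H$ sum with weights $H^s$, and finally passing to the infimum over extensions $\tilde f$ yields \eqref{linear_prop_estimate}.
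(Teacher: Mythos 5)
Your outline reproduces the paper's strategy at the frequency-localized level — your homogeneous estimate is exactly Lemma \ref{homogeneous_case}, your resolvent bound for the Duhamel term is Lemma \ref{nonhomogeneous_case}, and the absorption of cutoffs is Lemma \ref{time_mult_lema} together with Corollary \ref{lem_tau_w_ihb_xH} — but you have glossed over the step that carries most of the structural weight: constructing an extension of $u$. The quantity $\Vert P_Hu\Vert_{F_{H,\beta}}$ is not defined until $P_Hu$ is extended to all of $\R^3$, since $F^s_\beta(T)$ is an infimum over extensions while the $F_{H,\beta}$-norm takes a supremum over \emph{all} $t_H\in\R$. Your Duhamel representation $u_H(t)=U(t-t_H)u_H(t_H)+\int_{t_H}^tU(t-t')f_H(t')\,dt'$ is only available for $t,t_H\in[-T,T]$; when $t_H$ is within $O(H^{-\beta})$ of $\pm T$ the cutoff window sticks out of $[-T,T]$, and for $t_H$ outside $[-T,T]$ nothing is defined at all, so the claim that the localized estimate ``holds uniformly in $t_H\in\R$'' has no content yet. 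This is precisely why the paper builds, for each $H$ separately, the extension $\tilde u_H$ — damped free evolution plus Duhamel with the time-truncated forcing $\tilde f_H$ (truncated at scale $2^{-10}H^{-\beta}$, with the truncation harmless by Lemma \ref{time_mult_lema}), with cutoffs $\eta_0(2^5H^\beta(t\mp T))$ — proves the reduction \eqref{claim_linear} of $\sup_{t_H\in\R}$ to $\sup_{t_H\in[-T,T]}$ via support considerations and Corollary \ref{gro_gamma}, and only then sets $\tilde u=\sum_H\tilde u_H$ as the competitor for the infimum. The frequency dependence of this extension (cutoff scale $H^{-\beta}$) is essential and cannot be replaced by a single global extension of $u$.

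The second genuine flaw is your appeal to Lemma \ref{gro_characteristic_function_lemma} to pass between the sharp cutoff $\mathds{1}_{[t_H,t]}$ and the resolvent ``without losing the $\ell^1$-Besov summability in $L$''. That lemma asserts only the bound $\sup_{L\in\mathbb{D}}L^{1/2}\Vert\eta_L(\tau-\omega(\xi,\mu))\mathcal{F}(\mathds{1}_I(t)f)\Vert_{L^2}\lesssim\Vert\mathcal{F}(f)\Vert_{X_H}$, i.e.\ an $\ell^\infty_L$ conclusion, and this weakening is not cosmetic: the $1/\vert\tau-\tau'\vert$ tail of $\mathcal{F}_t(\mathds{1}_I)$ contributes $\sim\mathcal{L}^{1/2}\Vert f_{\mathcal{L}}\Vert_{L^2}$ to \emph{every} dyadic output modulation $L\gg\mathcal{L}$, so multiplication by a sharp time cutoff is genuinely unbounded on the $\ell^1$-in-$L$ space $X_H$. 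Consequently your plan of first writing the sharp-cutoff Duhamel formula and then transferring to the resolvent multiplier via that lemma would fail at the summation over $L$. The paper avoids sharp cutoffs entirely: in Lemma \ref{nonhomogeneous_case} it computes $\mathcal{F}\big[\eta_0(H^\beta t)\int_0^te^{(t-s)\partial_x(D_x^2-\partial_y^2)}f\,ds\big]$ in closed form, inserts the factor $(\tilde\tau-\omega+iH^\beta)/(\tilde\tau-\omega+iH^\beta)$, proves the pointwise kernel bound \eqref{claim_nh_linear_estimate} by splitting on $\vert\tilde\tau-\omega\vert\gtrless H^\beta$, and then lets Lemma \ref{technical_estimate_lemma} perform the $\ell^1$-in-$L$ summation with the resolvent weight in place. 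To repair your argument you should replace the indicator step by this explicit computation (Lemma \ref{gro_characteristic_function_lemma} is the right tool only where $\sup_L$ bounds suffice, as in the energy estimates of Section \ref{sec_energy}).
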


In order to prove the previous proposition, we split the analysis into two simpler lemma. First we treat the homogeneous case.

\begin{lem}\label{homogeneous_case}
Let $\beta\geq 0$ and $H\in\mathbb{D}$. Then, the following inequality holds \begin{align}\label{homogeneous_case_estimate}
\big\Vert \mathcal{F}\big(\eta_0(H^\beta t)e^{t\partial_x(D_x^2-\partial_y^2)}u_0\big)\big\Vert_{X_H}\lesssim \Vert u_0\Vert_{L^2},
\end{align}
for all $u_0\in L^2(\R^2)$ such that $\mathrm{supp}\,\mathcal{F}(u_0)\in I_H$.
\end{lem}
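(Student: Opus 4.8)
The plan is to reduce everything to an explicit computation of the space-time Fourier transform followed by an elementary dyadic summation. Since $e^{t\partial_x(D_x^2-\partial_y^2)}$ is exactly the free group $U(t)$, whose symbol is the pure phase $e^{it\omega(\xi,\mu)}$, taking the Fourier transform in $x,y$ diagonalizes the evolution, and the time cutoff $\eta_0(H^\beta t)$ becomes a convolution in the $\tau$-variable. Because the group acts as $e^{it\omega(\xi,\mu)}$ at each fixed frequency, that convolution collapses to a translation, yielding the identity
\[
\mathcal{F}\big(\eta_0(H^\beta t)U(t)u_0\big)(\tau,\xi,\mu)=H^{-\beta}\,\widehat{\eta_0}\big(H^{-\beta}(\tau-\omega(\xi,\mu))\big)\,\widehat{u_0}(\xi,\mu),
\]
where $\widehat{\eta_0}=\mathcal{F}_t\eta_0$. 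Thus the time-localized free evolution is, frequency by frequency, the fixed modulation profile $H^{-\beta}\widehat{\eta_0}(H^{-\beta}\,\cdot\,)$ centered on the characteristic surface $\tau=\omega(\xi,\mu)$, multiplied by $\widehat{u_0}$.

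First I would insert this identity into the definition \eqref{xh_norm} of the $X_H$-norm. Since $\widehat{u_0}(\xi,\mu)$ does not depend on $\tau$, for each dyadic $L$ the $\tau$-integral separates: changing variables $\sigma=H^{-\beta}(\tau-\omega(\xi,\mu))$ factors out $\Vert u_0\Vert_{L^2_{\xi,\mu}}$ and leaves the scalar coefficient $c_L:=L^{1/2}H^{-\beta/2}\big(\int_{|\sigma|\sim H^{-\beta}L}|\widehat{\eta_0}(\sigma)|^2\,d\sigma\big)^{1/2}$. This gives $\big\Vert\mathcal{F}(\eta_0(H^\beta t)U(t)u_0)\big\Vert_{X_H}\lesssim \big(\sum_{L\in\mathbb{D}}c_L\big)\Vert u_0\Vert_{L^2}$, so the lemma reduces to proving $\sum_L c_L\lesssim 1$ uniformly in $H$.

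To bound this sum I would split the dyadic range at $L\sim H^\beta$. In the low range $L\lesssim H^\beta$ one has $H^{-\beta}L\lesssim1$, so using only $|\widehat{\eta_0}|\lesssim1$ gives $\int_{|\sigma|\sim H^{-\beta}L}|\widehat{\eta_0}|^2\lesssim H^{-\beta}L$ and hence $c_L\lesssim LH^{-\beta}$, which sums geometrically to $\lesssim1$ (dominated by $L\sim H^\beta$). In the high range $L\gtrsim H^\beta$ I would exploit the rapid decay of the Schwartz function $\widehat{\eta_0}$, say $|\widehat{\eta_0}(\sigma)|\lesssim(1+|\sigma|)^{-4}$, which yields $c_L\lesssim (LH^{-\beta})^{-3}$, again summable to $\lesssim1$. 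Combining the two regimes closes the estimate.

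The computation is genuinely elementary, so there is no serious analytic obstacle; the only point requiring care is the bookkeeping of the $H^\beta$-scaling, making sure the $L^{1/2}$ modulation weight is exactly balanced by the $H^{-\beta}$-dilation of $\widehat{\eta_0}$ so that both the accumulation of small modulations and the Schwartz tail produce $H$-uniform geometric series. This is the short-time analogue of the classical fact that a free solution truncated to a time window of length $H^{-\beta}$ has modulation essentially concentrated at scale $H^\beta$, which is precisely why the $\ell^1$-Besov structure \eqref{xh_norm} in the modulation variable is well suited to the problem. One could alternatively route the $\tau$-integration through the kernel estimates of Lemma \ref{technical_estimate_lemma}, but the direct computation above is the most transparent.
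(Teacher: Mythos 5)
Your proposal is correct and follows essentially the same route as the paper: the explicit identity $\mathcal{F}\big(\eta_0(H^\beta t)U(t)u_0\big)=H^{-\beta}\widehat{\eta}_0\big(H^{-\beta}(\tau-\omega(\xi,\mu))\big)\widehat{u}_0(\xi,\mu)$, factoring out $\Vert u_0\Vert_{L^2}$ via Plancherel, and then the dyadic sum in $L$ split at $L\sim H^\beta$, with boundedness of $\widehat{\eta}_0$ giving the geometric series $LH^{-\beta}$ below and the Schwartz decay giving $(LH^{-\beta})^{-3}$ above. Your $c_L$ bookkeeping matches the paper's bound $\big\Vert \eta_L(\cdot)H^{-\beta}\widehat{\eta}_0(H^{-\beta}\cdot)\big\Vert_{L^2}\lesssim H^{-\beta}L^{1/2}\min\{1,H^{4\beta}L^{-4}\}$ exactly, so no gap remains.
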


\begin{proof}
In fact, first of all notice that, by direct computations, using the standard properties of the Fourier transform, we can see that \[
\mathcal{F}\big[\eta_0(H^\beta t)e^{t\partial_x(D_x^2-\partial_y^2)}u_0\big](\tau,\xi,\mu)=H^{-\beta}\widehat{\eta}_0\big(H^{-\beta}(\tau-\omega(\xi,\mu))\big)\widehat{u}_0(\xi,\mu).%
\]
Hence, taking the $X_H$-norm to the previous identity, and then using Plancharel Theorem, it follows that \[
\big\Vert \mathcal{F}\big[\eta_0(H^\beta t)e^{t\partial_x(D_x^2-\partial_y^2)}u_0\big]\big\Vert_{X_H}\leq \sum_{L\in\mathbb{D}} L^{1/2}\big\Vert \eta_L(\cdot)H^{-\beta}\widehat{\eta}_0(H^{-\beta}\cdot)\big\Vert_{L^2}\Vert u_0\Vert_{L^2}.
\]
On the other hand, using the fact that $\widehat{\eta}_0\in\mathcal{S}(\R)$, it is not difficult to see that \[
\big\Vert \eta_L(\cdot)H^{-\beta}\widehat{\eta}_0(H^{-\beta}\cdot)\big\Vert_{L^2}\lesssim %
H^{-\beta}\big\Vert\eta_L(\cdot)(1+H^{-\beta}\vert \cdot\vert)^{-4}\big\Vert_{L^2}%
\lesssim H^{-\beta}L^{1/2}\mathrm{min}\{1,H^{4\beta}L^{-4}\}.
\]
Gathering the above estimates we conclude the proof of the lemma.
\end{proof}

Now we seek to study the non-homogeneous case.

\begin{lem}\label{nonhomogeneous_case}
Let $\beta\geq 0$ and consider $H\in\mathbb{D}$. Then, it holds that
\begin{align}
\Big\Vert \mathcal{F}\big[\eta_0(H^\beta t)\int_0^t e^{(t-s)\partial_x(D_x^2-\partial_y^2)}f(s,\cdot,\cdot)ds\big]\Big\Vert_{X_H}\lesssim \big\Vert (\tau-\omega(\xi,\mu)+iH^\beta)^{-1}\mathcal{F}(f)\big\Vert_{X_H},
\end{align}
for all functions $f$ such that $\mathrm{supp}\,\mathcal{F}(f)\in\R\times \Delta_H$.
\end{lem}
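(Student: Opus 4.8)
The plan is to reduce the claim to an explicit kernel identity on the Fourier side and then to invoke the technical convolution estimates of Lemma \ref{technical_estimate_lemma}; the argument is the Duhamel analogue of Corollary \ref{lem_tau_w_ihb_xH}. Writing $\theta:=\tau-\omega(\xi,\mu)$ and $\theta':=\tau'-\omega(\xi,\mu)$ for the input and output modulation variables, I would first compute the full space--time Fourier transform of the Duhamel term. Using $\mathcal F_{xy}[e^{(t-s)\partial_x(D_x^2-\partial_y^2)}f(s)](\xi,\mu)=e^{i(t-s)\omega(\xi,\mu)}\mathcal F_{xy}f(s)(\xi,\mu)$, Fourier inversion in time, the elementary identity $\int_0^t e^{is(\tau-\omega)}\,ds=\frac{e^{it(\tau-\omega)}-1}{i(\tau-\omega)}$, and the relation $\mathcal F_t[\eta_0(H^\beta\cdot)e^{i(\cdot)\tau}](\tau')=H^{-\beta}\widehat{\eta}_0(H^{-\beta}(\tau'-\tau))$, one is led to the identity
\[
\mathcal F\Big[\eta_0(H^\beta t)\int_0^t e^{(t-s)\partial_x(D_x^2-\partial_y^2)}f(s)\,ds\Big](\tau',\xi,\mu)=\frac{c\,H^{-\beta}}{i}\int_\R\frac{\widehat f(\tau,\xi,\mu)}{\theta}\Big(\widehat{\eta}_0\big(H^{-\beta}(\theta'-\theta)\big)-\widehat{\eta}_0\big(H^{-\beta}\theta'\big)\Big)\,d\tau,
\]
where I used $\tau'-\tau=\theta'-\theta$ and $\tau'-\omega=\theta'$. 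This is the key formula; the factor $1/\theta$ is the main obstacle, and the whole point is that the difference in the last parenthesis cancels its singularity at $\theta=0$.

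To exploit this I would decompose $f$ dyadically in modulation, $\widehat f=\sum_{L\in\mathbb{D}}\eta_L(\theta)\widehat f=:\sum_L\widehat{f_L}$, and prove the bound for each piece with constant $c_L:=L^{1/2}\Vert\eta_L(\theta)(\theta+iH^\beta)^{-1}\widehat f\Vert_{L^2}$, whose sum over $L\in\mathbb{D}$ is exactly the right-hand side of the statement. Since $\widehat{f_L}$ is supported in $|\theta|\sim L$, I expect two regimes, according to whether $L\le H^\beta$ or $L>H^\beta$, mirroring the splitting used in the proof of Corollary \ref{lem_tau_w_ihb_xH}.

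In the low-modulation regime $L\le H^\beta$ I would use cancellation. By the Mean Value Theorem and the Schwartz decay of $\widehat\eta_0$, together with $|\theta|\lesssim H^\beta$ (which lets one trade $\theta'$ for $\theta'-\theta$ in the decay), one gets $\big|\widehat\eta_0(H^{-\beta}(\theta'-\theta))-\widehat\eta_0(H^{-\beta}\theta')\big|\lesssim H^{-\beta}|\theta|\,(1+H^{-\beta}|\theta'-\theta|)^{-4}$. The factor $1/\theta$ then cancels and the remaining $\tau$-integral is a convolution against $H^{-\beta}(1+H^{-\beta}|\cdot|)^{-4}$, precisely the kernel in estimates \eqref{technical_estimate_low}--\eqref{technical_estimate_high}. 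Applying Lemma \ref{technical_estimate_lemma} to $\widehat{f_L}$ and accounting for the remaining overall prefactor $H^{-\beta}$ yields a contribution $\lesssim H^{-\beta}\Vert\widehat{f_L}\Vert_{X_H}=H^{-\beta}L^{1/2}\Vert\eta_L(\theta)\widehat f\Vert_{L^2}=c_L$.

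In the high-modulation regime $L>H^\beta$ the two bumps $\widehat\eta_0(H^{-\beta}(\theta'-\theta))$ and $\widehat\eta_0(H^{-\beta}\theta')$ are essentially disjointly supported in $\theta'$, so I would split the kernel. Setting $g:=\widehat{f_L}/\theta$, the term carrying $\widehat\eta_0(H^{-\beta}(\theta'-\theta))$ is a convolution of $g$ against the kernel $H^{-\beta}\widehat\eta_0(H^{-\beta}\cdot)$ and is again controlled by Lemma \ref{technical_estimate_lemma}, giving $\lesssim\Vert g\Vert_{X_H}=L^{-1/2}\Vert\eta_L(\theta)\widehat f\Vert_{L^2}=c_L$; here the prefactor $H^{-\beta}$ is exactly the normalization of the convolution kernel. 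The term carrying $\widehat\eta_0(H^{-\beta}\theta')$ factors out of the $\tau$-integral, is concentrated on $|\theta'|\lesssim H^\beta$, and a direct Cauchy--Schwarz estimate in $\tau$ (using $\Vert\eta_L(\theta)/\theta\Vert_{L^2_\tau}\sim L^{-1/2}$) produces the same bound $c_L$. Summing the two regimes over $L\in\mathbb{D}$ reconstructs $\Vert(\tau-\omega(\xi,\mu)+iH^\beta)^{-1}\widehat f\Vert_{X_H}$ and closes the estimate. The only genuinely new ingredient compared with the corollaries proved above is the explicit Duhamel kernel and the cancellation of its singularity at $\theta=0$.
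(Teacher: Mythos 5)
Your proposal is correct and follows essentially the same route as the paper: the same explicit Duhamel kernel identity, the same splitting of modulations at the scale $H^\beta$ with a Mean Value Theorem cancellation of the $1/\theta$ singularity below that scale, and the same reliance on Lemma \ref{technical_estimate_lemma} together with a direct Cauchy--Schwarz/geometric-sum estimate for the factored-out bump $\widehat\eta_0(H^{-\beta}\theta')$. The only difference is bookkeeping: you decompose $f$ dyadically in modulation and sum the per-shell constants $c_L$, whereas the paper inserts the factor $(\tilde\tau-\omega(\xi,\mu)+iH^\beta)$ into the kernel and proves one uniform pointwise bound (its estimate \eqref{claim_nh_linear_estimate}) before reducing to the two quantities $I$ and $II$, which correspond exactly to your two regimes.
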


\begin{proof}
In fact, by direct computations, using Plancharel Theorem and  standard properties of the Fourier transform, we obtain that \begin{align*}
& \mathcal{F}\big[\eta_0(H^\beta t)\int_0^t e^{(t-s)\partial_x(D_x^2-\partial_y^2)}f(s,\cdot,\cdot)ds\big]%
\\ & \quad  =\mathcal{F}_t\big[\eta_0(H^\beta t)\int_\R \dfrac{e^{it\tilde\tau}-e^{it\omega(\xi,\mu)}}{i(\tilde\tau-\omega(\xi,\mu))}\mathcal{F}(f)d\tilde\tau \big](\tau)
\\ & \quad =H^{-\beta} \widehat\eta_0(H^{-\beta}\cdot)*\left[\dfrac{\mathcal{F}(f)(\cdot,\xi,\mu)}{i(\cdot-\omega(\xi,\mu))}\right](\tau)-\mathcal{F}_t\big[\eta_0(H^\beta t)e^{it\omega(\xi,\mu)}\big](\tau)\int_\R\dfrac{\mathcal{F}(f)(\tilde\tau,\xi,\mu)}{i(\tilde\tau-\omega(\xi,\mu))}d\tilde\tau
\\ & \quad = H^{-\beta}\int_\R \dfrac{\widehat\eta_0\big(H^{-\beta}(\tau-\tilde\tau)\big)-\widehat\eta_0\big(H^{-\beta}(\tau-\omega(\xi,\mu))\big)}{i(\tilde\tau-\omega(\xi,\mu))}\mathcal{F}(f)(\tilde\tau,\xi,\mu) d\tilde\tau
\\ & \quad = H^{-\beta}\int_\R \dfrac{\widehat\eta_0\big(H^{-\beta}(\tau-\tilde\tau)\big)-\widehat\eta_0\big(H^{-\beta}(\tau-\omega(\xi,\mu))\big)}{i(\tilde\tau-\omega(\xi,\mu))}\cdot\dfrac{\tilde\tau-\omega(\xi,\mu)+iH^\beta}{\tilde\tau-\omega(\xi,\mu)+iH^\beta}\mathcal{F}(f)(\tilde\tau,\xi,\mu) d\tilde\tau.
\end{align*}
Now, we claim that the following inequality holds \begin{align}\label{claim_nh_linear_estimate}
&\Big\vert \dfrac{\widehat\eta_0\big(H^{-\beta}(\tau-\tilde\tau)\big)-\widehat\eta_0\big(H^{-\beta}(\tau-\omega(\xi,\mu))\big)}{i(\tilde\tau-\omega(\xi,\mu))}\cdot\big(\tilde\tau-\omega(\xi,\mu)+iH^\beta\big)\Big\vert \nonumber
\\ & \qquad \qquad \qquad \qquad \qquad \qquad      \lesssim \big(1+H^{-\beta}\vert\tau-\tilde\tau\vert\big)^{-4}+\big(1+H^{-\beta}\vert\tau-\omega(\xi,\mu)\vert\big)^{-4}.%
\end{align}
We divide the analysis of the latter claim into two cases. First, notice that if $\vert \tilde\tau-\omega(\xi,\mu)\vert\geq H^\beta$, then it immediately follows that \[
\big\vert\tilde\tau-\omega(\xi,\mu)+iH^\beta\big\vert \lesssim \vert \tilde\tau-\omega(\xi,\mu)\vert.
\]
Therefore, in this case, the claim follows immediately from the above inequality along with the fact that $\widehat\eta_0\in\mathcal{S}(\R)$. Hence, from now on we assume that $\vert \tilde\tau-\omega(\xi,\mu)\vert < H^\beta$. In this case, by using the Mean Value Theorem we deduce that \[
\big\vert\widehat\eta_0\big(H^{-\beta}(\tau-\tilde\tau)\big)-\widehat\eta_0\big(H^{-\beta}(\tau-\omega(\xi,\mu))\big)\big\vert\leq H^{-\beta}\big\vert\widehat\eta_0'\big(H^{-\beta}\theta\big)\big\vert\cdot\big\vert\tilde\tau-\omega(\xi,\mu)\big\vert,
\]
form some $\theta\in (\vert\tau-\tilde\tau\vert,\vert\tau-\omega(\xi,\mu)\vert)\cup (\vert\tau-\omega(\xi,\mu)\vert,\vert\tau-\tilde\tau\vert)$. Then, notice that, since $\widehat\eta_0'\in\mathcal{S}(\R)$, we have the bound \[
\vert\widehat\eta_0'\big(H^{-\beta}\theta\big)\big\vert\lesssim \big(1+H^{-\beta}\theta\big)^{-4}.
\]
Thus, plugging the latter two inequalities into the left-hand side of \eqref{claim_nh_linear_estimate}, recalling that $\theta\in (\vert\tau-\tilde\tau\vert,\vert\tau-\omega(\xi,\mu)\vert)\cup (\vert\tau-\omega(\xi,\mu)\vert,\vert\tau-\tilde\tau\vert)$, we conclude the proof of the claim.

\medskip

Now notice that with the previous claim we can reduce the analysis to the study of the following two quantities \begin{align*}
I&:=\Big\Vert H^{-\beta}\int_\R \dfrac{\vert \mathcal{F}(f)(\tilde\tau,\xi,\mu)\vert }{\big\vert \tilde\tau-\omega(\xi,\mu)+iH^\beta\big\vert} \cdot\big(1+H^{-\beta}\vert \tau-\tilde\tau\vert\big)^{-4} d\tilde\tau\Big\Vert_{X_H},
\\ II&:=\Big\Vert H^{-\beta}\int_\R \dfrac{\vert \mathcal{F}(f)(\tilde\tau,\xi,\mu)\vert }{\big\vert \tilde\tau-\omega(\xi,\mu)+iH^\beta\big\vert} \cdot\big(1+H^{-\beta}\vert \tau-\omega(\xi,\mu)\vert\big)^{-4} d\tilde\tau\Big\Vert_{X_H}.
\end{align*}
In fact, for the first quantity, by directly using Lemma \ref{technical_estimate_lemma} we obtain that \[
I\lesssim H^{- \beta/2}\big\Vert \big(\tau-\omega(\xi,\mu)+iH^\beta\big)^{-1}\mathcal{F}(f)(\tau,\xi,\mu)\big\Vert_{X_H}.
\]
On the other hand, expanding the $X_H$-norm and then using \eqref{l1l2_xh} we get that \begin{align*}
II&\lesssim \sum_{L\in\mathbb{D}}L^{1/2}\big\Vert \eta_L(\cdot)H^{-\beta}\big(1+H^{-\beta}\vert\cdot\vert\big)^{-4}\big\Vert_{L^2}\Big\Vert \int_\R \dfrac{\vert \mathcal{F}(f)(\tilde\tau,\xi,\mu)\vert }{\big\vert \tilde\tau-\omega(\xi,\mu)+iH^\beta\big\vert} d\tilde\tau\Big\Vert_{L^2}
\\ & \lesssim\sum_{L\in\mathbb{D}}LH^{-\beta}\mathrm{min}\{1,H^{4\beta}L^{-4}\} \Big\Vert \int_\R \dfrac{\vert \mathcal{F}(f)(\tilde\tau,\xi,\mu)\vert }{\big\vert \tilde\tau-\omega(\xi,\mu)+iH^\beta\big\vert} d\tilde\tau\Big\Vert_{L^2}
\\ & \lesssim \big\Vert \big(\tau-\omega(\xi,\mu)+iH^\beta)^{-1}\mathcal{F}(f)(\tau,\xi,\mu)\big\Vert_{X_H}.
\end{align*}
Gathering the above estimates we conclude the proof of the lemma.
\end{proof}

Now we are in position to prove our main linear estimate.

\begin{proof}[Proof of Proposition \ref{linear_prop}]
In fact, let $u,f:[-T,T]\times \R^2\to\R$ satisfying \eqref{linear_PDE}. We consider $\tilde{f}$ an extension of $f$ on $\R^3$ satisfying that \[
\Vert \tilde{f}\Vert_{N^s_\beta}\leq 2\Vert f\Vert_{N^s_\beta(T)}.
\]
Now we consider $\rho\in C^\infty(\R^2)$ satisfying that $\rho(t)\equiv 1$ for all $t\geq 1$ and that $\rho(t)\equiv0$ for all $t\leq 0$. Then, for $H\in\mathbb{D}$ we define \[
\tilde{f}_H:=\rho\left(2^{10}H^\beta\big(t+T+2^{-10}H^{-\beta}\big)\right)\rho\left(-2^{10}H^\beta\big(t-T-2^{-10}H^{-\beta}\big)\right)P_H\tilde{f}.
\] 
Then, from these definitions and Corollary \eqref{time_mult_lema} it follows that \[
\tilde{f}_H\big\vert_{[-T,T]}\equiv P_Hf, \qquad  \Vert \tilde{f}_H\Vert_{N_{H,\beta}}\lesssim \Vert P_H\tilde{f}\Vert_{N_{H,\beta}},
\]
and that \[
\supp \tilde{f}_H\subset \big[-T-2^{-10}H^{-\beta},T+2^{-10}H^{-\beta}\big]\times \R^2.
\]
Now we extend $P_Hu$ on $\R^3$. To this end, for all $H\in\mathbb{D}$ we define $\tilde{u}_H(t)$ as follows \begin{align*}
\begin{cases}
\eta_0\big(2^5H^\beta(t+T)\big)\big(e^{(t+T)\partial_x(D_x^2-\partial_y^2)}P_Hu(-T)+\int_{-T}^te^{(t-s)\partial_x(D_x^2-\partial_y^2)}\tilde{f}_H(s)ds\big) & \hbox{if} \quad t<-T, 
\\ \eta_0\big(2^5H^\beta(t-T)\big)\big(e^{(t-T)\partial_x(D_x^2-\partial_y^2)}P_Hu(T)+\int_{T}^te^{(t-s)\partial_x(D_x^2-\partial_y^2)}\tilde{f}_H(s)ds\big) & \hbox{if} \quad t>T,
\end{cases}
\end{align*} 
and $\tilde{u}_H(t)=P_Hu(t)$ for $t\in[-T,T]$. Now we claim that \begin{align}\label{claim_linear}
\Vert \tilde{u}_H\Vert_{F_{H,\beta}}\lesssim \sup_{t_H\in[-T,T]} \big\Vert \mathcal{F}\big[\eta_0\big(H^\beta(t-t_H)\big)\tilde{u}_H\big]\big\Vert_{X_H}.
\end{align}
In fact, first of all notice that, from the definition of $\tilde{u}_H$ it follows that \[
\supp \tilde{u}_H\subseteq [-T-2^{-5}H^{-\beta},T+2^{-5}H^{-\beta}]\times \R^2.
\]
Therefore, for any $t_H>T$ and any $\tilde{t}_H\in[T-H^{-\beta},T]$, we have that \[
\eta_0\big(H^\beta(t-t_H)\big)\tilde{u}_H=\eta_0\big(H^\beta(t-t_H)\big)\eta_0\big(H^\beta(t-\tilde{t}_H)\big)\tilde{u}_H,
\]
and hence, the latter identity along with Lemma \ref{gro_gamma} imply that \[
\sup_{t_H>T}\big\Vert \mathcal{F}\big[\eta_0\big(H^\beta(\cdot-t_H)\big)\tilde{u}_H\big]\big\Vert_{X_H}\lesssim \sup_{\tilde{t}_H\in[-T,T]}\big\Vert \mathcal{F}\big[\eta_0\big(H^\beta(\cdot-t_H)\big)\tilde{u}_H\big]\big\Vert_{X_H}.
\]
Arguing similarly we obtain the same estimate for $t_H<-T$, which concludes the proof of \eqref{claim_linear}. Next, we fix $t_H\in[-T,T]$ and observe that \[
\big\Vert \mathcal{F}\big[\eta_0(H^\beta(\cdot-t_H))\tilde{u}_H\big] \big\Vert_{X_H}=\big\Vert \mathcal{F}\big[\eta_0\big(H^\beta\cdot\big)\tilde{u}_H(\cdot+t_H)\big]\big\Vert_{X_H}.
\]
On the other hand, from Duhamel principle we get that \begin{align*}
\eta_0\big(H^\beta t\big)\tilde{u}_H(t+t_H)&=m_H(t)\eta_0\big(H^\beta t\big)e^{it\partial_x(D_x^2-\partial_y^2)}P_Hu(t_H)
\\ & \quad +m_H(t)\eta_0\big(H^\beta t\big)\int_0^te^{i(t-s)\partial_x(D_x^2-\partial_y^2)}\widetilde{\eta}_0\big(H^\beta s\big)\tilde{f}_H(s+t_H)ds,
\end{align*}
where $m_H\in S_{H,\beta}$. Thus, we deduce from Lemmas \ref{time_mult_lema},  \ref{homogeneous_case} and \ref{nonhomogeneous_case}  that \begin{align*}
\big\Vert\mathcal{F}\big[\eta_0\big(H^\beta\cdot\big)\tilde{u}_H(\cdot+t_H)\big]\big\Vert_{X_H}&\lesssim \Vert P_Hu(t-H)\Vert_{L^2}
\\ & \quad +\big\Vert \big(\tau-\omega(\xi,\mu)+iH^\beta\big)^{-1}\mathcal{F}\big[\widetilde{\eta}_0\big(H^\beta\cdot\big)\tilde{f}_H(\cdot+t_H)\big] \big\Vert_{X_H}.
\end{align*}
Now we define $\tilde{u}=\sum_{H\in\mathbb{D}}\tilde{u}_H$. Notice that $\tilde{u}$ extends $u$ outside of the interval $[-T,T]$. Finally, it is not difficult to see, by direct computations, that 
\[
\big\Vert P_H\tilde{u}\big\Vert_{F_H}\lesssim \sum_{H'\sim H}\big\Vert \tilde{u}_{H'}\big\Vert_{F_{H'}}.
\]
Thus, gathering the above estimates, taking the supreme in $t_H\in[-T,T]$ and then summing over $H\in\mathbb{D}$ we conclude the proof of the proposition. 
\end{proof}

\subsection{Strichartz estimates}

In this sub-section we seek to prove some Strichartz estimates that shall be particularly useful while working outside the curves $\mu^2= 3\xi^2$. In \cite{CaKeZi},  Carbery, Kenig and Ziesler proved an optimal $L^4$-restriction theorem for homogeneous polynomial hypersurfaces in $\R^3$.

\begin{thm}[\cite{CaKeZi}]\label{CKZ_THM}
Let $\Gamma(\xi,\mu):=\big(\xi,\mu,\omega(\xi,\mu)\big)$, where $\omega(\xi,\mu)$ is an homogeneous polynomial of degree greater or equal than $2$. Then, there exists a positive constant $C>0$ (depending on $\omega$) such that \begin{align*}
\Big(\int_{\R^2}\big\vert \mathcal{F}[f]\big(\Gamma(\xi,\mu)\big)\big\vert^2 \big\vert K_\omega(\xi,\mu)\big\vert^{1/4}d\xi d\mu \Big)^{1/2}\leq C\Vert f\Vert_{L^{4/3}},
\end{align*}
for all $f\in L^{4/3}(\R^3)$, where $\vert K_\omega(\xi,\mu)\vert = \vert \mathrm{det}\,\mathrm{Hess}\,\omega(\xi,\mu)\vert $.
\end{thm}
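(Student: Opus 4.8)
The plan is to prove the estimate in its dual, \emph{extension} form and then exploit the $L^4$ structure through Plancherel. This is the affine-invariant $L^4$ restriction theorem, and the natural route follows the classical $L^4$/convolution strategy. By duality, the claimed restriction bound $\mathcal{R}:L^{4/3}(\R^3)\to L^2\big(|K_\omega|^{1/4}d\xi d\mu\big)$ is equivalent to the extension estimate
\[
\Big\Vert \int_{\R^2} g(\xi,\mu)\, e^{i(x,y,t)\cdot\Gamma(\xi,\mu)}\, |K_\omega(\xi,\mu)|^{1/4}\, d\xi\, d\mu \Big\Vert_{L^4(\R^3)} \lesssim \Vert g\Vert_{L^2(|K_\omega|^{1/4} d\xi\, d\mu)},
\]
so I would first reduce to proving this, writing $Eg$ for the operator on the left.

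Second, I would use the $L^4$--Plancherel identity. Since $\Vert Eg\Vert_{L^4}^4=\Vert (Eg)^2\Vert_{L^2}^2=\Vert \widehat{(Eg)^2}\,\Vert_{L^2}^2$, and the Fourier transform of $(Eg)^2$ is the convolution of the weighted surface measure $g\,|K_\omega|^{1/4}\,d\sigma$ with itself, the whole problem collapses to an $L^2$ bound on the self-convolution of the affine surface measure. Concretely, I would study the push-forward under the sum map
\[
\Phi\big((\xi,\mu),(\xi',\mu')\big):=\big(\xi+\xi',\, \mu+\mu',\, \omega(\xi,\mu)+\omega(\xi',\mu')\big)
\]
from $\R^4$ to $\R^3$, of the density $g(\xi,\mu)g(\xi',\mu')\,|K_\omega(\xi,\mu)|^{1/4}|K_\omega(\xi',\mu')|^{1/4}$, and bound its $L^2$ norm.

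Third comes the geometric heart. By the coarea formula for $\Phi$, the density of the self-convolution at a point $\eta$ is an integral over the one-dimensional fibre $\Phi^{-1}(\eta)$, weighted by the product of the $|K_\omega|^{1/4}$ factors divided by the Jacobian $|\nabla\omega(\xi,\mu)-\nabla\omega(\xi',\mu')|$. Taking the $L^2$ norm and applying Cauchy--Schwarz reduces the matter to showing that a fixed kernel is uniformly bounded, and this is exactly where the exponent $1/4$ is forced: the affine-invariance identity expresses the Jacobian of the doubled map as comparable to $|K_\omega(\xi,\mu)|^{1/2}|K_\omega(\xi',\mu')|^{1/2}$, up to lower-order factors that cancel against the weight. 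Homogeneity of $\omega$ is then used to rescale each dyadic piece to unit scale, so that only a compact region of the surface must be analyzed.

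The main obstacle will be the degeneracy set $\{K_\omega=0\}$ (the parabolic lines of the surface, e.g. $\mu^2=3\xi^2$ for the symbol $\omega(\xi,\mu)=\xi(|\xi|^2+\mu^2)$), where both the weight and the Jacobian vanish simultaneously; controlling their quotient there is the crux. One must check that $|K_\omega|^{1/4}$ degenerates at precisely the rate that keeps the convolution kernel bounded. I would handle this by a further dyadic decomposition in the distance to $\{K_\omega=0\}$, verifying on each piece that the competing vanishing orders balance. Establishing the Jacobian/Hessian identity and carrying out this degenerate-region bookkeeping is where essentially all the difficulty lies; once they are in place, summing the dyadic contributions and undoing the duality yields the stated $L^{4/3}\to L^2$ bound.
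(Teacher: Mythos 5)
The first thing to note is that the paper contains no proof of this statement at all: Theorem \ref{CKZ_THM} is imported verbatim from Carbery--Kenig--Ziesler \cite{CaKeZi}, and the only ``proof'' in the paper is that citation. So the comparison must be with the argument in \cite{CaKeZi}. Your outline does follow the same broad route that the literature takes for such $L^4$ affine restriction theorems: dualize to the extension operator $E$, use $\Vert Eg\Vert_{L^4}^2=\Vert (Eg)^2\Vert_{L^2}$ and Plancherel to reduce to an $L^2$ bound for the self-convolution of the $|K_\omega|^{1/4}$-weighted surface measure, and analyze that convolution by the coarea formula. As a strategy this is correct and is indeed how the $L^4$ exponent is exploited.

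However, as a proof the proposal has a genuine gap, and moreover one of its stated key facts is false. The ``affine-invariance identity'' you invoke --- that the fibre Jacobian $|\nabla\omega(\xi,\mu)-\nabla\omega(\xi',\mu')|$ is comparable to $|K_\omega(\xi,\mu)|^{1/2}|K_\omega(\xi',\mu')|^{1/2}$ up to cancelling lower-order factors --- cannot hold pointwise: the Jacobian vanishes identically on the diagonal $(\xi,\mu)=(\xi',\mu')$ even when the Hessian determinant is bounded below (for the paraboloid $\omega=\xi^2+\mu^2$ one has $K_\omega$ constant while $|\nabla\omega-\nabla\omega'|\to 0$), and near the degeneracy variety $\{K_\omega=0\}$ the relative vanishing rates depend on the real-algebraic structure of the specific homogeneous polynomial, not on a universal identity. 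What is true is an \emph{integrated} statement along the one-dimensional fibres, and establishing it is precisely the content of \cite{CaKeZi}: there one factors the Hessian determinant of a homogeneous polynomial into linear factors, decomposes the plane into sectors around these root lines on which $K_\omega$ behaves like a monomial, and proves uniform weighted one-dimensional convolution/sublevel estimates on each sector. Your sketch defers exactly this analysis (``degenerate-region bookkeeping''), which is not bookkeeping but the theorem itself; the sharpness of the exponent $1/4$ lives entirely there. A second unaddressed point is the dyadic summation: homogeneity renormalizes each annulus, but $L^4$ norms do not sum over a decomposition without almost-orthogonality, so the cross terms $\int |Eg_j|^2|Eg_k|^2$ with $j\neq k$ require a decay estimate for convolutions of measures supported on different annuli, which the plan does not supply. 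In short: the road map is the right one, but the two decisive estimates it leaves open are exactly where the proof of \cite{CaKeZi} lies, and the pointwise Jacobian claim on which the outline leans would fail as stated.
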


As a direct consequence of the previous Theorem, we have the following general Strichartz estimate.

\begin{cor}
Let $e^{it\omega(D)}$ and $\vert K_\omega(D)\vert^{1/8}$ be the Fourier multipliers associated with $e^{it\omega(\xi,\mu)}$ and $\vert K_\omega(\xi,\mu)\vert^{1/8}$, that is, \[
\mathcal{F}_{xy}\big[e^{it\omega(D)}f\big](\xi,\mu)=e^{it\omega(\xi,\mu)}\mathcal{F}_{xy}[f](\xi,\mu),
\]
and \[
\mathcal{F}_{xy}\big[\vert K_\omega(D)\vert^{1/8}f\big](\xi,\mu)=\vert K_\omega(\xi,\mu)\vert^{1/8}\mathcal{F}_{xy}[f](\xi,\mu),
\]
where $K_\omega(\xi,\mu)$ is defined as above. Then, under the hypotheses of the previous theorem, the following inequality holds \[
\big\Vert \vert K_\omega(D)\vert^{1/8}e^{it\omega(D)}f\big\Vert_{L^4_{txy}}\lesssim \Vert f\Vert_{L^2_{xy}},
\]
for all $f\in L^2(\R^2)$.
\end{cor}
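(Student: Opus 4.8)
The plan is to deduce the corollary from Theorem \ref{CKZ_THM} by a standard $T T^*$ duality argument. First I would observe that the operator $T f := |K_\omega(D)|^{1/8} e^{it\omega(D)} f$, viewed as a map from $L^2_{xy}(\R^2)$ into $L^4_{txy}(\R^3)$, has an adjoint $T^*$ whose composition $T T^*$ is given by a space-time Fourier restriction/extension along the hypersurface $\Gamma(\xi,\mu) = (\xi,\mu,\omega(\xi,\mu))$. Concretely, writing out the kernel of $T$ shows that for a space-time function $g(t,x,y)$ one has, up to constants,
\[
\mathcal{F}_{xy}\big[T^* g\big](\xi,\mu) = |K_\omega(\xi,\mu)|^{1/8}\int_\R e^{-it\omega(\xi,\mu)}\mathcal{F}_{xy}[g](t,\xi,\mu)\,dt = |K_\omega(\xi,\mu)|^{1/8}\,\mathcal{F}[g]\big(\Gamma(\xi,\mu)\big),
\]
so that $\|T^* g\|_{L^2_{xy}}^2 = \int_{\R^2} |\mathcal{F}[g](\Gamma(\xi,\mu))|^2 |K_\omega(\xi,\mu)|^{1/4}\,d\xi\,d\mu$.

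The key step is then to recognize that the right-hand side of the last identity is exactly the quantity controlled by Theorem \ref{CKZ_THM}. Indeed, the restriction estimate of Carbery, Kenig and Ziesler gives precisely
\[
\Big(\int_{\R^2}\big|\mathcal{F}[g]\big(\Gamma(\xi,\mu)\big)\big|^2\,\big|K_\omega(\xi,\mu)\big|^{1/4}\,d\xi\,d\mu\Big)^{1/2}\lesssim \|g\|_{L^{4/3}_{txy}},
\]
which reads as $\|T^* g\|_{L^2_{xy}} \lesssim \|g\|_{L^{4/3}}$. Since $L^{4/3}$ is the dual exponent of $L^4$, this says $T^*\colon L^{4/3}_{txy}\to L^2_{xy}$ is bounded. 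By the duality of bounded linear operators, the boundedness of $T^*$ from $L^{4/3}$ to $L^2$ is equivalent to the boundedness of $T$ from $L^2$ to $L^4$, with the same operator norm, and this is exactly the claimed estimate $\||K_\omega(D)|^{1/8} e^{it\omega(D)} f\|_{L^4_{txy}}\lesssim \|f\|_{L^2_{xy}}$.

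I expect the main obstacle to be purely bookkeeping rather than conceptual: one must carefully verify the identity $\mathcal{F}_{xy}[T^*g](\xi,\mu) = |K_\omega(\xi,\mu)|^{1/8}\,\mathcal{F}[g](\Gamma(\xi,\mu))$, keeping track of which Fourier transform (space, time, or full space-time) is being used, and confirming that the multiplier $|K_\omega(D)|^{1/8}$ is self-adjoint so that it distributes correctly onto $T^*$. Since $\omega$ is real-valued and homogeneous, $e^{it\omega(D)}$ is unitary on $L^2_{xy}$ for each fixed $t$ and $|K_\omega(D)|^{1/8}$ is a real Fourier multiplier, hence self-adjoint, so no difficulty arises there. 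The only mild technical care needed is to run the argument first for Schwartz functions, where all integrals converge absolutely and Fubini applies, and then extend by density to general $f\in L^2$ and $g\in L^{4/3}$; this is routine given the quantitative bound. Thus the entire corollary reduces to the $T T^*$ dictionary applied to Theorem \ref{CKZ_THM}.
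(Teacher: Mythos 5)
Your proposal is correct and follows essentially the same route as the paper: the paper also dualizes the estimate, reduces via Cauchy--Schwarz to the bound $\Vert T^*g\Vert_{L^2_{xy}}\lesssim \Vert g\Vert_{L^{4/3}_{txy}}$, identifies $\mathcal{F}_{xy}[T^*g](\xi,\mu)$ with $c\,\vert K_\omega(\xi,\mu)\vert^{1/8}\mathcal{F}[g](\Gamma(\xi,\mu))$, and concludes by Theorem \ref{CKZ_THM}. Your framing as a $T^*$-duality argument (you never actually need the full $TT^*$ composition, only the adjoint bound, just as in the paper) and your remarks on density and self-adjointness of the multiplier are accurate bookkeeping of the same proof.
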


\begin{proof}
In fact, by duality we infer that it suffices to prove that \[
\int_{\R^3}\vert K_\omega(D)\vert^{1/8}e^{it\omega(D)}f(x,y)g(t,x,y)dxdydt \lesssim \Vert f\Vert_{L^2_{xy}}\Vert g\Vert_{L^{4/3}_{txy}}.
\]
Now, in order to prove the latter estimate, a direct application of Cauchy-Schwarz inequality show us that, it suffices, in turn, to prove that \begin{align}\label{ckz_cor_suffices}
\Big\Vert \int_\R \vert K_\omega(D)\vert^{1/8}e^{-it\omega(D)} g(t,x,y)dt\Big\Vert_{L^2_{xy}}\lesssim \Vert g\Vert_{L^{4/3}_{txy}}.
\end{align}
However, by direct computations we see that \[
\mathcal{F}_{xy}\Big[\int_{\R}\vert K_\omega(D)\vert^{1/8}e^{-it\omega(D)}g(t,x,y)dt\Big](\xi,\mu)=c \big\vert K_\omega(\xi,\mu)\big\vert^{1/8}\mathcal{F}_{txy}[f]\big(\omega(\xi,\mu),\xi,\mu\big).
\]
Therefore, estimate \eqref{ckz_cor_suffices} follows directly from the previous identity along with an application of Theorem \ref{CKZ_THM}. The proof is complete.
\end{proof}

In the particular case of the Zakharov-Kuznetsov equation, that is, in the case where $\omega(D)=-\partial_x\Delta$, we have that $\vert K_\omega(\xi,\mu)\vert =\vert 3\xi^2-\mu^2\vert$. Then, the previous Corollary translates into the following property.
\begin{prop}\label{Str_ZK_prop_final}
Let $\vert K(D)\vert^{1/8}$ be the Fourier multiplier associated with $\vert K(\xi,\mu)\vert^{1/8}=\vert 3\xi^2-\mu^2\vert^{1/8}$. Then, it holds that \begin{align*}
\big\Vert \vert K(D)\vert^{1/8}e^{-t\partial_x\Delta}u_0\big\Vert_{L^4_{txy}}\lesssim \Vert u_0\Vert_{L^2},
\end{align*}
for all $u_0\in L^2(\R^2)$. Moreover, for all $u\in X^{0,1/2,1}$, it also holds that \begin{align}\label{Str_ZK}
\big\Vert \vert K(D)\vert^{1/8}u\big\Vert_{L^4_{txy}}\lesssim \Vert u\Vert_{X^{0,1/2,1}}.
\end{align}
\end{prop}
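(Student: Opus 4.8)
The plan is to derive the first (homogeneous) bound as an immediate specialization of the preceding Corollary, and then to upgrade it to the $X^{0,1/2,1}$ estimate \eqref{Str_ZK} by a transference argument tailored to the $\ell^1$-Besov modulation structure.

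For the homogeneous estimate I would apply the preceding Corollary with $\omega(\xi,\mu)=\xi(\xi^2+\mu^2)$, a homogeneous polynomial of degree $3$, so that the hypotheses of Theorem \ref{CKZ_THM} hold. A direct computation of the Hessian gives
\[
\mathrm{Hess}\,\omega(\xi,\mu)=\begin{pmatrix} 6\xi & 2\mu \\ 2\mu & 2\xi\end{pmatrix},
\qquad K_\omega(\xi,\mu)=\det\mathrm{Hess}\,\omega(\xi,\mu)=4\,(3\xi^2-\mu^2).
\]
Hence $|K_\omega(\xi,\mu)|^{1/8}=c\,|3\xi^2-\mu^2|^{1/8}=c\,|K(\xi,\mu)|^{1/8}$, and the Corollary yields $\big\Vert |K(D)|^{1/8}e^{-t\partial_x\Delta}u_0\big\Vert_{L^4_{txy}}\lesssim\Vert u_0\Vert_{L^2}$ directly.

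For \eqref{Str_ZK} I would use the transference principle. Setting $\theta=\tau-\omega(\xi,\mu)$ and defining a family of profiles $f_\theta$ by $\mathcal{F}_{xy}[f_\theta](\xi,\mu):=\hat u(\omega(\xi,\mu)+\theta,\xi,\mu)$, Fourier inversion represents $u$ as a superposition of modulated free evolutions,
\[
u(t,x,y)=c\int_\R e^{it\theta}\big(e^{-t\partial_x\Delta}f_\theta\big)(x,y)\,d\theta.
\]
Applying $|K(D)|^{1/8}$, taking the $L^4_{txy}$-norm, and using Minkowski's inequality (valid since $4\geq 1$) to move the norm inside the $\theta$-integral, each slice is controlled by the homogeneous estimate just obtained, so that $\big\Vert |K(D)|^{1/8}u\big\Vert_{L^4_{txy}}\lesssim\int_\R\Vert f_\theta\Vert_{L^2_{xy}}\,d\theta$. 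It then remains to bound the right-hand side by $\Vert u\Vert_{X^{0,1/2,1}}$.

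To close, I would decompose the $\theta$-integral dyadically over $|\theta|\sim L$, $L\in\mathbb{D}$. On each shell, Cauchy-Schwarz in $\theta$ (whose support has measure $\sim L$) gains a factor $L^{1/2}$, so that, by Plancherel and the change of variables $\tau=\omega(\xi,\mu)+\theta$, one has $\int_{|\theta|\sim L}\Vert f_\theta\Vert_{L^2_{xy}}\,d\theta\lesssim L^{1/2}\big\Vert \eta_L(\tau-\omega(\xi,\mu))\hat u\big\Vert_{L^2}$. Summing over the dyadic scales reproduces exactly the $\ell^1$-type norm $\sum_{L\in\mathbb{D}} L^{1/2}\Vert \eta_L(\tau-\omega(\xi,\mu))\hat u\Vert_{L^2}=\Vert u\Vert_{X^{0,1/2,1}}$. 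The only delicate point is that $b=1/2$ is precisely the borderline value at which the naive $\ell^2$ transference would lose a logarithm; it is the $\ell^1$-summability built into the $X^{0,1/2,1}$ norm that lets the Minkowski-plus-Cauchy-Schwarz scheme close without loss, and this is the step that must be arranged with care.
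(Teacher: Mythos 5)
Your proposal is correct and follows essentially the paper's own route: the homogeneous bound is obtained exactly as in the text, by specializing the preceding Corollary to $\omega(\xi,\mu)=\xi(\xi^2+\mu^2)$ and computing $\det\mathrm{Hess}\,\omega=4(3\xi^2-\mu^2)$, the constant being harmless. The paper states the $X^{0,1/2,1}$ bound \eqref{Str_ZK} without proof, treating it as the standard transference principle, and your argument --- representing $u$ as a superposition of modulated free evolutions, applying Minkowski, and then dyadic Cauchy--Schwarz in the modulation variable, with the $\ell^1$-summability of the Besov structure closing the borderline $b=1/2$ case --- is precisely the canonical proof of that step, correctly executed.
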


\begin{rem}
In the previous statement, the $X^{0,1/2,1}$ space denotes the $\ell^1$-Besov version of the classical Bourgain space of regularity $1/2$ associated with the linear part of the Zakharov-Kuznetsov equation \eqref{zk}.
\end{rem}

\medskip

\section{$L^2$ bilinear estimates}\label{sec_bilinear_estimates}

Before going further let us recall that $\omega(\xi,\mu)=\xi(\xi^2+\mu^2)$ and \begin{align}\label{resonant}
\Omega(\xi_1,\mu_1,\xi_2,\mu_2)&=\omega(\xi_1+\xi_2,\mu_1+\mu_2)-\omega(\xi_1,\mu_1)-\omega(\xi_2,\mu_2)
\\ &= \xi_3^3-\xi_1^3-\xi_2^3+\xi_1\mu_2\mu_3+\xi_2\mu_1\mu_3+\xi_3\mu_1\mu_2,\nonumber
\end{align}
where we are denoting by $\xi_3:=\xi_1+\xi_2$ and $\mu_3:=\mu_1+\mu_2$. Now, for $N,H,L\in\mathbb{D}$ dyadic numbers, we define the sets $D_{N,H,L}$ and $D_{\infty,H,L}$ by 
\begin{align}\label{DNHL_set}
D_{N,H,L}&:=\{(\tau,\xi,\mu)\in\R^3: \, \xi\in I_N, \, (\xi,\mu)\in \Delta_H \hbox{ and } \vert\tau-\omega(\xi,\mu)\vert\leq L\},
\\ D_{\infty,H,L}&:=\{(\tau,\xi,\mu)\in\R^3: \, (\xi,\mu)\in \Delta_H \hbox{ and } \vert\tau-\omega(\xi,\mu)\vert\leq L\}=\cup_{N\in\mathbb{D}} D_{N,H,L}.\nonumber
\end{align}
The main result of this subsection are the following bilinear estimates, which shall be useful throughout the rest of this article. We refer to \cite{MoPi,RiVe} for some similar bilinear estimates.
\begin{lem}\label{lemma_bilineal}
Let $N_i,H_i,L_i\in\mathbb{D}$ be dyadic numbers and assume that  $f_i:\R^3\to\R_+$ are $L^2(\R^3)$ functions supported in $D_{\infty,H_i,L_i}$, for $i=1,2,3$.
\begin{enumerate}
\item[a.] Then, the following bound holds \begin{align}\label{general_case_bilinear}
\int_{\R^3} (f_1*f_2)\cdot f_3 \lesssim H_\mathrm{min}^{1/2}L_{\mathrm{min}}^{1/2}\Vert f_1\Vert_{L^2}\Vert f_2\Vert_{L^2}\Vert f_3\Vert_{L^2}.
\end{align}
\item[b.] Let us suppose that $H_{\mathrm{min}}\ll H_{\mathrm{max}}$. If we are in the case where $(H_i,L_i)=(H_{\mathrm{min}}, L_{\mathrm{max}})$ for some $i\in\{1,2,3\}$, then the following inequality holds
\begin{align}\label{hmin_ll_hmax_bc}
\int_{\R^3} (f_1*f_2)\cdot f_3 \lesssim  H_\mathrm{max}^{-1/2}H_\mathrm{min}^{1/4}L_\mathrm{min}^{1/2} L_{\mathrm{max}}^{1/2}  \Vert f_1\Vert_{L^2}\Vert f_2\Vert_{L^2}\Vert f_3\Vert_{L^2}.
\end{align}
Otherwise we have that \begin{align}\label{hmin_ll_hmax_aoc}
\int_{\R^3} (f_1*f_2)\cdot f_3 \lesssim    H_\mathrm{max}^{-1/2}H_\mathrm{min}^{1/4}L_\mathrm{min}^{1/2}L_{\mathrm{med}}^{1/2}\Vert f_1\Vert_{L^2}\Vert f_2\Vert_{L^2}\Vert f_3\Vert_{L^2}.
\end{align}
\item[c.] If $H_\mathrm{min}\sim H_\mathrm{max}$ and $f_i$ are supported in $D_{N_i,H_i,L_i}$ for $i=1,2,3$, then \begin{align}\label{hmin_sim_hmax_bilinear}
\int_{\R^3} (f_1*f_2)\cdot f_3 \lesssim    N^{-1}_\mathrm{max}H_\mathrm{min}^{1/4}L_\mathrm{med}^{1/2}L_{\mathrm{max}}^{1/2}\Vert f_1\Vert_{L^2}\Vert f_2\Vert_{L^2}\Vert f_3\Vert_{L^2}.
\end{align}
\end{enumerate}
\end{lem}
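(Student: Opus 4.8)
The plan is to reduce everything to a single master estimate obtained by Cauchy--Schwarz in the convolution structure, and then extract the dyadic gains from the geometry of the resonance function $\Omega$ and the Strichartz estimate of Proposition \ref{Str_ZK_prop_final}. Writing $J:=\int_{\R^3}(f_1*f_2)\cdot f_3$ and using that $\int (f_1*f_2)f_3$ is symmetric under the obvious relabelling, I would first observe that $J$ is a restricted convolution integral supported on the set where $(\tau_1,\xi_1,\mu_1)+(\tau_2,\xi_2,\mu_2)=(\tau_3,\xi_3,\mu_3)$, and that on this set the modulation variables satisfy the algebraic identity
\begin{equation}\label{mod_resonance}
(\tau_1-\omega(\xi_1,\mu_1))+(\tau_2-\omega(\xi_2,\mu_2))-(\tau_3-\omega(\xi_3,\mu_3))=-\,\Omega(\xi_1,\mu_1,\xi_2,\mu_2).
\end{equation}
Since each $f_i$ is supported where $|\tau_i-\omega|\leq L_i$, the left-hand side of \eqref{mod_resonance} is $\lesssim L_{\mathrm{max}}$ in absolute value, which forces $|\Omega|\lesssim L_{\mathrm{max}}$. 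This is the fundamental constraint that couples the modulation sizes $L_i$ to the spatial-frequency geometry, and it is what produces the various gains in parts (b) and (c).

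For part (a) I would prove the coarse bound \eqref{general_case_bilinear} by the standard almost-orthogonality / Cauchy--Schwarz argument: bounding $J\leq \|f_1*f_2\|_{L^2}\|f_3\|_{L^2}$ is too lossy, so instead I would estimate $J$ by first fixing the output variables and applying Cauchy--Schwarz in the internal integration variable, so that $J\lesssim \big(\sup|E|\big)^{1/2}\prod_i\|f_i\|_{L^2}$, where $|E|$ is the measure of the slice of the support set over which the convolution is integrated. The factor $L_{\mathrm{min}}^{1/2}$ comes from the fact that one of the three modulation localizations restricts an integration interval to length $\lesssim L_{\mathrm{min}}$, and the factor $H_{\mathrm{min}}^{1/2}$ comes from the measure of the annular region $\Delta_{H_{\mathrm{min}}}$ in the appropriate frequency variable; here I would choose to integrate out the function carrying the smallest $H$ and smallest $L$. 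This is the pure measure-theoretic bound with no use of the resonance \eqref{mod_resonance}.

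For parts (b) and (c) the improvement over \eqref{general_case_bilinear} comes from trading part of the crude measure estimate for the dispersive/transversality gain. In the regime $H_{\mathrm{min}}\ll H_{\mathrm{max}}$ of part (b), the key point is that $h(\xi,\mu)=3\xi^2+\mu^2$ (the first component of the group velocity) is strongly varying, so on the support set the identity $\partial_\xi\Omega$ is comparable to a difference of $h$-values, and the constraint $|\Omega|\lesssim L_{\mathrm{max}}$ localizes one frequency variable to an interval whose length carries the factor $H_{\mathrm{max}}^{-1/2}H_{\mathrm{min}}^{1/4}$ after balancing against the annulus measure; one then pays $L_{\mathrm{min}}^{1/2}L_{\mathrm{max}}^{1/2}$ (resp. $L_{\mathrm{min}}^{1/2}L_{\mathrm{med}}^{1/2}$) depending on whether the pair $(H_{\mathrm{min}},L_{\mathrm{max}})$ is attached to the same input. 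For part (c), where $H_{\mathrm{min}}\sim H_{\mathrm{max}}$ and the stronger support hypothesis $f_i\subset D_{N_i,H_i,L_i}$ is available, the additional localization in $\xi\in I_{N_i}$ lets me replace the $H$-annulus gain by the sharper factor $N_{\mathrm{max}}^{-1}$ coming from the size of the set where $|\Omega|\lesssim L_{\mathrm{max}}$ once the $\xi$-variables are pinned at scale $N_{\mathrm{max}}$; I would carry this out by an explicit lower bound on $|\partial\Omega|$ transverse to the resonance. In both parts the cleanest route is to combine \eqref{mod_resonance} with the $L^4$-Strichartz estimate \eqref{Str_ZK}, using the weight $|K(\xi,\mu)|^{1/8}=|3\xi^2-\mu^2|^{1/8}$ to run a bilinear $L^4\times L^4\to L^2$ argument away from the degenerate lines $\mu^2=3\xi^2$, and treating the region near those lines separately by the measure bound, since that is precisely where the Strichartz weight degenerates.

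The main obstacle I anticipate is the case analysis near the curves $\mu^2=3\xi^2$, where $K_\omega$ vanishes and the Strichartz estimate gives no gain. There one cannot rely on Proposition \ref{Str_ZK_prop_final} and must instead extract the needed powers of $H$ and $N$ purely from the measure of the resonant region and from a careful lower bound on the relevant derivative of $\Omega$; verifying that these two mechanisms (Strichartz away from the degeneracy, direct measure estimate near it) patch together to give the uniform bounds \eqref{hmin_ll_hmax_bc}--\eqref{hmin_sim_hmax_bilinear} in every configuration of which index realizes $H_{\mathrm{min}}$, $H_{\mathrm{med}}$, $H_{\mathrm{max}}$ is the delicate bookkeeping at the heart of the proof.
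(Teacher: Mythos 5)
Your proposal follows essentially the same route as the paper's proof: part (a) via the Cauchy--Schwarz/almost-orthogonality measure bound giving $H_{\mathrm{min}}^{1/2}L_{\mathrm{min}}^{1/2}$, part (b) via a change of variables exploiting the transversality $\vert\partial_{\xi_2}\Omega\vert=\vert h(\xi_1,\mu_1)-h(\xi_2,\mu_2)\vert\sim H_{\mathrm{max}}$ together with the $H_{\mathrm{min}}^{1/2}$-sized range of the remaining $\mu$-variable, and part (c) via a decomposition into regions near and away from the degenerate lines $\mu^2=3\xi^2$, using the Carbery--Kenig--Ziesler $L^4$ Strichartz estimate away from the degeneracy and measure/transversality lower bounds on derivatives of $\Omega$ (which is what the paper's Lemma \ref{lem_bil_incurve} and its sign case analysis provide) near it. Apart from an inconsequential sign slip in your modulation identity (the sum of signed modulations equals $+\Omega$, not $-\Omega$, which does not affect the constraint $\vert\Omega\vert\lesssim L_{\mathrm{max}}$), the mechanisms you name are exactly those the paper implements.
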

In order to prove Lemma \ref{lemma_bilineal} we shall the following definitions. For $\delta\in(0,1)$ sufficiently small, we define the sets \[
A_\delta:=\big\{(\xi,\mu)\in\R^2: \, (3-\delta)\xi^2\leq \mu^2 \leq (3+\delta)\xi^2\big\}.
\]
Now consider $\rho\in C^\infty(\R)$, satisfying that $0\leq \rho\leq 1$ and such that $\rho\equiv 0$ on $[-\tfrac{1}{2},\tfrac{1}{2}]$ and $\rho(\xi)\equiv1$ for all $\vert\xi\vert\geq 1$. We also denote by $\rho_\delta$ the function given by $\rho_\delta(\xi):=\rho(\xi/\delta)$. Notice that with these definitions we have that \begin{align}\label{rho_char_identity}
\rho_\delta\left(3-\dfrac{\mu^2}{\xi^2}\right)\mathds{1}_{A_\delta^c}(\xi,\mu)=\mathds{1}_{A_\delta^c}(\xi,\mu) \quad \hbox{ for all } \quad (\xi,\mu)\in\R^*\times \R,
\end{align}
as well as \[
\rho_\delta\left(3-\dfrac{\mu^2}{\xi^2}\right)\equiv0 \quad \hbox{ for all } \quad (\xi,\mu)\in A_{\delta/2}.
\]
From the last two identities we infer that we can see $\rho_\delta(3-\mu^2/\xi^2)$ as a sort of smooth version of $\mathds{1}_{A_\delta^c}$. The following technical lemma will be important in proving the above bilinear estimates.

\begin{lem}\label{lem_bil_incurve}
Let $\delta\in(0,1)$ be fixed. Consider $(\xi_i,\mu_i)\in\R^2$ for $i=1,2$, satisfying that \[
(\xi_1,\mu_1),(\xi_2,\mu_2)\in A_\delta, \qquad \xi_1\xi_2<0, \quad \ \hbox{ and } \ \quad \mu_1\mu_2<0.
\]
Then, it holds that \[
h(\xi_1+\xi_2,\mu_1+\mu_2)\leq \vert h(\xi_1,\mu_1)-h(\xi_2,\mu_2)\vert+f(\delta)\max\{h(\xi_1,\mu_1),h(\xi_2,\mu_2)\},
\]
where $f$ is a continuous function on $[0,1]$ satisfying that $\lim_{\delta \to 0^+} f(\delta)=0$.
\end{lem}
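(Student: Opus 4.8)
The plan is to reduce the statement to an exact algebraic identity for $h(\xi_1+\xi_2,\mu_1+\mu_2)$ and then to read off the claimed bound by a completing-the-square argument, treating $\delta>0$ as a perturbation of the exactly resonant case $\mu_i^2=3\xi_i^2$. Throughout I write $\xi_3=\xi_1+\xi_2$, $\mu_3=\mu_1+\mu_2$ and $h_i:=h(\xi_i,\mu_i)=3\xi_i^2+\mu_i^2$.

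First I would expand. Since $h$ is a quadratic form, a direct computation gives
\[
h(\xi_3,\mu_3)=h_1+h_2+6\xi_1\xi_2+2\mu_1\mu_2.
\]
The two sign hypotheses $\xi_1\xi_2<0$ and $\mu_1\mu_2<0$ turn the cross terms into $-6|\xi_1||\xi_2|-2|\mu_1||\mu_2|$, which is precisely the subtractive sign needed to complete a square. Note also that on $A_\delta$ neither $\xi_i$ nor $\mu_i$ can vanish: if $\xi_i=0$ then the defining inequalities force $\mu_i=0$, contradicting $\mu_1\mu_2<0$. Hence every quantity below is well-defined.

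Next I would encode membership in $A_\delta$ by writing $\mu_i^2=(3+\epsilon_i)\xi_i^2$ with $\epsilon_i\in[-\delta,\delta]$. Then $h_i=(6+\epsilon_i)\xi_i^2$ and $|\mu_i|=\sqrt{3+\epsilon_i}\,|\xi_i|$, so that $|\xi_i|=\sqrt{h_i/(6+\epsilon_i)}$ and
\[
6|\xi_1||\xi_2|+2|\mu_1||\mu_2|=\big(6+2\sqrt{(3+\epsilon_1)(3+\epsilon_2)}\big)|\xi_1||\xi_2|=C(\epsilon_1,\epsilon_2)\sqrt{h_1h_2},
\]
where $C(\epsilon_1,\epsilon_2):=\dfrac{6+2\sqrt{(3+\epsilon_1)(3+\epsilon_2)}}{\sqrt{(6+\epsilon_1)(6+\epsilon_2)}}$. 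The crucial point is that $C$ is continuous near the origin and $C(0,0)=2$, which is exactly the coefficient producing a perfect square.

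Finally I would split
\[
h(\xi_3,\mu_3)=h_1+h_2-C\sqrt{h_1h_2}=\big(\sqrt{h_1}-\sqrt{h_2}\big)^2+(2-C)\sqrt{h_1h_2}.
\]
The first term is bounded by $|h_1-h_2|$, since $\big(\sqrt{h_1}-\sqrt{h_2}\big)^2\le|\sqrt{h_1}-\sqrt{h_2}|\,(\sqrt{h_1}+\sqrt{h_2})=|h_1-h_2|$. For the second term I use $\sqrt{h_1h_2}\le\max\{h_1,h_2\}$, so that $(2-C)\sqrt{h_1h_2}\le|2-C(\epsilon_1,\epsilon_2)|\,\max\{h_1,h_2\}$ regardless of the sign of $2-C$. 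Setting $f(\delta):=\sup_{|\epsilon_1|,|\epsilon_2|\le\delta}|2-C(\epsilon_1,\epsilon_2)|$ yields a non-decreasing continuous function on $[0,1]$ with $f(0)=0$, by continuity of $C$ at $(0,0)$, and combining the two bounds gives the claim. There is no genuine analytic obstacle here; the only delicate point is the algebraic bookkeeping that isolates the perfect square $(\sqrt{h_1}-\sqrt{h_2})^2$ and the verification that the residual coefficient $2-C$ is uniformly small on $A_\delta$, which is where both sign conditions are essential.
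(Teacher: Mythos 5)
Your proof is correct, and it takes a genuinely different route from the paper's. The paper's proof first normalizes $h(\xi_1,\mu_1)\ge h(\xi_2,\mu_2)$, reduces the claim to the one-sided inequality $3\big((\xi_1+\xi_2)^2+\xi_2^2-\xi_1^2\big)+2\mu_2(\mu_1+\mu_2)\le f(\delta)h(\xi_1,\mu_1)$, and then estimates the $\mu$-cross term and the $\xi$-cross term separately, splitting into the cases $\vert\xi_1\vert\le\vert\xi_2\vert$ and $\vert\xi_1\vert>\vert\xi_2\vert$ and assembling $f$ from explicit auxiliary functions $f_1,f_2$ built on the comparison $\xi_2^2\le\frac{6+\delta}{6-\delta}\xi_1^2$. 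You instead exploit that $h$ is a quadratic form to obtain the exact identity $h(\xi_1+\xi_2,\mu_1+\mu_2)=h_1+h_2-C(\epsilon_1,\epsilon_2)\sqrt{h_1h_2}=(\sqrt{h_1}-\sqrt{h_2})^2+(2-C)\sqrt{h_1h_2}$, where the parametrization $\mu_i^2=(3+\epsilon_i)\xi_i^2$, $\vert\epsilon_i\vert\le\delta$, encodes membership in $A_\delta$, and both sign hypotheses are used precisely to give the cross terms the negative sign. Your version is fully symmetric (no WLOG, no case split), makes transparent why $C(0,0)=2$ is the perfect-square threshold — i.e.\ why the resonant rays $\mu^2=3\xi^2$ are the borderline configuration — and produces an explicit monotone modulus $f(\delta)=\sup_{\vert\epsilon_1\vert,\vert\epsilon_2\vert\le\delta}\vert 2-C(\epsilon_1,\epsilon_2)\vert$, continuous with $f(0)=0$ since $C$ is continuous on $[-1,1]^2$ (the denominators $6+\epsilon_i$, $3+\epsilon_i$ stay bounded away from zero). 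The paper's argument, by contrast, needs only crude term-by-term bounds and fits the style of the surrounding estimates. Two small points you handled correctly and which are genuinely necessary: the verification that $\xi_i\neq0$ on $A_\delta$ under $\mu_1\mu_2<0$ (so the parametrization and the divisions are legitimate), and the bound $(2-C)\sqrt{h_1h_2}\le\vert 2-C\vert\max\{h_1,h_2\}$ regardless of the sign of $2-C$, together with $(\sqrt{h_1}-\sqrt{h_2})^2\le\vert h_1-h_2\vert$.
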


\begin{proof}
Indeed, first of all notice that without loss of generality we can assume that $h(\xi_1,\mu_1)\geq h(\xi_2,\mu_2)$. Therefore, by using the definition of $h(\cdot,\cdot)$ we infer that it is enough to prove that \[
3\big(( \xi_1+\xi_2)^2+\xi_2^2-\xi_1^2\big)+2\mu_2(\mu_1+\mu_2)\leq f(\delta)h(\xi_1,\mu_1).
\]
In fact, notice that if $(\xi_2,\mu_2)\in A_\delta$, then we have that \[
(6-\delta)\xi_2^2\leq 3\xi_2^2+\mu_2^2=h(\xi_2,\mu_2),
\]
which, since $(\xi_1,\mu_1)\in A_\delta$ and $h(\xi_2,\mu_2)\leq h(\xi_1,\mu_1)$ as well, in turn implies that \begin{align}\label{tech_estimacion_base}
\xi_2^2\leq \left(\dfrac{6+\delta}{6-\delta}\right)\xi_1^2\xrightarrow{\delta\to0}\xi_1^2.
\end{align}
Then, using the latter inequality, the fact that $\mu_1\mu_2<0$ and $(\xi_i,\mu_i)\in A_\delta$, we get that \begin{align*}
\mu_2(\mu_1+\mu_2)\leq (3+\delta)\xi_2^2-(3-\delta)\vert \xi_1\xi_2\vert\leq f_1(\delta)\xi_2^2\xrightarrow{\delta \to0}\xi_2^2,
\end{align*} 
where $f_1(\delta)$ is given by \[
f_1(\delta):= 3+\delta-(3-\delta)\left(\dfrac{6+\delta}{6-\delta}\right)^{-1/2}.
\]
Now on the one-hand, by using inequality \eqref{tech_estimacion_base} and the fact that $\xi_1\xi_2<0$, we infer that, if $\vert\xi_1\vert\leq \vert\xi_2\vert$, then we have that \[
( \xi_1+\xi_2)^2+\xi_2^2-\xi_1^2=\big( \vert\xi_2\vert-\vert\xi_1\vert\big)^2 +\xi_2-\xi_1^2\leq f_2(\delta)\xi_1^2\xrightarrow{\delta\to0}\xi_1^2,
\]
where \[
f_2(\delta):=\Big(\left(\tfrac{6+\delta}{6-\delta}\right)^{1/2}-1\Big)^2+\tfrac{6+\delta}{6-\delta}-1.
\]
On the other hand, if $\vert \xi_1\vert >\vert\xi_2\vert$, it is enough to notice that \[
( \xi_1+\xi_2)^2+\xi_2^2-\xi_1^2=\big( \vert\xi_1\vert-\vert\xi_2\vert\big)^2 +\xi_2^2-\xi_1^2\leq 0.
\]
Gathering the above computations we conclude the proof of the lemma.
\end{proof}

\begin{proof}[Proof of Lemma \ref{lemma_bilineal}]
In fact, let us start by proving estimate \eqref{general_case_bilinear}. First of all notice that
\begin{align}\label{definition_I_bilineal}
I:=\int_{\R^3} (f_1*f_2)\cdot f_3 = \int_{\R^3} (\widetilde{f}_1*f_3)\cdot f_2 = \int_{\R^3} (\widetilde{f}_2*f_3)\cdot f_1,
\end{align}
where $\widetilde{f}_i(\tau,\xi,\mu):=f_i(-\tau,-\xi,-\mu)$, $i=1,2$. Then, in view of the above symmetry, without loss of generality we can assume that $L_1=L_{\mathrm{min}}$. Now define $f_i^\#(\theta,\xi,\mu):=f_i(\theta+\omega(\xi,\mu),\xi,\mu)$, for $i=1,2,3$. Clearly we have that $\Vert f_i^\#\Vert_{L^2}=\Vert f_i\Vert_{L^2}$. Also, due to the assumptions on $f_i$, it follows that the functions $f_i^\#$ are supported in the sets \[
D_{\infty,H_i,L_i}^\#:=\{(\theta,\xi,\mu)\in\R^3:\, (\xi,\mu)\in \Delta_{H_i} \hbox{ and } \vert \theta\vert\leq L_i\}.
\]
Then, expanding the convolution, changing variables and using the above definitions, we rewrite the left-hand side of \eqref{definition_I_bilineal} as \begin{align}\label{definition_I_modi_bilinear}
I:=\int_{\R^6} f_1^\#(\theta_1,\xi_1,\mu_1)f_2^\#(\theta_2,\xi_2,\mu_2)f_3^\#\big(\theta_1+\theta_2+\Omega(\xi_1,\mu_1,\xi_2,\mu_2),\xi_1+\xi_2,\mu_1+\mu_2\big),
\end{align}
where $\Omega(\xi_1,\mu_1,\xi_2,\mu_2)$ is the resonant function defined in \eqref{resonant}. Now, for $i=1,2,3$ we define $F_i(\xi,\mu)$ as follows \[
F_i(\xi,\mu):=\left(\int_\R f_i^\#(\theta,\xi,\mu)^2d\theta\right)^{1/2}.
\]
Therefore, by using the Cauchy-Schwarz inequality along with Young inequalities in the $\theta$ variable, we obtain that \begin{align*}
I&\lesssim \int_{\R^4} \big\Vert f_1^\#(\cdot,\xi_1,\mu_1)\big\Vert_{L^1_\theta} F_2(\xi_2,\mu_2)F_3(\xi_1+\xi_2,\mu_1+\mu_2)d\xi_1d\mu_1d\xi_2d\mu_2
\\ & \lesssim  L_\mathrm{min}^{1/2} \int_{\R^4} F_1(\xi_1,\mu_1) F_2(\xi_2,\mu_2)F_3(\xi_1+\xi_2,\mu_1+\mu_2)d\xi_1d\mu_1d\xi_2d\mu_2.
\end{align*}
Then, estimate \eqref{general_case_bilinear} follows from the latter inequality by applying the same argument in the $\xi$ and $\mu$ variables, recalling that $\Vert \mathds{1}_{H_\mathrm{min}}(\xi,\mu)\Vert_{L^2}\sim H_\mathrm{min}^{\frac{1}{4}+\frac{1}{4}}$.

\medskip

We now seek to prove  inequality \eqref{hmin_ll_hmax_aoc}. Once again, in view of \eqref{definition_I_bilineal}, without loss of generality we assume that $L_\mathrm{max}=L_3$ and $H_\mathrm{min}=H_2$. Notice that, it suffices to prove that, if $g_i:\R^2\to\R_+$ are $L^2$ functions supported in $\Delta_{H_i}$ for $i=1,2$ and $g:\R^3\to\R_+$ is an $L^2$ function supported in $D^\#_{\infty,H_3,L_3}$, then \begin{align}\label{defi_J_bilineal}
J(g_1,g_2,g):=\int_{\R^4} g_1(\xi_1,\mu_1)g_2(\xi_2,\mu_2)g\Big(\Omega(\xi_1,\mu_1,\xi_2,\mu_2),\xi_1+\xi_2,\mu_1+\mu_2\Big)
\end{align}
satisfies that \begin{align}\label{claim_bilinear_1}
J(g_1,g_2,g)\lesssim H^{-1/2}_1H^{1/4}_2\Vert g_1\Vert_{L^2}\Vert g_2\Vert_{L^2}\Vert g\Vert_{L^2}.
\end{align}
Indeed, if estimate \eqref{claim_bilinear_1} holds, then we can define $g_i(\xi_i,\mu_i):=f_i^\#(\theta_i,\xi_i,\mu_i)$, $i=1,2$, and $g(\Omega,\xi,\mu):=f_3^\#(\theta_1+\theta_2+\Omega,\xi,\mu)$, for $\theta_1$ and $\theta_2$ fixed. Hence, by applying \eqref{claim_bilinear_1} along with Cauchy-Schwarz inequality to \eqref{definition_I_modi_bilinear} we would deduce that \begin{align*}
I&\lesssim H_1^{-1/2}H_2^{1/4}\Vert f_3^\#\Vert_{L^2}\int_{\R^2}\Vert f_1^\#(\theta_1,\cdot,\cdot)\Vert_{L^2_{\xi,\mu}}\Vert f_2^\#(\theta_2,\cdot,\cdot)\Vert_{L^2_{\xi,\mu}}d\theta_1 d\theta_2
\\ & \lesssim H_1^{-1/2}H_2^{1/4}L_1^{1/2}L_2^{1/2} \Vert f_1^\#\Vert_{L^2} \Vert f_2^\#\Vert_{L^2} \Vert f_3^\#\Vert_{L^2},
\end{align*}
which is exactly estimate \eqref{hmin_ll_hmax_aoc}. Now, to prove estimate \eqref{claim_bilinear_1}, we begin by applying twice the Cauchy-Schwarz inequality to obtain that \[
J(g_1,g_2,g) \leq \Vert g_1\Vert_{L^2}\Vert g_2\Vert_{L^2}\left(\int_{\Delta_{H_1}\times \Delta_{H_2}} g^2\Big(\Omega(\xi_1,\mu_1,\xi_2,\mu_2),\xi_1+\xi_2,\mu_1+\mu_2\Big)\right)^{1/2}
\]
We make the following change of variables $(\xi_1',\mu_1',\xi_2',\mu_2')=(\xi_1+\xi_2,\mu_1+\mu_2,\xi_2,\mu_2)$, so that \[
J(g_1,g_2,g) \leq \Vert g_1\Vert_{L^2}\Vert g_2\Vert_{L^2}\left(\int_{\Delta_{\sim H_1}\times \Delta_{H_2}} g^2\Big(\Omega(\xi_1'-\xi_2',\mu_1'-\mu_2',\xi_2',\mu_2'),\xi_1',\mu_1'\Big)\right)^{1/2}.
\]
Finally observe that \[
\left\vert\dfrac{\partial}{\partial\xi_2'}\Omega(\xi_1'-\xi_2',\mu_1'-\mu_2',\xi_2',\mu_2')\right\vert=\big\vert h(\xi_1'-\xi_2',\mu_1'-\mu_2')-h(\xi_2',\mu_2')\big\vert\sim H_\mathrm{max}.
\]
Thus, performing the change of variables $(\xi_1'',\mu_1'',\xi_2'',\mu_2'')=\big(\xi_1',\mu_1',\Omega(\xi_1'-\xi_2',\mu_1'-\mu_2',\xi_2',\mu_2'),\mu_2'\big)$ we conclude that \[
J(g_1,g_2,g)\lesssim H_\mathrm{max}^{-1/2}\Vert g_1\Vert_{L^2}\Vert g_2\Vert_{L^2}\left(\int_{\R^3\times[-cH_2^{1/2},cH_2^{1/2}]}g^2(\xi_2'',\xi_1'',\mu_1'')d\xi_1''d\xi_2''d\mu_1''d\mu_2''\right)^{1/2},
\]
for some harmless constant $c\sim1$, which leads us to \eqref{claim_bilinear_1} after integrating in $\mu_2$. Notice that inequality \eqref{hmin_ll_hmax_bc} follows the same lines, however, in that case we assume that $L_3=L_\mathrm{med}$ and $L_2=L_\mathrm{max}$ (and hence we also have that $H_2=H_\mathrm{min}$).

\medskip

It only remains to prove estimate \eqref{hmin_sim_hmax_bilinear}.  As before, our starting point is identity \eqref{definition_I_modi_bilinear}. For the sake of simplicity, from now on we denote by $\xi_3=\xi_1+\xi_2$ and $\mu_3=\mu_1+\mu_2$. Then, we decompose the integration domain into the following regions: \begin{align*}
\mathrm{R}_1&:=\big\{(\xi_1,\mu_1,\xi_2,\mu_2)\in\R^4:\, \mu_i^2\ll\vert \xi_i\vert^2 \hbox{ for all } i\in\{1,2,3\}\big\},
\\ \mathrm{R}_2&:=\big\{(\xi_1,\mu_1,\xi_2,\mu_2)\in\R^4:\, \mathrm{min}_{i\in\{1,2,3\}}\vert\xi_i\mu_i\vert\ll\mathrm{max}_{i\in\{1,2,3\}}\vert \xi_i\mu_i\vert\big\}\setminus\mathrm{R}_1,
\\ \mathrm{R}_3&:=\R^4\setminus(\mathrm{R}_1\cup \mathrm{R}_2).
\end{align*}
We also define the quantities $I_i$ being the restriction of $I$, given by identity \eqref{definition_I_modi_bilinear}, to the domain $\mathrm{R}_i$, for $i=1,2,3$ respectively. We split the analysis into several cases.

\medskip

\textbf{Estimate for $I_1$:} In fact, first of all notice that, in view of the symmetry \eqref{definition_I_bilineal}, we can always assume that $L_3=L_\mathrm{max}$. On the other hand, gathering the hypothesis $H_\mathrm{min}\sim H_\mathrm{max}$ along with $\mu_i^2\ll \vert\xi_i\vert^2$ for all $i\in\{1,2,3\}$, we infer that we must also have that $N_\mathrm{min}\sim N_\mathrm{max}$. Therefore, in this region we obtain that \[
\vert \Omega(\xi_1,\mu_1,\xi_2,\mu_2)\vert=\big\vert \xi_3\vert\xi_3\vert^2-\xi_1\vert\xi_1\vert^2-\xi_2\vert\xi_2\vert^2+\xi_1\mu_2\mu_3+\xi_2\mu_1\mu_3+\xi_3\mu_1\mu_2\big\vert\sim N_\mathrm{max}^{2+1}.
\]
Then, proceeding exactly as for the proof of item $(a)$, we obtain that \begin{align}\label{general_bilinear_forN}
I_1\lesssim N_\mathrm{min}^{1/2}H_\mathrm{min}^{1/4}L_\mathrm{min}^{1/2}\Vert f_1^\#\Vert_{L^2}\Vert f_2^\#\Vert_{L^2}\Vert f_3^\#\Vert_{L^2}.
\end{align}
Thus, using that $L_\mathrm{max}\gtrsim N_\mathrm{max}^{2+1}$ and observing that $N_\mathrm{max}^{2}\sim H_\mathrm{max}$, we conclude that \[
I_1\lesssim H_\mathrm{min}^{-1/4}L_\mathrm{min}^{1/2}L_\mathrm{max}^{1/2}\Vert f_1\Vert_{L^2}\Vert f_2\Vert_{L^2}\Vert f_3\Vert_{L^2},
\]
which is acceptable for our purposes since $N_\mathrm{max}^{2/4}\sim H_\mathrm{min}^{1/4}$.

\smallskip

\textbf{Estimate for $I_2$:} In this case observe that, due to frequency localization, we must have that $N_\mathrm{max}\sim N_\mathrm{med}$ and $\max_{1\leq j\leq 3}\vert \mu_j\vert \sim \mathrm{med}_{1\leq j\leq3}\vert\mu_j\vert$. From these relations it follows that \[
\max_{1\leq j\leq 3}\vert \xi_j\mu_j\vert \sim \max_{1\leq j \leq 3}\vert \xi_j\vert \max_{1\leq j\leq 3}\vert\mu_j\vert.
\]
Moreover, due to the definition of $\mathrm{R}_2$, we also have that there exists $i\in\{1,2,3\}$ such that $ \xi_i^2\lesssim\mu_i^2$. We infer that for any $j\in\{1,2,3\}$ it holds that $ \xi_j^2\lesssim H_j\sim H_i\sim \mu_i^2$, and hence we conclude that $N_\mathrm{max}^2\lesssim \max_{1\leq j\leq 3}\mu_j^2$. Moreover, from the symmetry properties of $I$ in \eqref{definition_I_bilineal} we can always assume that $\min_{1\leq j\leq 3}\vert\xi_j\mu_j\vert=\vert \xi_1\mu_1\vert$ and $\max_{1\leq j\leq 3}\vert \xi_j\mu_j\vert = \vert\xi_2\mu_2\vert$. Therefore, performing the change of variables $(\xi_1',\mu_1',\xi_2',\mu_2')=(\xi_1+\xi_2,\mu_1+\mu_2,\xi_2,\mu_2)$ we obtain that \[
\left\vert\dfrac{\partial}{\partial \mu_2'}\Omega\big(\xi_1'-\xi_2',\mu_1'-\mu_2',\xi_2',\mu_2'\big)\right\vert =2\big\vert \xi_1\mu_1-\xi_2\mu_2\big\vert \gtrsim N_{\mathrm{max}}H_\mathrm{max}^{1/2}.
\]
Thus, changing variables once again $(\xi_1'',\mu_1'',\xi_2'',\mu_2'')=(\xi_1',\mu_1',\xi_2',\Omega(\xi_1'-\xi_2',\mu_1'-\mu_2',\xi_2',\mu_2'))$ we infer that \[
J_2(g_1,g_2,g)\lesssim N_\mathrm{max}^{-1/2}H_\mathrm{max}^{-1/4}\Vert g_1\Vert_{L^2}\Vert g_2\Vert_{L^2}\left(\int_{\R^2\times I_{N_2}\times \R}g^2(\mu_2,\xi_1,\mu_1)%
d\xi_1d\mu_1d\xi_2d\mu_2\right)^{1/2},
\]
where $J_2$ is the restriction of the integral $J$ defined in \eqref{defi_J_bilineal} to the domain $\mathrm{R}_2$. Then, applying Cauchy-Schwarz inequality, the latter estimate leads us to \[
I_2\lesssim H_\mathrm{max}^{-1/4}L_\mathrm{med}^{1/2}L_\mathrm{max}^{1/2}\Vert f_1\Vert_{L^2}\Vert f_2\Vert_{L^2}\Vert f_3\Vert_{L^2},
\]
which is compatible with \eqref{hmin_sim_hmax_bilinear} since $N_\mathrm{max}^{1/2}\lesssim H_\mathrm{max}^{1/4}\sim H_\mathrm{min}^{1/4}$.

\smallskip

\textbf{Estimate for $I_3$:} First of all notice that, due to frequency localization and the definition of $\mathrm{R}_3$ we have that $
N_\mathrm{med}\sim N_\mathrm{max}$, 
$%
 \mathrm{med}_{1\leq j\leq3}\mu_j^2\sim\max_{1\leq j\leq 3}\mu_j^2$,
$%
\min_{1\leq j\leq 3}\vert \xi_j\mu_j\vert \sim \max_{1\leq j \leq 3}\vert \xi_j\mu_j\vert
$
and $\xi_i^2\lesssim \mu_i^2$ for some $i\in\{1,2,3\}$, which, along with $H_\mathrm{min}\sim H_\mathrm{max}$, implies that \[
N_\mathrm{min}^2\sim N_\mathrm{max}^2 \lesssim \min_{1\leq j\leq 3}\mu_j^2\sim \max_{1\leq j\leq 3} \mu_j^2.
\]
Now we split the integration domain into several subregions as follows \begin{align*}
\mathrm{R}_3^1&:=\big\{(\xi_1,\mu_1,\xi_2,\mu_2)\in\mathrm{R}_3: (\xi_1,\mu_1),(\xi_2,\mu_2)\in A_\delta\big\},
%
%
%
%
\\ \mathrm{R}_3^2&:=\big\{(\xi_1,\mu_1,\xi_2,\mu_2)\in\mathrm{R}_3: (\xi_1,\mu_1),(\xi_3,\mu_3)\in A_\delta\big\},
%
%
%
%
\\ \mathrm{R}_3^3&:=\big\{(\xi_1,\mu_1,\xi_2,\mu_2)\in\mathrm{R}_3: (\xi_2,\mu_2),(\xi_3,\mu_3)\in A_\delta\big\},
%
%
%
%
\\ \mathrm{R}_3^4&:=\mathrm{R}_3\setminus \cup_{i=1}^3\mathrm{R}_3^i,
\end{align*}
where $0<\delta\ll1$ denotes a sufficiently small number to be chosen. Then, we denote by $I_3^j$ the restriction of the integral $I_3$ to the domain $\mathrm{R}^j_3$. As we shall see, all the previous integrals $I_3^j$, but $I_3^4$, follows exactly the same proof, and hence we focus only on $I_3^1$.

\smallskip

\textit{Case $\{\xi_1\xi_2>0$ and $\mu_1\mu_2>0\}$}: Let us denote by $\mathrm{R}_3^{1,1}$ the integration domain $\mathrm{R}_3^1$ under the additional constraints $\xi_1\xi_2>0$ and $\mu_1\mu_2>0$. Then, it is enough to notice that, in this region, we have that \[
L_\mathrm{max}\gtrsim \big\vert\Omega(\xi_1,\mu_1,\xi_2,\mu_2)\big\vert\gtrsim N_\mathrm{max}H_\mathrm{max}.
\]
Therefore, once again, it follows arguing exactly as for \eqref{general_bilinear_forN} that \[
I_3^{1,1} \lesssim H_\mathrm{max}^{-1/4}L_\mathrm{min}^{1/2}L_\mathrm{max}^{1/2}\Vert f_1\Vert_{L^2}\Vert f_2\Vert_{L^2}\Vert f_3\Vert_{L^2},
\]
where we have denoted by $I_3^{1,1}$ the restriction of $I_3^1$ to the integration domain $\mathrm{R}_3^{1,1}$.

\smallskip

\textit{Case $\{\xi_1\xi_2>0$ and $\mu_1\mu_2<0\}$ or $\{\xi_1\xi_2<0$ and $\mu_1\mu_2>0\}$}: Let us denote by $\mathrm{R}_3^{1,2}$ the integration domain $\mathrm{R}_3^1$ under the additional current constraint, and by $I_3^{1,2}$ the restriction of $I_3^1$ to the domain $\mathrm{R}_3^{1,2}$. Then, performing the change of variables $(\xi_1',\mu_1',\xi_2',\mu_2')=(\xi_1+\xi_2,\mu_1+\mu_2,\xi_2,\mu_2)$ and then noticing that \[
\left \vert \dfrac{\partial}{\partial \mu_2'}\Omega(\xi_1'-\xi_2',\mu_1'-\mu_2',\xi_2',\mu_2')\right\vert =2\vert \xi_1\mu_1-\xi_2\mu_2\vert \gtrsim N_\mathrm{max}H_\mathrm{max}^{1/2}.
\]
Thus, arguing exactly as in the proof of estimate \eqref{claim_bilinear_1}, we infer that \[
J_3^{1,2}(g_1,g_2,g)\lesssim N_\mathrm{max}^{-1/2}H_\mathrm{max}^{-1/4}\Vert g_1\Vert_{L^2}\Vert g_2\Vert_{L^2}\left(\int_{\R^2\times I_{N_2}\times \R}g^2(\mu_2,\xi_1,\mu_1)d\xi_1d\mu_1d\xi_2d\mu_2\right)^{1/2},
\]
where we have denoted by $J_3^{1,2}$ the restriction of $J$ defined by \eqref{defi_J_bilineal} to the integration domain $\mathrm{R}_3^{1,2}$. Finally, observe that the latter estimate leads us to \[
I_3^{1,2}\lesssim H_\mathrm{max}^{-1/4}L_\mathrm{min}^{1/2}L_\mathrm{med}^{1/2}\Vert f_1\Vert_{L^2}\Vert f_2\Vert_{L^2}\Vert f_3\Vert_{L^2}.
\]

\smallskip

\textit{Case $\{\xi_1\xi_2<0$ and $\mu_1\mu_2<0\}$}: Let us denote by $\mathrm{R}_3^{1,3}$ the integration domain $\mathrm{R}_3^1$ restricted to the additional constraint $\{\xi_1\xi_2<0 \,\hbox{ and } \, \mu_1\mu_2<0\}$, and by $I_3^{1,3}$ the corresponding restriction of $I_3^1$ to the integration domain $\mathrm{R}_3^{1,3}$. Now we claim that, due to frequency localization, along with the fact that $H_\mathrm{min}\sim H_\mathrm{max}$ we infer that, there exists a sufficiently small constant $0<\gamma\ll 1$ such that \begin{align}\label{hh_gtr_max}
\big\vert h(\xi_1,\mu_1)-h(\xi_2,\mu_2)\big\vert \geq \gamma \max\big\{h(\xi_1,\mu_1),h(\xi_2,\mu_2)\big\},
\end{align}
for all $(\xi_1,\mu_1,\xi_2,\mu_2)\in \mathrm{R}_1^{1,3}$. In fact, by contradiction, if the latter inequality does not hold for any $\gamma\in (0,10^{-4})$, then Lemma \ref{lem_bil_incurve} with $f(\delta)=10^{-4}$ would imply that \[
\big\vert h(\xi_3,\mu_3)\big\vert \leq 10^{-3}\mathrm{max}\big\{h(\xi_1,\mu_1),h(\xi_2,\mu_2)\big\}.
\]
However, this contradicts the fact that $H_\mathrm{min}\sim H_\mathrm{max}$, and hence inequality \eqref{hh_gtr_max} holds. Thus, we can now proceed as for estimate \eqref{claim_bilinear_1}, performing the change of variables $(\xi_1',\mu_1',\xi_2',\mu_2'):=(\xi_1+\xi_2,\mu_1+\mu_2,\xi_2,\mu_2)$. In fact, it is enough to notice that, from \eqref{hh_gtr_max} we get that \[
\left\vert \dfrac{\partial}{\partial\xi_2'}\Omega(\xi_1'-\xi_2',\mu_1'-\mu_2',\xi_2',\mu_2')\right\vert=\big\vert h(\xi_1,\mu_1)-h(\xi_2,\mu_2)\big\vert\gtrsim H_\mathrm{max}.
\]
Then, following the same arguments as in the proof of \eqref{claim_bilinear_1} we are lead to \[
I_3^{1,3}\lesssim H_\mathrm{max}^{-1/2}H_\mathrm{min}^{1/4}L_\mathrm{min}^{1/2}L_\mathrm{med}^{1/2}\Vert f_1\Vert_{L^2}\Vert f_2\Vert_{L^2}\Vert f_3\Vert_{L^2},
\]
which concludes the proof for $I_3^{1,3}$.

\medskip

Therefore, gathering the above three cases we conclude the estimate for $I_3^1$. As we mentioned before, notice that $I_3^2$ and $I_3^3$ can be bounded in the exact same fashion, and hence it only remains to control $I_3^4$.

\smallskip

\textbf{Contribution of $I_3^4$:} In this case we take advantage of the improved Strichartz estimates derived in Proposition  \ref{Str_ZK_prop_final}. In fact, it is not difficult to see that, as a direct consequence of this corollary we have that, if $\mathrm{supp}\,f\subseteq D_{N,H,L}$, then \begin{align}\label{consequence_cor_stric}
\Vert P_{A_\delta^c}\mathcal{F}^{-1}(f)\Vert_{L^4}\lesssim N^{-1/4}L^{1/2}\Vert f\Vert_{L^2}.
\end{align}
Now, without loss of generality let us assume that $(\xi_1,\mu_1),(\xi_2,\mu_2)\in \R^2\setminus A_\delta$. Then, notice that H\"older inequality along with Plancharel identity lead us to \[
I_3^4\lesssim \Vert f_3\Vert_{L^2}\big\Vert \big(\mathds{1}_{\R^2\setminus A_\delta} f_1\big)*\big(\mathds{1}_{\R^2\setminus A_\delta} f_2\big)\big\Vert_{L^2}\lesssim \Vert f_3\Vert_{L^2}\Vert P_{A_\delta^c}\mathcal{F}^{-1}(f_1)\Vert_{L^4}\Vert P_{A_\delta^c}\mathcal{F}^{-1}(f_2)\Vert_{L^4}.
\]
Therefore, by plugging estimate \eqref{consequence_cor_stric}  into the right-hand side of the latter inequality we conclude that \[
I_3^4\lesssim N_\mathrm{max}^{-1/2}L_\mathrm{med}^{1/2}L_\mathrm{max}^{1/2}\Vert f_1\Vert_{L^2} \Vert f_2\Vert_{L^2} \Vert f_3\Vert_{L^2},
\]
which is an acceptable estimate since $N_\mathrm{max}^{1/2}\lesssim H_\mathrm{min}^{1/4}$. The proof is complete.
\end{proof}

To finish this subsection we restate the previous lemma in a form that is suitable for the bilinear estimates in the next section.

\begin{cor}\label{cor_bilinear_l2}
For $i=1,2,3$, let $N_i,H_i,L_i\in\mathbb{D}$ and assume that $f_i:\R^3\to \R_+$ are $L^2(\R^3)$ functions supported in $D_{\infty,H_i,L_i}$. Then, the following inequality holds \begin{align}\label{cor_bilinear_l2_general}
\big\Vert \mathds{1}_{D_{\infty,H_3,L_3}}\cdot(f_1*f_2)\big\Vert_{L^2}\lesssim H_{\mathrm{min}}^{1/2}L_\mathrm{min}^{1/2}\Vert f_1\Vert_{L^2}\Vert f_2\Vert_{L^2}.
\end{align}
Moreover, we have the following cases: \begin{enumerate}
\item[1.] Assume that $H_\mathrm{min}\ll H_\mathrm{max}$. If there exists $i\in\{1,2,3\}$ such that $(H_i,L_i)=(H_\mathrm{min},L_\mathrm{max})$, then it follows that \begin{align}\label{cor_l2bl_hminllhmax_particular}
\big\Vert \mathds{1}_{D_{\infty,H_3,L_3}}\cdot(f_1*f_2)\big\Vert_{L^2}\lesssim H_\mathrm{max}^{-1/2}H_\mathrm{min}^{1/4}L_\mathrm{min}^{1/2}L_\mathrm{max}^{1/2}\Vert f_1\Vert_{L^2}\Vert f_2\Vert_{L^2}.%
\end{align}
Otherwise, we have that \begin{align}\label{cor_l2bl_hminllhmax_general}
\big\Vert \mathds{1}_{D_{\infty,H_3,L_3}}\cdot(f_1*f_2)\big\Vert_{L^2}\lesssim H_\mathrm{max}^{-1/2}H_\mathrm{min}^{1/4}L_\mathrm{min}^{1/2}L_\mathrm{med}^{1/2}\Vert f_1\Vert_{L^2}\Vert f_2\Vert_{L^2}.
\end{align}
\item[2.] Assume that $H_\mathrm{min}\sim H_\mathrm{max}$, and suppose additionally that $f_i$ are supported in $D_{N_i,H_i,L_i}$ for $i=1,2,3$. Then, the following bound holds  \begin{align}\label{cor_l2bl_minsimmax}
\big\Vert \mathds{1}_{D_{N_3,H_3,L_3}}\cdot(f_1*f_2)\big\Vert_{L^2}\lesssim N_\mathrm{max}^{-1} H_\mathrm{min}^{1/4}L_\mathrm{med}^{1/2}L_\mathrm{max}^{1/2}\Vert f_1\Vert_{L^2}\Vert f_2\Vert_{L^2}.
\end{align}
\end{enumerate}
\end{cor}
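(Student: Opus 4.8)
The plan is to deduce the corollary from Lemma \ref{lemma_bilineal} by a straightforward duality argument, crucially exploiting that the $f_i$ are nonnegative. The key observation is that each inequality of the corollary is merely the dual reformulation of the corresponding trilinear bound already proved. Indeed, whenever $f_3$ is supported in $D_{\infty,H_3,L_3}$ one has $\int_{\R^3}(f_1*f_2)\cdot f_3=\int_{\R^3}\mathds{1}_{D_{\infty,H_3,L_3}}(f_1*f_2)\cdot f_3$, since $f_3=\mathds{1}_{D_{\infty,H_3,L_3}}f_3$. Hence Lemma \ref{lemma_bilineal} furnishes, in each case, an estimate of the form $\int_{\R^3}\mathds{1}_{D_{\infty,H_3,L_3}}(f_1*f_2)\cdot f_3\lesssim C\,\Vert f_1\Vert_{L^2}\Vert f_2\Vert_{L^2}\Vert f_3\Vert_{L^2}$, valid for every such $f_3$, where $C$ is the prefactor on the right-hand side of the relevant inequality.

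First I would invoke the duality characterization of the $L^2$-norm of a function supported on a fixed measurable set $D$: if $g=\mathds{1}_D\,g$, then $\Vert g\Vert_{L^2}=\sup\{|\langle g,h\rangle|:\ \Vert h\Vert_{L^2}\leq1,\ \supp h\subset D\}$, since $\langle g,h\rangle=\langle g,\mathds{1}_Dh\rangle$ and $\Vert \mathds{1}_Dh\Vert_{L^2}\leq\Vert h\Vert_{L^2}$. Taking $g=\mathds{1}_{D_{\infty,H_3,L_3}}(f_1*f_2)$ and passing to the supremum over admissible $h$ (in the role of $f_3$) in the bound above yields at once $\Vert \mathds{1}_{D_{\infty,H_3,L_3}}(f_1*f_2)\Vert_{L^2}\lesssim C\,\Vert f_1\Vert_{L^2}\Vert f_2\Vert_{L^2}$. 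This characterization also circumvents any a priori integrability concern, since an infinite left-hand norm would already contradict the finite bound.

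Concretely, I would apply this scheme four times. Estimate \eqref{cor_bilinear_l2_general} follows from \eqref{general_case_bilinear} with $C=H_\mathrm{min}^{1/2}L_\mathrm{min}^{1/2}$; in the regime $H_\mathrm{min}\ll H_\mathrm{max}$, estimate \eqref{cor_l2bl_hminllhmax_particular} follows from \eqref{hmin_ll_hmax_bc} and estimate \eqref{cor_l2bl_hminllhmax_general} from \eqref{hmin_ll_hmax_aoc}; and in the regime $H_\mathrm{min}\sim H_\mathrm{max}$, estimate \eqref{cor_l2bl_minsimmax} follows from \eqref{hmin_sim_hmax_bilinear}. For this last case the pertinent localization is the finer set $D_{N_3,H_3,L_3}$, so I would instead take $g=\mathds{1}_{D_{N_3,H_3,L_3}}(f_1*f_2)$ and test against $h$ supported in $D_{N_3,H_3,L_3}$; the only change is the localizing set used in the duality step.

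Since all the analytic content is already contained in Lemma \ref{lemma_bilineal}, I expect no genuine obstacle here. The sole point demanding care is the legitimacy of choosing the competitor $f_3$ equal to the restricted convolution, which is exactly what the hypothesis $f_1,f_2\geq0$ secures: it makes $f_1*f_2\geq0$ a well-defined nonnegative function, so that $\mathds{1}_{D_{\infty,H_3,L_3}}(f_1*f_2)$ (respectively $\mathds{1}_{D_{N_3,H_3,L_3}}(f_1*f_2)$) is an admissible nonnegative test function supported in the prescribed region.
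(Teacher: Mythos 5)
Your proposal is correct and is essentially the paper's own proof: the paper disposes of the corollary with exactly this duality argument, reading each $L^2$ bound off the corresponding trilinear estimate of Lemma \ref{lemma_bilineal} by testing $\mathds{1}_{D_{\infty,H_3,L_3}}(f_1*f_2)$ (resp.\ $\mathds{1}_{D_{N_3,H_3,L_3}}(f_1*f_2)$) against admissible $f_3$. Your added care about the support restriction in the duality characterization, the a priori finiteness of the left-hand side, and the nonnegativity needed for the test functions (one may restrict the supremum to $h\geq 0$ since the convolution is nonnegative) fills in exactly what the paper leaves implicit.
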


\begin{proof}
The proof follows directly from Lemma \ref{lemma_bilineal} by using a duality
argument.
\end{proof}

\smallskip

\section{Short time bilinear estimates}

The main results of this section are the following bilinear estimates in the $F^s_\beta(T)$ spaces. Note that, in the classical case, namely $\Psi\equiv 0$, to overcome the high-low frequency interaction problem, the ``optimal'' choice of $\beta$ would be $\beta_0=0$ (we do not need to chop the time interval in this case). It is worth to notice that the method for $\beta_0=0$ corresponds to a fixed point argument in classical Bourgain spaces. However, due to the presence of $\Psi$, we shall need to fix $\beta=1/2\geq\beta_0$ to control terms of the form $\Vert P_H\partial_x(P_{\sim H}u \cdot P_{\ll H}\Psi)\Vert_{F^s_\beta(T)}$. The price to pay for this ``non optimal'' choice of $\beta$ is not being able to go beyond the energy space $H^1(\R^2)$ in our local well-posedness theorem, while in the classical case (without the background function $\Psi$), by choosing $\beta=0$, the present method would allow us to obtain LWP in $H^{1/2^+}(\R^2)$. 

\medskip

Since $\beta=1/2$ is fixed now, in the rest of the paper we denote $F^s_\beta(T)$, $N^s_\beta(t)$, $F^s_\beta$, $N^s_\beta$, $F_{H,\beta}$ and $N_{H,\beta}$ simply by $F^s(T)$, $N^s(t)$, $F^s$, $N^s$, $F_{H}$ and $N_{H}$ respectively. The main results of this section are the following bilinear estimates at the $H^s$ and $L^2$ level. We refer to \cite{KePi,RiVe} for similar bilinear estimates in different contexts.
\begin{prop}\label{shortime_bilinear_estimate}
Let $s\geq 1$ and $T\in (0,1]$ both be fixed. Then, the following holds
\begin{align}\label{short_t_estimate}
\big\Vert \partial_x(uv)\big\Vert_{N^{s}(T)} \lesssim \Vert u\Vert_{F^{0^+}(T)}\Vert v\Vert_{F^{s}(T)}+\Vert u\Vert_{F^{s}(T)}\Vert v\Vert_{F^{0^+}(T)}.
\end{align}
Moreover, for any $s>0$, if $u\in F^0(T)$ and $v\in F^s(T)$, then it holds that \begin{align}\label{short_t_2_estimate}
\Vert \partial_x(uv)\Vert_{N^0(T)}\lesssim \Vert u\Vert_{F^0(T)}\Vert v\Vert_{F^s(T)}.
\end{align}
\end{prop}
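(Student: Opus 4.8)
The plan is to deduce both short-time estimates from the $L^2$-bilinear bounds of Corollary \ref{cor_bilinear_l2} by a Littlewood--Paley decomposition together with a careful matching of the frequency-dependent time localizations. Writing $u=\sum_{H_1}P_{H_1}u$ and $v=\sum_{H_2}P_{H_2}v$, and using the definition \eqref{Ns_norm} of the $N^s$-norm with the triangle inequality in the output frequency $H_3$, it suffices to prove a frequency-localized bound
\begin{align*}
\|P_{H_3}\partial_x(P_{H_1}u\, P_{H_2}v)\|_{N_{H_3}}\lesssim C(H_1,H_2,H_3)\,\|P_{H_1}u\|_{F_{H_1}}\|P_{H_2}v\|_{F_{H_2}},
\end{align*}
with a spatial gain $C(H_1,H_2,H_3)$ good enough to be summed against the weights $H_3^{s/2}$ from \eqref{Ns_norm} and the weights $H_1^{-s/2}$ (or $H_1^{0^-}$) and $H_2^{0^-}$ (or $H_2^{-s/2}$) coming from \eqref{Fs_norm}. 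Since the Fourier support of $P_{H_1}u\,P_{H_2}v$ forces the two largest of $H_1,H_2,H_3$ to be comparable, the summation reduces to three geometries: high--low ($H_3\sim H_{\mathrm{max}}$), high--high-to-low ($H_3=H_{\mathrm{min}}\ll H_{\mathrm{max}}$), and fully comparable frequencies.

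At the localized level, for a fixed $t_{H_3}$ I would insert the cutoff $\eta_0(H_3^{1/2}(\cdot-t_{H_3}))$ dictated by the $N_{H_3}$-norm and then transfer the time localization onto the two inputs, writing the Fourier transform of the localized product as a convolution so that Corollary \ref{cor_bilinear_l2} applies. When $H_3\sim H_{\mathrm{max}}$ the output time scale $H_3^{-1/2}$ is the finest, so Corollary \ref{key_cor_prelim} controls the $X_{H_3}$-type norms of the localized inputs directly by $\|P_{H_1}u\|_{F_{H_1}}$ and $\|P_{H_2}v\|_{F_{H_2}}$ (in particular it handles the low-frequency input, whose natural scale is coarser). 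In the high--high-to-low case the output scale $H_3^{-1/2}$ is the coarsest, so I would partition $\eta_0(H_3^{1/2}(\cdot-t_{H_3}))$ into $\sim (H_{\mathrm{max}}/H_3)^{1/2}$ smooth cutoffs supported on intervals of length $H_{\mathrm{max}}^{-1/2}$, apply the bilinear bound on each piece, and sum; this produces a genuine loss of $(H_{\mathrm{max}}/H_3)^{1/2}$ that must be defeated by the off-diagonal spatial gain.

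Once the three functions share a common time scale, I would exploit the $\ell^1$-Besov structure of $X_{H_3}$ to decompose each factor into dyadic modulations $L_1,L_2,L_3$ and remove $\partial_x$ via $|\xi_3|\lesssim H_3^{1/2}$. Replacing $(\tau-\omega(\xi,\mu)+iH_3^{1/2})^{-1}$ by $\min\{L_3^{1/2}H_3^{-1/2},\,L_3^{-1/2}\}$ according to whether $L_3\lesssim H_3^{1/2}$ or $L_3\gtrsim H_3^{1/2}$, the core is a trilinear $L^2$ pairing to which Corollary \ref{cor_bilinear_l2} applies, with the branch chosen according to whether $H_{\mathrm{min}}\ll H_{\mathrm{max}}$ or $H_{\mathrm{min}}\sim H_{\mathrm{max}}$ and to the ordering of the $L_i$. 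The factors $L_{\mathrm{min}}^{1/2},L_{\mathrm{max}}^{1/2}$ produced there pair against the $\ell^1$ modulation weights and the modulation gain of $N_{H_3}$, and the $1/2$-exponent in the definition of $X_H$ is exactly what makes these modulation sums converge without logarithmic losses. The dyadic sums over $H_1,H_2,H_3$ then close using the spatial gains $H_{\mathrm{max}}^{-1/2}H_{\mathrm{min}}^{1/4}$ (resp.\ $N_{\mathrm{max}}^{-1}H_{\mathrm{min}}^{1/4}$) together with the regularity weights, which is where $s\geq 1$ in \eqref{short_t_estimate} and $s>0$ in \eqref{short_t_2_estimate} are used; the second estimate follows along the same lines, placing all the regularity on $v$ and measuring the output in $L^2$.

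The main obstacle is the high--high-to-low interaction. There the time-subdivision loss $(H_{\mathrm{max}}/H_3)^{1/2}$ must be absorbed, while simultaneously the $L^2$-bilinear estimate degenerates near the curves $\mu^2=3\xi^2$, where $\det\mathrm{Hess}\,\omega$ vanishes and the change-of-variables Jacobian $|h(\xi_1,\mu_1)-h(\xi_2,\mu_2)|$ is no longer $\sim H_{\mathrm{max}}$. This forces one to split off a neighborhood of those curves and to rely on the resonance lower bound of Lemma \ref{lem_bil_incurve} together with the Strichartz-type input \eqref{consequence_cor_stric}, both already packaged into Corollary \ref{cor_bilinear_l2}. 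Carefully tracking these gains shows that $\beta=1/2$ is precisely the threshold at which the subdivision loss is matched by the available spatial decay, which is the reason this value of $\beta$ is fixed throughout.
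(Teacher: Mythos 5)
Your proposal follows, in its architecture, the same route as the paper: extension of $u,v$, Minkowski and Littlewood--Paley reduction to the interaction regimes, transfer of the output time scale $H_3^{-1/2}$ onto the inputs via Corollary \ref{key_cor_prelim} when $H_3\sim H_{\mathrm{max}}$, subdivision of the coarse interval into $\sim(H_{\mathrm{max}}/H_3)^{1/2}$ pieces by a smooth partition $\gamma$ in the high$\times$high$\to$low regime, then modulation decomposition and Corollary \ref{cor_bilinear_l2}. There is, however, a concrete step that would fail as written: in the comparable regime $H\sim H_1\sim H_2$ you cannot first remove $\partial_x$ via $\vert\xi_3\vert\lesssim H_3^{1/2}$ and only afterwards invoke the gain $N_{\mathrm{max}}^{-1}H_{\mathrm{min}}^{1/4}$ of \eqref{cor_l2bl_minsimmax}, since the product $H^{1/2}N_{\mathrm{max}}^{-1}$ is unbounded when the interaction concentrates near the $\mu$-axis: one can have all $h(\xi_i,\mu_i)\sim H$ while $N_{\mathrm{max}}\ll H^{1/2}$. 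The paper's Lemma \ref{bilinear_hhh} performs the extra decomposition with the $P_N^x$ projectors \emph{first} and keeps the derivative as the factor $N$, so that $N\cdot N_{\mathrm{max}}^{-1}\lesssim 1$; moreover, in this regime the sums over $N$ and over $L_{\mathrm{max}}\sim\vert\Omega\vert\lesssim H^{3/2}$ each cost a logarithm $\sim H^{0^+}$, contrary to your claim that the modulation sums close without logarithmic losses — these losses are precisely why \eqref{short_t_estimate} carries $F^{0^+}$ rather than $F^0$.

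Your closing diagnosis also misplaces the two main difficulties. In high$\times$high$\to$low one has $H_3=H_{\mathrm{min}}\ll H_1\sim H_2$, and the symmetry $\int(f_1*f_2)\,f_3=\int(\widetilde f_2*f_3)\,f_1$ lets the change of variables pair a high frequency with the low one, so the Jacobian $\vert h(\xi_1,\mu_1)-h(\xi_2,\mu_2)\vert\sim H_{\mathrm{max}}$ never degenerates there; Lemma \ref{lem_bil_incurve} and the Strichartz input \eqref{consequence_cor_stric} are needed only in the comparable case $H_{\mathrm{min}}\sim H_{\mathrm{max}}$, where no time subdivision occurs at all. Consequently, the assertion that $\beta=1/2$ is the threshold at which the subdivision loss is matched by the spatial decay of this bilinear estimate is incorrect: the paper points out that for $\Psi\equiv 0$ the choice $\beta=0$ (classical Bourgain spaces and a contraction argument) would suffice, and that $\beta=1/2$ is forced by the interaction $P_H\partial_x(P_{\sim H}u\,P_{\ll H}\Psi)$ treated in Proposition \ref{shortime_psi_bilinear_estimate}, not by Proposition \ref{shortime_bilinear_estimate}. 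These defects are localized — the machinery you invoke does carry the argument once the $P_N^x$ refinement is placed before the derivative count and the case analysis is attached to the correct regimes — but as written the comparable-frequency step does not close.
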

In order to prove the above proposition, we split the analysis into several technical lemmas.
\begin{lem}[high$\,\times\,$low$\,\rightarrow\,$high]\label{bilinear_hlh}
Let $H,H_1,H_2\in\mathbb{D}$ satisfying that $H\sim H_2\gg H_1$. Then, the following inequality holds \begin{align}\label{bilinear_hlh_estimate}
\big\Vert P_H\partial_x(u_{H_1}v_{H_2})\big\Vert_{N_H}\lesssim H^{-1/4}H_1^{1/4}\Vert u_{H_1}\Vert_{F_{H_1}}\Vert v_{H_2}\Vert_{F_{H_2}},
\end{align}
for all $u_{H_1}\in F_{H_1}$ and $v_{H_2}\in F_{H_2}$.
\end{lem}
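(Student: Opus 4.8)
The plan is to reduce the frequency-localized estimate \eqref{bilinear_hlh_estimate} to the $L^2$-bilinear bounds of Corollary \ref{cor_bilinear_l2} and then carry out a modulation summation. Fix $t_H\in\R$; by the definition of the $N_H$-norm and the fact that $P_H\partial_x$ commutes with multiplication by functions of $t$ alone, I first write
\[
\eta_0(H^{1/2}(t-t_H))P_H\partial_x(u_{H_1}v_{H_2})=P_H\partial_x\big(g_{H_1}\,g_{H_2}\big),
\]
where $g_{H_2}:=\eta_0(H^{1/2}(t-t_H))v_{H_2}$ and, using that $H_1\ll H$ forces $\eta_0(H_1^{1/2}(t-t_H))\equiv1$ on the support of $\eta_0(H^{1/2}(t-t_H))$, $g_{H_1}:=\eta_0(H_1^{1/2}(t-t_H))u_{H_1}$. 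Setting $f_i:=\mathcal{F}(g_{H_i})$, Lemma \ref{time_mult_lema} (together with Corollary \ref{gro_gamma} to exchange the cutoff scale $H^{1/2}\sim H_2^{1/2}$ on $v_{H_2}$) gives $\Vert f_1\Vert_{X_{H_1}}\lesssim\Vert u_{H_1}\Vert_{F_{H_1}}$ and $\Vert f_2\Vert_{X_{H_2}}\lesssim\Vert v_{H_2}\Vert_{F_{H_2}}$. Thus it suffices to bound $\big\Vert(\tau-\omega(\xi,\mu)+iH^{1/2})^{-1}(i\xi)\psi_H\,(f_1*f_2)\big\Vert_{X_H}$ by the right-hand side of \eqref{bilinear_hlh_estimate}.

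Next I record the two symbol bounds that drive the estimate. On $\Delta_H$ one has $|\xi|\lesssim H^{1/2}$, so the factor $\partial_x$ costs $H^{1/2}$; and on the region $|\tau-\omega(\xi,\mu)|\sim L$ the resolvent satisfies $|(\tau-\omega(\xi,\mu)+iH^{1/2})^{-1}|\sim\min(H^{-1/2},L^{-1})$. I then decompose $f_1=\sum_{L_1}f_{1,L_1}$, $f_2=\sum_{L_2}f_{2,L_2}$ and the output into modulation $L$, so that expanding the $X_H$-norm reduces matters to the dyadic sum of
\[
L^{1/2}\,c(L)\,\big\Vert\mathds{1}_{D_{\infty,H,L}}(f_{1,L_1}*f_{2,L_2})\big\Vert_{L^2},\qquad c(L):=H^{1/2}\min(H^{-1/2},L^{-1}),
\]
over $L,L_1,L_2\in\mathbb{D}$. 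Since $H_{\mathrm{min}}=H_1\ll H\sim H_2\sim H_3=H_{\mathrm{max}}$, I apply Corollary \ref{cor_bilinear_l2}, case 1: when the low-frequency factor $f_1$ carries the largest modulation I use \eqref{cor_l2bl_hminllhmax_particular}, and otherwise \eqref{cor_l2bl_hminllhmax_general}; in both instances the bilinear factor is bounded by $H^{-1/2}H_1^{1/4}$ times a product of the square roots of the two modulations that are not the free one.

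It then remains to perform the modulation summation. Because the $X_H$-norms supply the weights $L_1^{1/2}L_2^{1/2}$, the claim follows once I show, for each fixed $L_1,L_2$, that the $L$-sum of $L^{1/2}c(L)$ times the surviving modulation factors is $\lesssim H^{1/4}L_1^{1/2}L_2^{1/2}$; multiplying by the common gain $H^{-1/2}H_1^{1/4}$ then produces exactly the factor $H^{-1/4}H_1^{1/4}$. I split according to which of $L,L_1,L_2$ is largest. When the output modulation $L$ dominates, the two surviving factors are $L_1^{1/2}L_2^{1/2}$ and the weighted sum $\sum_L L^{1/2}c(L)$ telescopes to $H^{1/4}$ (the range $L\le H^{1/2}$ contributing $\sum L^{1/2}\sim H^{1/4}$ and the range $L>H^{1/2}$ contributing $\sum H^{1/2}L^{-1/2}\sim H^{1/4}$). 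The main obstacle is the case where the high-frequency input carries the dominant modulation $L_2$: here the surviving factor is $(LL_1)^{1/2}$ and a direct summation of $\sum_{L\le L_2}L\,c(L)$ produces a logarithm $\sim H^{1/2}\log(L_2/H^{1/2})$; this is nonetheless absorbed by the target gain, since $H^{1/2}\log(L_2/H^{1/2})\lesssim H^{1/4}L_2^{1/2}$ amounts to the uniform bound $\log x\lesssim x^{1/2}$ with $x=L_2/H^{1/2}\ge1$. The remaining case, in which the low-frequency input $f_1$ has the top modulation (handled by \eqref{cor_l2bl_hminllhmax_particular}), is treated identically after splitting the $L$-sum at $L=L_2$, the resolvent decay $c(L)\sim H^{1/2}L^{-1}$ guaranteeing convergence at the high-modulation end. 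Summing the resulting bound over $L_1,L_2$ reconstructs $\Vert f_1\Vert_{X_{H_1}}\Vert f_2\Vert_{X_{H_2}}$ and completes the proof.
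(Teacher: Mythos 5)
Your proof is correct and follows the paper's overall strategy: expand the $N_H$-norm, localize in time at scale $H^{-1/2}$, decompose into modulations, apply the case-1 bilinear bounds \eqref{cor_l2bl_hminllhmax_particular}--\eqref{cor_l2bl_hminllhmax_general} with $H_{\mathrm{min}}=H_1$, and sum against the resolvent weight $L^{1/2}\min(H^{-1/2},L^{-1})$. The one genuinely different step is your treatment of the low-frequency factor: you place the cutoff at its own scale, $\eta_0(H_1^{1/2}(\cdot-t_H))$, using that it is identically $1$ on the support of $\eta_0(H^{1/2}(\cdot-t_H))$, so that $\Vert f_1\Vert_{X_{H_1}}\leq \Vert u_{H_1}\Vert_{F_{H_1}}$ holds directly from the definition of $F_{H_1}$; the paper instead keeps the fine cutoff $\eta_0(H^{1/2}(\cdot-t_H))$ on $u_{H_1}$ and invokes Corollary \ref{key_cor_prelim}, which has the effect of restricting all modulations to $L,L_1,L_2\gtrsim H^{1/2}$ (low modulations being lumped into $\lfloor H^{1/2}\rfloor$). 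The price of your variant is that $L_1,L_2$ range over all dyadic values, which is exactly why the case $L_2=L_{\mathrm{max}}$ produces the logarithm $H^{1/2}\log(L_2/H^{1/2})$ that you absorb via $\log x\lesssim x^{1/2}$; in the paper's bookkeeping this never arises, because the surviving modulation factor is bounded uniformly in $L$ by $L_1^{1/2}L_2^{1/2}$ (using $L\leq L_{\mathrm{max}}$ in every case), after which $\sum_{L\geq H^{1/2}}L^{-1/2}\sim H^{-1/4}$ closes the sum in one stroke. Your summation is nonetheless complete and correct, including the endpoint $L_2\leq H^{1/2}$ in case (ii), where $\sum_{L\leq L_2}L\,c(L)\sim L_2\leq H^{1/4}L_2^{1/2}$, and case (iii), where the split at $L=L_2$ together with the decay $c(L)\sim H^{1/2}L^{-1}$ gives the same bound; both routes thus deliver \eqref{bilinear_hlh_estimate}, with the paper's version slightly leaner and yours avoiding the machinery of Corollary \ref{key_cor_prelim} for the low-frequency factor.
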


\begin{proof}
In fact, first of all, notice that expanding the $N_H$ norm we have that \begin{align}\label{hlh_NH_norm_proof}
\big\Vert P_H\partial_x(u_{H_1}v_{H_2})\big\Vert_{N_H}\lesssim \sup_{t_H\in\R}\big\Vert (\tau-\omega(\xi,\mu)+iH^{1/2})^{-1}H^{1/2}\mathds{1}_{\Delta_H}\cdot(f_{H_1}*f_{H_2})\big\Vert_{X_H},
\end{align}
where $f_{H_1}$ and $f_{H_2}$ are defined by \[
f_{H_1}:=\big\vert \mathcal{F}\big(\eta_0(H^{1/2}(\cdot-t_H))u_{H_1}\big)\big\vert \quad \hbox{ and } \quad f_{H_2}:=\big\vert \mathcal{F}\big(\widetilde\eta_0(H^{1/2}(\cdot-t_H))v_{H_2}\big)\big\vert.
\]
For the sake of notation now we set \begin{align}\label{def_fhl_l2}
f_{H_i,\lfloor H^{1/2}\rfloor}(\tau,\xi,\mu)&:=\eta_{\leq \lfloor H^{1/2}\rfloor}(\tau-\omega(\xi,\mu))f_{H_i}(\tau,\xi,\mu), \quad \hbox{and}  \\ f_{H_i,L_i}(\tau,\xi,\mu)&:=\eta_{L_i}(\tau-\omega(\xi,\mu))f_{H_i}(\tau,\xi,\mu), \quad \hbox{for } \ L_i>\lfloor H^{1/2}\rfloor.\nonumber
\end{align}
Therefore, expanding the $X_H$-norm and then decomposing into frequencies, from \eqref{hlh_NH_norm_proof} we deduce that \begin{align}\label{hlh_proof_rewritten}
\big\Vert P_H\partial_x(u_{H_1}v_{H_2})\big\Vert_{N_H}\lesssim \sup_{t_H\in\R}H^{1/2}\sum_{L,L_1,L_2\geq  H^{1/2}} L^{-1/2}\Vert \mathds{1}_{D_{\infty,H,L}}\cdot(f_{H_1,L_1}*f_{H_2,L_2})\Vert_{L^2}.
\end{align}
Notice that, in the latter estimate, to bound the terms corresponding to $L< H^{1/2}$ appearing implicitly on the right-hand side of \eqref{hlh_NH_norm_proof}, we have used the fact that $\big\vert(\tau-\omega(\xi,\mu)+iH^{1/2})^{-1}\big\vert\lesssim H^{-{1/2}}$ to control them by the term corresponding to $L=\lfloor H^\beta\rfloor$ on the right-hand side of \eqref{hlh_proof_rewritten}. Thus, according to Corollary \ref{key_cor_prelim} and estimate \eqref{hlh_proof_rewritten}, we infer that it suffices to prove that \begin{align}\label{claim_bilinear_hlh}
&H^{1/2} \sum_{L\geq  H ^{1/2}} L^{-1/2} \big\Vert \mathds{1}_{\infty,H,L}\cdot(f_{H_1,L_1}*f_{H_2,L_2})\big\Vert_{L^2} \nonumber
\\ & \qquad \qquad \qquad \qquad \lesssim H^{-1/4}H_1^{1/4}L_1^{1/2}\Vert f_{H_1,L_1}\Vert_{L^2}L_2^{1/2}\Vert f_{H_2,L_2}\Vert_{L^2},
\end{align}
for all $L_1,L_2\geq H^{1/2}$, in order to prove estimate \eqref{bilinear_hhh_estimate}. On the other hand, by using \eqref{cor_l2bl_hminllhmax_particular} and \eqref{cor_l2bl_hminllhmax_general} we obtain that \begin{align*}
&H^{1/2} \sum_{L\geq  H ^{1/2}} L^{-1/2} \big\Vert \mathds{1}_{\infty,H,L}\cdot(f_{H_1,L_1}*f_{H_2,L_2})\big\Vert_{L^2}
\\ & \qquad \qquad \qquad \qquad \lesssim H^{1/2}\sum_{L\geq H^{1/2}}L^{-1/2}H^{-1/2}H_1^{1/4}L_1^{1/2}\Vert f_{H_1,L_1}\Vert_{L^2} L_2^{1/2}\Vert f_{H_2,L_2}\Vert_{L^2},
\end{align*}
which implies estimate \eqref{claim_bilinear_hlh} after summing over $L$. The proof is complete.
\end{proof}

\begin{lem}[high$\,\times\,$high$\,\rightarrow\,$high]\label{bilinear_hhh}
Let $H,H_1,H_2\in\mathbb{D}$ satisfying that $H\sim H_1\sim H_2\gg 1$. Then, it holds that \begin{align}\label{bilinear_hhh_estimate}
\big\Vert P_H\partial_x(u_{H_1}v_{H_2})\big\Vert_{N_H}\lesssim H^{0^+}\Vert u_{H_1}\Vert_{F_{H_1}}\Vert v_{H_2}\Vert_{F_{H_2}},
\end{align}
for all $u_{H_1}\in F_{H_1}$ and $v_{H_2}\in F_{H_2}$.
\end{lem}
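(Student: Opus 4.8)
The plan is to mirror the reduction carried out in the proof of Lemma \ref{bilinear_hlh}, but with two changes forced by the resonant character of high$\times$high interactions. First I would expand the $N_H$-norm, use Corollary \ref{key_cor_prelim} to replace the time-localized Fourier pieces $f_{H_1},f_{H_2}$ (defined exactly as in the proof of Lemma \ref{bilinear_hlh}, with $\eta_0$ and $\widetilde\eta_0$ respectively) by the full norms $\Vert u_{H_1}\Vert_{F_{H_1}}$, $\Vert v_{H_2}\Vert_{F_{H_2}}$, and reduce everything to an $L^2$ convolution bound; the contributions with $L<H^{1/2}$ are absorbed into the endpoint $L=\lfloor H^{1/2}\rfloor$ just as before. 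The crucial change is that, rather than bounding the symbol of $\partial_x$ crudely by $H^{1/2}$, I would dyadically decompose the output frequency in its first component, $\mathds{1}_{\Delta_H}=\sum_{N_3}\mathds{1}_{D_{N_3,H,L}}$, so that on each such piece $\partial_x$ contributes only a factor $\sim N_3\lesssim H^{1/2}$. Since now $H_\mathrm{min}\sim H_\mathrm{max}\sim H$, I must also decompose each input in its first frequency variable, $f_{H_i}=\sum_{N_i,L_i}f_{H_i,N_i,L_i}$ with $f_{H_i,N_i,L_i}$ supported in $D_{N_i,H_i,L_i}$, so as to be able to invoke the bilinear estimate \eqref{cor_l2bl_minsimmax}.

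Applying \eqref{cor_l2bl_minsimmax} then yields, for each choice of dyadic parameters,
\[
\big\Vert \mathds{1}_{D_{N_3,H,L}}(f_{H_1,N_1,L_1}*f_{H_2,N_2,L_2})\big\Vert_{L^2}\lesssim N_\mathrm{max}^{-1}H^{1/4}L_\mathrm{med}^{1/2}L_\mathrm{max}^{1/2}\Vert f_{H_1,N_1,L_1}\Vert_{L^2}\Vert f_{H_2,N_2,L_2}\Vert_{L^2},
\]
where $N_\mathrm{max}=\max(N_1,N_2,N_3)$ and the median and maximum $L_\mathrm{med},L_\mathrm{max}$ are taken among $\{L,L_1,L_2\}$. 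The derivative factor $N_3$ is precisely what compensates the loss $N_\mathrm{max}^{-1}$: since $\xi_3=\xi_1+\xi_2$ forces $N_3\lesssim N_\mathrm{max}$, summing the geometric series gives $\sum_{N_3\lesssim N_\mathrm{max}}N_3\,N_\mathrm{max}^{-1}\lesssim1$, while the remaining sum over $N_1,N_2\lesssim H^{1/2}$ costs only a factor $\log H$ when one passes from $\ell^1$ to $\ell^2$ using the orthogonality of the frequency pieces. This cancellation is the heart of the matter and is exactly why the final bound carries the harmless loss $H^{0^+}$ instead of a genuine power of $H$.

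It then remains to estimate the modulation sum $\sum_{L\geq H^{1/2}}L^{-1/2}L_\mathrm{med}^{1/2}L_\mathrm{max}^{1/2}$, the weight $L^{-1/2}$ coming from the resolvent $(\tau-\omega(\xi,\mu)+iH^{1/2})^{-1}$ in the $N_H$-norm. Assuming without loss of generality $L_1\leq L_2$, I would split according to the position of $L$. When $L\leq L_1$ or $L_1\leq L\leq L_2$, the sum is dominated by its lowest endpoint and is controlled by $H^{0^+}H^{-1/4}L_1^{1/2}L_2^{1/2}$, using $L_1,L_2\geq H^{1/2}$. The only delicate case is $L=L_\mathrm{max}\gg L_1,L_2$, where $L^{-1/2}L_\mathrm{med}^{1/2}L_\mathrm{max}^{1/2}=L_2^{1/2}$ does not decay in $L$; here convergence comes from the resonance identity $\tau-\omega(\xi,\mu)=(\tau_1-\omega_1)+(\tau_2-\omega_2)-\Omega$ on the support of the convolution, which forces $L\sim|\Omega|$. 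Since all frequencies satisfy $|\xi_i|,|\mu_i|\lesssim H^{1/2}$, the resonance function obeys $|\Omega|\lesssim H^{3/2}$, so the sum truncates at $L\lesssim H^{3/2}$ and contributes only $O(\log H)$ terms, again absorbed into $H^{0^+}$.

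Collecting the cancellation $N_3N_\mathrm{max}^{-1}$, the identity $H^{1/4}\cdot H^{-1/4}=1$, and the modulation sum, and then recombining the $L_i$-summations into $\sum_{L_i}L_i^{1/2}\Vert f_{H_i,L_i}\Vert_{L^2}$ to recover the product $\Vert u_{H_1}\Vert_{F_{H_1}}\Vert v_{H_2}\Vert_{F_{H_2}}$, yields \eqref{bilinear_hhh_estimate}. The main obstacle, as the argument makes clear, is the genuinely resonant regime in which $N_\mathrm{max}$ is small — equivalently $|\mu_i|\sim H^{1/2}$ — where modulation offers no gain and the bilinear factor $N_\mathrm{max}^{-1}$ degenerates; this is rescued only because the derivative symbol $N_3\lesssim N_\mathrm{max}$ is correspondingly small, so that no net power of $H$ survives.
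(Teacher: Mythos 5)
Your proposal is correct in substance and follows essentially the same route as the paper's proof: expanding the $N_H$-norm, performing an additional dyadic decomposition in the $\xi$-frequencies $N,N_1,N_2$ (each sum over frequencies $\lesssim H^{1/2}$ costing only $\log(H^{1/2})\lesssim H^{0^+}$), applying the bilinear bound \eqref{cor_l2bl_minsimmax} so that the derivative factor $N\lesssim N_{\mathrm{max}}$ cancels the loss $N_{\mathrm{max}}^{-1}$, and truncating via $L_{\mathrm{max}}\sim\vert\Omega\vert\lesssim H^{3/2}$ in the regime where the largest modulation dominates. One small slip worth noting: in the intermediate case $L_1\leq L\ll L_2$ the summand $L^{-1/2}L_{\mathrm{med}}^{1/2}L_{\mathrm{max}}^{1/2}=L_2^{1/2}$ is constant in $L$, so the sum over $L$ is \emph{not} dominated by its lowest endpoint as you claim; it is rescued by the very same resonance identity you invoke for $L=L_{\mathrm{max}}$, since $L_2=L_{\mathrm{max}}\gg L_{\mathrm{med}}$ forces $L_2\sim\vert\Omega\vert\lesssim H^{3/2}$ and hence only $O(\log H)$ values of $L$ contribute --- exactly how the paper organizes its dichotomy (either $L=L_{\mathrm{min}}$ or $L_{\mathrm{med}}\sim L_{\mathrm{max}}$, and otherwise $L_{\mathrm{max}}\sim\vert\Omega\vert\lesssim H^{3/2}$).
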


\begin{proof}
In fact, arguing as in the proof of the previous lemma, along with an additional decomposition into frequencies in the $\xi$ variable, we infer that it is enough to prove that
\begin{align}\label{claim_bilinear_hhh}
&N\sum_{L\geq  H ^{1/2}} L^{-1/2} \big\Vert \mathds{1}_{N,H,L}\cdot(f_{N_1,H_1,L_1}*f_{N_2,H_2,L_2})\big\Vert_{L^2}\nonumber
\\ & \qquad \qquad \qquad \qquad \lesssim H^{0^+} L_1^{1/2}\Vert f_{N_1,H_1,L_1}\Vert_{L^2}L_2^{1/2}\Vert f_{N_2,H_2,L_2}\Vert_{L^2},
\end{align}
where $f_{N_i,H_i,L_i}$ is localized in $D_{N_i,H_i,L_i}$ and $L_1,L_2\geq H^{1/2}$. Notice that the decomposition in $N,N_1,N_2$ is harmless in the sense that, once we prove the above estimate, we can control $f_{N_i,H_i,L_i}$ with $f_{H_i,L_i}$ since the sums over $N,\,N_1$ and $N_2$ are controlled by $\log(H^{1/2})\lesssim H^{0^+}$. Thus, we can directly sum over $N,$ $N_1$ and $N_2$. Now we split the analysis into two cases. First we assume that either $L=L_\mathrm{min}$ or $L_\mathrm{med}\sim L_\mathrm{max}$. Then, by using estimate  \eqref{cor_l2bl_minsimmax}, we immediately obtain that \begin{align*}
&N\sum_{L\geq  H ^{1/2}} L^{-1/2} \big\Vert \mathds{1}_{N,H,L}\cdot(f_{N_1,H_1,L_1}*f_{N_2,H_2,L_2})\big\Vert_{L^2}\nonumber
\\ & \qquad \qquad \qquad \qquad \lesssim N \sum_{L\geq H^{1/2}}L ^{-1/2}N_\mathrm{max}^{-1}H^{1/4}L_\mathrm{med}^{1/2}L_\mathrm{max}^{1/2}\Vert f_{N_1,H_1,L_1}\Vert_{L^2} \Vert f_{N_2,H_2,L_2}\Vert_{L^2}\nonumber
\\ & \qquad \qquad \qquad \qquad \lesssim L_1^{1/2}\Vert f_{N_1,H_1,L_1}\Vert_{L^2}L_2^{1/2}\Vert f_{N_2,H_2,L_2}\Vert_{L^2}. 
\end{align*}
On the other hand, if neither of the previous cases holds, then $L_\mathrm{max}\sim\vert\Omega\vert\lesssim H^{3/2}$, and hence the sum over $L$ is bounded by $H^{0^+}$. Therefore, estimate \eqref{claim_bilinear_hhh} still holds, which concludes the proof of the lemma.
\end{proof}

\begin{lem}[high$\,\times\,$high$\,\rightarrow\,$low]\label{bilinear_hhl}
Let $H,H_1,H_2\in\mathbb{D}$ satisfying that $H_1\sim H_2\gg H$. Then, the following holds \begin{align}\label{bilinear_hhl_estimate}
\big\Vert P_H\partial_x(u_{H_1}v_{H_2})\big\Vert_{N_H}\lesssim H^{1/4}H_1^{(-1/4)^+}\Vert u_{H_1}\Vert_{F_{H_1}}\Vert v_{H_2}\Vert_{F_{H_2}},
\end{align}
for all $u_{H_1}\in F_{H_1}$ and $v_{H_2}\in F_{H_2}$.
\end{lem}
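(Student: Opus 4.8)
The proof will follow the same scheme as Lemma \ref{bilinear_hlh}, but with one essential new difficulty coming from the frequency-dependent time localization. Since the output is localized at frequency $H$, its natural time-window has length $H^{-1/2}$, whereas the two inputs $u_{H_1},v_{H_2}$ are controlled by the $F_{H_1}$- and $F_{H_2}$-norms only on the much shorter windows of length $H_1^{-1/2}\ll H^{-1/2}$. Hence, after fixing $t_H$ and inserting the cutoff $\eta_0(H^{1/2}(\cdot-t_H))$ coming from the $N_H$-norm, the first step is to chop this window into $\sim (H_1/H)^{1/2}$ subintervals of length $H_1^{-1/2}$. Concretely, I would write $\eta_0(H^{1/2}(t-t_H))=\sum_{|k|\lesssim (H_1/H)^{1/2}}\beta_k(t)$, where each $\beta_k$ is a smooth bump of width $\sim H_1^{-1/2}$ centered at $t_k:=t_H+kH_1^{-1/2}$, obtained by multiplying $\eta_0(H^{1/2}(\cdot-t_H))$ by a partition of unity at scale $H_1^{-1/2}$. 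Since each $\beta_k$ depends only on $t$, it commutes with the space operator $P_H\partial_x$, so that
\[
\beta_k\, P_H\partial_x(u_{H_1}v_{H_2})=P_H\partial_x\big((\beta_k u_{H_1})(\widetilde\beta_k v_{H_2})\big),
\]
where $\widetilde\beta_k$ is a fattened bump equal to $1$ on $\supp\beta_k$. Applying the triangle inequality in the norm defining $N_H$ then reduces matters to a single $k$-term, at the price of the multiplicative factor $(H_1/H)^{1/2}$.

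For a single $k$, I would repeat the reduction of Lemma \ref{bilinear_hlh}: expand the $N_H$- and $X_H$-norms, decompose into modulation variables $L,L_1,L_2$ as in \eqref{def_fhl_l2}, and invoke Corollary \ref{key_cor_prelim} together with Lemma \ref{time_mult_lema} and Corollary \ref{gro_gamma} to replace $\beta_k u_{H_1}$ and $\widetilde\beta_k v_{H_2}$ by factors controlled by $\|u_{H_1}\|_{F_{H_1}}$ and $\|v_{H_2}\|_{F_{H_2}}$. Because the inputs are now localized at scale $H_1^{-1/2}$, all relevant modulations obey $L,L_1,L_2\gtrsim H_1^{1/2}$. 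This leaves the purely $L^2$ claim
\[
H^{1/2}\sum_{L\gtrsim H_1^{1/2}}L^{-1/2}\big\Vert \mathds{1}_{D_{\infty,H,L}}\cdot (f_{H_1,L_1}*f_{H_2,L_2})\big\Vert_{L^2}\lesssim H_1^{0^+}H^{3/4}H_1^{-3/4}\,L_1^{1/2}\Vert f_{H_1,L_1}\Vert_{L^2}\,L_2^{1/2}\Vert f_{H_2,L_2}\Vert_{L^2},
\]
where the prefactor $H^{1/2}$ accounts for $\partial_x$ (recall $|\xi|\lesssim H^{1/2}$ on the output since $\xi^2+\mu^2\sim H$).

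To prove the last display I would apply Corollary \ref{cor_bilinear_l2}, Case $1$, which is available since $H_\mathrm{min}=H\ll H_1=H_\mathrm{max}$: estimate \eqref{cor_l2bl_hminllhmax_particular} when the output modulation $L$ is the largest, and \eqref{cor_l2bl_hminllhmax_general} otherwise, each contributing the crucial gain $H_\mathrm{max}^{-1/2}H_\mathrm{min}^{1/4}=H_1^{-1/2}H^{1/4}$. In every configuration the output weight $L^{-1/2}$ exactly cancels the matching modulation factor produced by the bilinear bound, so that the surviving modulation dependence is a single power $L_\mathrm{min}^{1/2}$ or $L_\mathrm{med}^{1/2}$; bounding this by $(L_1L_2)^{1/4}$ and using $L_1,L_2\gtrsim H_1^{1/2}$ yields the extra gain $H_1^{-1/4}$ needed to reach the exponent $H_1^{-3/4}$. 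Finally, the free variable $L$ ranges over only $O(\log H_1)=H_1^{0^+}$ dyadic values, because the support condition $\theta_3=\theta_1+\theta_2+\Omega$ forces $L\lesssim\max(L_1,L_2,|\Omega|)$ with $|\Omega|\lesssim H_1^{3/2}$. Summing over $L_1,L_2$ rebuilds the $F_{H_1}$- and $F_{H_2}$-norms, summing over the $\sim(H_1/H)^{1/2}$ values of $k$ produces the announced power, and the arithmetic $(H_1/H)^{1/2}\,H^{3/4}H_1^{-3/4+}=H^{1/4}H_1^{(-1/4)^+}$ gives \eqref{bilinear_hhl_estimate}.

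The main obstacle is precisely the time-decomposition step: unlike the high$\,\times\,$low and high$\,\times\,$high$\,\rightarrow\,$high cases, here the output lives on a \emph{longer} time window than the inputs, forcing one to pay the factor $(H_1/H)^{1/2}$. The estimate closes only because the high$\,\times\,$high$\,\rightarrow\,$low bilinear interaction supplies the compensating gain $H_1^{-1/2}H^{1/4}$ together with the modulation gain $H_1^{-1/4}$; keeping track of these gains against the time loss is the delicate bookkeeping of the argument.
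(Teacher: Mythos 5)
Your proposal is correct and is essentially the paper's own proof: your partition into bumps $\beta_k$ of width $H_1^{-1/2}$ with fattened partners $\widetilde\beta_k$ is exactly the paper's partition $\sum_{m}\gamma^2\big(H_1^{1/2}(\cdot-t_H)-m\big)=1$, and the subsequent reduction to a dyadic $L^2$ claim, the use of Corollary \ref{cor_bilinear_l2} (case 1) with the gain $H_{\mathrm{max}}^{-1/2}H_{\mathrm{min}}^{1/4}=H_1^{-1/2}H^{1/4}$ (estimate \eqref{cor_l2bl_hminllhmax_particular} precisely when the output modulation is largest), the extra factor $H_1^{-1/4}$ extracted from $L_1L_2\geq H_1$, and the control of the $L$-summation via $\vert\Omega\vert\lesssim H_1^{3/2}$ all coincide with the paper's argument, including the final arithmetic $(H_1/H)^{1/2}\,H^{3/4}H_1^{(-3/4)^+}=H^{1/4}H_1^{(-1/4)^+}$. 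One harmless inaccuracy: the short-time localization lets you lump the \emph{input} modulations below $H_1^{1/2}$ (Corollary \ref{key_cor_prelim}), but the \emph{output} modulation $L$ cannot be restricted to $L\gtrsim H_1^{1/2}$ and the sum must run over all dyadic $L$ (values $L\leq H^{1/2}$ being absorbed by the weight $\big(\tau-\omega(\xi,\mu)+iH^{1/2}\big)^{-1}$ as in Lemma \ref{bilinear_hlh}, and the logarithmic $L$-count for huge $L_1,L_2$ being paid by the leftover $L_{\mathrm{max}}^{-1/2}$ slack rather than by $O(\log H_1)$) — your case analysis yields the identical bound in these regimes, so nothing breaks.
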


\begin{proof}
Indeed, first of all let us consider a smooth function $\gamma:\R\to[0,1]$ supported in $[-1,1]$ satisfying that \[
\sum_{m\in \Z}\gamma^2(x-m)=1, \quad \forall x\in\R.
\]
Then, expanding the $N_H$-norm in the right-hand side of \eqref{bilinear_hhl} and then using the definition of $\gamma$, we obtain that \begin{align}\label{expansion_NH_hhl_bilinear}
&\big\Vert P_H\partial_x(u_{H_1}v_{H_2})\big\Vert_{N_H}\nonumber
\\ & \qquad \qquad \lesssim \sup_{t_H\in\R}\big\Vert \big(\tau-\omega(\xi,\mu)+iH^{1/2}\big)^{-1}H^{1/2}\mathds{1}_{\Delta_H}\sum_{\vert m\vert\lesssim (H_1/H)^{1/2}}f^m_{H_1}*f_{H_2}^m\big\Vert_{X_H},
\end{align}
where $f^m_{H_1}$ and $f^m_{H_2}$ are defined by \begin{align*}
f^m_{H_1}&:=\left\vert \mathcal{F}\left( \eta_0\big(H^{1/2}(\cdot-t_H)\big)\gamma\big(H_1^{1/2}(\cdot-t_H)-m\big)u_{H_1}\right)\right\vert, 
\\ f^m_{H_2}&:=\left\vert \mathcal{F}\left( \widetilde\eta_0\big(H^{1/2}(\cdot-t_H)\big)\gamma\big(H_1^{1/2}(\cdot-t_H)-m\big)v_{H_2}\right)\right\vert.
\end{align*}
In the same fashion as before, for the sake of simplicity we define \begin{align*}
f^m_{H_i,\lfloor H_1^{1/2}\rfloor}(\tau,\xi,\mu)&:= \eta_{\leq \lfloor H_1^{1/2} \rfloor}(\tau-\omega(\xi,\mu))f^m_{H_i}(\tau,\xi,\mu), \quad \hbox{and}
\\ f^m_{H_i,L_i}(\tau,\xi,\mu)&:= \eta_{\leq L_i}(\tau-\omega(\xi,\mu))f^m_{H_i}(\tau,\xi,\mu), \quad \hbox{for} \quad L_i>\lfloor H_1^{1/2}\rfloor.
\end{align*}
From the above definitions, estimate \eqref{expansion_NH_hhl_bilinear}, the definition of the $X_H$-norm, arguing as in the proof of Lemma \ref{bilinear_hlh}, we infer that \begin{align*}
&\big\Vert P_H\partial_x(u_{H_1}v_{H_2})\big\Vert_{N_H}\nonumber
\\ & \qquad \qquad \lesssim H^{1/2}\sup_{t_H\in\R,\, m\in\Z}H_1^{1/2} H^{-{1/2}}\sum_{L\in\mathbb{D}}\sum_{L_1,L_2\geq H_1^{1/2}} L^{-1/2}\big\Vert \mathds{1}_{D_{\infty,H,L}}\cdot\big(f^m_{H_1,L_1}*f^m_{H_2,L_2}\big)\big\Vert_{L^2}.
\end{align*}
Therefore, according to Lemma  \ref{technical_estimate_lemma} and the latter estimate, it suffices to prove that \begin{align}\label{claim_hhl_lemma}
&H_1^{1/2} \sum_{L\in\mathbb{D}} L^{-1/2}\big\Vert \mathds{1}_{D_{\infty,H,L}}\cdot\big(f^m_{H_1,L_1}*f^m_{H_2,L_2}\big)\big\Vert_{L^2}\nonumber
\\ & \qquad \ \qquad \qquad \lesssim H_1^{(-1/4)^+}H^{1/4}L_1^{1/2}\Vert f_{H_1,L_1}^m\Vert_{L^2} L_2^{1/2}\Vert f_{H_2,L_2}^m\Vert_{L^2},
\end{align}
for all $L_1,L_2\geq H_1^{1/2}$, in order to prove estimate \eqref{bilinear_hhl_estimate}. Now, on the one-hand, if $L_\mathrm{max}=L_1$ or $L_\mathrm{max}=L_2$, say for example $L_\mathrm{max}=L_1$, then, we deduce from estimate  \eqref{cor_l2bl_hminllhmax_general}  that \begin{align*}
& H_1^{1/2} \sum_{L\in\mathbb{D}} L^{-1/2}\big\Vert \mathds{1}_{D_{\infty,H,L}}\cdot\big(f^m_{H_1,L_1}*f^m_{H_2,L_2}\big)\big\Vert_{L^2}
\\ & \qquad \qquad \lesssim H_1^{{1/2}-1/2}H^{1/4}\sum_{L\in\mathbb{D}} L^{-1/2}L^{1/2}\Vert f_{H_1,L_1}^m\Vert_{L^2} L_2^{1/2}\Vert f_{H_2,L_2}^m\Vert_{L^2}
\\ & \qquad \qquad \lesssim H_1^{(-1/4)^+}H^{1/4}L_1^{1/2}\Vert f_{H_1,L_1}^m\Vert_{L^2} L_2^{1/2}\Vert f_{H_2,L_2}^m\Vert_{L^2}%
\\ & \qquad \qquad \qquad +H_1^{1/2-1/2}H^{1/4}\sum_{L\geq H_1^{1/2}} L^{-1/2}L_1^{1/2}\Vert f_{H_1,L_1}^m\Vert_{L^2} L_2^{1/2}\Vert f_{H_2,L_2}^m\Vert_{L^2}.
\end{align*}
On the other hand, if $L_\mathrm{max}=L$ we have that $L\sim \max\{L_\mathrm{med},\vert\Omega\vert\}$, where $\Omega$ is defined in \eqref{resonant}. Notice that, if $L\sim L_\mathrm{med}$, then we are in one of the above cases, whereas in the case $L\sim \vert \Omega\vert$ we have that $L\sim \vert \Omega\vert\lesssim H_1^{3/2}$, and hence the sum over $L$ is bounded by $H_1^{0^+}$. Therefore, estimate \eqref{claim_hhl_lemma} still holds, which concludes the proof of the lemma.
\end{proof}

\begin{lem}[low$\,\times\,$low$\,\rightarrow\,$low]\label{bilinear_lll}
Let $H,H_1,H_2\in\mathbb{D}$ satisfying that $H,H_1, H_2\lesssim 1$. Then, the following inequality holds \begin{align}\label{bilinear_lll_estimate}
\big\Vert P_H\partial_x(u_{H_1}v_{H_2})\big\Vert_{N_H}\lesssim \Vert u_{H_1}\Vert_{F_{H_1}}\Vert v_{H_2}\Vert_{F_{H_2}},
\end{align}
for all $u_{H_1}\in F_{H_1}$ and $v_{H_2}\in F_{H_2}$.
\end{lem}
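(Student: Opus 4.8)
The plan is to follow the same scheme as in the proofs of Lemmas \ref{bilinear_hlh}--\ref{bilinear_hhl}, the point being that the low-frequency regime is the most benign one. The first observation is that, since the dyadic numbers range over $\mathbb{D}=\{2^\ell:\ell\in\N\}$ and are therefore bounded below, the hypothesis $H,H_1,H_2\lesssim 1$ actually forces $H\sim H_1\sim H_2\sim 1$; in particular only finitely many frequency blocks occur and every power of $H$, $H_1$, $H_2$ that appears (the factor coming from $\partial_x$, the $H^{1/2}$ weights built into the $N_H$-norm, and the $H_{\min}^{1/2}$ loss in the $L^2$-bilinear estimate) is comparable to $1$. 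Consequently there is no frequency to be gained or lost, and the whole estimate becomes a pure modulation computation.

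First I would expand the $N_H$-norm exactly as in \eqref{hlh_NH_norm_proof}--\eqref{hlh_proof_rewritten}. After localizing $u_{H_1}$ and $v_{H_2}$ in time on the scale $H^{-1/2}\sim 1$ and invoking Corollaries \ref{gro_gamma} and \ref{key_cor_prelim}, together with $|\mathcal{F}(\partial_x\cdot)|=|\xi|\,|\mathcal{F}(\cdot)|$ and $|\xi|\lesssim H^{1/2}\sim 1$ on $\Delta_H$, the problem reduces (uniformly in the time-translation parameter $t_H$) to controlling
$$\sum_{L,L_1,L_2}L^{-1/2}\big\Vert\mathds{1}_{D_{\infty,H,L}}\cdot(f_{H_1,L_1}*f_{H_2,L_2})\big\Vert_{L^2},$$
where $f_{H_i,L_i}$ is the modulation-localized piece of $f_{H_i}$ supported in $D_{\infty,H_i,L_i}$ as in \eqref{def_fhl_l2}. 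Applying the basic $L^2$-bilinear bound \eqref{cor_bilinear_l2_general} and using $H_{\min}^{1/2}\sim 1$, each summand is dominated by $L^{-1/2}L_{\min}^{1/2}\Vert f_{H_1,L_1}\Vert_{L^2}\Vert f_{H_2,L_2}\Vert_{L^2}$, with $L_{\min}=\min(L,L_1,L_2)$, and the goal is to bound the resulting triple sum by $\big(\sum_{L_1}L_1^{1/2}\Vert f_{H_1,L_1}\Vert_{L^2}\big)\big(\sum_{L_2}L_2^{1/2}\Vert f_{H_2,L_2}\Vert_{L^2}\big)$, which is $\lesssim\Vert u_{H_1}\Vert_{F_{H_1}}\Vert v_{H_2}\Vert_{F_{H_2}}$ by Corollary \ref{key_cor_prelim}.

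The only genuine point, and the step I expect to be the main obstacle, is the convergence of this modulation sum, precisely because there is no frequency gain to spend. The key fact is that the resonance function in \eqref{resonant} is bounded here, $|\Omega|\lesssim 1$, since all frequencies are $O(1)$; as the support condition of the convolution on $D_{\infty,H,L}$ forces the output modulation to satisfy $L\lesssim L_1+L_2+|\Omega|$, this caps $L\lesssim\max(L_1,L_2)$ (recall $L_i\ge 1$). With this upper cap in force, for fixed $L_1,L_2$ the inner sum over $L$ splits into the range $L\le\min(L_1,L_2)$, where each term equals $1$ and the dyadic sum contributes at most a factor $1+\log\min(L_1,L_2)$, and the range $\min(L_1,L_2)\le L\lesssim\max(L_1,L_2)$, which is geometric and contributes $O(1)$. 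Hence the triple sum is dominated by
$$\sum_{L_1,L_2}\big(1+\log\min(L_1,L_2)\big)(L_1L_2)^{-1/2}\,a_{L_1}b_{L_2},\qquad a_{L_1}:=L_1^{1/2}\Vert f_{H_1,L_1}\Vert_{L^2},\ \ b_{L_2}:=L_2^{1/2}\Vert f_{H_2,L_2}\Vert_{L^2}.$$
Since $\big(1+\log\min(L_1,L_2)\big)(L_1L_2)^{-1/2}\lesssim 1$ for all $L_1,L_2\ge 1$ — the logarithm is beaten by the square-root decay — this is $\lesssim\big(\sum_{L_1}a_{L_1}\big)\big(\sum_{L_2}b_{L_2}\big)\lesssim\Vert u_{H_1}\Vert_{F_{H_1}}\Vert v_{H_2}\Vert_{F_{H_2}}$ by Corollary \ref{key_cor_prelim}, which closes \eqref{bilinear_lll_estimate}. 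I stress that the boundedness $|\Omega|\lesssim 1$ is essential: without the resulting cap on $L$, the sum over large output modulations would diverge, so the mild logarithmic loss in the small-$L$ regime is in fact the worst thing that can happen, and it is harmless.
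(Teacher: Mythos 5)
Your proof is correct and follows essentially the same route as the paper: the standard reduction to the fixed-$(L_1,L_2)$ modulation sum, then the basic bilinear bound \eqref{cor_bilinear_l2_general} with $H_{\mathrm{min}}^{1/2}\sim 1$, which is precisely what the paper invokes (it records the remaining summation over $L$ as a direct consequence). One correction to the point you stress at the end: the cap $L\lesssim\max(L_1,L_2)$ coming from $\vert\Omega\vert\lesssim 1$ is not actually needed, because for $L\geq\min(L_1,L_2)$ the summand is $L^{-1/2}\min(L_1,L_2)^{1/2}$, which decays geometrically in $L$, so the unrestricted sum over $L\in\mathbb{D}$ already converges and is $\lesssim 1+\log\min(L_1,L_2)$; your claim that the sum over large output modulations would diverge without the cap is false, though this does not affect the validity of your argument.
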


\begin{proof}
Once again, arguing as in the proof of the previous lemmas, we infer that it is enough to prove that \begin{align*}
\sum_{L\in\mathbb{D}}L^{-1/2}\big\Vert\mathds{1}_{D_{\infty,H,L}}\cdot(f_{H_1,L_1}*f_{H_2,L_2})\big\Vert_{L^2}\lesssim L_1^{1/2}\Vert f_{H_1,L_1}\Vert_{L^2}L_2^{1/2}\Vert f_{H_2,L_2}\Vert_{L^2}, 
\end{align*}
where $L_1,L_2\in\mathbb{D}$ are fixed dyadic numbers and $f_{H_i,L_i}$ are supported in $D_{\infty,H_i,L_i}$. However, this is a direct consequence of estimate \eqref{cor_bilinear_l2_general}  along with the fact that $H,H_1,H_2\lesssim 1$. The proof is complete.
\end{proof}

Finally, we are in position to prove Proposition \ref{shortime_bilinear_estimate}.

\begin{proof}[Proof of Proposition \ref{shortime_bilinear_estimate}]
First of all, since estimates
\eqref{short_t_estimate} and \eqref{short_t_2_estimate} follow very similar lines, we shall only prove \eqref{short_t_estimate} and we omit the proof of the second case. In fact, in order to take advantage of the previous estimates, we take two extensions $\tilde u$ and $\tilde v$, of $u$ and $v$ respectively, satisfying that \[
\Vert \tilde u\Vert_{F^s}\leq 2\Vert u\Vert_{F^s(T)} \quad \hbox{ and } \quad \Vert \tilde v\Vert_{F^s}\leq 2\Vert v\Vert_{F^s(T)}.
\]
On the other hand notice that, from the definition of the $N^s(T)$ norm in \eqref{NsT_norm}, after an application of Minkowski inequality we infer that \[
\big\Vert \partial_x(uv)\big\Vert_{N^s(T)}\lesssim \bigg(\sum_{H\in\mathbb{D}}H^{s}\Big(\sum_{H_1,H_2\in\mathbb{D}}\big\Vert\partial_xP_H(P_{H_1}\tilde{u}P_{H_2}\tilde{v})\big\Vert_{N_H}\Big)^2\bigg)^{1/2}.
\]
Now, for $H\in\mathbb{D}$ fixed, we split the analysis into several regions, namely \begin{align*}
\mathbf{H}_1&:=\big\{(H_1,H_2)\in\mathbb{D}^2: H\ll H_1\sim H_2\big\},
\\ \mathbf{H}_2&:=\big\{(H_1,H_2)\in\mathbb{D}^2: H_1\ll H_2\sim H\big\},
\\ \mathbf{H}_3&:=\big\{(H_1,H_2)\in\mathbb{D}^2: H_2\ll H_1\sim H\big\},
\\ \mathbf{H}_4&:=\big\{(H_1,H_2)\in\mathbb{D}^2: H\sim H_1\sim H_2\gg 1\big\},
\\ \mathbf{H}_5&:=\big\{(H_1,H_2)\in\mathbb{D}^2: H, H_1, H_2\lesssim 1\big\}.
\end{align*}
Therefore, due to the frequency localization we conclude that \begin{align*}
\big\Vert \partial_x(uv)\big\Vert_{N ^s(T)}&\lesssim \sum_{j=1}^5 \bigg(\sum_{H\in\mathbb{D}}H^{s}\Big(\sum_{(H_1,H_2)\in \mathbf{H}_j}\big\Vert\partial_xP_H(P_{H_1}\tilde{u}P_{H_2}\tilde{v})\big\Vert_{N_H}\Big)^2\bigg)^{1/2}=: \sum_{j=1}^5 \mathcal{N}_j.
\end{align*}
Then, in order to deal with $\mathcal{N}_1$ it suffices to use estimate \eqref{bilinear_hhl_estimate}, from where we get \begin{align*}
\mathcal{N}_1&\lesssim \bigg(\sum_{H\in\mathbb{D}}H^{s}\Big(\sum_{H_1\gg H}H_1^{0^+}\Vert P_{H_1}\tilde{u}\Vert_{F_{H_1}}  \Vert P_{H_1}\tilde{v}\Vert_{F_{H_1}}\Big)^2\bigg)^{1/2}\lesssim \Vert \tilde u\Vert_{F^{0^+}}\Vert \tilde v\Vert_{F^s}.
\end{align*}
In a similar fashion, by using estimate \eqref{bilinear_hlh_estimate} we obtain that \[
\mathcal{N}_2\lesssim \bigg(\sum_{H\in\mathbb{D}}H^{s}\Big(\sum_{H_1\ll H}\Vert P_{H_1}\tilde{u}\Vert_{F_{H_1}}  \Vert P_{H}\tilde{v}\Vert_{F_{H}}\Big)^2\bigg)^{1/2}\lesssim \Vert \tilde u\Vert_{F^{0^+}}\Vert \tilde v\Vert_{F^s}.
\]
Notice that, by symmetry, the exact same bound holds for $\mathcal{N}_3$, exchanging the roles of $\tilde u$ and $\tilde v$. Next, to handle $\mathcal{N}_4$ we use estimate \eqref{bilinear_hhh_estimate}, along with Cauchy-Schwarz, which leads us to \begin{align*}
\mathcal{N}_4\lesssim \left(\sum_{H\in\mathbb{D}}H^{s}H^{0^+}\Vert P_H\tilde u\Vert_{F_H}^2\Vert P_H\tilde v\Vert_{F_H}^2\right)^{1/2}\lesssim \Vert \tilde u\Vert_{F^{0^+}}\Vert \tilde v\Vert_{F^s}.
\end{align*}
Finally, by using
\eqref{bilinear_lll_estimate} we conclude that \[
\mathcal{N}_5\lesssim \Vert \tilde u\Vert_{F^0}\Vert \tilde v\Vert_{F^0}.
\]
Therefore, gathering the above estimates we conclude the proof of the proposition.
\end{proof}

\subsection{Short time estimates on the background of $\Psi$}

Here we state an alternative version of the lemmas in the previous subsection when one of the functions has no decay properties, but boundedness.

\begin{prop}\label{shortime_psi_bilinear_estimate}
Let $s\geq 0$ and $T\in (0,1]$ both be fixed. Then, for any $u\in F^s(T)$ and any $V\in L^\infty_tW^{1/2^+,\infty}_{xy}$, the following inequality hold
\begin{align}\label{short_t_psi_bil}
\big\Vert \partial_x(uV)\big\Vert_{N^s(T)} \lesssim \Vert u\Vert_{F^{s}(T)}\Vert V\Vert_{L^\infty_tW^{1/2^+,\infty}_{xy}}+\Vert u\Vert_{F^{0}(T)}\Vert V\Vert_{ L^\infty_t W^{s^+,\infty}_{xy}}.
\end{align}
\end{prop}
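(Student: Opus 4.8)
The plan is to reduce the whole estimate to a single crude but scale-invariant $L^2$-based bound for the $N_H$-norm, exploiting the fact that multiplication by a bounded $V$ never forces us to leave the $L^2$ framework. First I would extend $u$ frequency by frequency: for each $H$ choose a near-optimal extension $\tilde u_H$ of $P_Hu$ with $\|\tilde u_H\|_{F_H}\le 2\|P_Hu\|_{F_H(T)}$ and set $\tilde u:=\sum_H\tilde u_H$. Since $\|f\|_{F^\sigma(T)}^2=\sum_H H^\sigma\|P_Hf\|_{F_H(T)}^2$, this single extension satisfies $\|\tilde u\|_{F^\sigma}\lesssim\|u\|_{F^\sigma(T)}$ for \emph{every} $\sigma\ge0$ simultaneously. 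As $V$ is already global, $\partial_x(\tilde uV)$ extends $\partial_x(uV)$, so it suffices to prove the global estimate; decomposing both factors by Littlewood-Paley and applying Minkowski exactly as in the proof of Proposition \ref{shortime_bilinear_estimate}, everything reduces to controlling $\|P_H\partial_x(P_{H_1}\tilde u\,P_{H_2}V)\|_{N_H}$.

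The central step is a purely linear bound: for any $g$ with spatial frequencies in $\Delta_H$,
\[
\|g\|_{N_H}\lesssim H^{-1/4}\sup_{t_H}\big\|\eta_0(H^{1/2}(t-t_H))g\big\|_{L^2_{txy}},
\]
which I would prove by splitting the $X_H$-sum defining $N_H$ at the modulation $L=H^{1/2}$, bounding $|(\tau-\omega+iH^{1/2})^{-1}|$ by $H^{-1/2}$ for $L\le H^{1/2}$ and by $L^{-1}$ for $L>H^{1/2}$, and applying Cauchy--Schwarz in $L$ together with the bounded overlap of the $\eta_L(\tau-\omega)$. Applying this to $g=P_H\partial_x(P_{H_1}\tilde u\,P_{H_2}V)$, using $\|P_H\partial_x(\cdot)\|_{L^2}\lesssim H^{1/2}\|\cdot\|_{L^2}$ (since $|\xi|\lesssim H^{1/2}$ on $\Delta_H$), placing $P_{H_2}V$ in $L^\infty$, and bounding $\|\eta_0(H^{1/2}(\cdot-t_H))P_{H_1}\tilde u\|_{L^2_{txy}}\lesssim H^{-1/4}\|P_{H_1}\tilde u\|_{F_{H_1}}$ via Lemma \ref{lem_linear_estimate}, all powers of $H$ cancel and I obtain
\[
\big\|P_H\partial_x(P_{H_1}\tilde u\,P_{H_2}V)\big\|_{N_H}\lesssim \|P_{H_1}\tilde u\|_{F_{H_1}}\,\|P_{H_2}V\|_{L^\infty_tL^\infty_{xy}},
\]
valid for all $H,H_1,H_2$. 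This is exactly where $\beta=1/2$ is forced: repeating the count with a general $\beta$ leaves an extra factor $H^{1/2-\beta}$, bounded only for $\beta\ge 1/2$.

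With this master inequality in hand the remainder is bookkeeping over the three admissible regimes. When $H\sim H_1\gtrsim H_2$ (so $u$ is at the output frequency) I let $u$ carry the $H^s$-weight and sum the low-frequency pieces of $V$ through the Besov embedding $\sum_{H_2}\|P_{H_2}V\|_{L^\infty}\lesssim\|V\|_{W^{0^+,\infty}}\le\|V\|_{W^{1/2^+,\infty}}$, producing the first term $\|u\|_{F^s}\|V\|_{W^{1/2^+,\infty}}$. When $H\ll H_1\sim H_2$ (both inputs high) I again let $u$ carry the $s$-weight but extract decay from the high frequency of $V$ by Bernstein, $\|P_{H_2}V\|_{L^\infty}\lesssim H_2^{-1/4^+}\|V\|_{W^{1/2^+,\infty}}$, and close by Cauchy--Schwarz in the output frequency; this again gives the first term. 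Finally, when $H\sim H_2\gtrsim H_1$ (so $V$ is at the output frequency) I use $\sum_H H^s\|P_HV\|_{L^\infty}^2\lesssim\|V\|_{W^{s^+,\infty}}^2$ and sum the low-frequency pieces of $u$ (absorbing a harmless logarithm into the $s^+$), which yields the second term $\|u\|_{F^0}\|V\|_{W^{s^+,\infty}}$; it is precisely to get the sharper $F^0$-norm here that I use the single frequency-localized extension, simultaneously near-optimal in every $F^\sigma(T)$.

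The point I expect to require the most care is conceptual rather than computational: once $V$ is measured in $L^\infty$ in both space and time, multiplication by $V$ is bounded on $L^2$, and the whole modulation and time-frequency analysis that was indispensable for the genuinely bilinear estimate $\partial_x(u^2)$ simply disappears. The only genuinely delicate items are the scaling count that pins down $\beta=1/2$ in the master bound, and keeping the two output-frequency regimes cleanly separated in the summation so that the $F^0$-norm (and not the $F^s$-norm) of $u$ appears in the second term; the stated regularity $W^{1/2^+,\infty}$ on $V$ in the first term is comfortably more than the argument needs, so no optimization there is required.
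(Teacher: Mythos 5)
Your proposal is correct, and at the decisive step it takes a genuinely different --- and in fact sharper --- route than the paper. The overall skeleton coincides: a frequency-localized extension of $u$, Littlewood--Paley decomposition plus Minkowski, the same case analysis over the relative sizes of $H,H_1,H_2$, and Besov-type summation in which the H\"older regularity of $V$ absorbs high-frequency factors. The difference is how the $N_H$-norm is handled. The paper expands modulations and, in the $\mathrm{high}\times\mathrm{high}\to\mathrm{low}$ regime, chops the time window of length $H^{-1/2}$ into roughly $(H_1/H)^{1/2}$ pieces of length $H_1^{-1/2}$ via the $\gamma$-partition (as in Lemma \ref{bilinear_hhl}); summing those pieces by the triangle inequality in $X_H$ costs a factor $H^{1/4}$ (enlarged to $H_1^{1/4}$ in the summation), which is then absorbed by $\Vert V\Vert_{L^\infty_tW^{1/2^+,\infty}_{xy}}$ --- this is exactly where the $1/2^+$ is consumed in the paper's proof. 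Your linear bound $\Vert g\Vert_{N_H}\lesssim H^{-1/4}\sup_{t_H}\Vert \eta_0(H^{1/2}(\cdot-t_H))g\Vert_{L^2_{txy}}$ is valid as you describe (split the $X_H$-sum at $L=H^{1/2}$, use $\vert(\tau-\omega+iH^{1/2})^{-1}\vert\lesssim \max(L,H^{1/2})^{-1}$, Cauchy--Schwarz in $L$ and the finite overlap of the $\eta_L(\tau-\omega)$), and it discards all modulation information on $u$, keeping only $\Vert P_{H_1}\tilde u\Vert_{L^\infty_tL^2}\lesssim \Vert P_{H_1}\tilde u\Vert_{F_{H_1}}$, i.e.\ the frequency-localized bound inside the proof of Lemma \ref{lem_linear_estimate}. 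Consequently no time-chopping is needed and your master bound $\Vert P_H\partial_x(P_{H_1}\tilde u\,P_{H_2}V)\Vert_{N_H}\lesssim \Vert P_{H_1}\tilde u\Vert_{F_{H_1}}\Vert P_{H_2}V\Vert_{L^\infty}$ holds \emph{without loss} uniformly in all three regimes; your scaling count pinning down $\beta=1/2$ through the net power $H^{1/2-\beta}$ is also correct, matching the paper's motivation for the choice of $\beta$. One consequence you only half-notice: with the lossless master bound, the $\mathrm{high}\times\mathrm{high}\to\mathrm{low}$ regime closes using merely $\Vert P_{H_1}V\Vert_{L^\infty}\lesssim H_1^{0^-}\Vert V\Vert_{W^{0^+,\infty}}$, so your argument actually proves a slightly stronger statement than \eqref{short_t_psi_bil} (with $W^{0^+,\infty}$ in the first term), whereas in the paper's proof the $W^{1/2^+,\infty}$ hypothesis is genuinely needed to pay for the chopping loss. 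What the paper's heavier route buys is structural uniformity with the genuinely bilinear Lemma \ref{bilinear_hhl}, where the modulation decomposition and chopping are indispensable; what your route buys is simplicity and sharpness, at no cost precisely because multiplication by $V\in L^\infty$ reduces everything to H\"older in $L^2$, as you correctly observe. Two small points to write out carefully in a final version: the time cutoff $\eta_0(H^{1/2}(\cdot-t_H))$ commutes with the spatial operator $P_H\partial_x$ before you apply the Bernstein bound $\Vert P_H\partial_x\cdot\Vert_{L^2}\lesssim H^{1/2}\Vert\cdot\Vert_{L^2}$, and the output constraint $H\lesssim\max\{H_1,H_2\}$ (since $P_H$ annihilates the product otherwise) is what justifies restricting to your three regimes.
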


\begin{proof}
First of all, since estimates
\eqref{short_t_estimate} and \eqref{short_t_2_estimate} follow very similar lines, we shall only prove \eqref{short_t_estimate} and we omit the proof of the second case. In fact, in order to take advantage of the previous estimates, we take two extensions $\tilde u$ and $\tilde v$, of $u$ and $v$ respectively, satisfying that \[
\Vert \tilde u\Vert_{F^s}\leq 2\Vert u\Vert_{F^s(T)} \quad \hbox{ and } \quad \Vert \tilde v\Vert_{F^s}\leq 2\Vert v\Vert_{F^s(T)}.
\]
On the other hand notice that, from the definition of the $N^s(T)$ norm in \eqref{NsT_norm}, after an application of Minkowski inequality we infer that \[
\big\Vert \partial_x(uV)\big\Vert_{N^s(T)}\lesssim \bigg(\sum_{H\in\mathbb{D}}H^{s}\Big(\sum_{H_1,H_2\in\mathbb{D}}\big\Vert\partial_xP_H(P_{H_1}\tilde{u}P_{H_2}V)\big\Vert_{N_H}\Big)^2\bigg)^{1/2}.
\]
Now, for $H\in\mathbb{D}$ fixed, we split the analysis into several regions, namely \begin{align*}
\mathbf{H}_1&:=\big\{(H_1,H_2)\in\mathbb{D}^2: H_1\ll H_2\sim H\big\},
\\ \mathbf{H}_2&:=\big\{(H_1,H_2)\in\mathbb{D}^2: H_2\ll H_1\sim H\big\},
\\ \mathbf{H}_3&:=\big\{(H_1,H_2)\in\mathbb{D}^2: H\ll H_1\sim H_2\big\},
\\ \mathbf{H}_4&:=\big\{(H_1,H_2)\in\mathbb{D}^2: H\sim H_1\sim H_2\gg 1\big\},
\\ \mathbf{H}_5&:=\big\{(H_1,H_2)\in\mathbb{D}^2: H, H_1, H_2\lesssim 1\big\}.
\end{align*}
Therefore, due to the frequency localization we conclude that \begin{align*}
\big\Vert \partial_x(uv)\big\Vert_{N ^s(T)}&\lesssim \sum_{j=1}^5 \bigg(\sum_{H\in\mathbb{D}}H^{s}\Big(\sum_{(H_1,H_2)\in \mathbf{H}_j}\big\Vert\partial_xP_H(P_{H_1}\tilde{u}P_{H_2}V)\big\Vert_{N_H}\Big)^2\bigg)^{1/2}=: \sum_{j=1}^5 \mathcal{N}_j.
\end{align*}
Now, in the same fashion as before, except for the $\mathrm{high}\times \mathrm{high}\to\mathrm{low}$ case $\mathcal{N}_3$, we simply expand the $N_H$ norm, from where we obtain that, for each of these cases \begin{align}\label{hlh_NH_Psi_proof}
\big\Vert P_H\partial_x(\tilde u_{H_1}V_{H_2})\big\Vert_{N_H}\lesssim \sup_{t_H\in\R}\big\Vert (\tau-\omega(\xi,\mu)+iH^{1/2})^{-1}H^{1/2}\mathds{1}_{\Delta_H}\cdot(f_{H_1}*f_{H_2})\big\Vert_{X_H},
\end{align}
where $f_{H_1}$ and $f_{H_2}$ are defined by \[ 
f_{H_1}:= \mathcal{F}(\eta_0(H^{1/2}(\cdot-t_H))\tilde u_{H_1}) \quad \hbox{ and } \quad f_{H_2}:= \mathcal{F}(\widetilde{\eta}_0(H^{1/2}(\cdot-t_H))V_{H_2}).
\]
Notice that $f_{H_2}$ is well defined. For the sake of notation now we set \begin{align*}
f_{H_1,\lfloor H^{1/2}\rfloor}(\tau,\xi,\mu)&:=\eta_{\leq \lfloor H^{1/2}\rfloor}(\tau-\omega(\xi,\mu))f_{H_1}(\tau,\xi,\mu), \quad \hbox{and} \\ f_{H_1,L_1}(\tau,\xi,\mu)&:=\eta_{L_1}(\tau-\omega(\xi,\mu))f_{H_1}(\tau,\xi,\mu), \quad \hbox{for } \ L_1>\lfloor H^{1/2}\rfloor.
\end{align*}
Therefore, expanding the $X_H$-norm in  \eqref{hlh_NH_Psi_proof}, an application of Plancherel Theorem along with H\"older inequality leads us to \begin{align}\label{hlh_proofbil_Psi}
\big\Vert P_H\partial_x(\tilde u_{H_1}V_{H_2})\big\Vert_{N_H}&\lesssim \sup_{t_H\in\R}H^{1/2}\sum_{L,L_1\geq  H^{1/2}} L^{-1/2}\Vert \mathds{1}_{D_{\infty,H,L}}\cdot(f_{H_1,L_1}*f_{H_2})\Vert_{L^2} \nonumber
\\ & \lesssim  \Vert P_{H_1}\tilde u\Vert_{F_{H_1}}\Vert P_{H_2}V\Vert_{L^\infty} .
\end{align}
Then, plugging estimate \eqref{hlh_proofbil_Psi} into the definition of $\mathcal{N}_1$,  by direct computations we see that
\begin{align*}
\mathcal{N}_1&\lesssim \bigg(\sum_{H\in\mathbb{D}}H^{s}\Big(\sum_{H_1\ll H}\Vert P_{H_1}\tilde{u}\Vert_{F_{H_1}}  \Vert P_{H}V\Vert_{L^\infty}\Big)^2\bigg)^{1/2}\lesssim \Vert \tilde u\Vert_{F^{0}}\Vert V\Vert_{L^\infty_tW^{s^+,\infty}_{xy}}.
\end{align*}
In a similar fashion we get that \[
\mathcal{N}_2\lesssim \bigg(\sum_{H\in\mathbb{D}}H^{s}\Big(\sum_{H_2\ll H}\Vert P_{H}\tilde{u}\Vert_{F_{H}}  \Vert P_{H_2}V\Vert_{L^\infty}\Big)^2\bigg)^{1/2}\lesssim \Vert \tilde u\Vert_{F^{s}}\Vert V\Vert_{L^\infty_t W^{0^+,\infty}_{xy}}.
\]
Notice that the same bound also holds for $\mathcal{N}_4$. In the case of $\mathcal{N}_5$ we immediately obtain that  \[
\mathcal{N}_5\lesssim \Vert \tilde u\Vert_{F^0}\Vert V\Vert_{L^\infty_{txy}}.
\]
Finally, as in the previous case, in order to treat the $\mathrm{high}\times \mathrm{high}\to\mathrm{low}$ case $\mathcal{N}_3$ we need to introduce the $\gamma(\cdot)$ function, as in the proof of Lemma \ref{bilinear_hhl}. 
In the same fashion as before, for the sake of simplicity we define \begin{align*}
f^m_{H_i,\lfloor H_1^{1/2}\rfloor}(\tau,\xi,\mu)&:= \eta_{\leq \lfloor H_1^{1/2} \rfloor}(\tau-\omega(\xi,\mu))f^m_{H_i}(\tau,\xi,\mu), \quad \hbox{and}
\\ f^m_{H_i,L_i}(\tau,\xi,\mu)&:= \eta_{\leq L_i}(\tau-\omega(\xi,\mu))f^m_{H_i}(\tau,\xi,\mu), \quad \hbox{for} \quad L_i>\lfloor H_1^{1/2}\rfloor.
\end{align*}
From the above definitions, expanding the $N_H$ and the $X_H$-norm, arguing as in the proof of  \eqref{hlh_proofbil_Psi}, we obtain that \begin{align*}\big\Vert P_H\partial_x(\tilde u_{H_1}V_{H_2})\big\Vert_{N_H}\nonumber & \lesssim H^{1/2}\sup_{t_H\in\R,\, m\in\Z}H_1^{1/2} H^{-{1/2}}\sum_{L\in\mathbb{D}}\sum_{L_1\geq H_1^{1/2}} L^{-1/2}\big\Vert \mathds{1}_{D_{\infty,H,L}}\cdot\big(f^m_{H_1,L_1}*f^m_{H_2}\big)\big\Vert_{L^2}
\\ & \lesssim H^{1/4}\Vert P_{H_1}\tilde u\Vert_{F_{H_1}}\Vert P_{H_2}V\Vert_{L^\infty} .
\end{align*}
It is not difficult to see that the latter estimate leads us to \[
\mathcal{N}_3\lesssim  \bigg(\sum_{H\in\mathbb{D}}H^{s}\Big(\sum_{H_1\gg H}H_1^{1/4}\Vert P_{H_1}\tilde{u}\Vert_{F_{H_1}}  \Vert P_{H_1}V\Vert_{L^\infty}\Big)^2\bigg)^{1/2}\lesssim \Vert \tilde u\Vert_{F^{s}}\Vert V\Vert_{L^\infty_t W^{1/2^+,\infty}_{xy}}.
\]

Therefore, gathering the above estimates we conclude the proof of the proposition.
\end{proof}

\medskip

\section{Energy estimates}\label{sec_energy}

\subsection{A priori estimates for solutions} The main goal of this section is to prove the following key energy estimate for smooth solutions of \eqref{zk_psi}. We recall that, due to the linear estimate \eqref{linear_prop_estimate}, we need to control the $B^s(T)$-norm of a solution $u(t)$ to equation \eqref{zk_psi}, solely as a function of $\Vert u_0\Vert_{E^s}$ and $\Vert u\Vert_{F^s(T)}$. Moreover, recall also that, due to the short-time bilinear estimates derived in the last section, we need to work with $\beta=1/2$ in the definition of the spaces $F^s_\beta$, $F^s_\beta(T)$ and $F_{H,\beta}$, and hence we shall omit the index $\beta$ to simplify the notation.

\begin{prop}\label{energy_prop_state}
Let $T\in(0,1]$ and assume that $s\geq 1$. Consider $u\in C([-T,T]:\, H^\infty(\R^2))$ to be a smooth solution of the IVP \eqref{zk_psi}. Then, \begin{align}\label{energy_estimate_ineq}
\Vert u\Vert_{B^{s}(T)}^2&\lesssim \Vert u_0\Vert_{H^s}^2+T\Vert u\Vert_{F^{s}(T)}\Vert \partial_t\Psi+\partial_x^3\Psi+\partial_x\partial_y^2\Psi+\partial_xf(\Psi)\Vert_{L^\infty_TH^{s}_{xy}}\nonumber
\\ & \qquad +T^{1/2^-}\Vert u\Vert_{F^{1}(T)}\Vert u\Vert_{F^{s}(T)}^2+T^{1/2^-}\Vert \Psi\Vert_{L^\infty_TW^{1^+,\infty}_{xy}}\Vert u\Vert_{F^{s}(T)}^2
\\ & \qquad +T^{1/2^-}\Vert \Psi\Vert_{L^\infty_TW^{s+1^+,\infty}_{xy}}\Vert u\Vert_{F^{1}(T)}\Vert u\Vert_{F^{s}(T)}.\nonumber
\end{align}
\end{prop}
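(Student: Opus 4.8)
The plan is to run a frequency-localized energy estimate. First I rewrite \eqref{zk_psi} in the form
\[
\partial_t u + \partial_x\Delta u = -\tfrac12\partial_x(u^2) - \partial_x(u\Psi) - G, \qquad G := \partial_t\Psi + \partial_x\Delta\Psi + \tfrac12\partial_x(\Psi^2),
\]
so that $G = \partial_t\Psi + \partial_x^3\Psi + \partial_x\partial_y^2\Psi + \partial_x f(\Psi)$ is precisely the quantity controlled in $L^\infty_T H^s$ by \eqref{hyp_psi_general}. Applying $P_H$, pairing with $P_H u$ in $L^2_{xy}$, and using that the linear operator $\partial_x\Delta=\partial_x^3+\partial_x\partial_y^2$ is skew-adjoint (so $\langle P_H u,\partial_x\Delta P_H u\rangle=0$) yields, for each $H\in\mathbb{D}$ and $t_H\in[-T,T]$,
\[
\Vert P_H u(t_H)\Vert_{L^2}^2 = \Vert P_H u(0)\Vert_{L^2}^2 - 2\int_0^{t_H}\!\!\int_{\R^2} P_H u\, P_H\!\big(\tfrac12\partial_x(u^2) + \partial_x(u\Psi) + G\big)\,dx\,dy\,dt.
\]
Multiplying by $H^s$, taking $\sup_{t_H}$ and summing over $H$ reduces the proposition to estimating three space-time integrals; the boundary terms (and the special $H=1$ piece of $\Vert\cdot\Vert_{B^s(T)}$, which involves $u(0)$ directly) collectively produce $\Vert u_0\Vert_{H^s}^2$.

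I would dispatch the forcing term first, as the easiest: by Cauchy--Schwarz in $(x,y)$ and in $t$, $\big|\int_0^{t_H}\!\int P_H u\, P_H G\big|\le T\,\Vert P_H u\Vert_{L^\infty_T L^2}\Vert P_H G\Vert_{L^\infty_T L^2}$, and the embedding $\Vert P_H u\Vert_{L^\infty_T L^2}\lesssim\Vert P_H u\Vert_{F_H}$ (Lemma \ref{lem_linear_estimate}) together with a final Cauchy--Schwarz in $H$ against the weights $H^s$ gives exactly $T\Vert u\Vert_{F^s(T)}\Vert G\Vert_{L^\infty_T H^s}$.

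The heart of the argument is the cubic self-interaction $\int_0^{t_H}\!\int P_H u\, P_H\partial_x(u^2)$. Here I decompose $u^2=\sum_{H_1,H_2}P_{H_1}u\,P_{H_2}u$ and split into the regimes $H\sim H_1\sim H_2$, $H\sim H_1\gg H_2$ (and its symmetric), and $H_1\sim H_2\gg H$. The derivative $\partial_x$ costs one power of $H$ on the output that must be recovered. In the near-diagonal regime I would symmetrize the trilinear form in the three indistinguishable $u$-frequencies and integrate by parts, exploiting the algebraic identity that makes $\int\partial_x(u^2)\,u$ vanish so that only a commutator survives, gaining from the large resonance $\Omega$ off the diagonal. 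To turn the $L^2$ bilinear bounds of Corollary \ref{cor_bilinear_l2} into a bound on the \emph{time integral}, I localize $[-T,T]$ into $\sim TH^{1/2}$ intervals of length $H^{-1/2}$ via a partition $\sum_m\gamma^2(H^{1/2}t-m)=1$, apply the bilinear estimates on each interval, and sum with Cauchy--Schwarz across the intervals; this produces a factor $T^{1/2}H^{1/4}$, while the bilinear gain supplies an extra $H^{-1/4^-}$, making the dyadic sum converge and leaving $T^{1/2^-}\Vert u\Vert_{F^1(T)}\Vert u\Vert_{F^s(T)}^2$ (the two high factors carrying the $H^s$ weight, the lowest frequency measured only at level $1$).

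Finally, the mixed term $\int_0^{t_H}\!\int P_H u\, P_H\partial_x(u\Psi)$ is treated in the same short-time-localized framework, now with $\Psi$ replacing one $u$. Since $\Psi$ is only bounded and non-decaying, I measure it through its $L^\infty_x$ Littlewood--Paley pieces and invoke the background bilinear estimate Proposition \ref{shortime_psi_bilinear_estimate}; the two outcomes---$\Psi$ at low frequency with both $u$'s high, and $\Psi$ at high frequency carrying the derivatives---yield respectively $T^{1/2^-}\Vert\Psi\Vert_{L^\infty_T W^{1^+,\infty}_{xy}}\Vert u\Vert_{F^s(T)}^2$ and $T^{1/2^-}\Vert\Psi\Vert_{L^\infty_T W^{s+1^+,\infty}_{xy}}\Vert u\Vert_{F^1(T)}\Vert u\Vert_{F^s(T)}$. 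I expect the main obstacle to be the near-resonant high-frequency cubic interactions: balancing the $\partial_x$ derivative loss against the modulation gain through $\Omega$ while simultaneously keeping the $T$-power at $T^{1/2^-}$ and the frequency sum convergent is the delicate point, and it is exactly where the choice $\beta=1/2$ and the sharp $L^2$ bilinear estimates of Section \ref{sec_bilinear_estimates} are essential.
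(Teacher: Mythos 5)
Your overall architecture matches the paper's: the same reduction via $P_H$-localization and pairing with $P_Hu$, the same Cauchy--Schwarz treatment of the forcing term $G$ yielding $T\Vert u\Vert_{F^s(T)}\Vert G\Vert_{L^\infty_TH^s}$, and for the cubic term the same symmetrization of the symbol $\sum_i\psi_H^2(\xi_i,\mu_i)\xi_i$ (small, of size $H_{\min}^{1/2}$, by $\xi_1+\xi_2+\xi_3=0$ and the Mean Value Theorem) combined with the time partition $\sum_m\gamma(H^{1/2}t-m)$ and the $L^2$ bilinear estimates. Two bookkeeping caveats there: the paper sums the $\sim TH^{1/2}$ interior intervals trivially ($\#\mathcal{A}\cdot\sup_m$, giving $TH^{1/2}$ compensated by the bilinear gain), not by Cauchy--Schwarz across intervals --- square-summability in $m$ is not available since $F_H$ is defined with a supremum over $t_H$ --- and the boundary intervals, where the sharp cutoff $\mathds{1}_{[0,T]}$ destroys the $\ell^1$-Besov structure in the modulation (Lemma \ref{gro_characteristic_function_lemma} only gives a sup), require the separate argument via $L_{\mathrm{max}}\sim\max\{L_{\mathrm{med}},\vert\Omega\vert\}$ and, in the diagonal regime, an interpolation of \eqref{general_case_bilinear} with \eqref{hmin_sim_hmax_bilinear}; the paper also needs an extra $P_N^x$ decomposition there. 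These are omissions of detail rather than wrong ideas.

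The genuine gap is your treatment of the mixed term $\int_0^t\int \partial_xP_H(u\Psi)\,P_Hu$. Proposition \ref{shortime_psi_bilinear_estimate} bounds the $N^s(T)$-norm of $\partial_x(uV)$ and is the right tool in the \emph{linear} estimate \eqref{linear_prop_estimate}; it cannot be invoked inside the energy pairing, because there is no duality inequality of the form $\vert\int_0^t\int f\,P_Hu\vert\lesssim\Vert f\Vert_{N_H}\Vert P_Hu\Vert_{F_H}$ through the sharp time cutoff (if such a bound held, the entire energy section would be redundant and the proposition would follow from Propositions \ref{shortime_bilinear_estimate} and \ref{shortime_psi_bilinear_estimate} alone, which is precisely what the short-time method does \emph{not} permit). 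Nor can you feed $\Psi$ into the trilinear machinery of Lemma \ref{energy_bilinear_est}: that lemma requires all three factors in $F_{H_i}$, while $\Psi$ is merely $L^\infty$-based, with no $L^2$ decay and no modulation structure --- the paper's footnote says explicitly that the bilinear estimates ``are not useful here.'' What is actually needed is the commutator cancellation of Lemma \ref{pseudo_holder_l2}: after symmetrizing over the two $u$-frequencies one writes $\psi_H^2(\xi_1,\mu_1)\xi_1+\psi_H^2(\xi_2,\mu_2)\xi_2=\psi_H^2(\xi_1,\mu_1)(\xi_1+\xi_2)-\big(\psi_H^2(\xi_1,\mu_1)-\psi_H^2(\xi_2,\mu_2)\big)\xi_2$; the first piece moves the derivative onto the low $\Psi$-frequency via $\xi_1+\xi_2=-\xi_3$, and the second is the commutator $[P_H^2\partial_x,\Psi]$, bounded by $\Vert\nabla\Psi\Vert_{L^\infty}\lesssim H_3^{1/2}\Vert P_{H_3}\Psi\Vert_{L^\infty}$ through a kernel estimate and Schur's test. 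You deploy exactly this cancellation for the $u^3$ term --- where it is less critical, since there all factors live in $F$-spaces --- but omit it for the $u\Psi$ term, where without it the interaction $\partial_xP_H(P_{\sim H}u\cdot P_{\ll H}\Psi)$ paired against $P_Hu$ loses a full factor $H^{1/2}$ with nothing to recover it.
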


In order to prove the previous proposition, we first show the following technical lemma, which is a consequence of Lemma \ref{lemma_bilineal}.

\begin{lem}\label{energy_bilinear_est}
Let $s\in[1,2)$ and $T\in (0,1]$ be fixed. Consider $N_i,H_i\in\mathbb{D}$ and suppose that $u_i\in F_{H_i}$ for $i=1,2,3$. Then the following holds: \begin{enumerate}
\item[1.] If $H_\mathrm{min}\ll H_\mathrm{max}$, it holds that \begin{align}\label{lema_bili_energy_1}
\left\vert \int_0^T\int_{\R^2} u_1u_2u_3\right\vert \lesssim T^{1/2^-}H_\mathrm{max}^{-1/4}H_\mathrm{min}^{1/4}\Vert u_1\Vert_{F_{H_1}}\Vert u_2\Vert_{F_{H_2}}\Vert u_3\Vert_{F_{H_2}}
\end{align}
\item[2.] If $H_\mathrm{min}\sim H_\mathrm{max}$ and $\mathrm{supp}\,\mathcal{F}(u_i)\subseteq \R\times I_{N_i}\times \R$ for $i=1,2,3$, then \begin{align}\label{lema_bili_energy_2}
\left\vert \int_0^T\int_{\R^2} u_1u_2u_3\right\vert \lesssim T^{1/2^-}N_\mathrm{max}^{-1}H_\mathrm{min}^{1/2}\Vert u_1\Vert_{F_{H_1}}\Vert u_2\Vert_{F_{H_2}}\Vert u_3\Vert_{F_{H_2}}.
\end{align}
\end{enumerate}
\end{lem}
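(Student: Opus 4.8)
The plan is to run the time-localization/bilinear-estimate machinery of Ionescu--Kenig--Tataru, reducing the trilinear integral to the $L^2$-bilinear bounds of Lemma \ref{lemma_bilineal} after chopping $[0,T]$ into windows of length $H_{\max}^{-1/2}$. First I would pass to the Fourier side: attaching the cutoff $\mathds{1}_{[0,T]}$ to the factors (it is idempotent and, by Lemma \ref{gro_characteristic_function_lemma}, harmless on the modulation-weighted norms) and using Plancherel together with the convolution structure, I bound $\big|\int_0^T\int u_1u_2u_3\big|$ by $\int_{\R^3}(f_1*f_2)f_3$, where $f_i$ denotes the modulus of the space--time Fourier transform of the relevant factor, supported in $\R\times\Delta_{H_i}$. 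Introducing a partition $\sum_{m}\gamma^3(H_{\max}^{1/2}t-m)=1$ with $\gamma$ supported in $[-1,1]$, I localize every factor to a window of length $\sim H_{\max}^{-1/2}$; the number of windows meeting $[0,T]$ is $\lesssim 1+TH_{\max}^{1/2}$. On each window Corollary \ref{key_cor_prelim} replaces the window-localized pieces by the $F_{H_i}$ norms, at the cost of treating all effective modulations as $\gtrsim H_{\max}^{1/2}$ (the low part being lumped at $\lfloor H_{\max}^{1/2}\rfloor$).

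On a fixed window I would then decompose each piece in modulation and invoke Lemma \ref{lemma_bilineal}: in case $1$ ($H_{\min}\ll H_{\max}$) estimates \eqref{hmin_ll_hmax_bc}--\eqref{hmin_ll_hmax_aoc}, and in case $2$ ($H_{\min}\sim H_{\max}$, after the extra dyadic-in-$\xi$ splitting whose logarithmic cost is absorbed later) estimate \eqref{hmin_sim_hmax_bilinear}. In either case the bilinear bound weights exactly two of the three modulations, which are summed against the $\ell^1$-in-$L$ structure of $X_H$ to produce two of the $\|\cdot\|_{F_{H_i}}$; since the minimal modulation is $\gtrsim H_{\max}^{1/2}$, the remaining unweighted modulation sum costs only $H_{\max}^{-1/4}$. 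This yields a per-window frequency factor $H_{\max}^{-3/4}H_{\min}^{1/4}$ in case $1$ and $N_{\max}^{-1}H_{\min}^{1/4}H_{\max}^{-1/4}$ in case $2$.

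The crux is extracting the factor $T^{1/2^-}$ when summing over windows. The device I would use is to route the unweighted factor, namely the one carrying $\mathds{1}_{[0,T]}$, through its $L^2_{txy}$ norm rather than its $X_H$ norm: by the embedding \eqref{l1l2_xh} one has $\|\mathds{1}_{[0,T]}u\|_{L^2_{txy}}\le T^{1/2}\|u\|_{L^\infty_tL^2_{xy}}\lesssim T^{1/2}\|u\|_{F_{H}}$, and Plancherel together with the bounded overlap of the $\gamma(H_{\max}^{1/2}t-m)$ gives $\sum_m\|\gamma_m\mathds{1}_{[0,T]}u\|_{L^2}^2\lesssim T\|u\|_{F_H}^2$. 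A Cauchy--Schwarz over the $\lesssim 1+TH_{\max}^{1/2}$ relevant windows then produces $(1+TH_{\max}^{1/2})^{1/2}T^{1/2}$, which multiplied by the per-window frequency factor lands on $T^{1/2^-}H_{\max}^{-1/4}H_{\min}^{1/4}$ (resp. $T^{1/2^-}N_{\max}^{-1}H_{\min}^{1/2}$): in the regime $TH_{\max}^{1/2}\gtrsim1$ this is $T$ times the target factor, absorbed by $T\le T^{1/2^-}$, whereas for $TH_{\max}^{1/2}<1$ only $T^{1/2}$ survives; the logarithm created by passing from the $\ell^1$ modulation sum to the $L^2$ norm is absorbed into $T^{0^-}$ (using $H_{\max}\lesssim T^{-2}$ when the windows are plentiful) or into the spare frequency power.

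I expect the \textbf{main obstacle} to be precisely this last step: one must route the $\mathds{1}_{[0,T]}$-cutoff through an $L^2_{txy}$ bound on exactly the factor that the bilinear estimate leaves unweighted (the maximal-modulation factor in case $1$, the minimal-modulation one in case $2$), so that the convergent modulation sums coming from $X_H$ and the window square-summability coming from $L^2$ stay compatible. Care is needed to keep that unweighted modulation sum finite, by symmetrizing over which factor carries the extremal modulation and by using the resonance identity \eqref{resonant} to control $L_{\max}\sim|\Omega|\lesssim H_{\max}^{3/2}$, so that the only loss incurred is the admissible logarithm hidden in the $T^{0^-}$.
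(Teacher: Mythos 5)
Your skeleton coincides with the paper's: reduction to the Fourier side, the partition $\sum_m\gamma^3(H_{\mathrm{max}}^{1/2}t-m)=1$ into windows of length $H_{\mathrm{max}}^{-1/2}$, per-window application of Lemma \ref{lemma_bilineal} through Corollary \ref{key_cor_prelim}, the extra $P_N^x$ splitting in case 2, and the use of $L_{\mathrm{max}}\sim\max\{L_{\mathrm{med}},|\Omega|\}$ with $|\Omega|\lesssim H_{\mathrm{max}}^{3/2}$ to close modulation sums. The gap is in your device for extracting $T^{1/2^-}$. First, the sharp cutoff $\mathds{1}_{[0,T]}$ is \emph{not} harmless on the modulation-weighted norms: Lemma \ref{gro_characteristic_function_lemma} only yields the $\sup_{L}L^{1/2}\Vert\cdot\Vert_{L^2}$ bound, not the $\ell^1$-in-$L$ $X_H$ norm, so keeping the cutoff on every window destroys exactly the summability your per-window factor $H_{\mathrm{max}}^{-3/4}H_{\mathrm{min}}^{1/4}$ relies on. Second, and decisively, your routed factor is asked to do two jobs at once: to produce $T^{1/2}$ it must be measured in $L^2_{txy}$, with no modulation weight, yet your per-window accounting also cashes in its $X_H$ weight $L^{1/2}\gtrsim H_{\mathrm{max}}^{1/4}$ to generate the slack $H_{\mathrm{max}}^{-1/4}$ from the unweighted slot of \eqref{hmin_ll_hmax_bc}--\eqref{hmin_ll_hmax_aoc}. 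You cannot have both: redoing the bookkeeping with the routed factor weightless (and noting that which index carries $L_{\mathrm{max}}$ varies over the sum, so the cutoff must sit on all three factors, reducing the other two to sup-in-$L$ control) gives a per-window bound $H_{\mathrm{max}}^{-1/2}H_{\mathrm{min}}^{1/4}H_{\mathrm{max}}^{0^+}$, hence a total $TH_{\mathrm{max}}^{(-1/4)^+}H_{\mathrm{min}}^{1/4}$ in the many-window regime — an $H_{\mathrm{max}}^{0^+}$ loss with no spare frequency power in \eqref{lema_bili_energy_1} to absorb it. Your proposed fix, absorbing the log into $T^{0^-}$ \emph{using} $H_{\mathrm{max}}\lesssim T^{-2}$ "when the windows are plentiful," has the inequality reversed: plentiful windows means $TH_{\mathrm{max}}^{1/2}\gtrsim1$, i.e.\ $H_{\mathrm{max}}\gtrsim T^{-2}$, and $H_{\mathrm{max}}$ is otherwise unbounded in terms of $T$, so $H_{\mathrm{max}}^{0^+}$ cannot be traded for $T^{0^-}$ there.

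The paper avoids all of this with an interior/boundary dichotomy that your uniform treatment misses. For the windows $m\in\mathcal{A}$ wholly contained in $[0,T]$ one has $\gamma(H_3^{1/2}t-m)\mathds{1}_{[0,T]}=\gamma(H_3^{1/2}t-m)$, so the sharp cutoff disappears entirely, the full $\ell^1$ $X_H$ structure survives, and the time factor comes for free from counting: $\#\mathcal{A}\lesssim TH_3^{1/2}$ times the per-window bound $H_3^{-3/4}H_1^{1/4}$ gives $TH_3^{-1/4}H_1^{1/4}$ (a full power $T$, better than $T^{1/2}$) with no Cauchy--Schwarz over windows and no $L^2_{txy}$ routing. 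Only the at most four boundary windows $m\in\mathcal{B}$ retain the cutoff; there Lemma \ref{gro_characteristic_function_lemma} is combined with the $L_{\mathrm{med}}^{-1/2}$ division and $L_{\mathrm{max}}\sim\max\{L_{\mathrm{med}},|\Omega|\}$ — and, in case 2, with an interpolation of \eqref{general_case_bilinear} against \eqref{hmin_sim_hmax_bilinear}, since the weight $L_{\mathrm{med}}^{1/2}L_{\mathrm{max}}^{1/2}$ cannot be summed directly — to land on $H_3^{-(1/2)^+}H_1^{1/4}$, where the finite cardinality of $\mathcal{B}$ makes the absence of a window count irrelevant. To repair your argument, replace the $L^2$-routing and window Cauchy--Schwarz by this $\mathcal{A}/\mathcal{B}$ splitting.
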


\begin{proof}
First of all, without loss of generality we can always assume that $H_1\leq H_2\leq H_3$. Notice that due to frequency localization and the previous inequalities we must also have that $H_2\sim H_3$. We seek now to prove estimate \eqref{lema_bili_energy_1}.   Let $\gamma:\R\to[0,1]$ be a smooth function supported on $[-1,1]$ with the following property \[
\sum_{m\in\Z}\gamma^3(x-m)=1,\quad \forall x\in\R.
\]
Then, from the above property it follows that \begin{align}\label{deco_bilinear_energy}
\left\vert \int_0^T\int_{\R^2}u_1u_2u_3\right\vert\lesssim \sum_{\vert m\vert \lesssim H_3^{1/2}}\left\vert \int_{\R^3}\prod_{i=1}^3\left(\gamma\big(H_3^{1/2} t-m\big)\mathds{1}_{[0,T]}u_i\right)\right\vert
\end{align}
Now observe that the right-hand side of the above inequality can be split into the following two disjoints sets \begin{align*}
\mathcal{A}&:=\big\{m\in\Z:\, \gamma\big(H_3^{1/2} t-m\big)\mathds{1}_{[0,T]}= \gamma\big(H_3^{1/2} t-m\big)\big\},
\\ \mathcal{B}&:=\big\{m\in\Z:\, \gamma\big(H_3^{1/2} t-m\big)\mathds{1}_{[0,T]}\neq  \gamma\big(H_3^{1/2} t-m\big) \hbox{ and } \gamma\big(H_3^{1/2} t-m\big)\mathds{1}_{[0,T]}\neq 0\big\}.
\end{align*} 
Roughly speaking, $\mathcal{A}$ is the set of $m\in\Z$ such that the support of $\gamma$ contained in $[0,T]$, while for $m\in\mathcal{B}$ the support of $\gamma$ intersects the boundary of $[0,T]$. 

\medskip

We start by analyzing the case $m\in\mathcal{A}$. For the sake of notation, for $i=1,2,3$ we define \[
f_{H_i,H_3^{1/2}}^m:=\eta_{\leq \lfloor H_3^{1/2}\rfloor }(\tau-\omega(\xi,\mu))\left\vert \mathcal{F}\left(\gamma\big(H_3^{1/2} t-m\big)u_i\right)\right\vert,
\]
and, for $L\in\mathbb{D}$ such that $L>H_3^{1/2}$, we set \[
f_{H_i,L}^m:=\eta_{L}\big(\tau-\omega(\xi,\mu)\big)\left\vert \mathcal{F}\left(\gamma\big(H_3^{1/2} t-m\big)u_i\right)\right\vert.
\]
Therefore, by using Plancharel identity as well as estimates \eqref{hmin_ll_hmax_bc} and \eqref{hmin_ll_hmax_aoc} we obtain that \begin{align*}
\sum_{m\in\mathcal{A}}\left\vert \int_{\R^3}\prod_{i=1}^3\left(\gamma\big(H_3^{1/2} t-m\big)\mathds{1}_{[0,T]}u_i\right)\right\vert&\lesssim \sup_{m\in\mathcal{A}}TH_3^{1/2} \sum_{L_1,L_2,L_3\geq H_3^{1/2}} \int_{\R^3} f_{H_1,L_1}^m*f_{H_2,L_2}^m \cdot f_{H_3,L_3}^m
\\ & \lesssim \sup_{m\in\mathcal{A}}TH_3^{1/4-1/2}H_1^{1/4}\sum_{L_1,L_2,L_3\geq H_3^{1/2}}\prod_{i=1}^3L_i^{1/2}\Vert f_{H_i,L_i}^m\Vert_{L^2}.
\end{align*}
The latter estimate along with Corollary \ref{key_cor_prelim} implies that \begin{align*}
\sum_{m\in\mathcal{A}}\left\vert \int_{\R^3}\prod_{i=1}^3\left(\gamma\big(H_3^{1/2} t-m\big)\mathds{1}_{[0,T]}u_i\right)\right\vert \lesssim TH_3^{1/4-1/2}H_1^{1/4}\prod_{i=1}^3\Vert u_i\Vert_{F_{H_i}},
\end{align*}
which concludes the proof for $m\in\mathcal{A}$. Now we seek to bound the case where $m\in\mathcal{B}$. Similarly as before, for the sake of notation we define \[
g_{H_i,L}^m:=\eta_L\big(\tau-\omega(\xi,\mu)\big)\left\vert \mathcal{F}\left(\gamma\big(H_3^{1/2} t-m\big)\mathds{1}_{[0,T]}u_i\right)   \right\vert,
\]
for $i=1,2,3$, $L\in\mathbb{D}$ and $m\in\mathcal{B}$. Also, it is not difficult to notice that $\#\mathcal{B}\leq 4$. Then, using once again estimates \eqref{hmin_ll_hmax_bc} and \eqref{hmin_ll_hmax_aoc} along with Lemma \ref{gro_characteristic_function_lemma}  we infer that \begin{align*}
\sum_{m\in\mathcal{B}}\left\vert \int_{\R^3}\prod_{i=1}^3\left(\gamma\big(H_3^{1/2} t-m\big)\mathds{1}_{[0,T]}u_i\right)\right\vert &\lesssim \sup_{m\in\mathcal{B}}\sum_{L_1,L_2,L_3\in\mathbb{D}} \int_{\R^3} %
\big(g_{H_1,L_1}^m*g_{H_2,L_2}^m\big)\cdot g_{H_3,L_3}^m d\xi d\mu d\tau
\\ & \lesssim \sup_{m\in \mathcal{B}}H_3^{-1/2}H^{1/4}_1 \sum_{L_1,L_2,L_3\in\mathbb{D}} L_{\mathrm{med}}^{-1/2}\prod_{i=1}\sup_{L_i\in\mathbb{D}}L_i^{1/2}\Vert g^m_{H_i,L_i}\Vert_{L^2}
\\ & \lesssim H_3^{-(1/2)^+}H^{1/4}_1\prod_{i=1}^3\Vert u_i\Vert_{F_{H_i}}.
\end{align*}
Note that in the last step of the previous estimate we have used the fact that $L_\mathrm{max}\sim \max\{L_\mathrm{med},\vert \Omega\vert\}$ to control the sum over $L_\mathrm{max}$. Indeed, the case $L_\mathrm{max}\sim L_\mathrm{med}$ follows directly, whereas in the case $L_\mathrm{max}\sim \vert \Omega\vert$ the sum over $L_\mathrm{max}$ is bounded by $H_3^{0^+}$.

\medskip

Finally, we give an sketch of the proof of estimate \eqref{lema_bili_energy_2} since it follows very similar lines to the proof of estimate \eqref{lema_bili_energy_1}. 
In fact, in this case we use again the decomposition in \eqref{deco_bilinear_energy}. Then, we split the problem into two cases, the case where the summation domain is given by $\mathcal{A}$ or $\mathcal{B}$. The former case follows exactly the same lines as above, using estimate \eqref{hmin_sim_hmax_bilinear} instead of \eqref{hmin_ll_hmax_bc} and \eqref{hmin_ll_hmax_aoc} as before, from where we obtain \begin{align*}
\sum_{m\in\mathcal{A}}\left\vert \int_{\R^3}\prod_{i=1}^3\left(\gamma\big(H_3^{1/2} t-m\big)\mathds{1}_{[0,T]}u_i\right)\right\vert \lesssim TN_\mathrm{max}^{-1}H_\mathrm{min}^{1/2}\prod_{i=1}^3\Vert u_i\Vert_{F_{H_i}},
\end{align*}
However, the latter case $m\in\mathcal{B}$ is slightly more complicated since we cannot directly sum in $L_\mathrm{max}$ due to the fact that estimate $\eqref{hmin_sim_hmax_bilinear}$ has the factor $L_\mathrm{med}^{1/2}L_\mathrm{max}^{1/2}$ in the right-hand side. Instead, in this case we interpolate inequality \eqref{general_case_bilinear} with \eqref{hmin_sim_hmax_bilinear} to obtain
\[
\int_{\R^3}(f_1*f_2)\cdot f_3\lesssim TN_{\mathrm{max}}^{-2(1-\varepsilon)/2}H_\mathrm{min}^{(1-\varepsilon)^+/4+\varepsilon(1/4+1/4)}L_\mathrm{min}^{\varepsilon/2}L_\mathrm{med}^{(1-\varepsilon)/2}L_\mathrm{max}^{(1-\varepsilon)/2}\prod_{i=1}^3\Vert f_i\Vert_{L^2},
\] 
for $\varepsilon\in(0,1)$.  This is enough to sum over $L_\mathrm{max}$ and $L_\mathrm{med}$ following the same ideas as before. Therefore, in this case we have that \begin{align*}
\sum_{m\in\mathcal{B}}\left\vert \int_{\R^3}\prod_{i=1}^3\left(\gamma\big(H_3^{1/2} t-m\big)\mathds{1}_{[0,T]}u_i\right)\right\vert\lesssim TN_\mathrm{max}^{-1^+}H_\mathrm{min}^{1/4^+}\prod_{i=1}^3\Vert u_i\Vert_{F_{H_i}},
\end{align*}
which is compatible with \eqref{lema_bili_energy_2}. The proof is complete.
\end{proof}

The following lemma allow us to treat the case where one of the functions belongs to $L^\infty(\R^3)$.
\begin{lem}\label{pseudo_holder_l2}
Let $H_i\in\mathbb{D}$ be a dyadic number, for $i=1,2,3$, satisfying that $H_1\geq H_2$. Consider $u_1,u_2\in L^2(\R^2)$ and $u_{3}\in L^\infty(\R^2)$ such that the Fourier transform of each of them is supported on $\Delta_{H_i}$, respectively. Let $H\in\mathbb{D}$ fixed and set the functional $\mathcal{I}$ as follows 
\begin{align*}
\mathcal{I}:=\left\vert \int_{\Gamma^{3}} a(\xi_1,\mu_1,...,\xi_{3},\mu_{3})\hat{u}_1(\xi_1,\mu_1)\hat{u}_{2}(\xi_{2},\mu_{2})\hat{u}_{3}(\xi_{3},\mu_{3})d \Gamma^{3}\right\vert,
\end{align*}
where the symbol $a(\xi_1,\mu_1,...,\xi_{3},\mu_{3})$ stands for the function \begin{align}\label{definition_symbol_a}
a(\xi_1,\mu_1,...,\xi_{3},\mu_{3}):=\sum_{i=1}^{3}\psi_{H}^2(\xi_i,\mu_i)\xi_i.
\end{align}
Then, the following holds \begin{align*}
\mathcal{I}\lesssim  H_3^{1/2} \Vert u_1\Vert_{L^2}\Vert u_2\Vert_{L^2}\Vert u_3\Vert_{L^\infty}.
\end{align*}
\end{lem}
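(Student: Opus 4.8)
The plan is to exploit the cancellation encoded in the symbol $a$, which stems from the constraints $\xi_1+\xi_2+\xi_3=0$ and $\mu_1+\mu_2+\mu_3=0$ defining $\Gamma^3$. I would begin with two structural observations used throughout. First, since $\widehat u_i$ is supported on $\Delta_{H_i}$ and $\psi_H$ is supported where $\xi^2+\mu^2\sim H$ (comparable to $h(\xi,\mu)=3\xi^2+\mu^2$), the $i$-th summand $\psi_H^2(\xi_i,\mu_i)\xi_i$ of $a$ contributes to $\mathcal{I}$ only when $H\sim H_i$. Second, because the integration is over $\Gamma^3$, the two largest among $H_1,H_2,H_3$ must be comparable. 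I would then organize the proof according to which indices satisfy $H_i\sim H$.

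All configurations but one are settled by H\"older's inequality together with an elementary multiplier bound. Denoting by $m(D)$ the Fourier multiplier with symbol $\psi_H^2(\xi,\mu)\xi$, one has $\Vert m(D)\Vert_{L^2\to L^2}\lesssim H^{1/2}$ and $\Vert m(D)\Vert_{L^\infty\to L^\infty}\lesssim H^{1/2}$, the latter because the convolution kernel of $\psi_H^2(D)\partial_x$ has $L^1$-norm $\lesssim H^{1/2}$. Writing the contribution of the $i$-th summand of $a$ as $c\int_{\R^2}(m(D)u_i)\,u_ju_k$ and placing the factor $u_3$ in $L^\infty$, each such term is bounded by $H^{1/2}\Vert u_1\Vert_{L^2}\Vert u_2\Vert_{L^2}\Vert u_3\Vert_{L^\infty}$ with $H\sim H_i$. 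A short case check (using that a single high-frequency $L^2$ factor cannot dominate the other two, by the $\Gamma^3$ constraint) shows that, outside the regime $H\sim H_1\sim H_2\gg H_3$, every surviving index $i$ either equals $3$, so the derivative falls on the $L^\infty$ factor at its own frequency $H_3$, or satisfies $H_i\lesssim H_3$; in both situations $H^{1/2}\lesssim H_3^{1/2}$ and the desired bound follows at once.

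The remaining case $H\sim H_1\sim H_2\gg H_3$ is the only delicate one: here $u_1,u_2$ are the two high-frequency $L^2$ factors, and the term-by-term estimate above yields only $H_1^{1/2}\gg H_3^{1/2}$, so genuine cancellation is required. On the support the third summand of $a$ vanishes, hence $a=F(\xi_1,\mu_1)+F(\xi_2,\mu_2)$ with $F(\xi,\mu):=\psi_H^2(\xi,\mu)\xi$, which is odd. Using oddness and $(\xi_2,\mu_2)=-(\xi_1,\mu_1)-(\xi_3,\mu_3)$, I would rewrite $a=F(\xi_1,\mu_1)-F(\xi_1+\xi_3,\mu_1+\mu_3)$, so that the mean value theorem and $\Vert\nabla F\Vert_{L^\infty}\lesssim 1$, together with $|(\xi_3,\mu_3)|\lesssim H_3^{1/2}$, give the pointwise bound $|a|\lesssim H_3^{1/2}$. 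The same representation, combined with the homogeneity bounds $\Vert\nabla^k F\Vert_{L^\infty}\lesssim H^{(1-k)/2}$, yields the mixed-scale estimates $|\partial^\alpha_{(\xi_1,\mu_1)}\partial^\beta_{(\xi_3,\mu_3)}a|\lesssim H_3^{1/2}H^{-|\alpha|/2}H_3^{-|\beta|/2}$.

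It remains to convert this symbol information into the trilinear bound, and this is the main obstacle: because $|a|$ is small while its derivatives are not small relative to its size, one cannot simply insert absolute values inside the Fourier integral without destroying the $L^\infty$ control of $u_3$. Instead I would treat $a/H_3^{1/2}$ as a disposable symbol. Using the mixed-scale smoothness just established, I would expand it in a Fourier series on the box $[-CH^{1/2},CH^{1/2}]^2\times[-CH_3^{1/2},CH_3^{1/2}]^2$ in the variables $(\xi_1,\mu_1)$ and $(\xi_3,\mu_3)$, with absolutely summable coefficients. Each exponential mode turns the trilinear form into an integral of a product of spatially translated copies of $u_1,u_2,u_3$; since translations preserve the $L^2$ and $L^\infty$ norms, H\"older's inequality bounds every mode by $\Vert u_1\Vert_{L^2}\Vert u_2\Vert_{L^2}\Vert u_3\Vert_{L^\infty}$. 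Summing the coefficients and restoring the factor $H_3^{1/2}$ completes the proof; equivalently, one bounds the $L^1$-norm of the kernel of the multiplier $a/H_3^{1/2}$ and integrates the translated H\"older estimate against it.
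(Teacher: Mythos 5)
Your proposal is correct, and on the crucial cancellation step it takes a genuinely different route from the paper. The easy configurations are handled the same way in both arguments (derivative on the $L^\infty$ factor, or $H\lesssim H_3$, via H\"older and the multiplier bounds $\Vert \psi_H^2(D)\partial_x\Vert_{L^2\to L^2}+\Vert \psi_H^2(D)\partial_x\Vert_{L^\infty\to L^\infty}\lesssim H^{1/2}$), and your identification of $H\sim H_1\sim H_2\gg H_3$ as the only regime needing cancellation is accurate. From there the two proofs diverge. The paper does not perform a frequency case analysis at all: it splits the symbol algebraically on $\Gamma^3$ into $\psi_H^2(\xi_1,\mu_1)(\xi_1+\xi_2)=-\psi_H^2(\xi_1,\mu_1)\xi_3$ (derivative lands on $u_3$) plus the difference piece $\big(\psi_H^2(\xi_1,\mu_1)-\psi_H^2(\xi_2,\mu_2)\big)\xi_2$, which it recognizes in physical space as the commutator $[P_H^2\partial_x,u_3]u_1$; this commutator is bounded by writing out its kernel $K(x,y,\tilde x,\tilde y)=\mathcal{F}^{-1}[i\xi\psi_H^2](x-\tilde x,y-\tilde y)\big(u_3(x,y)-u_3(\tilde x,\tilde y)\big)$, applying the Mean Value Theorem to gain $\Vert\nabla u_3\Vert_{L^\infty}\lesssim H_3^{1/2}\Vert u_3\Vert_{L^\infty}$, and concluding by Schur's lemma. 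You instead stay entirely on the Fourier side: oddness of $F(\xi,\mu)=\psi_H^2(\xi,\mu)\xi$ gives $a=F(\xi_1,\mu_1)-F(\xi_1+\xi_3,\mu_1+\mu_3)$, the two-scale derivative bounds $|\partial^\alpha_{(\xi_1,\mu_1)}\partial^\beta_{(\xi_3,\mu_3)}a|\lesssim H_3^{1/2}H^{-|\alpha|/2}H_3^{-|\beta|/2}$ follow from $\Vert\nabla^kF\Vert_{L^\infty}\lesssim H^{(1-k)/2}$ (all of which check out), and you then dispose of $a/H_3^{1/2}$ by the standard separation-of-variables trick — Fourier expansion on a box adapted to the two scales, absolutely summable coefficients, and translation invariance of the $L^2$ and $L^\infty$ norms mode by mode; this is essentially the disposable-multiplier lemma from Tao's paper \cite{Ta}, which this article cites for other purposes. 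What each approach buys: the paper's commutator argument is shorter, needs no case analysis, and handles all frequency configurations in one stroke; yours is more systematic and more general, in that it applies verbatim to any symbol enjoying the same smallness and two-scale smoothness, at the cost of verifying the derivative estimates and the (routine but nontrivial) smooth cutoff and periodization details in the Fourier-series step — note in particular that on $\Gamma^3$ only the two variable pairs $(\xi_1,\mu_1)$ and $(\xi_3,\mu_3)$ need separating, as you implicitly use.
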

\begin{proof}
In fact, notice that, except for the terms associated with $i=1,2$ in the definition of $a(\xi_1,\mu_1,...,\xi_{k+1},\mu_{k+1})$, the proof follows directly from Plancherel Theorem and H\"older inequality. Indeed, going back to physical variables, and then using H\"older and Bernstein inequalities, we obtain that
\begin{align*}
&\left\vert \int_{\Gamma^{k+1}} \psi_H^2(\xi_3,\mu_3)\xi_3\hat{u}_1(\xi_1,\mu_1)...\hat{u}_{k+1}(\xi_{k+1},\mu_{k+1})d\Gamma^{k+1}\right\vert
\\ & \qquad \lesssim \Vert u_1\Vert_{L^2}\Vert u_2\Vert_{L^2}\Vert \partial_xP_{ H}^2u_3 \Vert_{L^\infty}\\ & \qquad \lesssim H_3^{1/2} \Vert u_1\Vert_{L^2}\Vert u_2\Vert_{L^2}\Vert u_3\Vert_{L^\infty}.
\end{align*}
Therefore, we can restrict ourselves to study $\mathcal{I}$ but replacing $a(\xi_1,\mu_1,...,\xi_{3},\mu_{3})$ in its definition by the following symbol \[
\widetilde{a}(\xi_1,\mu_1,...,\xi_{3},\mu_{3}):= \psi_{H}^2(\xi_1,\mu_1)\xi_1+\psi_{H}^2(\xi_2,\mu_2)\xi_2.
\]
Next, we split this symbol into two parts as follows
\begin{align*}
\widetilde{a}(\xi_1,\mu_1,...,\xi_{3},\mu_{3})&=
\psi_{H}^2(\xi_1,\mu_1)\big(\xi_1+\xi_2\big)-\big(\psi_{ H}^2(\xi_1,\mu_1)-\psi_{H}^2(\xi_2,\mu_2)\big)\xi_2
\\ &=:\widetilde{a}_1(\xi_1,\mu_1,...,\xi_{3},\mu_{3})+\widetilde{a}_2(\xi_1,\mu_1,...,\xi_{3},\mu_{3}).
\end{align*}
Now notice that, due to the additional restriction imposed by $\Gamma^{3}$ (that is $\xi_1+\xi_2+\xi_{3}=0$), in this integration domain we can rewrite $\widetilde{a}_1(\xi_1,\mu_1,...,\xi_{3},\mu_{3})$ as \[
\widetilde{a}_1(\xi_1,\mu_1,...,\xi_{3},\mu_{3})=-\psi_{H}^2(\xi_1,\mu_1)\xi_3.
\]
Hence, this case also follows from the above analysis. Therefore, it only remains to consider the symbol $\widetilde{a}_2$. In fact, by using Plancherel Theorem, integration by parts and then H\"older inequality we immediately obtain that
\begin{align*}
&\left\vert\int_{\Gamma^3}\widetilde{a}_2(\xi_1,\mu_1,...,\xi_3,\mu_3)\hat{u}_1(\xi_1,\mu_1)\hat{u}_2(\xi_2,\mu_2)\hat{u}_3(\xi_3,\mu_3)d\Gamma^3 \right\vert
\\ &   \qquad \qquad \qquad =\left\vert\int_{\R^2} \big(u_{2,x}P_{H}^2u_1-u_1P_{H}^2u_{2,x}\big) u_3dxdy \right\vert
\\ &   \qquad \qquad \qquad = \left\vert\int_{\R^2} u_2 \partial_x\big( u_3P_{H}^2u_1-P_{H}^2(u_1u_3)\big)dxdy \right\vert
\\ &  \qquad \qquad \qquad \lesssim \Vert u_1\Vert_{L^2}\Vert u_2\Vert_{L^2}\Vert \partial_x u_3\Vert_{L^\infty}+\Vert u_2\Vert_{L^2}\big\Vert [P_{H}^2\partial_x,u_3]u_1\big\Vert_{L^2}.
\end{align*}
Then, since $\Vert \partial_xu_3\Vert_{L^\infty}\lesssim H_3^{1/2}\Vert u_3\Vert_{L^\infty}$, it only remains to control the latter factor of the above inequality. In order to do that, first notice that by direct computations we can write \begin{align*}
&[P_{H}^2\partial_x,u_3] u_1(x,y) 
\\ & \qquad =ic\mathcal{F}_{\xi,\mu}^{-1}\left[\xi \psi_{H}^2(\xi,\mu)\big(\widehat{u}_1*\widehat{u}_3\big)-\big(\xi \psi_{H}^2(\xi,\mu)\widehat{u}_1\big)* \widehat{u}_3\right](x,y)
\\ & \qquad = c\int_{\R^2}\left(\mathcal{F}_{\xi,\mu}^{-1}\big[i\xi \psi_{H}^2(\xi,\mu)\big] (x-\tilde{x},y-\tilde{y})\big(u_3(x,y)-u_3(\tilde{x},\tilde{y})\big)\right)u_1(\tilde{x} , \tilde{y})d\tilde{x} d\tilde{y},
\end{align*} 
for some constant $c\in\R$. Now we define $K(x,y,\tilde{x},\tilde{y})$ as the above kernel, that is,  \begin{align*}
K(x,y,\tilde{x},\tilde{y})&:=\mathcal{F}^{-1}\big[i\xi \psi_{H}^2(\xi,\mu)\big] (x-\tilde{x},y-\tilde{y})\big(u_3(x,y)-u_3(\tilde{x},\tilde{y})\big).
\end{align*}
Then, a direct application of the Mean Value Theorem along with some basic properties of the Fourier Transform, lead us to the following uniform bound \[
\sup_{(\tilde{x},\tilde{y})\in\R^2}\int_{\R^2}\big\vert K(x,y,\tilde{x},\tilde{y})\big\vert dxdy +\sup_{(x,y)\in\R^2}\int_{\R^2}\big\vert K(x,y,\tilde{x},\tilde{y})\big\vert d\tilde{x}d\tilde{y}\lesssim \Vert \nabla u_3\Vert_{L^\infty},
\]
where the implicit constant does not depends of $H_{k+2}$. Therefore, applying Schur lemma, and then Bernstein inequality in the resulting right-hand side, we conclude that 
\begin{align*}
\big\Vert [P_{H}^2\partial_x,u_3]u_1\big\Vert_{L^2}\lesssim \Vert u_1\Vert_{L^2}\Vert \nabla u_3\Vert_{L^\infty}%
\lesssim H_3^{1/2} \Vert u_1\Vert_{L^2}\Vert u_3\Vert_{L^\infty},
\end{align*}
which concludes the proof of the lemma.
\end{proof}
\smallskip
\begin{proof}[Proof of Proposition \ref{energy_prop_state}]
First of all in order to take advantage of the short-time Bourgain spaces, we extend the solution from $[-T,T]$ to the whole line $\R$. Indeed, let $u\in C([-T,T],E^\infty)$ be a solution to equaiton \eqref{zk_psi}. Then, we choose an extension $\widetilde{u}$ of $u$ on $\R^2$ satisfying that \[
\widetilde{u}\big\vert_{[-T,T]\times \R^2}\equiv u \quad\, \hbox{ and } \quad \, \Vert \widetilde{u}\Vert_{F^s}\leq 2\Vert u\Vert_{F^s(T)}.
\] 
For the sake of notation, from now on we drop the tilde and simply write $u$ to denote the previous function. Moreover, we denote by $f(x):=\tfrac12x^2$ the nonlinearity function.

\medskip

Now, we apply the frequency projector $P_H$ to equation \eqref{zk_psi}, with $H>0$ dyadic but arbitrary.  Then, taking the $L^2_{xy}$-scalar product of the
resulting equation against $P_Hu$ and then integrating on $(0, t)$ with $0<t<T$, we obtain
\begin{align*}
\Vert P_Hu(t)\Vert_{L^2}^2&=\Vert P_Hu_0\Vert_{L^2}^2-\int_0^t\int_{\R^2} P_H\big(\partial_t\Psi-D_x^2\partial_x\Psi+\partial_x\partial_y^2\Psi+\partial_xf(u+\Psi)\big)P_Hu. 
\end{align*}
Thus, taking the supremum in time and then multiplying the resulting inequality by $\langle H\rangle^{s}$  we are lead to 
\begin{align*}
\Vert P_H u\Vert_{B^s(T)}^2&\lesssim \Vert P_Hu_0\Vert_{H^s}^2
\\ & \quad +\langle H\rangle^{s}\sup_{t\in(0,T)} \left\vert\int_0^t\int_{\R^2} P_H\big(\partial_t\Psi-D_x^2\partial_x\Psi+\partial_x\partial_y^2\Psi+\partial_xf(u+\Psi)\big)P_Hu\right\vert.
\end{align*}
Therefore, in order to conclude the proof of the proposition, we need to control the sum over all $H\in\mathbb{D}$ of the second term in the right-hand side of the latter inequality. We divide the analysis into several steps, each of which dedicated to bound one of the following integrals:
\begin{align}
\mathrm{I}&:=\sum_{H>0}\langle H\rangle^{s}\sup_{t\in(0,T)}\left\vert\int_0^t\int_{\R^2}\partial_xP_H(f(u+\Psi)-f(\Psi))P_Hu\right\vert,\label{sum_I_proof}
\\ \mathrm{II}&:= \sum_{H>0}\langle H\rangle^{s}\sup_{t\in(0,T)}\left\vert\int_0^t\int_{\R^2} P_H\big(\partial_t\Psi-D_x^2\partial_x\Psi+\partial_x\partial_y^2\Psi+\partial_xf(\Psi)\big)P_Hu\right\vert.\nonumber
\end{align}
For the sake of simplicity, from now on we denote by $\mathrm{I}_{u^2}$ and $\mathrm{I}_{u\Psi}$ the quantities given by \begin{align}
\mathrm{I}_{u^2}&:= \sum_{H>0}\langle H\rangle^{s}\sup_{t\in(0,T)}\left\vert\int_0^t\int_{\R^2}\partial_xP_H(u^2)P_Hu\right\vert, \label{I_un}
\\ \mathrm{I}_{u\Psi}&:=\sum_{H>0}\langle H\rangle^{s}\sup_{t\in(0,T)}\left\vert\int_0^t\int_{\R^2}\partial_xP_H(u\Psi)P_Hu\right\vert.\label{I_un_psim}
\end{align}

\smallskip

\textbf{Step 1:} We begin by controlling $\mathrm{II}$ right away. In fact, by using hypothesis \eqref{hyp_psi_general}, we infer that it is enough to use Cauchy-Schwarz and Bernstein inequalities, from where we obtain \begin{align*}
\mathrm{II} \lesssim T\Vert u\Vert_{F^s(T)}\Vert \partial_t \Psi+\partial_x^3\Psi+\partial_xf(\Psi)\Vert_{L^\infty_TH^{s}_{xy}}.
\end{align*}
This concludes the proof of the first case.

\medskip

\textbf{Step 2:} Now we aim to control $\mathrm{I}_{u^2}$, that is, by Plancherel Theorem, we aim to control the following quantity
\begin{align}\label{def_iu2}
\mathrm{I}_{u^2}&=\sum_{H>0}\langle H\rangle^{s}\sup_{t\in(0,T)}\left\vert\int_0^t\int_{\Gamma^{3}}a_2(\xi_1,\mu_1,...,\xi_{3},\mu_{3})\hat{u}(s,\xi_1,\mu_1)...\hat{u}(s,\xi_{3},\mu_3)d\Gamma^{3}ds\right\vert,
\end{align}
where the symbol $a_2(\xi_1,\mu_1,...,\xi_{3},\mu_{3})$ is explicitly given by \begin{align*}
a_2\big(\xi_1,\mu_1,...,\xi_{3},\mu_{3}\big):=i\psi_H^2(\xi_{3},\mu_{3})\xi_{3}.
\end{align*}
We point out that in the previous identity \eqref{def_iu2} we have used both, the fact that $u(t,\cdot,\cdot)$ is real-valued as well as the fact that $\psi_{H}(\xi,\cdot)$ and $\psi_H(\cdot,\mu)$ are both even. Then, in order to deal with this case, we symmetrize the symbol $a_2$, that is, from now on we consider \begin{align*}
\widetilde{a}_2(\xi_1,\mu_1,...,\xi_{3},\mu_{3}):=\big[a_2(\xi_1,\mu_1,...,\xi_{3},\mu_{3})\big]_{\mathrm{sym}}=\dfrac{i}{3}\sum_{i=1}^{3}\psi_H^2(\xi_i,\mu_i)\xi_i.
\end{align*}
Thus, by using frequency decomposition and the above symmetrization, the problem of bounding \eqref{def_iu2} is reduced to control the following quantity \begin{align}\label{sums_step31}
\sum_{H>0}\langle H\rangle^{s}\sup_{t\in(0,T)}\left\vert\int_0^t\sum_{H_1,H_2,H_{3}}\int_{\Gamma^{3}}\widetilde{a}_2(\xi_1,\mu_1,...,\xi_{3},\mu_3)\prod_{i=1}^{3}\psi_{H_i}(\xi_i,\mu_i)\hat{u}(t',\xi_i,\mu_i)d\Gamma^{3}dt'\right\vert.
\end{align}
Moreover, by symmetry, without loss of generality we can always assume that $H_1\leq H_2\leq H_3$.  On the other hand, from the explicit form of $\widetilde{a}_2$, it is not difficult to see that  $\widetilde{a}_2\equiv 0$, unless $ H_3\geq \tfrac{1}{2}H$. Furthermore, due to the additional constraint\footnote{By this we mean the condition $\xi_1+\xi_2+\xi_{3}=0$. In the sequel, each time we mention ``the constraint imposed by $\Gamma^k$'' we refer to the previous condition with $k$ frequencies.} imposed by $\Gamma^{3}$, we must also have that $H_2\geq \tfrac{1}{4}H_1$. Therefore, we have that $H_1\sim H_2$ with $H_1\geq \tfrac{1}{2}H$. Before going further, it is important to observe that we can split the symbol $\widetilde{a}_2$ as the sum of the following symbols \begin{align*}
\widetilde{a}_2^1(\xi_1,\mu_1,...,\xi_{3},\mu_{3})&:=\psi_H^2(\xi_3,\mu_3)(\xi_2+\xi_3),
\\ \widetilde{a}_2^2(\xi_1,\mu_1,...,\xi_{3},\mu_{3})&:=\big(\psi_H^2(\xi_2,\mu_2)-\psi_H^2(\xi_3,\mu_3)\big)\xi_2,
\\ \widetilde{a}_2^3(\xi_1,\mu_1,...,\xi_{3},\mu_{3})&:=\widetilde{a}_2(\xi_1,\mu_1,...,\xi_{3},\mu_{3})-\psi_H^2(\xi_2,\mu_2)\xi_2-\psi_H^2(\xi_3,\mu_3)\xi_3.
\end{align*}
We claim that, due to frequency localization along with the definition of $\Gamma^2$, the following inequality holds
\begin{align}\label{symbol_bound_l2}
\big\vert \widetilde{a}_2^1(\xi_1,\mu_1,...,\xi_{3},\mu_{3})\big\vert+\big\vert \widetilde{a}_2^2(\xi_1,\mu_1,...,\xi_{3},\mu_{3})\big\vert+\big\vert \widetilde{a}_2^3(\xi_1,\mu_1,...,\xi_{3},\mu_{3})\big\vert \lesssim H_1^{1/2}.
\end{align}
In fact, the bound for $\widetilde{a}_2^3$ follows directly. Then, on the one-hand, to bound $\widetilde{a}_2^1$ it is enough to use that $\xi_2+\xi_3=-\xi_1$, while  on the other hand, in order to estimate $\widetilde{a}_2^2$ we use the Mean Value Theorem, which leads us to \[
\big\vert \widetilde{a}_2^2(\xi_1,\mu_1,...,\xi_{3},\mu_{3})\big\vert\lesssim  \vert\xi_1 \vert+\vert\mu_1\vert.
\]
what concludes the proof of the claim. Now, with these estimates at hand, we split the analysis into two possible cases, namely \begin{align}\label{cases_step31}
\hbox{either }\quad H_1\ll H_3 \quad \hbox{ or }\quad H_1\sim H_3.
\end{align}
From now on we denote by\footnote{Recall that we are also assuming that $H_1\leq H_2\leq H_3$.} \begin{align*}
\mathbf{H}^1:=\big\{(H_1,H_2,H_3)\in \mathbb{D}^{3}: \, H_1\ll H_3\big\} \quad \hbox{ and }\quad \mathbf{H}^2:=\big\{(H_1,H_2,H_3)\in \mathbb{D}^{3}: \, H_1\sim H_3\big\},
\end{align*} 
and by $\mathcal{G}_1$ and $\mathcal{G}_2$, the corresponding contribution of \eqref{sums_step31} associated with each of these regions\footnote{That is, the quantity obtained once restricting the inner sum in \eqref{sums_step31} to $\mathbf{H}^1$ and $\mathbf{H}^2$, respectively.}, respectively. Then, to bound $\mathcal{G}_1$ it is enough to notice that, by Lemma \ref{energy_bilinear_est}, along with estimate \eqref{symbol_bound_l2}, we have that \begin{align*}
\big\vert \mathcal{G}_1\big\vert &\lesssim \sum_{H\in\mathbb{D}}\sum_{\mathbf{H}^1}\langle H\rangle^{s}H_\mathrm{max}^{-1/4}H_\mathrm{min}^{3/4}\prod_{i=1}^3\Vert P_{H_i}u\Vert_{F_{H_i}}\lesssim \Vert u\Vert_{F^{1}(T)}\Vert u\Vert_{F^s(T)}^2,
\end{align*}
where we have used that we can sum over $H_\mathrm{min}$ first, so that we do not loose an $H_\mathrm{min}^{+}$ factor. In fact, this latter observation comes from the fact that \[
H_\mathrm{max}^{-1/4}\sum_{H_{\mathrm{min}}\ll H_\mathrm{max}}H_\mathrm{min}^{1/4}\ll 1,
\]
uniformly in $H_\mathrm{max}$. Then, we sum over $H_\mathrm{max}$ using that $H^{s/2}\Vert P_H f\Vert_{F_H}$ defines a square summable sequence, provided that $f\in F^s$. On the other hand, in order to treat $\mathcal{G}_2$, first of all notice that, due to the factor $P_H$ coming from the symbol $\widetilde{a}_2$, in this case we actually have \[
H_1\sim H_2\sim H_3\sim H.
\]
Now we decompose into frequencies again, using the $P_N^x$ projectors as follows \begin{align*}
P_{H_1}uP_{H_2}uP_{H_3}u&=\sum_{N_1,N_2,N_3}P_{H_1}P_{N_1} ^xuP_{H_2}P_{N_2}^xuP_{H_3}P_{N_3}^xu.
\end{align*} 
Notice that, by frequency localization, we must have $N_\mathrm{max}\sim N_\mathrm{med}$, otherwise $\mathcal{G}_2=0$. Hence, by symmetry, we infer that it is enough to bound the contribution of the following quantity \begin{align*}
\sum_{N\in\mathbb{D}}P_{H_1}P_N ^xuP_{H_2}P_{\sim N}^xu\left(P_{H_3}P_{\ll N}^xu+P_{H_3}P_{\sim N}^xu\right)=:\Sigma_1+\Sigma_2,
\end{align*}
since all the remaining cases are equivalent to the latter one. We denote by $\mathcal{G}_2^1$ and $\mathcal{G}_2^2$ the restriction of $\mathcal{G}_2$ to each of these sums respectively. Then, in order to bound $\mathcal{G}_2^2$, we use the second part of Lemma \ref{energy_bilinear_est}, but instead of using estimate \eqref{symbol_bound_l2} (which would lead us to a factor $H_\mathrm{min}^{1/2}\sim H_\mathrm{max}^{1/2}$), we bound the contribution of the symbol directly from its explicit definition (that only involves $N_i$), from where we obtain \begin{align*}
\big\vert \mathcal{G}_2^2\big\vert &\lesssim \sum_{H\in\mathbb{D}}\sum_{\mathbf{H}^2}\sum_{N\in\mathbb{D}}\langle H\rangle^{s} H_{\mathrm{min}}^{1/2}\Vert P_{H_1}P_{N}^xu\Vert_{F_{H_1}}\Vert P_{H_2}P_{\sim N}^xu\Vert_{F_{H_2}}\Vert P_{H_3}P_{\sim N}^xu\Vert_{F_{H_3}}
\\ & \lesssim \sum_{H\in\mathbb{D}}\sum_{\mathbf{H}^2}\langle H\rangle^{s} H_{\mathrm{min}}^{1/2}\Vert P_{H_1}u\Vert_{F_{H_1}}\Vert P_{H_2}u\Vert_{F_{H_2}}\Vert P_{H_3}u\Vert_{F_{H_3}}
\\ &\lesssim \Vert u\Vert_{F^{1}(T)}\Vert u\Vert_{F^s(T)}^2.
\end{align*}
Once again, here we have used the fact that both $\Vert P_Hu\Vert_{F_H}$ and $\Vert P_Nu\Vert_{L^2}$ are square summable under the current hypotheses, and hence, using that $H_\mathrm{min}\sim H_\mathrm{max}$ and $N_\mathrm{min}\sim N_\mathrm{max}$ we bound one of the three factors above by its corresponding whole series, and for the other two we use the summability property. Notice that the estimate for $\mathcal{G}_2^1$ follows the exact same lines. For this latter case the important point is to not expand $P_{\ll N}u$ into a sum, otherwise we might loose a $N^{0+}$ factor, while written in the above fashion it suffices to use the continuity of the operator $P_{\ll N}$ in $L^2$. Therefore, we conclude that \[
\big\vert \mathcal{G}_2\big\vert\lesssim \Vert u\Vert_{F^{1}(T)}\Vert u\Vert_{F^s(T)}^2.
\]
\textbf{Step 3:} Now we aim to control $\mathrm{I}_{u\Psi}$, that is, we aim to control the following quantity  \begin{align}\label{def_iupsi}
\mathrm{I}_{u\Psi}&=\sum_{H>0}\langle H\rangle^{s}\sup_{t\in(0,T)}\left\vert\int_0^t\int_{\Gamma^{3}}a_p(\xi_1,\mu_1,...,\xi_{3},\mu_{3})\hat{u}(s,\xi_1,\mu_1)\hat{u}(s,\xi_{2},\mu_2)\widehat{\Psi}(s,\xi_3,\mu_3)\right\vert,
\end{align}
where the symbol $a_2(\xi_1,\mu_1,...,\xi_{3},\mu_{3})$ is explicitly given by \begin{align*}
a_p\big(\xi_1,\mu_1,...,\xi_{3},\mu_{3}\big):=i\psi_H^2(\xi_{1},\mu_{1})\xi_{1}.
\end{align*}
As before, in the previous identity \eqref{def_iupsi} we have used both, the fact that $u(t,\cdot,\cdot)$ and $\Psi(t,\cdot,\cdot)$ are real-valued as well as the fact that $\psi_{H}(\xi,\cdot)$ and $\psi_H(\cdot,\mu)$ are both even. Then, in order to deal with this case, we symmetrize the symbol $a_p$, that is, from now on we consider \begin{align*}
\widetilde{a}_p(\xi_1,\mu_1,...,\xi_{3},\mu_{3}):=\big[a_p(\xi_1,\mu_1,...,\xi_{3},\mu_{3})\big]_{\mathrm{sym}}=\dfrac{i}{2}\sum_{i=1}^{2}\psi_H^2(\xi_i,\mu_i)\xi_i.
\end{align*}
Hence, by using frequency decomposition and the above symmetrization, the problem of bounding \eqref{def_iupsi} is reduced to control the following quantity \begin{align}\label{sums_step31}
\sum_{H>0}\langle H\rangle^{s}\sup_{t\in(0,T)}\Big\vert\int_0^t\sum_{H_1,...,H_{3}}\int_{\Gamma^{3}}\widetilde{a}_p(\xi_1,\mu_1,...,\xi_{3},\mu_3)\psi_{H_3}(\xi_3,\mu_3)\widehat{\Psi}(s,\xi_3,\mu_3)\prod_{i=1}^{2}\psi_{H_i}(\xi_i,\mu_i)\hat{u}\Big\vert.
\end{align}
Moreover, by symmetry, without loss of generality we can always assume that $H_1\leq H_2$.  On the other hand, from the explicit form of $\widetilde{a}_p$, it is not difficult to see that  $\widetilde{a}_p\equiv 0$, unless $ H_2\geq \tfrac{1}{2}H$. Besides, due to the additional constraint imposed by $\Gamma^{3}$, we must also have that \[
H_2\sim \max\{H_1,H_3\}.
\]
Now, we introduce a smooth function $\gamma:\R\to[0,1]$, supported on $[-1,1]$, satisfying that \[
\sum_{m\in\Z}\gamma^2(x-m)=1, \qquad \forall x\in\R.
\] 
Thus, proceeding similarly as at the beginning of the proof\footnote{By this we are only referring to the fashion we chop the time interval, introducing the functions $f^m_{H_i,L}$ and so on, but not to the use of the bilinear estimates, since they are not useful here.} of Lemma  \ref{energy_bilinear_est},  and then using Lemma \ref{pseudo_holder_l2}, it is not difficult to see that \begin{align*}
\big\vert I_{u\Psi}\big\vert &\lesssim \sum_{H\in \mathbb{D}}\sum_{H_1,H_2,H_3}\langle H\rangle^{s}H_3^{1/2} \Vert P_{H_1}u\Vert_{F_{H_1}}\Vert P_{H_2}u\Vert_{F_{H_2}}\Vert P_{H_3}\Psi\Vert_{L^\infty_{txy}}
\\ & \lesssim \Vert \Psi\Vert_{L^\infty_tW^{s+1^+,\infty}_{xy}}\Vert u\Vert_{F^{0}(T)}\Vert u\Vert_{F^s(T)}+\Vert \Psi\Vert_{L^\infty_tW^{1^+,\infty}_{xy}}\Vert u\Vert_{F^s(T)}^2.
\end{align*}
Notice that in order to sum over $(H,H_1,H_2,H_3)$ we have used the fact that $H_2\sim\max\{H_1,H_3\}$. In fact, if $H_2\sim H_3$, with $H_1\ll H$, then it is enough to absorb the factor $\langle H\rangle^{s/2+1/2^+}$ with the $L^\infty$ norm of $\Psi$, so that we can easily sum over $H_2$. While if $H_2\sim H_3$ with $H_1\sim H$ we absorb $H_\mathrm{max}^{1/2^+}$ with $\Psi$. The case where $H_1\sim H_2$ follows directly from the square summability property of $\Vert P_{H_i}u\Vert_{F_{H_i}}$, $i=1,2$. In this latter case we use $\Psi$ to absorb the factor $H_3^{1/2^+}$. The proof is complete.
\end{proof}

\smallskip

\subsection{A priori estimates for the difference of two solutions}

In this subsection we seek to establish the key a priori estimate for the difference of two solutions.

\begin{prop}\label{diff_sol_prop}
Let $T\in(0,1]$ and $s\geq 1$, both fixed. Consider $u,v\in C([-T,T]:\, H^\infty(\R^2))$ to be a smooth solution of the IVP \eqref{zk_psi}. Denote by $w:=u-v$ and by $z:=u+v$. Then, \begin{align}\nonumber
\Vert w\Vert_{B^{s}(T)}^2&\lesssim \Vert w_0\Vert_{H^s}^2+\Vert z\Vert_{F^{1}(T)}\Vert w\Vert_{F^{s}(T)}^2+\Vert z\Vert_{F^{s+1/2}(T)}\Vert w\Vert_{F^{1/2^+}(T)}\Vert w\Vert_{F^{s}(T)}
\\ & \qquad +\Vert \Psi\Vert_{L^\infty_tW^{s+1^+,\infty}_{xy}}\Vert w\Vert_{F^{0}(T)}\Vert w\Vert_{F^{s}(T)}+\Vert \Psi\Vert_{L^\infty_tW^{1^+,\infty}_{xy}}\Vert w\Vert_{F^{s}(T)}^2.\label{diff_s_hs}
\end{align}
Moreover, the following holds \begin{align}\label{diff_scero}
\Vert w\Vert_{B^0(T)}^2&\lesssim \Vert w_0\Vert_{E^s}^2+\Vert z\Vert_{F^{1}(T)}\Vert w\Vert_{F^0(T)}^2+\Vert \Psi\Vert_{L^\infty_TW^{1^+,\infty}_{xy}}\Vert w\Vert_{F^0(T)}^2.
\end{align}
\end{prop}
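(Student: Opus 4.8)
The strategy is to transcribe the proof of Proposition \ref{energy_prop_state} to the equation satisfied by the difference. Subtracting the two copies of \eqref{zk_psi} solved by $u$ and $v$, the background contributions $\partial_t\Psi+\partial_x\Delta\Psi+\tfrac12\partial_x(\Psi^2)$ cancel identically, and since $(u+\Psi)^2-(v+\Psi)^2=w(z+2\Psi)$ with $w:=u-v$ and $z:=u+v$, the difference obeys the linear (in $w$) equation
\begin{equation}\label{diff_eq_plan}
\partial_tw+\partial_x\Delta w+\tfrac12\partial_x(wz)+\partial_x(w\Psi)=0.
\end{equation}
The cancellation of the source explains why \eqref{diff_s_hs} carries no term of the type $\partial_t\Psi+\partial_x\Delta\Psi+\tfrac12\partial_x(\Psi^2)$. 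First I would extend $w$ and $z$ to $\R^3$ with comparable $F^s$ norms, apply $P_H$ to \eqref{diff_eq_plan}, pair against $P_Hw$ in $L^2_{xy}$, integrate on $(0,t)$, take the supremum over $t\in(0,T)$, multiply by $\langle H\rangle^{s}$ and sum over $H\in\mathbb{D}$. This reduces everything to the two trilinear quantities built from $\tfrac12\partial_x(wz)$ and $\partial_x(w\Psi)$, which are of the same shape as $\mathrm{I}_{u^2}$ in \eqref{I_un} and $\mathrm{I}_{u\Psi}$ in \eqref{I_un_psim}, except that now two of the three slots carry $w$ and the remaining slot carries, respectively, the decaying function $z$ or the merely bounded $\Psi$.

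The $w\Psi$ contribution is handled exactly as in Step 3 of the proof of Proposition \ref{energy_prop_state}, replacing $u$ by $w$: symmetrizing the symbol over the two $w$-slots and placing $\Psi$ in the $L^\infty$ slot of Lemma \ref{pseudo_holder_l2}, together with the frequency constraint $H_2\sim\max\{H_1,H_3\}$, produces $\Vert\Psi\Vert_{L^\infty_tW^{s+1^+,\infty}_{xy}}\Vert w\Vert_{F^{0}(T)}\Vert w\Vert_{F^{s}(T)}+\Vert\Psi\Vert_{L^\infty_tW^{1^+,\infty}_{xy}}\Vert w\Vert_{F^{s}(T)}^2$, the two summands corresponding to whether the high $w$-frequency matches the output or not.

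The only genuinely new bookkeeping occurs in the $wz$ contribution, where all three functions decay but only the two $w$-slots may be symmetrized. After chopping time into intervals of length $H_{\mathrm{max}}^{-1/2}$ as in Lemma \ref{energy_bilinear_est} and splitting according to $H_{\mathrm{min}}\ll H_{\mathrm{max}}$ or $H_{\mathrm{min}}\sim H_{\mathrm{max}}$, I would apply \eqref{lema_bili_energy_1} and \eqref{lema_bili_energy_2}. In the comparable-frequency regime one argues as for the term $\mathcal{G}_2$ in the proof of Proposition \ref{energy_prop_state}, bounding the symbol through its explicit $N_i$-dependence against the $N_{\mathrm{max}}^{-1}$ gain of \eqref{lema_bili_energy_2}, which yields $\Vert z\Vert_{F^{1}(T)}\Vert w\Vert_{F^{s}(T)}^2$. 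The crux is the regime in which $z$ and one copy of $w$ sit at the top frequency $H_{\mathrm{max}}$ while the second copy of $w$ is at a low frequency: there the low-frequency $w$-term of the symmetrized symbol vanishes by frequency localization, the symbol is $\lesssim H_{\mathrm{max}}^{1/2}$, and combining the output weight $\langle H\rangle^{s}\sim H_{\mathrm{max}}^{s}$ with the $H_{\mathrm{max}}^{-1/4}H_{\mathrm{min}}^{1/4}$ gain of \eqref{lema_bili_energy_1} leaves the net power $H_{\mathrm{max}}^{s+1/4}H_{\mathrm{min}}^{1/4}$. Matching this against a product of dyadic norms forces the extra half-derivative onto the smoother factor $z$, hence $\Vert z\Vert_{F^{s+1/2}(T)}$, while square-summability of the low-frequency $w$ against $H_{\mathrm{min}}^{1/4}$ costs exactly $\Vert w\Vert_{F^{1/2^+}(T)}$, since one needs strictly more than half a derivative there in order to sum $\sum_{H_{\mathrm{min}}}H_{\mathrm{min}}^{1/4}\Vert P_{H_{\mathrm{min}}}w\Vert_{F_{H_{\mathrm{min}}}}$. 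This asymmetric allocation, with the derivative loss always falling on $z$, is the heart of the estimate and is precisely what makes the Bona--Smith comparison close.

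For the $L^2$ bound \eqref{diff_scero} I would rerun the same scheme with $s=0$. Since there is no $\langle H\rangle^{s}$ weight to redistribute, in every frequency regime, including the high-$z$ one, it suffices to assign $z$ the full $F^{1}(T)$ norm and keep both copies of $w$ at the $F^{0}(T)$ level: the surviving powers $H_{\mathrm{max}}^{1/4}H_{\mathrm{min}}^{1/4}$ combine, after summing the low frequency against $H_{\mathrm{min}}^{1/4}$, into $H_{\mathrm{max}}^{1/2}$, which is exactly the $F^{1}$ weight carried by $z$. This collapses the $wz$ contribution into $\Vert z\Vert_{F^{1}(T)}\Vert w\Vert_{F^{0}(T)}^2$ and the $w\Psi$ contribution into $\Vert\Psi\Vert_{L^\infty_tW^{1^+,\infty}_{xy}}\Vert w\Vert_{F^{0}(T)}^2$, which is \eqref{diff_scero}.
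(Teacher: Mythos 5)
Your proposal is correct and follows essentially the same route as the paper's proof: the same reduction to the equation $\partial_tw+\partial_x\Delta w+\tfrac12\partial_x\big(w(z+2\Psi)\big)=0$, the same symmetrization of the symbol over the two $w$-slots, the same combination of Lemma \ref{energy_bilinear_est} for the $wz$ term and Lemma \ref{pseudo_holder_l2} for the $w\Psi$ term, and in particular the same asymmetric derivative allocation yielding $\Vert z\Vert_{F^{s+1/2}(T)}\Vert w\Vert_{F^{1/2^+}(T)}\Vert w\Vert_{F^{s}(T)}$ in the crux regime where $z$ and one copy of $w$ carry the top frequency. The one regime you do not spell out, $H_1\sim H_2\gg H_3$ (both copies of $w$ high, $z$ low), is handled in the paper exactly as in the regime $\mathcal{G}_1$ of Proposition \ref{energy_prop_state} that you are transcribing, via the Mean Value Theorem bound $\vert a\vert\lesssim H_3^{1/2}$ for the symmetrized symbol (estimate \eqref{symbol_bound_diff}), without which the trivial bound $H_{\mathrm{max}}^{1/2}$ would strand an unsummable $H_{\mathrm{max}}^{1/4}$ factor.
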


\begin{proof}
We only prove the first inequality since \eqref{diff_scero} follows almost the same lines (and is in fact easier to prove). In a similar fashion as before, in order to take advantage of the short-time Bourgain estimates, we extend the solutions from $[0,T]$ to the whole real line $\R$. For the sake of notation, we keep denoting these extensions by $u$ and $v$. Now, let us denote by $w:=u-v$ the difference of both solutions. Then, $w(t,x)$ satisfies the equation \begin{align*}
\partial_t w+\partial_x^3w+\partial_x\partial_y^2w+\partial_x\big(f(u+\Psi)-f(v+\Psi)\big)=0.
\end{align*}
Then, we apply the frequency projector $P_H$ to the latter equation, take the $L^2_{xy}$-scalar product of the
resulting equation against $P_Hw$ and multiply the result by $\langle H\rangle^{s}$. Finally, integrating on $(0, t)$ with $0<t<T$ and taking the supremum in time we are lead to
\begin{align*}
\Vert P_H w\Vert_{B^s(T)}^2&\lesssim \Vert P_Hw_0\Vert_{H^s}^2
\\ & \quad +\langle H\rangle^{s}\sup_{t\in(0,T)} \left\vert\int_0^t\int_{\R^2} P_H\big(f(u+\Psi)-f(v+\Psi)\big)\partial_xP_Hw\right\vert.
\end{align*}
Before getting into the details, let us comment that, the ideas behind the estimates we shall prove below are the same as those of the
proof of the latter proposition. However, since in this case we have more (different) functions, we shall have more cases as well, since we cannot order all the frequencies appearing
in $P_H(u^iv^{1-i}\Psi w)$, $i=0,1$, as we did in the previous proposition where there was only $u$.

\medskip

As we shall see, the estimates below are symmetric with respect to $u$ and $v$, and hence it suffices to bound the contribution of 
\begin{align}
\mathrm{I}_z&:=\sum_{H>0}\langle H\rangle^{s}\sup_{t\in(0,T)}\left\vert\int_0^t\int_{\R^2}\partial_xP_H(zw)P_Hw\right\vert,\label{sum_Iz_proof}
\\ \mathrm{I}_\Psi&:= \sum_{H>0}\langle H\rangle^{s}\sup_{t\in(0,T)}\left\vert\int_0^t\int_{\R^2} \partial_xP_H(\Psi w)P_Hw\right\vert,\label{sum_Izpsi_proof}
\end{align}
for $z\in F^s(T)$, where we have used the fact that $f(u+\Psi)-f(v+\Psi)=w\big(u+v+2\Psi\big)$. Notice that, in the same fashion as in the previous proposition, in this case we begin by symmetrizing the underlying symbol in both above cases, which leads us to study the symbol \[
a(\xi_1,\mu_1,...,\xi_3,\mu_3):=\dfrac{i}{2}\psi_H^2(\xi_1,\mu_1)\xi_1+\dfrac{i}{2}\psi_H^2(\xi_2,\mu_2)\xi_2,
\]
where $(\xi_1,\mu_1)$ and $(\xi_2,\mu_2)$ denote (respectively) the frequencies of each of the occurrences of $w$ in \eqref{sum_Iz_proof} and \eqref{sum_Izpsi_proof}. 
Hence, by using frequency decomposition and the above symmetrization, the is reduced to control the following quantity \begin{align}\label{diff_reduced_p}&\mathbf{I}:=\sum_{H>0}\langle H\rangle^{s}\times
\\ & \ \   \times \sup_{t\in(0,T)}\Big\vert\int_0^t\sum_{H_1,H_2,H_{3}}\int_{\Gamma^{2}}a(\xi_1,\mu_1,...,\xi_{3},\mu_3)\Phi(t'\xi_3,\mu_3)\prod_{i=1}^{2}\psi_{H_i}(\xi_i,\mu_i)\hat{w}(t',\xi_i,\mu_i)d\Gamma^{2}dt'\Big\vert,\nonumber
\end{align}
where $\Phi(t,\xi_3,\mu_3)$ is either $\psi_{H_3}(\xi_3,\mu_3)\hat{z}(t',\xi_3,\mu_3)$ or $\psi_{H_3}(\xi_3,\mu_3)\widehat{\Psi}(t',\xi_3,\mu_3)$. Then, by symmetry, without loss of generality we can always assume that $H_1\geq H_2$. Also, from the explicit form of $a(\xi_1,\mu_1,...,\xi_3,\mu_3)$, it is not difficult to see that  $a\equiv 0$, unless $ H_1\geq \tfrac{1}{2}H$. Moreover, due to the additional constraint imposed by $\Gamma^{2}$, we must also have that $H_1\sim \max\{H_2,H_3\}$.

\medskip

\textbf{Step 1:} We seek to estimate the contribution of \eqref{diff_reduced_p} in the case where $\Phi(t',\xi_3,\mu_3)=\psi_{H_3}(\xi_3,\mu_3)\hat{z}(t',\xi_3,\mu_3)$. We split the analysis into three possible cases, namely \begin{align}\label{cases_H_diff_sol}
\mathbf{H}^1&:=\big\{(H_1,H_2,H_3)\in\mathbb{D}^3:\, H_1\sim H_2 \ \hbox{ and } \ H_1\gg H_3 \big\},\nonumber
\\ \mathbf{H}^2&:=\big\{(H_1,H_2,H_3)\in\mathbb{D}^3:\, H_1\sim H_3 \ \hbox{ and } \ H_1\gg H_2 \big\},
\\ \mathbf{H}^3&:=\big\{(H_1,H_2,H_3)\in\mathbb{D}^3:\, H_1\sim H_2 \sim H_3 \big\}.\nonumber
\end{align}
As before, let us denote by $\mathcal{G}_1$, $\mathcal{G}_2$ and $\mathcal{G}_3$ the corresponding contribution of \eqref{diff_reduced_p} associated with each of these regions, respectively. Then, in order to bound $\mathcal{G}_1$ first notice that, due to the explicit form of the symbol, we infer htat $H_1\sim H_2\sim H$. On the other hand, it is not difficult to see, as an application of the Mean Value Theorem, that \begin{align}\label{symbol_bound_diff}
\big\vert a(\xi_1,\mu_1,...,\xi_3,\mu_3)\big\vert \lesssim H_3^{1/2}.
\end{align}
Then, as a consequence of Lemma \ref{energy_bilinear_est}, along with the estimate for the symbol above, we obtain that \begin{align*}
\big\vert \mathcal{G}_1\big\vert &\lesssim \sum_{H\in\mathbb{D}}\sum_{\mathbf{H}^1}\langle H\rangle^{s}H^{-1/4}H_3^{3/4}\Vert P_{H_3}z\Vert_{F_{H_3}}\prod_{i=1}^2\Vert P_{H_i}w\Vert_{F_{H_i}}\lesssim \Vert z\Vert_{F^{1}(T)}\Vert w\Vert_{F^s(T)}^2.
\end{align*}
On the other hand, in order to deal with $\mathcal{G}_2$, we proceed similarly as in the latter estimate, except that in this case we cannot control the symbol by $H_2^{1/2}$, which is the minimum frequency (as in the previous case). Therefore, using Lemma \ref{energy_bilinear_est}, using the trivial bound $H_1^{1/2}$ for the symbol, we obtain that \[
\big\vert \mathcal{G}_2\big\vert \lesssim \sum_{H\in\mathbb{D}}\sum_{\mathbf{H}^1}\langle H\rangle^{s}H_1^{1/4}H_2^{1/4}\Vert P_{H_3}z\Vert_{F_{H_3}}\prod_{i=1}^2\Vert P_{H_i}w\Vert_{F_{H_i}}\lesssim \Vert z\Vert_{F^{s+1/2}(T)}\Vert w\Vert_{F^{1/2^+}(T)}\Vert w\Vert_{F^s(T)}.
\]
Finally, in order to treat $\mathcal{G}_3$, we decompose into frequencies again, using the $P_N^x$ projectors as before, that is, we write \begin{align*}
P_{H_1}wP_{H_2}wP_{H_3}z&=\sum_{N_1,N_2,N_3}P_{H_1}P_{N_1} ^xwP_{H_2}P_{N_2}^xwP_{H_3}P_{N_3}^xz.
\end{align*} 
Then, by frequency localization, we must have that $N_\mathrm{max}\sim N_\mathrm{med}$, otherwise $\mathcal{G}_3=0$. Hence, by symmetry, we infer that it is enough to bound the contribution of the following quantity \begin{align*}
&\sum_{N\in\mathbb{D}}P_{H_1}P_N ^xwP_{H_2}P_{\sim N}^xw\left(P_{H_3}P_{\ll N}^xz+P_{H_3}P_{\sim N}^xz\right)
\\ & \qquad + \sum_{N\in\mathbb{D}}P_{H_1}P_N ^xwP_{H_2}P_{\ll N}^xwP_{H_3}P_{\sim N}^xz =:\Sigma_1+\Sigma_2+\Sigma_3,
\end{align*}
since all the remaining cases are equivalent to one of the above cases. We denote by $\mathcal{G}_3^1$, $\mathcal{G}_3^2$ and $\mathcal{G}_3^3$ the restriction of $\mathcal{G}_3$ to each of these sums respectively. Then, in order to bound $\mathcal{G}_3^1$, arguing exactly as in the corresponding case in the proof of the previous proposition, we use the second part of Lemma \ref{energy_bilinear_est}, but instead of using estimate \eqref{symbol_bound_diff} and we bound the contribution of the symbol directly from its explicit definition, from where we obtain
\[
\big\vert \mathcal{G}_3^1\big\vert \lesssim \sum_{H\in\mathbb{D}}\langle H\rangle^{s}H^{1/2} \Vert P_{H_1}w\Vert_{F_{H_1}}\Vert P_{H_2}w\Vert_{P_{H_2}}\Vert P_{H_3}z\Vert_{P_{H_3}}\lesssim \Vert z\Vert_{F^{1}(T)}\Vert w\Vert_{F^s(T)}^2.
\]
Notice that, proceeding in a similar fashion, we obtain that the exact same bound holds for the remaining two cases $\mathcal{G}_3^2$ and $\mathcal{G}_3^3$, and hence we conclude that \[
\big\vert \mathcal{G}_3\big\vert \lesssim  \Vert z\Vert_{F^{1}(T)}\Vert w\Vert_{F^s(T)}^2.
\]

\smallskip

\textbf{Step 2:} We seek now to estimate the contribution of \eqref{diff_reduced_p} in the case where $\Phi(t',\xi_3,\mu_3)=\psi_{H_3}(\xi_3,\mu_3)\widehat{\Psi}(t',\xi_3,\mu_3)$. This case follows very similar lines as the corresponding case in the proof of the previous proposition. In fact, introducing the $\gamma$ function $\gamma:\R\to[0,1]$ as at the beginning of the proof of Lemma  \ref{energy_bilinear_est},  and then using Lemma \ref{pseudo_holder_l2}, we conclude that \begin{align*}
\vert \mathbf{I}\vert &\lesssim \sum_{H\in \mathbb{D}}\sum_{H_1,H_2,H_3}\langle H\rangle^{s}H_3^{1/2}\Vert P_{H_1}w\Vert_{F_{H_1}}\Vert P_{H_2}w\Vert_{F_{H_2}}\Vert P_{H_3}\Psi\Vert_{L^\infty_{txy}}
\\ & \lesssim \Vert \Psi\Vert_{L^\infty_tW^{s+1^+,\infty}_{xy}}\Vert w\Vert_{F^{0}(T)}\Vert w\Vert_{F^{s}(T)}+\Vert \Psi\Vert_{L^\infty_tW^{1^+,\infty}_{xy}}\Vert w\Vert_{F^{s}(T)}^2.
\end{align*}
Once again, in order to sum over $(H,H_1,H_2,H_3)$ we have used the fact that $H_1\sim\max\{H_2,H_3\}$ and that either $H_1\sim H$ or $H_2\sim H$, otherwise the symbol $a\equiv0$. In fact, if $H_1\sim H_3$ with $H_2\ll H$, then it is enough to absorb the factor $\langle H\rangle^{s/2+1/2^+}$ with the $L^\infty$ norm of $\Psi$ so that we can then sum over $H_1$. While if $H_1\sim H_3$ with $H_2\sim H$ we absorb $H_\mathrm{max}^{1/2^+}$ with $\Psi$. The case where $H_1\sim H_2$ follows directly from the square summability property of $\Vert P_{H_i}w\Vert_{F_{H_i}}$, $i=1,2$. The proof is complete.
\end{proof}

\begin{rem}
For the existence Theorem we shall actually use a slight modification of inequality \eqref{diff_s_hs}, namely \begin{align}\nonumber
\Vert w\Vert_{B^{s}(T)}^2&\lesssim \Vert w_0\Vert_{H^s}^2+\Vert z\Vert_{F^{1}(T)}\Vert w\Vert_{F^{s}(T)}^2+\Vert z\Vert_{F^{s+1}(T)}\Vert w\Vert_{F^{0}(T)}\Vert w\Vert_{F^{s}(T)}
\\ & \qquad +\Vert \Psi\Vert_{L^\infty_tW^{s+1^+,\infty}_{xy}}\Vert w\Vert_{F^{0}(T)}\Vert w\Vert_{F^{s}(T)}+\Vert \Psi\Vert_{L^\infty_tW^{1^+,\infty}_{xy}}\Vert w\Vert_{F^{s}(T)}^2,\label{diff_s_util}
\end{align}
which follows exactly the same lines as before, but when bounding $\mathcal{G}_2$ we absorb $H_1^{1/4}H_2^{1/4}$ with $z$ instead of splitting it between $z$ and $w$.
\end{rem}
\medskip

\section{Local well-posedness}\label{sec_loc}

In this section we seek to prove Theorem \ref{MT1}. Our starting point is a local well-posedness result for smooth solutions of the generalized model \eqref{zk_psi}. The proof is standard and follows classical estimates that can be found in \cite{AbBoFeSa, Ioro}, for instance. The main idea comes from introducing a parabolic regularizing term $-\varepsilon\Delta u$ to the equation, and then passing to the limit $\varepsilon\to 0$. Let us recall that $\Psi\in L^\infty(\R,W^{4+\varepsilon_\star,\infty}_{xy}(\R^2))$. This latter hypothesis allows us to perform the main commutator estimates involved in the proof of the smooth LWP Theorem without additional problems.

\begin{thm}\label{regular_lwp_thm}
For all $u_0\in H^{s}(\R^2)$ with $s\in(2,3+\tfrac12\varepsilon_\star)$, there exist a positive time $T>0$ and a unique solution $u\in C([-T,T],H^s(\R^2))$ to the initial value problem  \eqref{zk_psi}. Moreover, the minimal existence time satisfies that \[
T=T\big(\Vert u_0\Vert_{H^s},\Vert \Psi\Vert_{L^\infty_tW^{4+\varepsilon_\star,\infty}_{xy}},\Vert \partial_t\Psi+\partial_x\Delta\Psi+\tfrac{1}{2}\partial_x(\Psi^2)\Vert_{L^\infty_tH^{3+\varepsilon_\star}_{xy}}\big)>0,
\]
and it can be chosen as a nonincreasing function of its arguments. Moreover, the data-to-solution map $\Phi:u_0\mapsto u$ is continuous from $H^s(\R^2)$ into $C([-T,T],H^s(\R^2))$.
\end{thm}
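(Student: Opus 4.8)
The plan is to follow the artificial-viscosity (parabolic regularization) scheme, as in \cite{AbBoFeSa, Ioro}. First I would rewrite \eqref{zk_psi} in the form
\[
\partial_t u+\partial_x\Delta u=-\tfrac12\partial_x(u^2)-\partial_x(u\Psi)-G,\qquad u(0)=u_0,
\]
where $G:=\partial_t\Psi+\partial_x\Delta\Psi+\tfrac12\partial_x(\Psi^2)$ belongs to $L^\infty(\R,H^{3+\varepsilon_\star}(\R^2))$ by \eqref{hyp_psi_general}. For $\varepsilon\in(0,1]$, I would introduce the regularized problem obtained by adding $-\varepsilon\Delta u$ to the left-hand side, whose linear part generates the semigroup $S_\varepsilon(t)=e^{t(\varepsilon\Delta-\partial_x\Delta)}$ with symbol $e^{-\varepsilon t|\zeta|^2+it\omega(\xi,\mu)}$. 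Since the real part of the symbol is $-\varepsilon t|\zeta|^2$, this semigroup is parabolic-smoothing for $t>0$; in particular $\Vert S_\varepsilon(t)\partial_x f\Vert_{H^s}\lesssim (\varepsilon t)^{-1/2}\Vert f\Vert_{H^s}$, an estimate integrable in $t$ near $0$. Because the nonlinearity and the background coupling each carry only one spatial derivative, a standard Banach fixed-point argument on the Duhamel formula yields, for each fixed $\varepsilon$, a unique solution $u^\varepsilon\in C([0,T_\varepsilon],H^s)$; the parabolic smoothing moreover upgrades $u^\varepsilon$ to a smooth solution, which legitimizes all the integrations by parts performed below.

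The core of the proof is a uniform-in-$\varepsilon$ a priori bound. Applying $J^s$ to the regularized equation, pairing with $J^s u^\varepsilon$ in $L^2$, and integrating in space, the dispersive term $\partial_x\Delta u^\varepsilon$ drops out by skew-adjointness while the viscous term contributes $-\varepsilon\Vert \nabla J^s u^\varepsilon\Vert_{L^2}^2\le0$, and hence only helps. For the nonlinear term $\tfrac12\partial_x(u^2)$ I would use the Kato-Ponce commutator and product estimates to bound $\langle J^s\partial_x(u^2),J^s u\rangle$ by $C\Vert u\Vert_{W^{1,\infty}}\Vert u\Vert_{H^s}^2\lesssim \Vert u\Vert_{H^s}^3$ (using $s>2$ and Sobolev embedding). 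The terms $\langle J^s\partial_x(u\Psi),J^s u\rangle$ and $\langle J^s G,J^s u\rangle$ are controlled using $\Psi\in W^{4^+,\infty}$ and $G\in H^{3^+}$, respectively; the restriction $s<3+\tfrac12\varepsilon_\star$ is exactly what makes $\Vert G\Vert_{H^s}$ finite and ensures $\Psi\in W^{s+1,\infty}$. Invoking Gronwall's inequality then yields $\sup_{[0,T]}\Vert u^\varepsilon(t)\Vert_{H^s}\le M$ with $T$ and $M$ independent of $\varepsilon$ and depending only, in a nonincreasing way, on $\Vert u_0\Vert_{H^s}$, $\Vert \Psi\Vert_{L^\infty_tW^{4+\varepsilon_\star,\infty}_{xy}}$ and $\Vert G\Vert_{L^\infty_tH^{3+\varepsilon_\star}_{xy}}$; a standard continuation argument guarantees that every $u^\varepsilon$ is defined on the common interval $[0,T]$.

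With the uniform bound in hand I would pass to the limit $\varepsilon\to0$. Estimating the difference $w=u^\varepsilon-u^{\varepsilon'}$ at the $L^2$ level (where the derivative in the nonlinearity is absorbed after integration by parts and the viscous discrepancy is $O(|\varepsilon-\varepsilon'|)$) shows that $\{u^\varepsilon\}$ is Cauchy in $C([0,T],L^2)$; interpolating with the uniform $H^s$ bound gives convergence in $C([0,T],H^{s'})$ for every $s'<s$, and the limit $u$ solves the original equation with $u\in L^\infty([0,T],H^s)\cap C([0,T],H^{s'})$. Reversibility of the equation in time extends everything to $[-T,T]$, and uniqueness follows from the same $L^2$ difference estimate together with Gronwall. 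Finally, to obtain $u\in C([-T,T],H^s)$ and the continuity of the data-to-solution map, I would apply the Bona-Smith argument \cite{BS}: comparing the solution with solutions launched from mollified data, the uniform bounds and the difference estimates upgrade weak continuity into strong $H^s$-continuity and yield continuous dependence.

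The main obstacle will be the uniform energy estimate for the coupling term $\partial_x(u\Psi)$. Since $\Psi$ is not integrable, one cannot use the usual Kato-Ponce product estimate involving $\Vert \Psi\Vert_{H^s}$; instead one must commute $J^s$ past multiplication by $\Psi$ using a commutator estimate whose right-hand side involves only $L^\infty$-based norms of $\Psi$, namely $\Vert [J^s,\Psi]\partial_x u\Vert_{L^2}\lesssim \Vert \Psi\Vert_{W^{s+1,\infty}}\Vert u\Vert_{H^s}$. Controlling this, together with securing a lifespan $T$ independent of the viscosity $\varepsilon$, is exactly what forces the hypothesis $\Psi\in W^{4^+,\infty}$ for $s$ up to $3+\tfrac12\varepsilon_\star$ and constitutes the delicate point of the argument.
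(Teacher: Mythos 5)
Your proposal follows exactly the route the paper takes for Theorem \ref{regular_lwp_thm}: the paper gives no detailed proof but states that it proceeds by the classical parabolic regularization $-\varepsilon\Delta u$ with limit $\varepsilon\to 0$ (citing \cite{AbBoFeSa,Ioro}), and it notes that the hypothesis $\Psi\in L^\infty(\R,W^{4+\varepsilon_\star,\infty}_{xy}(\R^2))$ is what makes the main commutator estimates go through, which is precisely the $L^\infty$-based bound $\Vert [J^s,\Psi]\partial_x u\Vert_{L^2}\lesssim \Vert\Psi\Vert_{W^{s+1,\infty}_{xy}}\Vert u\Vert_{H^s}$ you isolate as the delicate point. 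Your uniform-in-$\varepsilon$ energy estimate, $L^2$-Cauchy difference argument, and Bona--Smith upgrade to $C([-T,T],H^s)$ continuity are the standard ingredients of that same scheme, so the proposal is correct and essentially identical in approach.
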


\subsection{A priori estimates for smooth solutions}

The following proposition is the main result of this subsection. It ensures us that the minimal existence time for smooth solutions can be chosen only depending on some rougher norms. For the sake of simplicity, from now on we shall denote by \begin{align}\label{norm_psi_3b}
|||\Psi|||_{s}:=\Vert \partial_t\Psi+\partial_x\Delta\Psi+\tfrac{1}{2}\partial_x(\Psi^2)\Vert_{L^\infty_t H^{s}_{xy}}.
\end{align}
\begin{prop}\label{apriori_smooth}
Let $\sigma\in(2,3+\frac12 \varepsilon_\star)$ and $s\in [1,\sigma]$ both be fixed. Then, for any $M>0$ there exists a positive existence time $T=T(M)$ such that, for any initial data $u_0\in H^\sigma(\R^2)$ satisfying $\Vert u_0\Vert_{H^s}\leq M$, the smooth solution $u(t)$ given by Theorem \ref{regular_lwp_thm} is defined on $[-T,T]$. Moreover, the solution $u(t)$ satisfies that \begin{align}\label{apriori_regularity_smooth}
u\in C([-T,T],H^\sigma(\R^2)) \quad \hbox{ and } \quad \Vert u\Vert_{L^\infty_TH^s_{xy}}\lesssim \Vert u_0\Vert_{H^s}.
\end{align}
\end{prop}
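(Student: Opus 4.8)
The plan is to run the continuity (bootstrap) argument encoded in the system \eqref{explicacion}, feeding it with the smooth solution produced by Theorem \ref{regular_lwp_thm} and closing it at a time depending only on $M=\Vert u_0\Vert_{H^s}$. The four ingredients I would assemble are: the linear estimate \eqref{linear_prop_estimate} applied to $\partial_t u+\partial_x\Delta u=f$ with $f=-\tfrac12\partial_x(u^2)-\partial_x(u\Psi)-(\partial_t\Psi+\partial_x\Delta\Psi+\tfrac12\partial_x(\Psi^2))$; the bilinear estimate of Proposition \ref{shortime_bilinear_estimate} to bound $\Vert\partial_x(u^2)\Vert_{N^s(T)}\lesssim\Vert u\Vert_{F^{0^+}(T)}\Vert u\Vert_{F^s(T)}$; the background bilinear estimate of Proposition \ref{shortime_psi_bilinear_estimate} for $\Vert\partial_x(u\Psi)\Vert_{N^s(T)}$; and the energy estimate of Proposition \ref{energy_prop_state}, whose decisive feature is the explicit gain $T^{1/2^-}$ (and $T$) in front of \emph{every} nonlinear term. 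Together they produce a closed inequality of the schematic form $\Vert u\Vert_{F^s(T)}^2\lesssim \Vert u_0\Vert_{H^s}^2+|||\Psi|||_s^2+(\text{coeff})\,\Vert u\Vert_{F^s(T)}^2+\Vert u\Vert_{F^s(T)}^4$, where the coefficient gathers $\Vert\Psi\Vert_{L^\infty_tW^{4^+,\infty}}$ and $T^{1/2^-}\Vert u\Vert_{F^s(T)}$ contributions.

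Since the offending quartic term carries neither a smallness factor nor a power of $T$, this is bootstrappable only in the small-data regime; for general $M$ I would arrange smallness by \textbf{scaling}. Introduce $u_\lambda(t,x,y)=\lambda^2u(\lambda^3t,\lambda x,\lambda y)$, which solves \eqref{zk_psi} with $\Psi$ replaced by the rescaled background $\Psi_\lambda$. For $s\ge1$ one checks $\Vert u_{0,\lambda}\Vert_{H^s}\to0$ and $\Vert\Psi_\lambda\Vert_{L^\infty_tW^{4^+,\infty}}+|||\Psi_\lambda|||_{3^+}\to0$ as $\lambda\to0^+$, while $\Psi_\lambda$ still obeys \eqref{hyp_psi_general}. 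Fixing $\lambda=\lambda(M)$ so small that all these quantities are $\ll1$ reduces matters to the small-data case, at the cost of an existence time $T(M)=\lambda(M)^3\,T'$ with $T'=O(1)$ (since $u_\lambda$ on $[-T',T']$ corresponds to $u$ on $[-\lambda^3T',\lambda^3T']$).

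For the rescaled smooth solution I would use that, because $u_\lambda\in C([-T_0,T_0],H^\sigma)$, the map $T\mapsto\Vert u_\lambda\Vert_{F^s(T)}+\Vert u_\lambda\Vert_{B^s(T)}$ is continuous and satisfies $\limsup_{T\to0^+}(\cdots)\lesssim\Vert u_{0,\lambda}\Vert_{H^s}$. Writing $X:=\Vert u_\lambda\Vert_{F^s(T)}$ and imposing the bootstrap hypothesis $X\le R$ with $R\sim\Vert u_{0,\lambda}\Vert_{H^s}+|||\Psi_\lambda|||_s\ll1$, the cubic and quartic terms are absorbed ($X^4\le R^2X^2$, $X^3\le RX^2$) and the coefficient of $X^2$ is made $\le\tfrac12$ by the smallness of $R$, of $\Vert\Psi_\lambda\Vert$ and of $T^{1/2^-}$; this strictly improves the hypothesis, so by continuity the bound persists on the whole interval of existence, giving $\Vert u_\lambda\Vert_{F^s(T')}\lesssim\Vert u_{0,\lambda}\Vert_{H^s}$ and, via Lemma \ref{lem_linear_estimate}, $\Vert u_\lambda\Vert_{L^\infty_{T'}H^s}\lesssim\Vert u_{0,\lambda}\Vert_{H^s}$.

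To guarantee that the solution actually reaches $T'$, I would propagate the high regularity: applying Proposition \ref{energy_prop_state} at regularity $\sigma$ (legitimate, as it only requires $\sigma\ge1$) together with the same linear and bilinear estimates, the already-obtained control $\Vert u_\lambda\Vert_{F^1(T')}\lesssim\Vert u_\lambda\Vert_{F^s(T')}$ makes the coefficient multiplying $\Vert u_\lambda\Vert_{F^\sigma}^2$ small (it carries a $T^{1/2^-}$ and the smallness factor), so a second bootstrap yields $\Vert u_\lambda\Vert_{L^\infty_{T'}H^\sigma}<\infty$; by the blow-up alternative underlying Theorem \ref{regular_lwp_thm} this rules out blow-up before $T'$, hence $u_\lambda$ exists on $[-T',T']$. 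Undoing the scaling gives existence of $u$ on $[-T(M),T(M)]$ with $u\in C([-T(M),T(M)],H^\sigma)$, and scale-covariance of the $H^s$-norm transfers the low-norm bound into $\Vert u\Vert_{L^\infty_TH^s}\lesssim\Vert u_0\Vert_{H^s}$ with constant independent of $M$. The main obstacle I anticipate is the coupling of the two bootstraps with the continuation criterion: one must verify that $T\mapsto\Vert u_\lambda\Vert_{F^s(T)}$ is genuinely continuous for smooth solutions and that the $M$-only low-norm bound, while decoupled from the high-norm estimate, still drives the finite (possibly $\Vert u_0\Vert_{H^\sigma}$-dependent) control of $\Vert u_\lambda\Vert_{L^\infty_{T'}H^\sigma}$ needed to prevent blow-up on $[-T',T']$.
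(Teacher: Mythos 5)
Your overall architecture is exactly the paper's: rescale $u$ and $\Psi$ to make $\Vert u_{0,\lambda}\Vert_{H^s}$, $\Vert \Psi_\lambda\Vert_{L^\infty_tW^{4^+,\infty}_{xy}}$ and $|||\Psi_\lambda|||_{3^+}$ simultaneously small (your $u_\lambda=\lambda^2u(\lambda^3t,\lambda x,\lambda y)$ is the paper's scaling with $\lambda$ replaced by $\lambda^{1/2}$, and it yields the same $T\sim \Vert u_0\Vert_{H^s}^{-3}$), close a small-data continuity argument by combining the linear estimate \eqref{linear_prop_estimate} with Propositions \ref{shortime_bilinear_estimate}, \ref{shortime_psi_bilinear_estimate} and \ref{energy_prop_state}, run the same scheme at every regularity $\beta\in[s,\sigma]$ using the low-norm smallness to make the inequality effectively linear in the high norm, and iterate Theorem \ref{regular_lwp_thm} to reach the full interval. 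However, there is one genuine gap, and you flagged it yourself without resolving it: you run the bootstrap on $X=\Vert u_\lambda\Vert_{F^s(T)}$ and assert that $T\mapsto\Vert u_\lambda\Vert_{F^s(T)}$ is continuous because $u_\lambda\in C([-T_0,T_0],H^\sigma)$. This does not follow. The $F^s(T)$-norm is an \emph{infimum over all extensions} of $u$ outside $[-T,T]$, and neither left- nor right-continuity in $T$ is clear for such a quantity: a near-optimal extension for $[-T,T]$ need not agree with $u$ on any larger interval, and smoothness of $u$ in time does not help directly because the $F_{H,\beta}$-structure involves frequency-dependent time localizations. Since the continuity of the bootstrapped quantity is the hinge of the whole argument, this is the crux, not a routine verification.

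The paper is built precisely to avoid this. It introduces $\Lambda^s_{T'}(u):=\max\big\{\Vert u\Vert_{B^s(T')},\Vert \partial_x(u^2)\Vert_{N^s(T')},\Vert \partial_x(u\Psi)\Vert_{N^s(T')}\big\}$ and devotes the long technical Lemma \ref{cont_incr_tech_lem} to proving that $T'\mapsto\Lambda^s_{T'}(u)$ is nondecreasing and continuous with $\lim_{T'\to0}\Lambda^s_{T'}(u)\lesssim\Vert u(0)\Vert_{H^s}$; even for a fixed function $f\in C([-T,T],H^\infty)$ the continuity of $T'\mapsto\Vert f\Vert_{N^s(T')}$ is nontrivial and is obtained there via the dilation operators $D_r$ together with Corollary \ref{lem_tau_w_ihb_xH}. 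The continuity argument is then run on $\Lambda^s_T(u)$, and the $F^s(T)$-bound (the quantity $\Gamma^s_T(u)$) is recovered \emph{a posteriori} through the linear estimate \eqref{linear_prop_estimate} combined with \eqref{psi_ns_linf_hs_T}, never bootstrapped directly. To repair your proof you should do the same substitution: replace $X$ by $\Lambda^s_T(u_\lambda)$ and transfer to $F^s(T)$ through \eqref{linear_prop_estimate}. A second, minor, slip: undoing the scaling does not act on the inhomogeneous $H^s$-norm by a single factor, so the implicit constant in $\Vert u\Vert_{L^\infty_TH^s}\lesssim\Vert u_0\Vert_{H^s}$ picks up a negative power of $\lambda(M)$ and is \emph{not} independent of $M$; this is harmless for the statement of Proposition \ref{apriori_smooth} but contradicts your closing claim.
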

In order to prove the previous proposition we shall need the following technical Lemma.
\begin{lem}\label{cont_incr_tech_lem}
Let $s\in\R_+$ and $T>0$ both fixed. Consider $u\in C([-T,T],H^\infty(\R^2))$. Define \begin{align*}
\Lambda_{T'}^s(u):=\max\big\{\Vert u\Vert_{B^s(T')},\Vert \partial_x(u^2)\Vert_{N^s(T')},\Vert \partial_x(u\Psi)\Vert_{N^s(T')}\big\},
\end{align*}
for $0\leq T'\leq T$. Then, the map $T'\mapsto \Lambda_{T'}^s(u)$ defines a nondecreasing continuous function on $[0,T)$. Morover, we have that \begin{align*}
\lim_{T'\to 0}\Lambda_{T'}^s(u)\lesssim \Vert u(0)\Vert_{H^s}.
\end{align*}
\end{lem}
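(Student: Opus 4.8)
The plan is to handle separately the three quantities whose maximum defines $\Lambda^s_{T'}(u)$, since a maximum of finitely many continuous nondecreasing functions is again continuous and nondecreasing. \textbf{Monotonicity} is immediate from the definitions: increasing $T'$ enlarges the supremum interval $[-T',T']$ in the $B^s(T')$-norm, while for the localized $N^s(T')$-norm it shrinks the admissible set $\{\tilde f:\tilde f|_{[-T',T']}=f\}$ over which the infimum runs; hence each of $\Vert u\Vert_{B^s(T')}$, $\Vert\partial_x(u^2)\Vert_{N^s(T')}$, $\Vert\partial_x(u\Psi)\Vert_{N^s(T')}$ is nondecreasing. The one analytic tool I will use repeatedly is a \emph{slab estimate}: for $h$ supported in time in an interval $J$, combining $|(\tau-\omega(\xi,\mu)+iH^{1/2})^{-1}|\lesssim\min\{H^{-1/2},L^{-1}\}$ on modulation $\sim L$ with Cauchy--Schwarz in the dyadic modulation sum gives, at fixed frequency, $\Vert P_Hh\Vert_{N_H}\lesssim H^{-1/4}\min\{H^{-1/4},|J|^{1/2}\}\sup_t\Vert P_Hh(t)\Vert_{L^2_{xy}}$, and therefore, after summing in $H$ and using $H\sim\langle(\xi,\mu)\rangle^2$,
\[
\Vert h\Vert_{N^s}\lesssim |J|^{1/2}\,\Vert h\Vert_{L^\infty_t H^{s-1/2}_{xy}}.
\]

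I will first dispose of the \textbf{limit at $T'=0$}. Since $u$ is smooth, $g:=\partial_x(u^2)$ and $g:=\partial_x(u\Psi)$ belong to $C([-T,T],H^{\sigma'}_{xy})$ for every $\sigma'$ within the regularity of $\Psi$ (here one uses $\Psi\in L^\infty_tW^{4^+,\infty}_{xy}$ for the second term, which gives $\partial_x(u\Psi)\in L^\infty_tH^{3^+}_{xy}$); choosing for each $T'$ an extension supported in $[-2T',2T']$ and applying the slab estimate yields $\Vert g\Vert_{N^s(T')}\lesssim (T')^{1/2}\Vert g\Vert_{L^\infty_TH^{s-1/2}_{xy}}\to0$ as $T'\to0$. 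On the other hand $\lim_{T'\to0}\Vert u\Vert_{B^s(T')}=\Vert u(0)\Vert_{H^s}$, because for each $H$ the supremum over the shrinking interval $[-T',T']$ of the continuous map $t\mapsto\Vert P_Hu(t)\Vert_{L^2}$ tends to its value at $t=0$, and the $H$-sum is controlled uniformly (see below). This already gives $\lim_{T'\to0}\Lambda^s_{T'}(u)\lesssim\Vert u(0)\Vert_{H^s}$.

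For \textbf{continuity} I exploit that the $N^s(T')$-norm decouples across spatial frequencies: since $P_H$ commutes with restriction in time, the constraint $\tilde f|_{[-T',T']}=f$ splits into $P_H\tilde f|_{[-T',T']}=P_Hf$, so that $\Vert f\Vert_{N^s(T')}^2=\sum_H H^s\Vert P_Hf\Vert_{N_H(T')}^2$ with $N_H(T')$ the single-frequency localized norm. It therefore suffices to prove continuity of $T'\mapsto\Vert P_Hg\Vert_{N_H(T')}$ for each fixed $H$ and to sum: by monotonicity and a crude application of the slab estimate the tail $\sum_{H>H_0}H^s\Vert P_Hg\Vert_{N_H(T')}^2\lesssim H_0^{-(\sigma'-s)}\Vert g\Vert_{N^{\sigma'}(T)}^2$ is uniformly small in $T'$ for some $\sigma'>s$, so the partial sums converge uniformly and continuity passes to the series (the identical uniform-tail argument, now bounded by $\Vert u\Vert_{B^{\sigma'}(T)}^2$, gives continuity of $\Vert u\Vert_{B^s(T')}$, each summand being continuous in $T'$). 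Left-continuity at an interior $T_0$ is soft: taking near-optimal extensions for $T_n\uparrow T_0$, their uniformly bounded norms admit a limit which, by continuity of $g$, still extends $g|_{[-T_0,T_0]}$, whence $\Vert g\Vert_{N^s(T_0)}\le\lim_n\Vert g\Vert_{N^s(T_n)}$, matching monotonicity.

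The \textbf{main obstacle} is right-continuity, i.e. controlling the \emph{increase} of the infimum as the matching interval grows from $[-T_0,T_0]$ to $[-T_n,T_n]$ with $T_n\downarrow T_0$. Given a near-optimal extension $\tilde g_0$ for $T_0$, I will build a competitor $\tilde g_n=\tilde g_0+\theta_n(G-\tilde g_0)$, where $G$ is a fixed \emph{smooth} global extension of $g$ and $\theta_n$ is a time cutoff equal to $1$ on $T_0\le|t|\le T_n$ and vanishing on $[-T_0+\delta,T_0-\delta]$; since $G-\tilde g_0=0$ on $[-T_0,T_0]$ this is a valid $T_n$-extension, so $\Vert g\Vert_{N^s(T_n)}\le\Vert\tilde g_0\Vert_{N^s}+\Vert\theta_n(G-\tilde g_0)\Vert_{N^s}$, with the correction supported in a slab of width $O(T_n-T_0+\delta)$. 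The smooth part $\theta_nG$ is handled directly by the slab estimate, but the term $\theta_n\tilde g_0$ is delicate because $\tilde g_0$ is only in $N^s$; this is exactly where the smoothness of $g$ must be used. The resolution is to show that for smooth $g$ the infimum defining $\Vert P_Hg\Vert_{N_H(T_0)}$ is attained up to $\epsilon$ by \emph{smooth} extensions — mollify a near-optimal extension and repair the matching in a boundary layer whose width tends to $0$, the repair being cheap, again at fixed $H$, by the slab estimate — so that $\tilde g_0$ may be taken smooth and $\Vert\theta_n\tilde g_0\Vert_{N^s}\lesssim(T_n-T_0+\delta)^{1/2}\Vert\tilde g_0\Vert_{L^\infty_TH^{s-1/2}_{xy}}\to0$. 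Letting $\delta\to0$ and then $T_n\to T_0$ yields right-continuity, and combining it with left-continuity and the uniform frequency-summation completes the argument.
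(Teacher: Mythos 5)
Your handling of monotonicity, of the limit at $T'=0$ (your slab estimate is a legitimate refinement of the bound $\Vert g\Vert_{N^s}\lesssim\Vert g\Vert_{L^2_tH^s_{xy}}$ that the paper itself uses for this step), and of left-continuity via weak limits of near-optimal extensions is sound. But your argument for right-continuity, which you correctly identify as the crux, has two genuine gaps. First, the asserted identity $\Vert f\Vert_{N^s(T')}^2=\sum_H H^s\Vert P_Hf\Vert_{N_H(T')}^2$ is false: projecting a near-optimal extension gives $\Vert f\Vert_{N^s(T')}^2\geq\sum_H H^s\Vert P_Hf\Vert_{N_H(T')}^2$, but a family of frequency-wise near-optimal extensions $(g_H)$ cannot in general be realized as $(P_H\tilde f)_H$ for a single $\tilde f$; setting $\tilde f=\sum_H g_H$ one only gets $P_H\tilde f=\sum_{H'\sim H}P_Hg_{H'}$, since the multipliers $\psi_H$ overlap and are not idempotent on the shells. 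What survives is a two-sided \emph{equivalence} with constants bounded away from $1$, and a norm equivalence does not transfer continuity in $T'$ from the decoupled sum to the actual infimum norm. This particular misstep is repairable, because your gluing construction can be run directly on the full $N^s$ norm (the slab estimate is already summed over $H$), but as written the reduction "continuity at fixed $H$ plus uniform tails" proves continuity of the wrong quantity.

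The more serious gap is the step "near-optimal extensions may be taken smooth", which is exactly where the hard analysis lives, and your sketch of it does not go through as described. Time-mollification $\rho_\kappa*\tilde g_0$ destroys the matching on the \emph{whole} of $[-T_0,T_0]$, not in a boundary layer: in the interior it extends $\rho_\kappa*g$ rather than $g$ (the repair there is cheap only because $g$ is smooth, via $\Vert g-\rho_\kappa*g\Vert_{L^\infty_tH^s}\to0$), while near $\vert t\vert=T_0$ the mismatch involves values of $\tilde g_0$ outside the interval and is not small; repairing it with the slab estimate requires a quantitative $L^\infty_tH^{s-1/2}$ bound on $\rho_\kappa*\tilde g_0$, which the $N^s$ norm does not supply without a loss of $H^{\beta}$ per shell. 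Worse, your final bookkeeping needs $\Vert\rho_\kappa*\tilde g_0\Vert_{N^s}\leq(1+o_\kappa(1))\Vert\tilde g_0\Vert_{N^s}$, i.e.\ a constant tending to $1$ (equivalently, strong convergence of time-mollification on $N^s$), whereas the available tools (Corollary \ref{gro_gamma}, Lemma \ref{time_mult_lema}, Corollary \ref{lem_tau_w_ihb_xH}) give only boundedness with a fixed constant; strong continuity is delicate here because of the $\sup_{t_H}$ and the $\ell^1$-in-modulation structure, and it is established nowhere. It is instructive that the paper avoids extensions-of-extensions altogether by a dilation argument: it compares $\Vert f\Vert_{N^s(T')}$ with $\Vert D_{T'/T_0}[f]\Vert_{N^s(T')}$ (the difference being $O(T'^{1/2}\Vert f-D_{T'/T_0}[f]\Vert_{L^\infty_tH^s})$), reducing continuity to the single quantitative estimate $\Vert D_{1/r}[f_r]\Vert_{N^s}\leq\phi(r)\Vert f_r\Vert_{N^s}$ with $\phi$ continuous and $\phi(1)=1$, proved by tracking how each ingredient of the $X_H$/$N_H$ norms (modulation annuli, resolvent weight, time cutoff, spatial shell) deforms under dilation, via the Mean Value Theorem and Corollary \ref{lem_tau_w_ihb_xH}. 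To complete your route you would need to supply the missing strong-approximation lemma; otherwise some quantitative-perturbation mechanism of the paper's type is required.
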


First we assume Lemma \ref{cont_incr_tech_lem} holds and we prove Proposition \ref{apriori_smooth}, then we prove Lemma \ref{cont_incr_tech_lem} which is much more technical.

\begin{proof}[Proof of Proposition \ref{apriori_smooth}]
In fact, let us consider $\varepsilon_\star=\min\{\varepsilon_1,\varepsilon_2\}>0$, where $\varepsilon_1$ and $\varepsilon_2$ are given by the regularity hypothesis \eqref{hyp_psi_general} on $\Psi$, namely \begin{align}\label{norms_tobe_small}
\Psi\in L^\infty(\R,W^{4+\varepsilon_1,\infty}(\R^2)) \quad \hbox{ and } \quad \big(\partial_t\Psi+\partial_x\Delta\Psi+\tfrac12\partial_x(\Psi^2)\big)\in L^\infty(\R,H^{3+\varepsilon_2}(\R^2)).
\end{align}
Now, consider $u_0\in H^\sigma(\R^2)$, with $\sigma\in (2,3+\tfrac12\varepsilon_\star)$. We denote by $u\in C([-T,T],H^\sigma(\R^2))$ the solution of \eqref{zk_psi} emanating from $u_0\in H^\sigma(\R^2)$, given by Theorem \ref{regular_lwp_thm}.   On the other hand, consider $s\in [1, \sigma]$ fixed. We claim that, by a scaling argument on $u$ and $\Psi$, without loss of generality we can always assume that the initial data $u_0$ has a small $H^s$-norm. Indeed, if $u(t,x,y)$ solves equation \eqref{zk_psi} on the time interval $[0,T]$, then, we define $u_\lambda$ and $\Psi_\lambda$  as follows\[
u_\lambda(t,x,y):=\lambda u\Big(\lambda^{3/2}t,\lambda^{1/2}x,\lambda^{1/2}y\Big) \quad \hbox{ and } \quad \Psi_\lambda(t,x,y):=\lambda\Psi\Big(\lambda^{3/2}t,\lambda^{1/2}x,\lambda^{1/2}y\Big).
\]
It is not difficult to see that $u_\lambda$ solves equation \eqref{zk_psi} in the background of $\Psi_\lambda$, i.e., equation \eqref{zk_psi} with $\Psi_\lambda$ taking the role of $\Psi$, with initial data $u_\lambda(0,x,y):=\lambda u_0(x,y)$, on the time interval $[-\lambda^{-3/2}T,\lambda^{-3/2}T]$. On the other hand, by direct computations we check that \begin{align}
\Vert u_\lambda(0,x,y)\Vert_{H^s}&\lesssim \lambda^{1/2}(1+\lambda^s)\Vert u_0\Vert_{H^s},\nonumber
\\ \Vert \Psi_\lambda\Vert_{L^\infty_tW^{r,\infty}_{xy}} & \lesssim  \lambda(1+\lambda^r)\Vert \Psi\Vert_{L^\infty_tW^{r,\infty}_{xy}},\label{delta_zkpsi_2}
\end{align}
and \begin{align}\label{delta_zkpsi} \Vert \partial_t\Psi_\lambda+\partial_x\Delta\Psi_\lambda+\tfrac12 \partial_x(\Psi_\lambda^2)\Vert_{L^\infty_tH^r_{xy}}\lesssim \lambda^{2}(1+\lambda^r) \Vert \partial_t\Psi+\partial_x\Delta\Psi+\tfrac12 \partial_x(\Psi^2)\Vert_{L^\infty_tH^r_{xy}},
\end{align}
for any $s,r>0$. Thus, for $\delta>0$ as small as desired, it suffices to choose $\lambda$ of the form \begin{align}\label{choice_delta}
\lambda\sim \delta^{2}\min\big\{1,\Vert u_0\Vert_{H^s}^{-2}\big\}, \quad \hbox{ and hence } \quad \Vert u_\lambda(0,x,y)\Vert_{H^s}\lesssim \delta.
\end{align}
Notice that this procedure makes the norms in \eqref{delta_zkpsi_2}-\eqref{delta_zkpsi} small as well. In fact, without loss of generality we can assume both norms associated with  \eqref{norms_tobe_small} are smaller than $\delta$, simply modifying the choice of $\lambda$ as follows \[
\lambda\sim \delta^2\min\big\{1,\Vert u_0\Vert_{H^s}^{-2},\Vert \Psi\Vert_{L^\infty_tW^{4+\varepsilon_1,\infty}_{xy}}^{-1},|||\Psi|||_{3+\varepsilon_2}^{-1/2}\big\}.
\]
Notice that, this choice of $\lambda$ is independent $\sigma$ and $T$ since $\Psi$ is given and fixed. Therefore, denoting by $\mathcal{B}^s(\delta)$ the $H^s(\R^2)$-ball with radius $\delta$ centered at the origin, we conclude that it is enough to prove that, if $u_0\in\mathcal{B}^s(\delta)$, then Proposition \ref{apriori_smooth} holds with $T=1$. Notice that this would in fact imply that Proposition \ref{apriori_smooth}
holds for arbitrarily large initial data in $H^s(\R^2)$, with an existence time $T\sim \Vert u_0\Vert_{H^s}^{-3}$.

\medskip

In view of the above analysis, now we consider $u_0\in H^\sigma(\R^2)\cap \mathcal{B}^s(\delta)$, and denote by $u\in C([-T,T],H^\sigma(\R^2))$, the solution to \eqref{zk_psi}  given by Theorem \ref{regular_lwp_thm}, with $T\in (0,1]$, with background function\footnote{Notice that we keep denoting the background function $\Psi_\lambda$, while the solution simply by $u$.} $\Psi_\lambda$. Before going further notice that, from the linear estimate \eqref{linear_prop_estimate} we get\begin{align*}
\Vert u\Vert_{F^{s}(T)}&\lesssim \Vert u\Vert_{B^{s}(T)}+\Vert \partial_x(u^2)\Vert_{N^{s}(T)}+\Vert \partial_x(u\Psi_\lambda)\Vert_{N^{s}(T)}
\\ & \qquad +\Vert \partial_t\Psi_\lambda+\partial_x\Delta \Psi_\lambda+\tfrac{1}{2}\partial_x(\Psi_\lambda^2)\Vert_{N^{s}(T)}.
\end{align*}
On the other hand, from the definition of the $N^{s}(T)$ norm in \eqref{NsT_norm}, and the hypotheses on $\Psi$, it is not difficult to see that \begin{align}\label{psi_ns_linf_hs_T}
\Vert \partial_t\Psi_\lambda+\partial_x\Delta \Psi_\lambda+\tfrac{1}{2}\partial_x(\Psi_\lambda^2)\Vert_{N^{s}(T)}\lesssim T^{1/2}\Vert \partial_t\Psi_\lambda+\partial_x\Delta \Psi_\lambda+\tfrac{1}{2}\partial_x(\Psi_\lambda^2)\Vert_{L^\infty_tH^{s}_{xy}}.
\end{align}
At this point it is worth to notice that, thanks to estimate \eqref{delta_zkpsi} and the choice of $\lambda$ in \eqref{choice_delta}, along with Cauchy-Schwarz inequality,  the second term appearing in the right-hand side of the energy estimate \eqref{energy_estimate_ineq} can be bounded as follows \[
\Vert u\Vert_{F^{s}(T)}|||\Psi_\lambda|||_{s}\lesssim \delta^2 \Vert u\Vert_{F^{s}(T)}^2+\delta^2|||\Psi|||_{s}^2.
\]
Being able to recover the $\delta$ factor in the above estimate shall be important in the sequel.  Therefore, gathering the above estimates, the energy estimate \eqref{energy_estimate_ineq},  the linear estimate \eqref{linear_prop_estimate} along with the $L^2$ bilinear estimates \eqref{short_t_estimate} and \eqref{short_t_psi_bil},  plugging them into the definition of $\Lambda^s_T(u)$, several applications of Cauchy-Schwarz inequality for $T\in (0,1)$ yield us to \begin{align}\label{lambda_f_estimate}
\Lambda_T^{\beta}(u)^2&\lesssim \Vert u_0\Vert_{H^{\beta}}^2  +  \delta^2|||\Psi|||_{\beta}^2+|||\Psi_\lambda|||_{\beta}^3+\Vert\Psi_\lambda\Vert_{L^\infty_TW^{(\beta+1)^+,\infty}_{xy}}|||\Psi_\lambda|||_{\beta}^2\nonumber 
\\ & \quad +\delta^4|||\Psi|||_{\beta}^4+\Vert\Psi_\lambda\Vert_{L^\infty_TW^{(\beta+1)^+,\infty}_{xy}}^2|||\Psi_\lambda|||_{\beta}^2+ \Big(\Lambda^{s}_T(u)+\Lambda^{s}_T(u)^2 \Big)\Lambda^{\beta}
_T(u)^2
\\ & \quad +\Big(\delta^2+\Vert \Psi_\lambda\Vert_{L^\infty_TW^{(\beta+1)^+,\infty}_{xy}}+\Vert \Psi_\lambda\Vert_{L^\infty_TW^{(\beta+1)^+,\infty}_{xy}}^2+|||\Psi_\lambda|||_{\beta}+|||\Psi_\lambda|||_{\beta}^2\Big)\Lambda^{\beta}_T(u)^2   , \nonumber
\end{align}
for any $\beta\in [s,\sigma]$. Therefore, using inequality \eqref{lambda_f_estimate}
 with $\beta=s$ we conclude that $\Lambda^{s}_T(u)\lesssim \delta$ provided that $\Vert u_0\Vert_{H^s}\lesssim \delta$. Now we define $\Gamma^{s}_T(u):=\max\big\{\Vert u\Vert_{B^{s}_T},\Vert u\Vert_{F^{s}_T}\big\}$. Thus,  inequality \eqref{lambda_f_estimate} with $\beta=s$, the linear estimate \eqref{linear_prop_estimate} along with inequality \eqref{psi_ns_linf_hs_T} and estimate \eqref{basic_embedding} in particular ensures, by a continuity argument, the existence of $\delta_s>0$ and $C_s>0$ such that $\Gamma^{s}_T(u)\leq C_s\delta$, provided that $\Vert u_0\Vert_{H^s}\leq \delta\leq \delta_s$. Hence, gathering inequality \eqref{basic_embedding}, the linear estimate \eqref{linear_prop_estimate} 
along with estimate \eqref{lambda_f_estimate} yield us to 
\begin{align}\label{uniform_bound}
\Vert u\Vert_{L^\infty_{T}H^\beta_{xy}}\lesssim  \Gamma^{\beta}_{T}(u)\lesssim \Vert u_0\Vert_{H^\beta},
\end{align}
for all $\beta\in[s,\sigma]$, provided that $\Vert u_0\Vert_{H^s}\leq \delta\leq \delta_s$, with $s\in [1,\sigma]$. Therefore, taking advantage of  estimate \eqref{uniform_bound}, we can reapply Theorem \ref{regular_lwp_thm} a finite number of times so that we extend the solution $u$ to the whole time interval $[-1,1]$. This concludes the proof of the Proposition.
\end{proof}

\begin{proof}[Proof of Lemma \ref{cont_incr_tech_lem}]
First of all notice that, from the definition of $B^s(T')$ it immediately follows that $T'\mapsto \Vert u\Vert_{B^s(T')}$ defines a nondecreasing continuous map on $[0,T]$ provided that $u\in C([-T,T],H^\infty(\R^2))$. Moreover, it is not difficult to see that \[
\lim_{T'\to0}\Vert u\Vert_{B^s(T')}\lesssim \Vert u(0)\Vert_{H^s}.
\]
Thus, in order to conclude the proof, it only remains to deal with $\Vert \partial_x(u^2)\Vert_{N^s(T')}$. In fact, in this case we shall prove a more general property. Let $f\in C([-T,T],H^\infty(\R^2))$, we claim that the following holds \[
T'\in[0,T)\mapsto \Vert f\Vert_{N^s(T')} \ \hbox{is nondecreasing and continuous,} \quad \hbox{ and }\quad \lim_{T'\to 0}\Vert f\Vert_{N^s(T)}=0.
\]
Indeed, first notice that, from the definition of $N^s$ in \eqref{Ns_norm} we see that \[
\Vert g\Vert_{N^s}\lesssim \Vert g\Vert_{L^2_tH^s_{xy}},
\]
for all $g\in L^2_tH^s_{xy}$. Then, taking $g=\mathds{1}_{[-T',T']}(t)f(t,x,y)$ we conclude that \begin{align}\label{Ns_bounded_by_LHs}
\Vert f\Vert_{N^s(T')}\lesssim \Vert g\Vert_{N^s}\lesssim \Vert g\Vert_{L^2_tH^s_{xy}}\lesssim T'^{1/2}\Vert f\Vert_{L^\infty_TH^s_{xy}}\xrightarrow{T'\to0}0.
\end{align}
Hence, the second part of the claim follows. Now, on the one-hand, the nondecreasing property of the claim follows directly from the definition of $N^s(T')$. On the other hand, in order to prove the continuity of the map at some point, let us say, $T_0\in(0,T)$ fixed, we define the scaling operator \[
D_r[f](t,x,y):=f\big(t/r,x/r^3,y/r^3\big).
\]
Therefore, by using inequality \eqref{Ns_bounded_by_LHs}, along with the triangle inequality  and the fact that $f\in C([-T,T],H^\infty(\R^2))$, we infer that \begin{align*}
\Big\vert \Vert f\Vert_{N^s(T')}-\Vert D_{T'/T_0}[f]\Vert_{N^s(T')}\Big\vert&\lesssim \Vert f-D_{T'/T_0}[f]\Vert_{N^s(T')}\\ & \lesssim T'^{1/2}\Vert f-D_{T'/T_0}[f]\Vert_{L^\infty_TH^s_{xy}}\xrightarrow{T'\to T_0}0.
\end{align*}
Thus, in order to conclude the proof of the claim, it only remains to prove tat \[
\lim_{r\to 1}\Vert D_r[f]\Vert_{N^s(rT_0)}=\Vert f\Vert_{N^s(T_0)}.
\]
In order to prove the latter property, we shall show the following two inequalities \begin{align}\label{liminflimsup_tecn}
\Vert f\Vert_{N^s(T_0)}\leq \liminf_{r\to1}\Vert D_r[f]\Vert_{N^s(rT_0)} \quad \hbox{ and } \quad \limsup_{r\to1}\Vert D_r[f]\Vert_{N^s(rT_0)}\leq \Vert f\Vert_{N^s(T_0)}.
\end{align}
In fact, let us begin with the first of them. Let $\varepsilon>0$ be an arbitrarily small number. For $r>0$ close to $1$, we choose an extension $f_r$ of $D_r[f]$ outside the interval $[-rT_0,rT_0]$ satisfying \begin{align}\label{scaling_hyp_estimate}
f_r\big\vert_{[-rT_0,rT_0]}\equiv D_r[f] \quad \hbox{ and } \quad \Vert f_r\Vert_{N^s}\leq \Vert D_r[f]\Vert_{N^s(rT_0)}+\varepsilon.
\end{align}
Now observe that $\Vert D_r[f]\Vert_{N^s(rT_0)}<M$ for a positive constant $M\in\R$ independent of $r$, since $f\in C([-T,T],H^\infty(\R^2))$. Also, notice that $D_{1/r}[f_r]$ is an extension of $f$ outside the interval $[-T_0,T_0]$, and hence \begin{align}\label{scalingoperator_estimate}
\Vert f\Vert_{N^s(T_0)}\leq \Vert D_{1/r}[f_r]\Vert_{N^s}.
\end{align}
Finally, in order to prove the latter property, we shall prove the existence of a continuous function $\phi(r)$, defined on a neighborhood of $r=1$, satisfying that $\phi(1)=1$, and  such that \begin{align}\label{claim_nh_xk}
\Vert D_{1/r}[f_r]\Vert_{N^s}\leq \phi(r)\Vert f_r\Vert_{N^s}.
\end{align}
Notice that the first estimate in \eqref{liminflimsup_tecn}  would follow from the above claim, along with \eqref{scaling_hyp_estimate} and \eqref{scalingoperator_estimate}. Now, in order to prove the latter inequality, let us fix some $H\in\mathbb{D}$ dyadic. Then, by definition of the $N_H$ norm we have that \begin{align*}
\Vert P_HD_{1/r}[f_r]\Vert_{N_H}=\sup_{t_H\in\R}\big\Vert \big(\tau-\omega(\xi,\mu)+iH^\beta\big)^{-1} \mathcal{F}\big(\eta_0(H^\beta(\cdot-t_H))P_HD_{1/r}[f_r]\big)\big\Vert_{X_H}.
\end{align*}
Then, by explicit computations we obtain that \[
\eta_0\big(H^\beta(\cdot-t_H)\big)D_{1/r}[f_r]=D_{1/r}\left[\eta_{0,r}(H^\beta(\cdot-rt_H))f_r\right],
\]
where $\eta_{0,r}(\cdot):=\eta_0(\cdot/r)$. Therefore, we have the following identity \begin{align*}
&\mathcal{F}\left[\eta_0(H^\beta(\cdot-t_H))P_HD_{1/r}[f_r]\right](\tau,\xi,\mu)
\\ & \qquad \qquad \qquad =r^{-5/3}\eta_H(\xi,\mu)\mathcal{F}\left[\eta_{0,r}(H^\beta(\cdot-rt_H))f_r\right](\tfrac\tau r,\tfrac\xi {r^{1/3}},\tfrac\mu{r^{1/3}}).
\end{align*}
Hence, by plugging the latter identity into the definition of the $X_H$ norm in \eqref{xh_norm}, we infer that the right-hand side of \eqref{claim_nh_xk} is equal to \[
r^{-5/6}\sup_{t_H\in\R}\sum_{L\in\mathbb{D}}L^{1/2}\left\Vert \dfrac{\eta_L(r(\tau-\omega(\xi,\mu)))}{(r(\tau-\omega(\xi,\mu))+iH^\beta)}\eta_H(r^{1/3}\xi,r^{1/3}\mu)\mathcal{F}\big[\eta_{0,r}(H^\beta(\cdot-t_H))f_r\big] \right\Vert_{L^2(\R^3)}.
\]
Now, observe that for any $a\in\R$ and any $r\in[\tfrac12,2]$  we have that \[
\left\vert \dfrac{a^2+H^{2\beta}}{r^2a^2+H^{2\beta}}-1\right\vert=\dfrac{a^2 \vert 1-r^2\vert}{r^2a^2+H^{2\beta}}\leq 4\vert 1-r^2\vert,
\]
which in turn trivially implies that \[
\dfrac{1}{r^2a^2+H^{2\beta}}\leq \dfrac{1+4\vert 1-r^2\vert}{a^2+H^{2\beta}}.
\]
Thus, we infer from the latter inequality that, for each $H\in\mathbb{D}$ fixed, it holds that \begin{align*}
&\left\Vert \dfrac{\eta_L(r(\tau-\omega(\xi,\mu)))}{r(\tau-\omega(\xi,\mu)+iH^\beta)}\eta_H(r^{1/3}\xi,r^{1/3}\mu)\mathcal{F}\big[\eta_{0,r}(H^\beta(\cdot-t_H))f_r\big] \right\Vert_{L^2(\R^3)}
\\ & \  \lesssim \big(1+4\vert 1-r^2\vert\big)^{1/2}\left\Vert \dfrac{\eta_L(r(\tau-\omega(\xi,\mu)))}{(\tau-\omega(\xi,\mu)+iH^\beta)}\eta_H(r^{1/3}\xi,r^{1/3}\mu)\mathcal{F}\big[\eta_{0,r}(H^\beta(\cdot-t_H))f_r\big] \right\Vert_{L^2(\R^3)}.
\end{align*}
On the other hand, by using the Mean Value Theorem in the $r$ variable we infer that \begin{align*}
\big\vert \eta_L\big(r(\tau-\omega(\xi,\mu))\big)-\eta_L\big(\tau-\omega(\xi,\mu)\big)\big\vert &\lesssim \vert r-1\vert \sup_{s\in[r,1]\cup[1,r]}\big\vert\eta_H'\big(s(\tau-\omega(\xi,\mu))\big)\big\vert
\\ & \lesssim \vert r-1\vert \sum_{ H'\in[\frac12H,2H]}\eta_{H'}\big(\tau-\omega(\xi,\mu)\big),
\end{align*}
provided that $r\in[\frac34,\frac54]$, which in turn trivially implies that \[
 \eta_L\big(r(\tau-\omega(\xi,\mu))\big)  \lesssim \eta_L\big(\tau-\omega(\xi,\mu)\big)+\vert r-1\vert \sum_{ H'\in[\frac12H,2H]}\eta_{H'}\big(\tau-\omega(\xi,\mu)\big).
\]
Therefore, gathering the above estimates we conclude that \begin{align}\label{phir_reduced_techlem}
&\Vert P_HD_{1/r}[f_r]\Vert_{N_H}
\\ & \qquad \leq \phi(r)\sup_{r_H\in\R}\big\Vert\big(\tau-\omega(\xi,\mu)+iH^\beta\big)^{-1}\eta_H(r^{1/3}\xi,r^{1/3}\mu)\mathcal{F}\big[\eta_{0,r}(H^\beta(\cdot-t_H))f_r\big]\big\Vert_{X_H},\nonumber
\end{align} 
where $\phi$ is a continuous function defined in a neighborhood of $1$, satisfying that $\phi(1)=1$. Finally, in order to treat the $\eta_{0,r}$ appearing on the right-hand side of the latter inequality, we proceed similarly fashion as before. Indeed, by using the Fundamental Theorem of Calculus we get that \[
\eta_{0,r}\big(H^\beta(t-t_H)\big)-\eta_0\big(H^\beta(t-t_H)\big)=\int_1^{1/r}\theta_s\big(H^\beta(t-t_H)\big)ds,
\]
where $\theta_s(t):=t\eta_0'(st)$. On the other hand, it is not difficult to see that, on the support of the latter integral above, the following holds \[
\eta_0\big(H^\beta(t-(t_H+2H^{-\beta}))\big)+\eta_0\big(H^\beta(t-(t_H-2H^{-\beta}))\big)\equiv1.
\]
Hence, plugging the latter identities into \eqref{phir_reduced_techlem} and then using Minkowski inequality along with Lemma \ref{lem_tau_w_ihb_xH} yields us to \begin{align*}
&\Vert P_HD_{1/r}[f_r]\Vert_{N_H}
\\ & \qquad \leq \widetilde{\phi}(r)\sup_{t_H\in\R}\big\Vert \big(\tau-\omega(\xi,\mu)+iH^\beta\big)^{-1}\eta_H(r^{1/3}\xi,r^{1/3}\mu)\mathcal{F}\big[\eta_0(H^\beta(t-t_H))f_r\big]\big\Vert_{X_H}.
\end{align*}
where $\widetilde\phi$ is a continuous function defined in a neighborhood of $1$, satisfying that $\widetilde\phi(1)=1$. Now notice that, proceeding similarly as before, using the Mean Value Theorem, we get that \[
\eta_H(r^{1/3}\xi,r^{1/3}\mu)\lesssim \eta_H(\xi,\mu)+\big\vert r^{1/3}-1\big\vert\sum_{H'\in[\frac12 H,2H]}\eta_{H'}(\xi),
\]
for $r\in[3/4,5/4]$. Thus, gathering the last two inequalities we obtain that \begin{align}
&\Vert P_HD_{1/r}f_r\Vert_{N_H}\label{r131_final_estimate}
\\ & \qquad \leq \big(1+\vert r^{1/3}-1\vert\big)\widetilde{\phi}(r)\Vert P_Hf_r\Vert_{N_H}\nonumber
\\ & \quad \qquad + \vert r^{1/3}-1\vert\widetilde{\phi}(r)\sup_{t_H\in\R}\big\Vert \big(\tau-\omega(\xi,\mu)+iH^\beta\big)^{-1}\eta_{2H}(\xi,\mu)\mathcal{F}\big[\eta_0(H^\beta(t-t_H))f_r\big]\big\Vert_{X_{2H}}\nonumber
\\ & \quad \qquad + \vert r^{1/3}-1\vert\widetilde{\phi}(r)\sup_{t_H\in\R}\big\Vert \big(\tau-\omega(\xi,\mu)+iH^\beta\big)^{-1}\eta_{H/2}(\xi,\mu)\mathcal{F}\big[\eta_0(H^\beta(t-t_H))f_r\big]\big\Vert_{X_{H/2}}.\nonumber
\end{align}
The first term in the right-hand side of the latter inequality is compatible with our claim. In order to deal with the second term, it is enough to notice that \[
\big\vert \tau-\omega(\xi,\mu)+iH^\beta\big\vert^{-1}\leq 4\big\vert \tau-\omega(\xi,\mu)+i(2H)^\beta\big\vert^{-1},
\]
and that, via a trivial change of variables, we have that \[
(2H)^\beta\int_{-t_H-8(2H)^{-\beta}}^{-t_H+8(2H)^{-\beta}}\eta_0\big((2H)^\beta(t+s)\big)ds=\int_\R \eta_0(s)ds>0,
\]
provided that $t\in\supp\eta_0\big(H^\beta(\cdot-t_H)\big)$. 
Therefore, from the above observations along with Minkowski inequality and an application of Lemma \ref{lem_tau_w_ihb_xH} we conclude that \begin{align}\label{r131_second_term}
\sup_{t_H\in\R}\big\Vert \big(\tau-\omega(\xi,\mu)+iH^\beta\big)^{-1}\eta_{2H}(\xi,\mu)\mathcal{F}\big[\eta_0(H^\beta(t-t_H))f_r\big]\big\Vert_{X_{2H}}\lesssim \Vert P_{2H}f_r\Vert_{N_{2H}}.
\end{align}
Similarly, to deal with the third term in \eqref{r131_final_estimate}, it is enough to notice that \[
\big\vert \tau-\omega(\xi,\mu)+iH^\beta\big\vert^{-1}\leq \big\vert \tau-\omega(\xi,\mu)+i(H/2)^\beta\big\vert^{-1},
\]
and that \[
\eta_0\big((H/2)^\beta(\cdot-t_H)\big)\equiv 1 \quad \hbox{ on the support of } \quad \eta_0\big(H^\beta(\cdot-t_H)\big).
\]
The last two observation along with an application of Lemma \ref{lem_tau_w_ihb_xH} imply that \begin{align}\label{r131_third_term}
\sup_{t_H\in\R}\big\Vert \big(\tau-\omega(\xi,\mu)+iH^\beta\big)^{-1}\eta_{H/2}(\xi,\mu)\mathcal{F}\big[\eta_0(H^\beta(t-t_H))f_r\big]\big\Vert_{H/2}\lesssim \Vert P_{H/2}f_r\Vert_{N_{H/2}}.
\end{align}
Then, we conclude the proof of the claim \eqref{claim_nh_xk} by plugging estimates \eqref{r131_second_term}, \eqref{r131_third_term} into \eqref{r131_final_estimate} and then summing over $H\in\mathbb{D}$. The proof for the second estimate in \eqref{claim_nh_xk} follows very similar lines, and hence we omit it. The proof is complete.
\end{proof}

\subsection{$L^2$-Lipschitz bound for the difference of two solutions and uniqueness}

Let us consider $u_1$ and $u_2$ two solutions of equation \eqref{zk_psi} defined on a time interval $[-T,T]$ for some $T\in (0,1]$, with initial data $u_1(0,x,y):=u_{1,0}(x,y)$ and $u_2(0,x,y):=u_{2,0}(x,y)$ respectively. Furthermore, we also assume that \begin{align}\label{hip_diff_lipschitz}
u_{1,0},u_{2,0}\in \mathcal{B}^s(\delta) \qquad \hbox{ and } \qquad \Gamma^{s}_T(u_i)\lesssim \delta,\ \, i=1,2.
\end{align}
Additionally, we make an smallness assumption on $\Psi$, namely, we assume that \begin{align}\label{hip_diff_psi_lipschitz}
\Vert \Psi\Vert_{L^\infty_tW^{4^+}_{xy}}+\Vert \partial_t\Psi+\partial_x\Delta\Psi+\tfrac12\partial_x (\Psi^2)\Vert_{L^\infty_tH^{3^+}_{xy}}\lesssim \delta.
\end{align}
From now on we denote by $w:=u_1-u_2$ and by $v:=u_1+u_2$. Then, observe that $w$ solves the equation \begin{align}\label{eq_w}
\partial_tw+\partial_x\Delta w+\tfrac12 \partial_x\Big[w(v+2\Psi)\Big]=0
\end{align}
Then, we conclude by gathering the linear estimate \eqref{linear_prop_estimate},   the bilinear estimates \eqref{short_t_2_estimate} and \eqref{short_t_psi_bil}, the energy estimate \eqref{diff_scero}, and the smallness assumptions  \eqref{hip_diff_lipschitz}-\eqref{hip_diff_psi_lipschitz}, that there exists $\delta'>0$ sufficiently small such that \begin{align}\label{smallness_condition}
\Gamma^0_T(w)\lesssim \Vert w_0\Vert_{L^2},
\end{align}
provided that $u_{1}$ and $u_2$ satisfy \eqref{hip_diff_lipschitz}, with $\delta\in (0,\delta')$. With the above $L^2$-bound, we are now in position to prove our uniqueness result.
\begin{prop}
Let $s\in[1,2]$ be fixed. Consider two solutions $u_1$ and $u_2$ to equation \eqref{zk_psi} in the class $C([-T,T],H^s)\cap B^{s}(T)\cap F^{s}(T)$ for some $T>0$, emanating from initial datum satisfying that $u_1(0,\cdot,\cdot)=u_2(0,\cdot,\cdot)=:u_0(x,y)$. Then, $u_1=u_2$ on the time interval $[-T,T]$.
\end{prop}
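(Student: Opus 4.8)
The plan is to read off uniqueness directly from the $L^2$-Lipschitz bound \eqref{smallness_condition}, which asserts that $\Gamma^0_T(w)\lesssim\|w_0\|_{L^2}$ for the difference $w:=u_1-u_2$ as soon as the smallness hypotheses \eqref{hip_diff_lipschitz}--\eqref{hip_diff_psi_lipschitz} are in force. Since $u_1$ and $u_2$ emanate from the same datum we have $w_0=0$, so that bound forces $\Gamma^0_T(w)=0$; combined with the embedding $\|w\|_{L^\infty_TL^2}\lesssim\|w\|_{F^0(T)}$ coming from Lemma \ref{lem_linear_estimate} with $s=0$, this gives $w\equiv 0$, i.e. $u_1=u_2$ on $[-T,T]$. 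Thus the entire content of the proof is to legitimately arrange the smallness conditions \eqref{hip_diff_lipschitz}--\eqref{hip_diff_psi_lipschitz}, which the given solutions need not satisfy a priori.

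First I would invoke the scaling used in the proof of Proposition \ref{apriori_smooth}. Replacing $(u_i,\Psi)$ by $(u_{i,\lambda},\Psi_\lambda)$ sends solutions to solutions of the rescaled equation, preserves the fact that $u_1$ and $u_2$ share the same initial datum, and, thanks to \eqref{delta_zkpsi_2}--\eqref{delta_zkpsi} and the choice of $\lambda$ in \eqref{choice_delta}, it makes $\|u_0\|_{H^s}$ together with the two background norms appearing in \eqref{hip_diff_psi_lipschitz} all $\leq\delta$. Since such a $\lambda$ is $\leq 1$, the rescaled solutions are defined on the possibly larger symmetric interval $[-\lambda^{-3/2}T,\lambda^{-3/2}T]$, and it suffices to prove that they coincide there; undoing the scaling then returns the statement on $[-T,T]$.

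Next I would secure the remaining hypothesis, namely $\Gamma^s(u_i)\lesssim\delta$ in \eqref{hip_diff_lipschitz}. I cover the rescaled time interval by finitely many subintervals of length at most one and run, on each of them, the continuity/bootstrap argument of Proposition \ref{apriori_smooth}: the linear estimate \eqref{linear_prop_estimate}, the bilinear estimates \eqref{short_t_estimate} and \eqref{short_t_psi_bil}, and the energy estimate \eqref{energy_estimate_ineq}, together with the continuity in $T'$ of the relevant norms provided by Lemma \ref{cont_incr_tech_lem}, close a bootstrap yielding $\Gamma^s\lesssim\delta$ on a subinterval whenever the $H^s$-norm of the solution at its left endpoint is $\lesssim\delta$. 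The a priori bound $\|u_i\|_{L^\infty H^s}\lesssim\|u_i\|_{H^s}$ at the left endpoint then propagates this smallness from one subinterval to the next. With \eqref{hip_diff_lipschitz}--\eqref{hip_diff_psi_lipschitz} now in hand, I apply \eqref{smallness_condition} successively: using that $\Psi$ enters the estimates only through its time-translation-invariant $L^\infty_t$ norms, the bound holds on each subinterval with the initial time taken at its left endpoint. Starting from $w=0$ at $t=0$ it gives $w\equiv 0$ on the first subinterval, hence $w=0$ at the next breakpoint, and iterating forward and backward in time covers $[-\lambda^{-3/2}T,\lambda^{-3/2}T]$.

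The main obstacle is precisely this middle step. The a priori and continuity machinery (Proposition \ref{apriori_smooth} and Lemma \ref{cont_incr_tech_lem}) is stated for smooth $H^\infty$ solutions, whereas here $u_1,u_2$ are only assumed to lie in $C([-T,T],H^s)\cap B^s(T)\cap F^s(T)$. One must therefore verify that the bootstrap closes for arbitrary solutions of this regularity in the prescribed class, and in particular that $T'\mapsto\Gamma^s_{T'}(u_i)$ is continuous with $\lim_{T'\to0}\Gamma^s_{T'}(u_i)\lesssim\|u_0\|_{H^s}\lesssim\delta$, since the $L^2$-Lipschitz bound genuinely requires control of $\|u_1+u_2\|_{F^1(T)}\leq\|u_1+u_2\|_{F^s(T)}$ through the difference energy estimate \eqref{diff_scero}. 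Once this regularity upgrade is justified, the rest is a routine assembly of estimates already established.
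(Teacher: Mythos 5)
Your overall skeleton is the paper's: rescale so that the datum, the $B^s$-norm and the background norms become of size $\delta$, feed the difference into the $L^2$-Lipschitz bound \eqref{smallness_condition} with $w_0=0$, and iterate over finitely many time subintervals. The genuine gap is your middle step, and you have correctly located it yourself without closing it. To verify the remaining hypothesis $\Gamma^s(u_{i,\lambda})\lesssim\delta$ of \eqref{hip_diff_lipschitz} you propose to re-run the bootstrap of Proposition \ref{apriori_smooth}, resting on Lemma \ref{cont_incr_tech_lem}; but that lemma is proved only for $u\in C([-T,T],H^\infty(\R^2))$ (the energy estimate \eqref{energy_estimate_ineq} is likewise stated for smooth solutions), and the restriction is not removable along your route: for $u$ merely in $C([-T,T],H^s)$ one has $\partial_x(u^2)\in C([-T,T],H^{s-1})$ only, so the bound $\Vert f\Vert_{N^s(T')}\lesssim T'^{1/2}\Vert f\Vert_{L^\infty_{T'}H^s_{xy}}$ on which the continuity of $T'\mapsto\Lambda^s_{T'}(u)$ rests is simply unavailable. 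Nor can you regularize: uniqueness must hold for an \emph{arbitrary} solution in the class $C([-T,T],H^s)\cap B^s(T)\cap F^s(T)$, and identifying such a putative solution with a limit of smooth solutions presupposes the uniqueness being proved. Your closing sentence, ``once this regularity upgrade is justified, the rest is a routine assembly,'' therefore concedes exactly the point the proposition must establish.

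The paper closes this step by a different and more elementary device, needing neither a bootstrap nor any continuity in the time parameter: it exploits the mere \emph{finiteness} of $\Vert u_{i,\lambda}\Vert_{F^s(T')}$, which is part of the hypothesis on the class of solutions. After rescaling, $\Vert u_{i,\lambda}\Vert_{L^\infty_{T'}H^s_{xy}}+\Vert u_{i,\lambda}\Vert_{B^s(T')}\lesssim\delta$ holds outright by \eqref{small_k_lambda}, so your propagation of $H^s$-smallness across subintervals is unnecessary; only the $F^s$-norm requires a local-in-time treatment. For that, one fixes a dyadic $H$ so large that the high-frequency tail obeys $\Vert P_{\geq H}u_{i,\lambda}\Vert_{F^s(T'')}\leq\delta$ as in \eqref{small_highf} --- a tail estimate available precisely because the full $F^s$-norm is a convergent square sum --- while for $P_{\leq H}u_{i,\lambda}$ one applies the linear estimate \eqref{linear_prop_estimate} together with the crude bound $\Vert f\Vert_{N^s(T'')}\lesssim T''^{1/2}\Vert f\Vert_{L^\infty_{T''}H^s_{xy}}$, H\"older's inequality and the embedding $H^{1/2}(\R^2)\hookrightarrow L^4(\R^2)$, so that every nonlinear contribution carries a factor $T''^{1/2}H^{s/2+1/2}$ and is $\leq\delta$ once $T''=T''(K)$ is small enough; this is \eqref{claim_fsT_small}, proved directly at the stated regularity. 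Substituting this high/low frequency splitting for your bootstrap turns your proposal into the paper's proof; everything else in it (the scaling step, the application of \eqref{smallness_condition}, and the finite iteration forward and backward in time) is sound.
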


\begin{proof}
In fact, for the sake of simplicity let us denote by $K:=\max\big\{\Gamma^s_T(u_1),\Gamma^s_T(u_2)\big\}$. In the same fashion as before, we consider the same dilations $u_{i,\lambda}$ of $u_i$ for $i=1,2$ and $\lambda>0$, which are also solutions to the equation \eqref{zk_psi} on the time interval $[-T',T']$, with $T'=\lambda^{-3/2}T$, with initial data and background function given by (respectively) \[
u_{i,\lambda}(0,x,y)=\lambda u\big(0,\lambda^{1/2}x,\lambda^{1/2}y) \quad \hbox{ and } \quad \Psi_\lambda(t,x,y)=\lambda\big(\lambda^{3/2}t,\lambda^{1/2}x,\lambda^{1/2}y),
\]
as in the proof on Proposition \ref{apriori_smooth}. The above dilations imply that \[
\Vert u_{i,\lambda}\Vert_{L^\infty_{T'}H^s_{xy}}+\Vert u_{i,\lambda}\Vert_{B^{s}(T')}\lesssim \lambda^{1/2}\big(1+\lambda^s\big)\big(\Vert u_{i}\Vert_{L^\infty_TH^s_{xy}}+\Vert u_{i}\Vert_{B^{s}(T)}\big)\lesssim K\lambda^{1/2}\big(1+\lambda^{s}\big),
\]
for $i=1,2$. Thus, we can always choose $\lambda=\lambda(K)$ sufficiently small such that \begin{align}\label{small_k_lambda}
\Vert u_{i,\lambda}\Vert_{L^\infty_{T'}H^s}\lesssim \delta, \quad  \hbox{ and } \quad \Vert u_{i,\lambda}\Vert_{B^{s}(T')}\lesssim \delta.
\end{align}
Moreover, by making $\lambda$ smaller if necessary, we can also force the following inequality to hold \begin{align}\label{small_k_psi_lambda}
\Vert \Psi_\lambda\Vert_{L^\infty_tW^{4^+}_{xy}}+\Vert \partial_t\Psi_\lambda+\partial_x\Delta\Psi_\lambda+\tfrac12\partial_x (\Psi_\lambda^2)\Vert_{L^\infty_tH^{3^+}_{xy}}\lesssim \delta.
\end{align}
Next, we prove that, for $T''<T'$ sufficiently small, we also have that \begin{align}\label{claim_fsT_small}
\Vert u_{i,\lambda}\Vert_{F^{s}(T'')}\lesssim \delta.
\end{align}
In fact, first of all notice that, since $\Vert u_{i, \lambda}\Vert_{F^{s}(T)}<\infty$, we infer the existence of an $H\in\mathbb{D}$ such that \begin{align}\label{small_highf}
\Vert P_{\geq H}u_{i,\lambda}\Vert_{F^{s}(T'')}\leq \Vert P_{\geq H}u_{i,\lambda}\Vert_{F^{s}(T)}\leq \delta , \qquad i=1,2.
\end{align}
On the other hand, recalling that $\Vert u\Vert_{N^{s}(T'')}\lesssim \Vert u\Vert_{L^2_{T''}H^s_{xy}}$, we deduce from the linear estimate \eqref{linear_prop_estimate}, along with H\"older inequality, the Sobolev embedding $H^{1/2}(\R^2)\hookrightarrow L^4(\R^2)$ and the smallness condition \eqref{small_k_lambda}, that 
\begin{align*}
& \Vert P_{\leq H}u_{i,\lambda}\Vert_{F^{s}(T'')}
\\ & \qquad \lesssim \Vert u_{i,\lambda}\Vert_{B^{s}(T'')}+\Vert P_{\leq H}\partial_x(u^2_{i,\lambda})\Vert_{N^{s}(T'')}+\Vert P_{\leq H}\partial_x(u_{i,\lambda}\Psi_\lambda)\Vert_{N^{s}(T'')}
\\ & \qquad  \qquad +\Vert P_{\leq H}(\partial_t\Psi_\lambda+\partial_x\Delta\Psi_\lambda+\tfrac12\partial_x (\Psi^2_\lambda))\Vert_{N^{s}(T'')}
\\ & \qquad \lesssim \Vert u_{i,\lambda}\Vert_{B^{s}(T'')}+T''^{1/2}H^{s/2+1/2}\big(\Vert P_{\leq H}(u^2_{i,\lambda})\Vert_{L^\infty_{T''}L^2_{xy}}+\Vert P_{\leq H}(u_{i,\lambda}\Psi_\lambda)\Vert_{L^\infty_{T''}L^2_{xy}}\big)
\\ & \qquad \qquad +T''^{1/2}H^{s/2}\Vert P_{\leq H}(\partial_t\Psi_\lambda+\partial_x\Delta\Psi_\lambda+\tfrac12\partial_x (\Psi^2_\lambda))\Vert_{L^\infty_{T''}L^2_{xy}}
\\ & \qquad \lesssim \delta+T''^{1/2}H^{s/2+1/2}\Vert u_{i,\lambda}\Vert_{L^\infty_{T''}L^4_{xy}}^2+T''^{1/2}H^{s/2+1/2}\Vert u_{i,\lambda}\Vert_{L^\infty_{T''}L^2_{xy}}\Vert \Psi_\lambda\Vert_{L^\infty_{txy}}
\\ & \qquad \qquad  +T''^{1/2}H^{s/2}\Vert P_{\leq H}(\partial_t\Psi_\lambda+\partial_x\Delta\Psi_\lambda+\tfrac12\partial_x (\Psi^2_\lambda))\Vert_{L^\infty_{t}L^2_{xy}}
\\ & \qquad \lesssim \delta+T''^{1/2}H^{s/2+1/2}\Vert u_{i,\lambda}\Vert_{L^\infty_{T''}H^{s}_{xy}}^2+T''^{1/2}H^{s/2+1/2}\Vert u_{i,\lambda}\Vert_{L^\infty_{T''}H^s_{xy}}\Vert \Psi_\lambda\Vert_{L^\infty_{txy}}
\\ & \qquad \qquad  +T''^{1/2}H^{s/2}\Vert P_{\leq H}(\partial_t\Psi_\lambda+\partial_x\Delta\Psi_\lambda+\tfrac12\partial_x (\Psi^2_\lambda))\Vert_{L^\infty_{t}L^2_{xy}}
\\ & \qquad \lesssim \delta+T''^{1/2}H^{s/2+1/2}\big(1+\Vert u_{i,\lambda}\Vert_{L^\infty_{T''}H^s_{xy}}\big)^2\lesssim \delta,
\end{align*} 
for $T''$ sufficiently  small. Therefore, by choosing $T''=T''(K)$ sufficiently small, we deduce from the latter estimate, along with \eqref{small_k_lambda}, \eqref{small_k_psi_lambda} and \eqref{small_highf} that both $u_{1,\lambda}$ and $u_{2,\lambda}$ satisfy the smallness condition \eqref{hip_diff_lipschitz} on $[-T'',T'']$, that is, \[
\Gamma^s_{T''}(u_{i,\lambda})\lesssim \delta \qquad \hbox{for } \quad i=1,2.
\]
Then, the latter inequality along with  \eqref{smallness_condition} implies that $u_{1,\lambda}\equiv u_{2,\lambda}$ on $[-T'',T'']$, and hence, by applying this argument a finite number of times, we infer that the equality in fact holds in the whole interval $[-T',T']$, and so in $[-T,T]$ by dilation. The proof is complete.
\end{proof}

\subsection{Existence}

Let $s\in[1,2]$ be fixed and consider $u_0\in H^s(\R^2)$. By using a scaling argument as before, without loss of generality we can assume that $u_0\in\mathcal{B}^s(\delta)$ and \begin{align*}
\Vert \Psi\Vert_{L^\infty_tW^{4^+}_{xy}}+\Vert \partial_t\Psi+\partial_x\Delta\Psi+\tfrac12\partial_x (\Psi^2)\Vert_{L^\infty_tH^{3^+}_{xy}}\lesssim \delta.
\end{align*}
We seek to use the Bona-Smith argument (c.f. \cite{BS}) to obtain the existence of a solution $u(t)$ emanating from the initial data $u_0$. We shall follow the proofs in \cite{KePi,RiVe}. Before getting into the details, let us set a function $\rho\in\mathcal{S}(\R^2)$ satisfying that \[
\int  \rho(x,y) dxdy=1 \quad \hbox{ and } \quad \int x^iy^j\rho(x,y)dxdy=0,
\]
for $0\leq i\leq [s]+1$ and $0\leq j\leq [s]+1$, with $i+j\geq 1$. Moreover, from now on we denote by $\rho_\lambda$ the function given by $\rho_\lambda(x,y):=\lambda^{-2}\rho(x/\lambda,y/\lambda)$. We shall need the following technical lemma as well, whose proof can be found in \cite{BS,KaPo}.
\begin{lem}
Let $s\geq 0$, $\phi\in H^s(\R^2)$, and for any $\lambda>0$, set $\phi_\lambda:=\rho_\lambda*\phi$. Then, it holds that \begin{align}\label{bs_tech_ineq1}
\Vert \phi_\lambda\Vert_{H^{s+\gamma}}\lesssim \lambda^{-\gamma}\Vert \phi\Vert_{H^s}, \quad \hbox{ for all } \quad  \gamma\geq0,
\end{align}
and
\begin{align}\label{bs_tech_ineq2}
\Vert \phi-\phi_\lambda\Vert_{H^{s-\gamma}}\underset{\lambda\to0}{=}o(\lambda^\gamma), \quad \hbox{ for all } \quad  \gamma\in[0,s].
\end{align}
\end{lem}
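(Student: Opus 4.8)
The plan is to pass to the frequency side via Plancherel's theorem and exploit the identity $\widehat{\phi_\lambda}(\xi,\mu)=\widehat{\rho}(\lambda\xi,\lambda\mu)\widehat{\phi}(\xi,\mu)$, which holds since convolution turns into a product and $\widehat{\rho_\lambda}(\xi,\mu)=\widehat{\rho}(\lambda\xi,\lambda\mu)$. In this way both \eqref{bs_tech_ineq1} and \eqref{bs_tech_ineq2} reduce to pointwise bounds on the multipliers $\widehat{\rho}(\lambda\,\cdot\,)$ and $1-\widehat{\rho}(\lambda\,\cdot\,)$, respectively, weighted against $(1+|(\xi,\mu)|^2)^{s}|\widehat{\phi}|^2$.

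First I would prove \eqref{bs_tech_ineq1}. Since $\rho\in\mathcal{S}(\R^2)$, its transform $\widehat{\rho}$ is Schwartz, so $(1+|\eta|^2)^{\gamma}|\widehat{\rho}(\eta)|^2\lesssim 1$ uniformly in $\eta$. Setting $\eta=\lambda(\xi,\mu)$ and using $\lambda\in(0,1]$ (the relevant regime for the Bona--Smith scheme) to bound $\lambda^2(1+|(\xi,\mu)|^2)\le 1+|\eta|^2$, one obtains $(1+|(\xi,\mu)|^2)^{\gamma}|\widehat{\rho}(\lambda\xi,\lambda\mu)|^2\lesssim \lambda^{-2\gamma}$. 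Inserting this into the Fourier representation of $\Vert\phi_\lambda\Vert_{H^{s+\gamma}}^2$ and comparing with $\Vert\phi\Vert_{H^s}^2$ yields \eqref{bs_tech_ineq1}; the vanishing-moment hypotheses on $\rho$ play no role here.

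The substance of the argument is \eqref{bs_tech_ineq2}, where the moment conditions enter. The normalization $\int\rho=1$ gives $\widehat{\rho}(0)=1$, while $\int x^iy^j\rho=0$ for $0\le i,j\le[s]+1$, $(i,j)\ne(0,0)$, translates into $\partial^\alpha\widehat{\rho}(0)=0$ for every multi-index $\alpha=(i,j)$ in that box. Hence the Taylor polynomial of $\widehat{\rho}$ of degree $[s]+1$ at the origin reduces to the constant $1$ (each $\alpha$ with $|\alpha|\le[s]+1$ lies in the box), so $|1-\widehat{\rho}(\eta)|\lesssim|\eta|^{[s]+2}$ for $|\eta|\le1$; together with the trivial bound $|1-\widehat{\rho}(\eta)|\lesssim1$ for $|\eta|\ge1$ this gives the uniform estimate $|1-\widehat{\rho}(\eta)|\lesssim|\eta|^{\gamma}$ for all $\eta$ and all $\gamma\in[0,[s]+2]$, in particular for $\gamma\in[0,s]$. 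Specializing $\eta=\lambda(\xi,\mu)$ produces $|1-\widehat{\rho}(\lambda\xi,\lambda\mu)|^2\lesssim \lambda^{2\gamma}(1+|(\xi,\mu)|^2)^{\gamma}$, which already delivers the weaker bound $\Vert\phi-\phi_\lambda\Vert_{H^{s-\gamma}}\lesssim \lambda^{\gamma}\Vert\phi\Vert_{H^s}$.

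The remaining, and genuinely delicate, point is to upgrade this $O(\lambda^{\gamma})$ estimate to the little-$o$ statement. Here I would argue by dominated convergence: the integrand $\lambda^{-2\gamma}(1+|(\xi,\mu)|^2)^{s-\gamma}|1-\widehat{\rho}(\lambda\xi,\lambda\mu)|^2|\widehat{\phi}(\xi,\mu)|^2$ is dominated, uniformly in $\lambda$, by the fixed integrable function $(1+|(\xi,\mu)|^2)^{s}|\widehat{\phi}(\xi,\mu)|^2\in L^1$ (by the pointwise bound just established), and it tends to $0$ pointwise as $\lambda\to0$, since $1-\widehat{\rho}(\lambda\zeta)=O(|\lambda\zeta|^{[s]+2})$ forces $\lambda^{-2\gamma}|1-\widehat{\rho}(\lambda\zeta)|^2=O(\lambda^{2([s]+2)-2\gamma})\to0$ because $[s]+2>s\ge\gamma$. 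Therefore $\lambda^{-2\gamma}\Vert\phi-\phi_\lambda\Vert_{H^{s-\gamma}}^2\to0$, which is exactly \eqref{bs_tech_ineq2}. The main obstacle is precisely this last step: the power-counting bound only produces $O(\lambda^{\gamma})$, and one must combine the integrability of $(1+|(\xi,\mu)|^2)^{s}|\widehat{\phi}|^2$ with the pointwise decay to extract the strict gain, being careful that the endpoint $\gamma=s$ (permitted since $[s]+2>s$) is covered.
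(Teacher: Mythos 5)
Your proof is correct and follows essentially the standard argument: the paper offers no proof of this lemma (it cites Bona--Smith \cite{BS} and Kato--Ponce \cite{KaPo}), and your route --- Plancherel reduction to the multipliers $\widehat{\rho}(\lambda\cdot)$ and $1-\widehat{\rho}(\lambda\cdot)$, Taylor expansion of $\widehat{\rho}$ at the origin using the vanishing moments to get $|1-\widehat{\rho}(\eta)|\lesssim|\eta|^{[s]+2}$ near $0$, and the dominated-convergence upgrade from $O(\lambda^{\gamma})$ to $o(\lambda^{\gamma})$ --- is precisely how those references proceed, including the correct handling of the endpoint $\gamma=s$ via $[s]+2>s$. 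Your restriction to $\lambda\in(0,1]$ in the first estimate is appropriate (and in fact necessary, since for $\gamma>1$ the bound fails as $\lambda\to\infty$ whenever $\widehat{\phi}(0)\neq0$), and it matches the regime in which the lemma is applied in the Bona--Smith scheme.
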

With this lemma in hand, now we regularize the initial data. More specifically, we consider $u_{0,\lambda}:=\rho_\lambda*u_0$. Then, since $u_{0,\lambda}\in H^\infty(\R^2)$ for all $\lambda>0$, Theorem \ref{regular_lwp_thm} provide us the existence of a positive time $T_\lambda$ and a unique solution \[
u_\lambda\in C([-T_\lambda,T_\lambda],H^{3^+}(\R^2)),
\]
to equation \eqref{zk_psi} satisfying that $u_\lambda(0,\cdot,\cdot)=u_{0,\lambda}$. We observe that, from Young convolution inequality, we have that $\Vert u_{0,\lambda}\Vert_{H^s}\leq \Vert u_0\Vert_{H^s}\leq \delta$. It follows then, from the proof of Proposition \ref{apriori_smooth} and estimate \eqref{bs_tech_ineq1} that the sequence of solutions $\{u_\lambda\}$ can be extended to the whole time interval $[-1,1]$. Moreover, we have that \begin{align}\label{existence_1_ineq}
\Gamma_1^{s}(u_\lambda)\lesssim \Vert u_{0}\Vert_{H^s}\lesssim \delta \quad \hbox{ and } \quad \Gamma^{s+1}_1(u_\lambda)\lesssim \Vert u_{0,\lambda}\Vert_{H^{s+1}}\lesssim \lambda^{-1}\Vert u_0\Vert_{H^s},
\end{align}
for all $\lambda>0$. Then, we infer from \eqref{smallness_condition} and  \eqref{bs_tech_ineq2} that, for any $\lambda'\in(0,\lambda)$ it holds that
\begin{align}\label{existence_2_ineq}
\Gamma_1^0(u_\lambda-u_{\lambda'})\lesssim \Vert u_{0,\lambda}-u_{0,\lambda'}\Vert_{L^2}\underset{\lambda\to0}{=}o(\lambda^s).
\end{align}
On the other hand, from the linear estimate \eqref{linear_prop_estimate}, the $L^2$-bilinear estimates  
\eqref{short_t_estimate}
 and \eqref{short_t_psi_bil}, along with the energy estimate \eqref{diff_s_util} and \eqref{existence_1_ineq}, choosing $\delta$ small enough, we infer that  \begin{align*}
\Gamma^{s}_1(u_\lambda-u_{\lambda'})^2&\lesssim \Vert u_{0,\lambda}-u_{0,\lambda'}\Vert_{H^s}^2+\Big(\delta+\Gamma^{s+1}_1(u_\lambda)+\Gamma_1^{s+1}(u_{\lambda'})\Big)\Gamma_1^{0}(u_\lambda-u_{\lambda'})^2,
\end{align*}
since $s\geq 1$. Thus, combining \eqref{bs_tech_ineq2}, \eqref{existence_1_ineq} and \eqref{existence_2_ineq}  with the latter estimate, we get that 
\begin{align}\label{ex_diff_lamb_zero}
\Vert u_\lambda-u_{\lambda'}\Vert_{L^\infty_1H^s_{xy}}\lesssim \Gamma_1^{s}(u_\lambda-u_{\lambda'})\to 0 \quad \hbox{ as } \quad \lambda\to0.
\end{align}

Therefore, we conclude that the sequence $\{u_\lambda\}$ converges in the $\Gamma^s_1$-norm to a solution $u(t)$ of \eqref{zk_psi} in the class $C([-T,T],H^s(\R^2))\cap F^s(T)\cap B^s(T)$. The proof is complete.

\medskip

\subsection{Continuity of the flow map}

Let $s\in [1,2]$ and $u_0\in H^s(\R^2)$ both be fixed. In the same fashion as in the previous subsections, by a scaling argument, without loss of generality we can assume that $u_0\in \mathcal{B}^s(\delta)$ and \[
\Vert \Psi\Vert_{L^\infty_tW^{4^+}_{xy}}+\Vert \partial_t\Psi+\partial_x\Delta\Psi+\tfrac12\partial_x (\Psi^2)\Vert_{L^\infty_tH^{3^+}_{xy}}\lesssim \delta,
\]
for $\delta>0$ sufficiently small. Then, the solution $u(t)$ emanating from $u_0$ is defined on the whole time interval $[-1,1]$ and belongs to the class $u\in C([-1,1],H^s(\R^2))$. Now, let $\varepsilon\in (0,1)$ fixed. Thus, it suffices to prove that, for any initial data $v_0\in \mathcal{B}^s(\delta)$ such that $\Vert u_0-v_0\Vert_{H^s}\leq \tilde\epsilon$, with $\tilde\epsilon=\tilde\epsilon(\varepsilon) >0$ small enough to be fixed, the solution $v\in C([-1,1],H^s(\R^2))$ emanating from $v_0$ satisfies that \begin{align}\label{continuitymap_proof}
\Vert u-v\Vert_{L^\infty_1H^s_{xy}}\leq \varepsilon.
\end{align}
Next, as in the previous subsection, we regularize the initial datum $u_0$ and $v_0$ by defining \[
u_{0,\lambda}=\rho_\lambda*u_0 \quad \hbox{ and } \quad v_{0,\lambda}=\rho_\lambda*v_0,
\]
for $\lambda>0$, and consider the associated smooth solutions $u_\lambda, v_\lambda\in C([-1,1],H^{\sigma}(\R^2))$ with $\sigma\in (2,3+\tfrac12\varepsilon_\star)$. Then, from the triangle inequality it follows that \begin{align}\label{triangle}
\Vert u-v\Vert_{L^\infty_1H^s_{xy}}\leq \Vert u-u_\lambda\Vert_{L^\infty_1H^s_{xy}}+\Vert u_\lambda-v_\lambda\Vert_{L^\infty_1H^s_{xy}}+\Vert v-v_\lambda\Vert_{L^\infty_1H^s_{xy}}.
\end{align}
Now notice that, from \eqref{ex_diff_lamb_zero} it follows that we can choose $\lambda_*>0$ sufficiently small such that \[
\Vert u-u_{\lambda_*}\Vert_{L^\infty_1H^s_{xy}}+\Vert v-v_{\lambda_*}\Vert_{L^\infty_1H^s_{xy}}\leq\tfrac23\varepsilon.
\] 
On the other hand, we infer from \eqref{bs_tech_ineq1} that \[
\Vert u_{0,\lambda_*}-v_{0,\lambda_*}\Vert_{H^\sigma}\lesssim \lambda_*^{-(\sigma-s)}\Vert u_0-v_0\Vert_{H^s}\lesssim \lambda_*^{-(\sigma-s)}\tilde\epsilon.
\]
Therefore, by using the continuity result of the flow map for regular initial data (see Theorem \ref{regular_lwp_thm}), we obtain the existence of an $\tilde\epsilon>0$ small enough such that \[
\Vert u_{\lambda_*}-v_{\lambda_*}\Vert_{L^\infty_1H^s_{xy}}\leq \tfrac13\varepsilon.
\]
Thus, gathering the above estimates we conclude the proof.

\medskip 

\section{Global well-posedness}\label{sec_gwp}

In this section we seek to prove the global well-posedness Theorem \ref{MT_gwp}. We emphasize once again that, due to the presence of $\Psi(t,x,y)$, equation \eqref{zk_psi} has no evident well-defined conservation laws. However, a slight modification of the energy functional along with Gr\"onwall inequality shall be enough to conclude the proof of our GWP Theorem \ref{MT_gwp}. Our first lemma states that the $L^2$-norm of the solution can grows at most exponentially fast in time.
\begin{lem}\label{lemma_growth_l2}
Let $u(t)\in C([0,T],H^1(\R^2))$ be a solution to equation \eqref{zk_psi} emanating from an initial data $u_0\in H^1(\R^2)$. Then, for all $t\in[0,T]$ we have
\begin{align}\label{L2_gronwall_general}
\Vert u(t)\Vert_{L^2_{xy}}^2\leq C_{u_0,\Psi}\exp(C_{\Psi}t),
\end{align}
where $C_{\Psi}>0$ is a positive constants that only depends on $\Psi$, while $C_{u_0,\Psi}>0$ depends on $\Psi$ and $\Vert u_0\Vert_{L^2}$.
\end{lem}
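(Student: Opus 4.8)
The plan is to run a standard $L^2$ energy estimate on the equation for $u$, taking advantage of the fact that, although neither $\partial_t\Psi$ nor $\partial_x\Delta\Psi$ need be square integrable on their own, the hypotheses \eqref{hyp_psi_general} guarantee that the combination $G:=\partial_t\Psi+\partial_x\Delta\Psi+\tfrac12\partial_x(\Psi^2)$ belongs to $L^\infty(\R,H^{3^+}(\R^2))\hookrightarrow L^\infty(\R,L^2(\R^2))$. First I would rewrite \eqref{zk_psi} in the form $\partial_tu+\partial_x\Delta u=-G-\tfrac12\partial_x(u^2)-\partial_x(u\Psi)$, multiply by $2u$, and integrate over $\R^2$. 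As usual, all the integrations by parts below are first justified for the smooth solutions produced by Theorem \ref{regular_lwp_thm}, and then transferred to the $H^1$ solution $u(t)$ by the continuity of the data-to-solution map established in Theorem \ref{MT1}.

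The resulting identity reads $\tfrac{d}{dt}\Vert u\Vert_{L^2}^2=-2\int_{\R^2}u\,\partial_x\Delta u-2\int_{\R^2}uG-\int_{\R^2}u\,\partial_x(u^2)-2\int_{\R^2}u\,\partial_x(u\Psi)$, and I would dispose of the four terms one at a time. The dispersive term vanishes because $\partial_x\Delta$ is a skew-adjoint constant-coefficient differential operator of odd order, so $\int_{\R^2}u\,\partial_x\Delta u=0$. The purely nonlinear term also vanishes, since $\int_{\R^2}u\,\partial_x(u^2)=\tfrac23\int_{\R^2}\partial_x(u^3)=0$. The forcing term is controlled directly by Cauchy--Schwarz, namely $\big\vert 2\int_{\R^2}uG\big\vert\le\Vert u\Vert_{L^2}^2+\Vert G\Vert_{L^2}^2$. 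The crucial contribution is the mixed term: integrating by parts twice gives $-2\int_{\R^2}u\,\partial_x(u\Psi)=\int_{\R^2}\partial_x(u^2)\,\Psi=-\int_{\R^2}u^2\,\partial_x\Psi$, which is bounded by $\Vert\partial_x\Psi\Vert_{L^\infty}\Vert u\Vert_{L^2}^2$. The point of this rearrangement is that the $x$-derivative ultimately falls on $\Psi$ rather than on $u$, so that only the bounded quantity $\partial_x\Psi$ enters and no derivative of $u$ is lost.

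Gathering these bounds yields the differential inequality $\tfrac{d}{dt}\Vert u\Vert_{L^2}^2\le C_\Psi\Vert u\Vert_{L^2}^2+\Vert G\Vert_{L^\infty_tL^2}^2$, where $C_\Psi:=1+\Vert\partial_x\Psi\Vert_{L^\infty_{t,x,y}}$ is finite by \eqref{hyp_psi_general} (which provides both $\Psi\in L^\infty_tW^{4^+,\infty}_{xy}$ and $G\in L^\infty_tH^{3^+}_{xy}$). An application of Gr\"onwall's inequality then gives $\Vert u(t)\Vert_{L^2}^2\le\big(\Vert u_0\Vert_{L^2}^2+C_\Psi^{-1}\Vert G\Vert_{L^\infty_tL^2}^2\big)e^{C_\Psi t}$, which is exactly \eqref{L2_gronwall_general} upon setting $C_{u_0,\Psi}:=\Vert u_0\Vert_{L^2}^2+C_\Psi^{-1}\Vert G\Vert_{L^\infty_tL^2}^2$, a constant depending only on $\Psi$ and $\Vert u_0\Vert_{L^2}$, while $C_\Psi$ depends only on $\Psi$. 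The only genuine subtlety I anticipate is the double integration by parts on the mixed term $\partial_x(u\Psi)$: it must be organized precisely so that the estimate closes with $\Vert\partial_x\Psi\Vert_{L^\infty}$ alone, since $\Psi$ carries no integrability and any cruder bound would demand derivatives of $u$ that are unavailable at the $L^2$ level.
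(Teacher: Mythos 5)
Your proposal is correct and follows essentially the same route as the paper: differentiate the mass, cancel the dispersive and cubic self-interaction terms, bound the forcing term $\int uG$ by Young's inequality using $G=\partial_t\Psi+\partial_x\Delta\Psi+\tfrac12\partial_x(\Psi^2)\in L^\infty_tH^{3^+}\hookrightarrow L^\infty_tL^2$, shift the derivative onto $\Psi$ in the mixed term to get $\Vert\partial_x\Psi\Vert_{L^\infty}\Vert u\Vert_{L^2}^2$, and conclude by Gr\"onwall. The smoothing justification via Theorem \ref{regular_lwp_thm} and the continuity of the flow is also exactly how the paper legitimizes the integrations by parts, so there is nothing to add.
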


\begin{proof}
First of all, by using the continuity of the flow with respect to the initial data, given by Theorem \ref{MT1}, we can assume $u(t)$ is sufficiently smooth so that all the following computations hold. Now,  by taking the time derivative of the mass functional, using equation \eqref{zk_psi}, after suitable integration by parts we obtain 
\begin{align*}
\dfrac{1}{2}\dfrac{d}{dt}\int_\R u^2(t,x)dx&=-\dfrac12\int u\partial_x\big(u\Psi\big) -\int u\big(\partial_t\Psi+\partial_x^3\Psi+\tfrac12\partial_x( \Psi^2)\big)
\\ &=: \mathrm{I}+\mathrm{II}.
\end{align*}
Notice that, thanks to our hypotheses on $\Psi$, we can immediately bound $\mathrm{II}$ by using Young inequality for products, from where we get
\[
\vert \mathrm{II}\vert\leq\Vert u(t)\Vert_{L^2_{xy}}^2+\Vert \partial_t\Psi+\partial_x^3\Psi+\partial_xf(\Psi)\Vert_{L^\infty_tL^2_{xy}}^2.
\]
Similarly, after suitable integration by parts and a direct use of H\"older inequality we obtain \[
\vert \mathrm{I}\vert\leq2\Vert \Psi_x\Vert_{L^\infty_{txy}}\Vert u(t)\Vert_{L^2_{xy}}^2.
\]
Therefore, Gronwall inequality provides \eqref{L2_gronwall_general}. The proof is complete.
\end{proof}

Now, in order to control the $H^1$-norm, we consider the following modified energy functional \begin{align}\label{modi_energy}
\mathcal{E}\big(u(t)\big):=\int_{\R^2} \vert \nabla u\vert ^2-\dfrac13\int_{\R^2} u^ 2\big(u+3\Psi\big).
\end{align}
It is worth to notice that the previous functional is well defined for all times $t\in[0,T]$, however, it is clearly not conserved by the ZK-flow. The following lemma give us the desired control on the growth of the $H^1$-norm of the solution $u(t)$, and hence, it finishes the proof of Theorem \ref{MT_gwp}.

\begin{lem}
Let $u(t)\in C([0,T],H^1(\R^2))$ be a solution to equation \eqref{zk_psi} emanating from an initial data $u_0\in H^1(\R^2)$. Then, for all $t\in[0,T]$ we have
\begin{align*}
\Vert u(t)\Vert_{H^1_{xy}}\lesssim  C_{u_0,\Psi}^*\exp(C_{\Psi}^*t).
\end{align*}
where $C_{\Psi}^*>0$ is a positive constants that only depends on $\Psi$, while $C_{u_0,\Psi}^*>0$ depends on $\Psi$ and $\Vert u_0\Vert_{H^1}$.
\end{lem}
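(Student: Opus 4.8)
The plan is to control $\|\nabla u(t)\|_{L^2}$ through the modified energy $\mathcal{E}(u(t))$ from \eqref{modi_energy}, the $L^2$-growth bound \eqref{L2_gronwall_general} already at hand, and Gr\"onwall's inequality. As in Lemma \ref{lemma_growth_l2}, I would first invoke the continuity of the flow with respect to the initial data to reduce to smooth solutions, so that every integration by parts below is justified. The first observation is that $\mathcal{E}$ and $\|\nabla u\|_{L^2}^2$ differ only by lower-order terms: by the two-dimensional Gagliardo--Nirenberg inequality $\int_{\R^2} u^3 \lesssim \|u\|_{L^2}^2\|\nabla u\|_{L^2}$ together with $\big|\int_{\R^2} u^2\Psi\big|\leq \|\Psi\|_{L^\infty}\|u\|_{L^2}^2$, a Young absorption yields $\tfrac12\|\nabla u\|_{L^2}^2 \leq \mathcal{E}(u)+C(\|u\|_{L^2}^4+\|u\|_{L^2}^2)$. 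Hence, once $\mathcal{E}(u(t))$ is shown to grow at most exponentially, the desired $H^1$-bound follows by combining with \eqref{L2_gronwall_general}.

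The heart of the matter is the computation of $\tfrac{d}{dt}\mathcal{E}(u(t))$. Writing $G:=\partial_t\Psi+\partial_x\Delta\Psi+\tfrac12\partial_x(\Psi^2)$ and $\mathcal{L}:=2\Delta u+u^2+2u\Psi$, equation \eqref{zk_psi} reads $\partial_t u = -\tfrac12\partial_x\mathcal{L}-G$. Differentiating \eqref{modi_energy} and integrating by parts in the gradient term gives
\[
\frac{d}{dt}\mathcal{E}(u(t)) = -\int_{\R^2}\mathcal{L}\,\partial_t u -\int_{\R^2}u^2\partial_t\Psi = \frac12\int_{\R^2}\mathcal{L}\,\partial_x\mathcal{L}+\int_{\R^2}\mathcal{L}\,G-\int_{\R^2}u^2\partial_t\Psi.
\]
The key cancellation is that $\int_{\R^2} \mathcal{L}\,\partial_x\mathcal{L}=\tfrac12\int_{\R^2}\partial_x(\mathcal{L}^2)=0$, exactly as for the conserved ZK energy; what remains, $\int_{\R^2}\mathcal{L}\,G-\int_{\R^2} u^2\partial_t\Psi$, carries no factor of $\|\nabla u\|_{L^2}^2$, which is precisely what makes the argument close.

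It then remains to estimate $\int_{\R^2}\mathcal{L}\,G-\int_{\R^2} u^2\partial_t\Psi$ solely in terms of $\|u\|_{L^2}$ and fixed norms of $\Psi$. For the dangerous piece $\int_{\R^2} \Delta u\,G$, which a priori involves two derivatives of $u$, I would integrate by parts twice to write it as $\int_{\R^2} u\,\Delta G$ and bound it by $\|u\|_{L^2}\|G\|_{H^2}$, using that $G\in L^\infty_tH^{3^+}$ by \eqref{hyp_psi_general}; the remaining terms $\int_{\R^2} u^2 G$ and $\int_{\R^2} u\Psi G$ are controlled by $\|G\|_{L^\infty}\|u\|_{L^2}^2$ and $\|\Psi\|_{L^\infty}\|G\|_{L^2}\|u\|_{L^2}$ respectively. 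Finally, $\partial_t\Psi = G-\partial_x\Delta\Psi-\Psi\partial_x\Psi$ lies in $L^\infty_{t,x,y}$ thanks to $G\in H^{3^+}\hookrightarrow L^\infty$ and $\Psi\in L^\infty_tW^{4^+,\infty}_{xy}$, so $\big|\int_{\R^2} u^2\partial_t\Psi\big|\leq\|\partial_t\Psi\|_{L^\infty}\|u\|_{L^2}^2$. Altogether $\big|\tfrac{d}{dt}\mathcal{E}(u(t))\big|\leq C_\Psi(1+\|u(t)\|_{L^2}^2)$, and integrating in time while inserting the exponential $L^2$-bound \eqref{L2_gronwall_general} gives an exponential bound on $\mathcal{E}(u(t))$, hence on $\|u(t)\|_{H^1}$. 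The main obstacle is organizing the energy computation so that, after the perfect-derivative cancellation and the integration by parts onto $G$, no term involving $\|\nabla u\|_{L^2}$ survives on the right-hand side; this is what allows Gr\"onwall to close at the $L^2$-level only.
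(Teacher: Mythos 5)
Your proposal is correct and follows essentially the same route as the paper: the same modified energy $\mathcal{E}$ from \eqref{modi_energy}, the same cancellation of the ZK structure (your $\int \mathcal{L}\,\partial_x\mathcal{L}=0$ is exactly the paper's chain of integrations by parts, just packaged more cleanly), the same double integration by parts moving $\Delta$ onto $G=\partial_t\Psi+\partial_x\Delta\Psi+\tfrac12\partial_x(\Psi^2)$ to exploit $G\in L^\infty_tH^{3^+}$, and the same Gagliardo--Nirenberg coercivity plus Gr\"onwall closure at the $L^2$ level via Lemma \ref{lemma_growth_l2}. Your explicit justification that $\partial_t\Psi\in L^\infty$ (by writing $\partial_t\Psi=G-\partial_x\Delta\Psi-\Psi\partial_x\Psi$) is a point the paper uses only implicitly, but it does not change the argument.
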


\begin{proof}
By using the continuity of the flow with respect to the initial data given by Theorem \ref{MT1}, we can assume $u(t)$ is sufficiently smooth so that all the following computations hold. Now, let us begin by explicitly computing the time derivative of the modified energy functional $\mathcal{E}(u(t))$. Indeed, by using equation \eqref{zk_psi}, after suitable integration by parts we obtain  that
\begin{align}\label{derivative_energy_gwp}
\dfrac{d}{dt}\mathcal{E}&=-2\int (u_{xx}+u_{yy})u_t-\int u_t\big(u^2+2u\Psi\big) -\int u^2\Psi_t\nonumber
\\ & =\int (\Delta u)\partial_x\big( u^2+2u\Psi\big)+2\int u\Delta\big(\Psi_t+\partial_x\Delta\Psi+\tfrac12\partial_x(\Psi^2)\big)\nonumber
\\ & \quad +\int (\partial_x\Delta u)\big(u^2+2u\Psi\big)
+\dfrac12\int \big( u^2+2u\Psi\big)\partial_x\big( u^2+2u\Psi\big)\nonumber
\\ & \quad +\int \big( u^2+2u\Psi\big)\big(\Psi_t+\partial_x^3\Psi+\tfrac12 \partial_x (\Psi^2)\big) -\int u^2\Psi_t\nonumber
\\ & = 2\int u\Delta\big(\Psi_t+\partial_x^3\Psi+\tfrac12 \partial_x(\Psi^2)\big) -\int u^2\Psi_t\nonumber
\\ & \quad +\int \big(u^2+2u\Psi\big)\big(\Psi_t+\partial_x^3\Psi+\tfrac12 \partial_x(\Psi^2)\big)\nonumber
\\ & \lesssim \big(1+\Vert \Psi_t\Vert_{L^\infty_{txy}}+\Vert \Psi\Vert_{L^\infty_{txy}}^2+\Vert \partial_t\Psi+\partial_x^3\Psi+ \tfrac12 \partial_x(\Psi^2)\Vert_{L^\infty_{txy}}\big)\Vert u(t)\Vert_{L^2_{xy}}^2
\\ & \quad +\Vert \Psi_t+\partial_x^3\Psi+\tfrac12 \partial_x(\Psi^2)\Vert_{L^\infty_tH^2_{xy}}^2. \nonumber
\end{align}
On the other hand, by using Gagliardo-Nirenberg interpolation inequality along with Young inequality for products, we have that \begin{align*}
\left\vert\int_{\R^2} u^3(t,x,y)dxdy\right\vert&\leq C\Vert u(t)\Vert_{H^1_{xy}}\Vert u(t)\Vert_{L^2_{xy}}^{2} \lesssim \varepsilon\Vert u(t)\Vert_{H^1_{xy}}^2+\dfrac{1}{\varepsilon}\Vert u(t)\Vert_{L^2_{xy}}^{4},
\end{align*}
where $\varepsilon>0$ is a small parameter. Thus, by plugging the latter inequality into the second integral in the definition of the modified energy functional \eqref{modi_energy}, we get that \begin{align*}
\left\vert \int u^2\big(u+3\Psi\big)\right\vert &\lesssim \Vert u(t)\Vert_{L^3_{xy}}^3+\Vert \Psi\Vert_{L^\infty_{txy}}\Vert u(t)\Vert_{L^2_{xy}}^2
\\ &\lesssim \varepsilon\Vert u(t)\Vert_{H^1_{xy}}^2+\varepsilon^{-1}\Vert u(t)\Vert_{L^2_{xy}}^{4}+\Vert \Psi\Vert_{L^\infty_{txy}}\Vert u(t)\Vert_{L^2_{xy}}^2.
\end{align*}
Therefore, integrating \eqref{derivative_energy_gwp} in time on $[0,T]$, plugging the latter inequality into the resulting right-hand side, choosing $\varepsilon\in(0,1)$ sufficiently small, and then using the $L^2$-bound found in  Lemma \ref{lemma_growth_l2}, we infer that \begin{align*}
\int_\R \vert \nabla u\vert^2(t,x,y)dxdy&\lesssim \int \vert\nabla u_{0}\vert ^2dxdy -\int u_0^2\big(u_0+3\Psi\big)dxdy+C_{u_0,\Psi}\exp(C_\Psi t),
\end{align*}
where $C_{\Psi}$ only depend on the above norms of $\Psi$ and $C_{u_0,\Psi}$ only depends on norms of $\Psi$ and $\Vert u_0\Vert_{L^2}$. The proof is complete.
\end{proof}

\bigskip

\textbf{Acknowledgements:} The author is very grateful to Professor Luc Molinet for encouraging him to solve this problem and for several remarkably helpful comments and conversations.

\medskip

\end{document}